\newcommand{\R}{{\mathbb R}}
\newtheorem{theorem}{Theorem}[section]
\newtheorem{cor}[theorem]{Corollary}
\newtheorem{corollary}[theorem]{Corollary}
\newtheorem{thm}[theorem]{Theorem}
\newtheorem{definition}[theorem]{Definition}
\newtheorem{remark}[theorem]{Remark}
\newtheorem{lemma}[theorem]{Lemma}
\newtheorem{proposition}[theorem]{Proposition}
\newtheorem{claim}[theorem]{Claim}
\begin{document}
\title{Coisotropic Ekeland-Hofer capacities}
\date{November 8, 2020\\
Revised June 4, 2021\\
Second revised August 25, 2022}

\author{Rongrong Jin and Guangcun Lu
\thanks{Corresponding author
\endgraf \hspace{2mm} Partially supported
by the NNSF  11271044 of China and the Fundamental Research Funds for Central Universities, Civil Aviation University of China, 3122021074.
\endgraf\hspace{2mm} 2010 {\it Mathematics Subject Classification.}
 53D35, 53C23 (primary), 70H05, 37J05, 57R17 (secondary).}}
 \maketitle \vspace{-0.3in}

\vspace{0.1in}
\abstract{
 For subsets in the standard symplectic space
 $(\mathbb{R}^{2n},\omega_0)$ whose closures are intersecting with coisotropic
 subspace $\mathbb{R}^{n,k}$ we construct relative versions of
 the Ekeland-Hofer capacities of the subsets
 with respect to $\mathbb{R}^{n,k}$,
establish representation formulas for such capacities of bounded convex domains  intersecting with $\mathbb{R}^{n,k}$.
We also prove a product formula and a fact that the value of this capacity  on a hypersurface $\mathcal{S}$
of restricted contact type containing the origin is equal to the action of a generalized leafwise chord on
$\mathcal{S}$.
} \vspace{-0.1in}


\medskip\vspace{12mm}

\section{Introduction}
\setcounter{equation}{0}

\subsection{Coisotropic capacity}\label{sec:coCap}

Recently,  Lisi and  Rieser \cite{LiRi13}
introduced the notion of a  coisotropic capacity (i.e., a symplectic capacity relative to a coisotropic
 submanifold of a symplectic manifold),  and discussed their motivations and backgrounds.
Let $(M,\omega)$ be a symplectic manifold and $N\subset M$ a coisotropic submanifold.
({\it In this paper all manifolds are assumed to be connected without special statements}!)
An equivalence relation $\sim$ on $N$ was called  a  \textbf{coisotropic equivalence relation}
if $x$ and $y$ are on the same leaf then $x\sim y$ (cf. \cite[Definition 1.4]{LiRi13}).
Special examples are the trivial relation defined by $x\sim y$ for every pair $x, y\in N$ and
the so-called  \textbf{leaf relation} defined by
 $x\sim y$ if and only if $x$ and $y$ are on the same leaf.
For two tuples $(M_0, N_0, \omega_0, \sim_0)$ and $(M_1, N_1, \omega_1, \sim_1)$ as above,
 a \textsf{relative symplectic embedding} from
$(M_0, N_0, \omega_0)$ to $(M_1, N_1, \omega_1)$
is a symplectic embedding $\psi: (M_0, \omega_0)\to (M_1, \omega_1)$
satisfying $\psi^{-1}(N_1) = N_0$ (\cite[Definition 1.5]{LiRi13}). Such an embedding
 $\psi$ is said to \textbf{respect the pair of coisotropic equivalence relations}
    $(\sim_0,\sim_1)$
        if for every $x, y \in N_0$,
    $$
    \psi(x) \sim_1 \psi(y)\quad \Longrightarrow \quad x \sim_0 y.
    $$

The standard symplectic space  $(\mathbb{R}^{2n},\omega_0)$
has coisotropic linear subspaces
$$
\mathbb{R}^{n,k}=\{x\in\mathbb{R}^{2n}\,|\,x=(q_1,\cdots,q_n,p_1,\cdots,p_k,0,\cdots,0)\}
$$
for $k=0,\cdots,n$, where we understand
$\mathbb{R}^{n,0}=\{x\in\mathbb{R}^{2n}\,|\,x=(q_1,\cdots,q_n,0,\cdots,0)\}$.
Denote by  $\sim$  the leaf relation on $\mathbb{R}^{n,k}$, and by
\begin{eqnarray}\label{e:V0}
&&V_0^{n,k}=\{x\in\mathbb{R}^{2n}\;|\;x=(0,\cdots,0,q_{k+1},\cdots,q_n,0,\cdots,0)\},\\
&&V^{n,k}_1=\{x\in\mathbb{R}^{2n}\,|\,x=(q_1,\cdots,q_k,0,\cdots,0,p_1,\cdots,p_k,0,\cdots,0)\}.\label{e:V1}
\end{eqnarray}
Hereafter it is understood that $V_0^{n,0}=\{x\in\mathbb{R}^{2n}\;|\;x=(q_{1},\cdots,q_n,0,\cdots,0)\}=\mathbb{R}^{n,0}$,
$V^{n,n}_0=\{0\}$ and $V^{n,0}_1=\{0\}$, $V^{n,n}_1=\mathbb{R}^{2n}$.
Then $L_0^n:=V_0^{n,0}$ is a Lagrangian subspace, and two points $x, y\in\mathbb{R}^{n,k}$  satisfy
  $x\sim y$ if and only if their difference $x-y$ sits in $V_0^{n,k}$.
Obverse that $\mathbb{R}^{2n}$ has the orthogonal decomposition
$\mathbb{R}^{2n}=J_{2n}V^{n,k}_0\oplus \mathbb{R}^{n,k}=J_{2n}\mathbb{R}^{n,k}\oplus V^{n,k}_0$
 with respect to the standard inner product,
 where $J_{2n}$ denotes the standard complex structure  on $\mathbb{R}^{2n}$ given by
 $(q_1,\cdots,q_n, p_1,\cdots, p_n)\mapsto (p_1,\cdots,p_n, -q_1,\cdots, -q_n)$.

For $a\in\mathbb{R}$ we write ${\bf a}:=(0,\cdots,0,a)\in\mathbb{R}^{2n}$. Denote by
$B^{2n}({\bf a}, r)$ and $B^{2n}(r)$
the open balls of radius $r$ centered at ${\bf a}$ and the origin in $\R^{2n}$ respectively, and by
\begin{eqnarray}\label{e:Ball2}
&&W^{2n}(R) := \left \{ (x_1, \dots, x_n, y_1, \dots, y_n) \in \R^{2n} \; | \; x_n^2 + y_n^2  < R^2\; \text{or }\; y_n < 0
\right\},\\
&&  W^{n,k}(R):=  W^{2n}(R) \cap \R^{n,k}\quad\hbox{and}\quad B^{n,k}(r):= B^{2n}(r) \cap \R^{n,k}.\label{e:Ball3}
\end{eqnarray}
($W^{2n}(R)$ was written as $W(R)$ in \cite[Definition~1.1]{LiRi13}).

According to \cite[Definition~1.7]{LiRi13},
a \textsf{coisotropic capacity} is a functor $c$, which assigns to every tuple $(M,N,\omega, \sim)$ as above
 a non-negative (possibly infinite) number $c(M,N,\omega, \sim)$, such that the following
conditions hold:
  \begin{description}
         \item [(i)] {\bf Monotonicity}. If there exists a relative
symplectic embedding $\psi$ from $(M_0, N_0, \omega_0, \sim_0)$ to $(M_1, N_1, \omega_1, \sim_1)$
respecting the coisotropic equivalence relations where $\dim M_0=\dim M_1$, then
$c(M_0,N_0,\omega_0, \sim_0) \leq c(M_1,N_1,\omega_1, \sim_1)$.

    \item [(ii)]{\bf Conformality}.  
$c(M,N,\alpha\omega, \sim)=|\alpha|c(M,N,\omega, \sim),\;\forall\alpha \in \mathbb{R}\backslash \{0\}$.

    \item [(iii)]{\bf Non-triviality}. With the leaf
    relation $\sim$  it holds that for $k=0,\cdots,n-1$,
    \begin{eqnarray}\label{e:Ball4}
    c(B^{2n}(1),B^{n,k}(1),\omega_0, \sim ) =\frac{\pi}{2}=
    c(W^{2n}(1),W^{n,k}(1),\omega_0, \sim ).
    \end{eqnarray}
     \end{description}

As remarked in  \cite[Remark~1.9]{LiRi13},  any symplectic capacity can not serve as a coisotropic capacity because of
the non-triviality (iii).

\textsf{From now on, we abbreviate $c(M,N,\omega,\sim)$ as $c(M,N,\omega)$
if $\sim$ is the leaf relation on $N$.  
In particular, for domains $D\subset\mathbb{R}^{2n}$ we also abbreviate $c\left(D, D\cap\mathbb{R}^{n,k},\omega_0\right)$ as
   $c\left(D, D\cap\mathbb{R}^{n,k}\right)$ for simplicity.}

Given a ($n+k$)-dimensional coisotropic submanifold $N$ in a  symplectic manifold $(M, \omega)$ of dimension $2n$
we defined in \cite[Definition~1.3]{JinLu1917}
$$
{\it w}_G(N;M,\omega):=\sup\left\{\pi r^2\,\Bigg|\,\begin{array}{ll}
&\exists\;\hbox{a relative symplectic embedding}\\
&\hbox{$(B^{2n}(r), B^{n,k}(r))\to (M,N)$ respecting}\\
&\hbox{ the leaf relations on $B^{n,k}(r)$ and $N$}
\end{array}\right\}
$$
the \textbf{relative Gromov width} of $(M, N, \omega)$. Here we always assume $k\in\{0,1\cdots,n-1\}$.
(If $k=n$ then ${\it w}_G(N;M,\omega)$ is equal to the Gromov width ${\it w}_G(N,\omega|_N)$ of $(N,\omega|_N)$.)

When $k=0$, $N$ is a Lagrangian submanifold and this relative Gromov width was introduced by  Barraud, Biran and Cornea 
 \cite{BaCor06, BaCor07, BirCor08, BirCor09}.
It is easily seen that ${\it w}_G$ satisfies monotonicity, conformality and
$$
{\it w}_G(B^{2n}(r)\cap\mathbb{R}^{n,k}; B^{2n}(r),\omega_0) =\pi r^2,\quad\forall r>0.
$$
In fact  ${\it w}_G(N;M,\omega)/2$ is the smallest coisotropic capacity by the nonsqueezing theorem in  \cite{LiRi13}.
Dimitroglou Rizell \cite{Riz15} observed that the Lagrangian submanifolds of $\mathbb{C}^3$
 constructed by Ekholm, Eliashberg, Murphy and Smith \cite{EkElMS13} have infinite relative Gromov width.

%

Similar to the construction of the Hofer-Zehnder capacity,
Lisi and  Rieser \cite{LiRi13} constructed an analogue relative to a coisotropic submanifold, called the
\textbf{coisotropic Hofer-Zehnder
capacity},  and denoted by $c_{\rm LR}$ in this paper.
By peoperties of this coisotropic capacity, they also studied symplectic embeddings relative
to coisotropic constraints and got some corresponding dynamical results.
The coisotropic capacity $c_{\rm LR}$ also played a key role in the proof of
Humili\'ere-Leclercq-Seyfaddini's  important rigidity result  that symplectic homeomorphisms preserve coisotropic submanifolds
 and their characteristic foliations (\cite{HuLeSe15}).

For the  coisotropic capacity
$c_{\rm LR}\left(D, D\cap\mathbb{R}^{n,k}\right)$ of
a bounded convex domain $D\subset\mathbb{R}^{2n}$,
we \cite{JinLu1917} proved a representation formula,
some interesting corollaries, and  corresponding versions of a Brunn-Minkowski type inequality by Artstein-Avidan and Ostrover
 and a theorem by Evgeni Neduv.

\subsection{A relative version of the Ekeland-Hofer capacity with respect to a coisotropic submanifold $\mathbb{R}^{n,k}$}\label{sec:main}

Prompted by  Gromov's work \cite{Gr}, Ekeland and Hofer \cite{EH89, EH90} constructed a sequence of symplectic invariants
for subsets in the standard symplectic space $(\mathbb{R}^{2n},\omega_0)$,  the so-called Ekeland and Hofer symplectic capacities.
(In this paper, the Ekeland and Hofer symplectic capacity always means
the first Ekeland and Hofer symplectic capacity without special statements.)
We introduced the generalized Ekeland-Hofer and  the symmetric Ekeland-Hofer symplectic capacities
and developed corresponding results  (\cite{JinLu1915, JinLu1916}).
The aim of this paper is to construct a coisotropic analogue of the Ekeland-Hofer capacity for subsets in $(\mathbb{R}^{2n},\omega_0)$
relative to a coisotropic submanifold $\mathbb{R}^{n,k}$, the coisotropic Ekeland-Hofer
capacity.

Fix an integer $0\le k\le n$.
For each  subset $B\subset\mathbb{R}^{2n}$ whose closure $\overline{B}$ has nonempty intersection with
$\mathbb{R}^{n,k}$,  we define a number $c^{n,k}(B)$,
called  \textbf{coisotropic Ekeland-Hofer capacity} of $B$ (though it does not satisfy  the stronger monotonicity as in (i)
above (\ref{e:Ball4})),
which is equal to the Ekeland-Hofer capacity of $B$ if $k=n$.
The coisotropic capacity $c^{n,k}$  satisfies
$c^{n,k}(B)=c^{n,k}(\overline{B})$ and the following:

 \begin{proposition}\label{prop:coEHC.2}
Let $\lambda>0$ and $B\subset A\subset\mathbb{R}^{2n}$ satisfy $\overline{B}\cap \mathbb{R}^{n,k}\neq \emptyset$.
Then
\begin{description}
\item[(i)]{\rm (Monotonicity)} $c^{n,k}(B)\le c^{n,k}(A)$.
\item[(ii)] {\rm (Conformality)} $c^{n,k}(\lambda B)=\lambda^2 c^{n,k}(B)$.
\item[(iii)] {\rm (Exterior regularity)} $c^{n,k}(B)=\inf\{c^{n,k}(U_\epsilon(B))\,|\,\epsilon>0\}$
and so $c^{n,k}(B)=c^{n,k}(\overline{B})$, where $U_\epsilon(B)$ is the $\epsilon$-neighborhood of $B$.
\item[(iv)] {\rm (Translation invariance)} $c^{n,k}(B+ w)=c^{n,k}(B)$ for
 all $w\in \mathbb{R}^{n,k}$, where $B+w=\{z+w\,|\, z\in B\}$.
\end{description}
\end{proposition}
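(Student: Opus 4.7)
The plan is to follow the variational template used for the classical Ekeland--Hofer capacity, adapted to the coisotropic setting. Based on the preceding text, I expect $c^{n,k}(B)$ to be defined as $\inf_{H} c_H$, where $c_H$ is a minimax critical value of an action functional on a space of paths in $\mathbb{R}^{2n}$ with boundary conditions adapted to $\mathbb{R}^{n,k}$, and the infimum is taken over a class $\mathcal{E}^{n,k}(B)$ of admissible Hamiltonians vanishing on an open neighborhood of $B$ with prescribed quadratic asymptotics at infinity. Granting this set-up, all four properties follow from elementary manipulations of $\mathcal{E}^{n,k}(B)$, provided each operation preserves $\mathbb{R}^{n,k}$ together with its leaf decomposition by $V_0^{n,k}$.

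For (i), the inclusion $B \subset A$ gives $\mathcal{E}^{n,k}(A) \subset \mathcal{E}^{n,k}(B)$, since a Hamiltonian vanishing on a neighborhood of $A$ also vanishes on one of $B$; taking the infimum over the larger class produces a smaller value. For (iii), any $H \in \mathcal{E}^{n,k}(B)$ vanishes on some open $U \supset B$, and $U_\epsilon(B) \subset U$ for all sufficiently small $\epsilon$, so $H \in \mathcal{E}^{n,k}(U_\epsilon(B))$ and $c^{n,k}(U_\epsilon(B)) \le c_H$; taking the infimum over $H$ yields $\inf_{\epsilon > 0} c^{n,k}(U_\epsilon(B)) \le c^{n,k}(B)$, and the reverse inequality is (i). For (ii), the correspondence $H \mapsto H_\lambda$ with $H_\lambda(x) = \lambda^2 H(x/\lambda)$ is a bijection $\mathcal{E}^{n,k}(B) \to \mathcal{E}^{n,k}(\lambda B)$, because the linear dilation $x \mapsto \lambda x$ preserves $\mathbb{R}^{n,k}$ and its leaf decomposition; the $\omega_0$-action and critical values scale by $\lambda^2$ under this substitution, hence $c^{n,k}(\lambda B) = \lambda^2 c^{n,k}(B)$.

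The genuinely subtle property is (iv). For $w \in \mathbb{R}^{n,k}$ the translation $\tau_w(x) = x + w$ is an affine symplectomorphism preserving $\mathbb{R}^{n,k}$ (since $w$ lies in it) and commuting with the leaf relation, as $(x+w) - (y+w) = x - y$ still lies in $V_0^{n,k}$. Thus $H \mapsto H \circ \tau_{-w}$ is a bijection $\mathcal{E}^{n,k}(B) \to \mathcal{E}^{n,k}(B + w)$, and the paths of the path space associated with $H \circ \tau_{-w}$ are pushed forward from those for $H$ by $\tau_w$, with identical action since $\tau_w^*\omega_0 = \omega_0$ and the chord/loop boundary conditions on $\mathbb{R}^{n,k}$ are translation-invariant. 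Hence $c_H = c_{H \circ \tau_{-w}}$ and (iv) follows. The main obstacle in the full write-up is verifying this last invariance from the explicit formula for the action functional introduced in the following section, i.e.\ checking that the path space underlying the minimax is genuinely equivariant with respect to $\mathbb{R}^{n,k}$-translations so that the symplectic action term does not acquire a non-constant boundary contribution.
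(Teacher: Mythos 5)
Your treatment of (i)--(iii) is essentially the paper's: (i) comes from the inclusion $\mathscr{F}_{n,k}(\mathbb{R}^{2n},A)\subset\mathscr{F}_{n,k}(\mathbb{R}^{2n},B)$, (iii) from the fact that any $H$ vanishing near $\overline{B}$ vanishes near $\overline{U_\epsilon(B)}$ for small $\epsilon$, and (ii) from the homogeneity $c^{n,k}(\lambda^2 H(\cdot/\lambda))=\lambda^2 c^{n,k}(H)$ of Proposition~\ref{prop:coEHC.1}(iii). (Strictly speaking, you should first reduce to bounded $B$, since for unbounded sets $c^{n,k}$ is defined as a supremum over bounded subsets, but that reduction is routine.)

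For (iv) you diverge from the paper, and your route is actually simpler where it goes through. The paper derives translation invariance from Proposition~\ref{prop:Sinvariance}, whose proof requires the full strength of Lemma~\ref{lem:empty} (Sard's theorem for $\Phi_H\circ\Omega$) together with the critical-value transfer argument for general Hamiltonian isotopies fixing $\mathbb{R}^{n,k}$; translations are treated there as a special case of that machinery. You instead propose to show directly that $c^{n,k}(H(\cdot-w))=c^{n,k}(H)$, which you correctly identify as the crux but leave unverified. It does go through, and here is what is missing. Since $w\in\mathbb{R}^{n,k}=E^0$, the map $T_w:E\to E$, $T_w(x)=x+w$, belongs to $\Gamma_{n,k}$ (use the homotopy $\gamma_u(x)=x+uw$, which has $a=b=c\equiv 1$ and compact $K(x,u)=uw$, and clearly preserves $E\setminus(E^-\oplus E^0)$). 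Moreover $(x+w)^{\pm}=x^{\pm}$, so $\mathfrak{a}(x+w)=\mathfrak{a}(x)$, and therefore $\Phi_{H(\cdot-w)}(x)=\mathfrak{a}(x)-\int_0^1 H(x(t)-w)\,dt=\Phi_H(T_{-w}x)$. Since $\Gamma_{n,k}$ is closed under composition and $T_{\pm w}\in\Gamma_{n,k}$, the map $h\mapsto T_{-w}\circ h$ is a bijection of $\Gamma_{n,k}$ onto itself, and taking $\sup\inf$ gives $c^{n,k}(H(\cdot-w))=c^{n,k}(H)$. One should also check that $H\mapsto H(\cdot-w)$ sends $\mathscr{F}_{n,k}(\mathbb{R}^{2n},B)$ bijectively onto $\mathscr{F}_{n,k}(\mathbb{R}^{2n},B+w)$, which holds because expanding $a|z-w|^2+\langle z-w,z_0\rangle+b$ yields a form of the same type with $z_0$ replaced by $z_0-2aw\in\mathbb{R}^{n,k}$. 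With these verifications supplied, your argument is complete and avoids Sard's theorem entirely; what the paper's heavier Proposition~\ref{prop:Sinvariance} buys is the invariance under the much larger class of homotopies used in Theorem~\ref{th:coEHC.4} and Proposition~\ref{prop:coEHC.3}, for which no such direct $\Gamma_{n,k}$-equivariance is available.
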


The group  ${\rm Sp}(2n)={\rm Sp}(2n,\mathbb{R})$   of symplectic matrices
in $\mathbb{R}^{2n}$ is a connected Lie group. Kun Shi shows in Appendix~\ref{sec:app} that
 its subgroup
\begin{equation}\label{e:subgroup}
{\rm Sp}(2n,k):=\{A\in {\rm Sp}(2n)\,|\, Az=z\;\forall z\in\mathbb{R}^{n,k}\}
\end{equation}
 is also connected. 

\begin{theorem}[\hbox{\rm symplectic invariance}]\label{th:coEHC.4}
Let $B\subset \mathbb{R}^{2n}$ satisfy $\overline{B}\cap \mathbb{R}^{n,k}\neq \emptyset$.
 Suppose that
$\phi\in{\rm Symp}(\mathbb{R}^{2n},\omega_0)$ satisfies for some $w_0\in \mathbb{R}^{n,k}$,
$$
\phi(w)=w-w_0\;\;\forall w\in\mathbb{R}^{n,k}\quad\hbox{and}\quad
d\phi(w_0)\in {\rm Sp}(2n,k).
$$
Then  $c^{n,k}(\phi(B))=c^{n,k}(B)$.
\end{theorem}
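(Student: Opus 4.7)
The plan is to combine the translation invariance established in Proposition~\ref{prop:coEHC.2}(iv) with a direct pull-back argument on the variational problem defining $c^{n,k}$, using the connectedness of ${\rm Sp}(2n,k)$ to reconcile the linearization at the distinguished point.

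\textbf{Reduction to $w_0=0$.} First I would define $\psi(z):=\phi(z)+w_0$. As the composition of $\phi$ with the translation by $w_0$, the map $\psi$ is symplectic, fixes every point of $\mathbb{R}^{n,k}$, and satisfies $d\psi(w_0)=d\phi(w_0)\in{\rm Sp}(2n,k)$. Because $\phi(B)=\psi(B)-w_0$ with $w_0\in\mathbb{R}^{n,k}$, Proposition~\ref{prop:coEHC.2}(iv) yields $c^{n,k}(\phi(B))=c^{n,k}(\psi(B))$. Hence it suffices to treat the case $w_0=0$, so that $\phi$ fixes $\mathbb{R}^{n,k}$ pointwise and $d\phi(0)\in{\rm Sp}(2n,k)$.

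\textbf{Pull-back on the variational data.} The coisotropic Ekeland-Hofer capacity $c^{n,k}(B)$ is defined as a minimax value of a Hamiltonian action functional on a suitable path space, taken over an admissible class $\mathcal{H}^{n,k}(B)$ of Hamiltonians tied to $B$, where the relevant critical-point condition encodes the leaf relation on $\mathbb{R}^{n,k}$ via the splitting $\mathbb{R}^{2n}=J_{2n}V_0^{n,k}\oplus\mathbb{R}^{n,k}$. For $H\in\mathcal{H}^{n,k}(\phi(B))$ set $\tilde H:=H\circ\phi$. Since $\phi$ is symplectic, the Hamiltonian flows are conjugate by $\varphi^t_{\tilde H}=\phi^{-1}\circ\varphi^t_H\circ\phi$, and the action is preserved under the bijection $x\mapsto \phi^{-1}\circ x$ between trajectories. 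Because $\phi|_{\mathbb{R}^{n,k}}=\mathrm{id}$, the map $\phi$ preserves every leaf (the leaves being affine cosets of $V_0^{n,k}$ inside $\mathbb{R}^{n,k}$), so the boundary/jump conditions on critical paths transfer correctly. Running the same argument with $\phi^{-1}$ in the role of $\phi$ gives the reverse comparison, so the two minimax values coincide once the admissible classes are matched.

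\textbf{The main obstacle.} The technical heart lies in verifying that $H\mapsto H\circ\phi$ actually sends $\mathcal{H}^{n,k}(\phi(B))$ bijectively onto $\mathcal{H}^{n,k}(B)$. Beyond the support or sublevel conditions (which are immediate), admissibility typically prescribes a pointed normal form at $0\in\mathbb{R}^{n,k}$, such as a specific quadratic behavior at infinity together with vanishing to a given order at the origin. Under pull-back by $\phi$ this normal form is twisted by $d\phi(0)$; the hypothesis $d\phi(0)\in{\rm Sp}(2n,k)$ guarantees that the twist fixes $\mathbb{R}^{n,k}$ pointwise, and the connectedness of ${\rm Sp}(2n,k)$ proved in Appendix~\ref{sec:app} furnishes a smooth path $A_t$ in ${\rm Sp}(2n,k)$ from $I$ to $d\phi(0)$. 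This path allows one to continuously deform $\tilde H$ back to the required normal form without altering the minimax action. Reconciling this linear normalization at the origin is precisely the step where both the explicit hypothesis on $d\phi(w_0)$ and the connectedness of ${\rm Sp}(2n,k)$ enter in an essential way; the rest of the argument is essentially formal naturality of the variational construction.
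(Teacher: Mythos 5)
Your reduction to $w_0=0$ is correct and matches the paper (the paper does it as a final step, you do it first — either is fine). But there is a genuine gap in the pull-back argument that follows.

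The admissibility class $\mathscr{F}_{n,k}(\mathbb{R}^{2n},\phi(B))$ requires each $H$ to be of the exact form $H(z)=a|z|^2+\langle z,z_0\rangle+b$ outside a compact set (condition (H2)). If you naively set $\tilde H := H\circ\phi$ for an arbitrary $\phi\in{\rm Symp}(\mathbb{R}^{2n},\omega_0)$ that fixes $\mathbb{R}^{n,k}$ pointwise, there is no control whatsoever on $\phi$ at infinity, so $\tilde H$ will in general fail (H2). Your stated "main obstacle" concerns reconciling a linear normal form at the origin via a path in ${\rm Sp}(2n,k)$, but the serious issue is the behaviour at infinity, and connectedness of ${\rm Sp}(2n,k)$ does nothing to fix it. The paper avoids this by never pulling back by $\phi$ itself: it uses the Alexander trick $\varphi_t(z)=\phi(tz)/t$ (after first normalizing $d\phi(0)=\mathrm{id}$ via a compactly supported Hamiltonian isotopy with flow in ${\rm Sp}(2n,k)$ on a large ball — this is where connectedness actually enters), then cuts off the generating Hamiltonian so that the resulting isotopy $\psi_t$ is compactly supported and fixes $\mathbb{R}^{n,k}$ pointwise. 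Then $H\mapsto H\circ\psi_1$ preserves (H2) because $\psi_1$ is the identity outside a compact set, and relative monotonicity (Proposition~\ref{prop:coEHC.3}) applies.

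Even granting a bijection of admissible Hamiltonians, your argument does not explain why the minimax value is preserved. The action functional $\Phi_H$ is defined on the fixed Hilbert space $E$ and $c^{n,k}(H)$ is a minimax over the deformation class $\Gamma_{n,k}$; composing loops with $\phi^{-1}$ does not act on $E$ in a way that preserves $\Gamma_{n,k}$, so a mere "bijection of trajectories preserving action" is not enough. The paper's actual mechanism (Proposition~\ref{prop:Sinvariance}) is: for each $s$, $c^{n,k}(H\circ\psi_s)$ is a positive critical value of $\Phi_H$ itself (via the action-preserving correspondence $x\mapsto\psi_s\circ x$), the map $s\mapsto c^{n,k}(H\circ\psi_s)$ is continuous, and the critical value set of $\Phi_H$ has empty interior by the Sard-type Lemma~\ref{lem:empty} — hence the function is constant. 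This empty-interior argument is the key lemma that makes the homotopy argument close, and it is missing from your proposal.
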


\begin{corollary}\label{cor:coEHC.5}
For a subset $A\subset \mathbb{R}^{2n}$ satisfying $\overline{A}\cap \mathbb{R}^{n,k}\neq \emptyset$,
suppose that there exists a starshaped open neighborhood $U$ of $\overline{A}$
with respect to some point $w_0\in\mathbb{R}^{n,k}$ and  a symplectic embedding $\varphi$ from  $U$  to
 $\mathbb{R}^{2n}$ such that
 \begin{equation}\label{e:coEHC.8+}
 \varphi(w)=w-w_0\;\forall w\in\mathbb{R}^{n,k}\cap U\quad\hbox{and}\quad  d\varphi(w_0)\in {\rm Sp}(2n,k).
  \end{equation}
 Then  $c^{n,k}(\varphi(A))=c^{n,k}(A)$. In particular, for a subset $A\subset \mathbb{R}^{2n}$ satisfying $\overline{A}\cap \mathbb{R}^{n,k}\neq \emptyset$,
  if it is  starshaped with respect to some point $w_0\in\mathbb{R}^{n,k}$ and
 there exists a symplectic embedding $\varphi$ from some open neighborhood $U$ of $\overline{A}$ to
 $\mathbb{R}^{2n}$ such that (\ref{e:coEHC.8+}) holds, then $c^{n,k}(\varphi(A))=c^{n,k}(A)$.
\end{corollary}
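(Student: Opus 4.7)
The plan is to reduce the corollary to Theorem~\ref{th:coEHC.4} by extending $\varphi$ on a neighborhood of $\overline{A}$ to a global symplectomorphism $\Phi\in{\rm Symp}(\mathbb{R}^{2n},\omega_0)$ satisfying the hypotheses of that theorem. Once $\Phi$ is built, $\Phi(A)=\varphi(A)$ and Theorem~\ref{th:coEHC.4} immediately yields $c^{n,k}(\varphi(A))=c^{n,k}(A)$.

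First I would use the translation invariance from Proposition~\ref{prop:coEHC.2}(iv): replacing $\varphi(\cdot)$ by $\varphi(\cdot+w_0)$ and $A$ by $A-w_0$, one reduces to the case $w_0=0$, in which $U$ is starshaped about the origin, $\varphi(0)=0$, $\varphi|_{\mathbb{R}^{n,k}\cap U}={\rm id}$, and $\Psi:=d\varphi(0)\in{\rm Sp}(2n,k)$. The task becomes to produce $\Phi\in{\rm Symp}(\mathbb{R}^{2n})$ with $\Phi|_{\mathbb{R}^{n,k}}={\rm id}$ and $\Phi=\varphi$ on an open neighborhood of $\overline{A}$.

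The main construction is a scaling-based Hamiltonian isotopy. Setting $\varphi_t(z):=t^{-1}\varphi(tz)$ for $t\in(0,1]$ gives a symplectic embedding of $t^{-1}U\supset U$ with $\varphi_1=\varphi$ and $\varphi_t\to\Psi$ as $t\to 0^+$ uniformly on compacts. Put $\eta_t:=\Psi^{-1}\circ\varphi_t$, so that $\eta_0={\rm id}$, $\eta_1=\Psi^{-1}\varphi$, and each $\eta_t$ fixes $\mathbb{R}^{n,k}$ pointwise on its domain (using $\Psi\in{\rm Sp}(2n,k)$). Since each $\eta_t(t^{-1}U)$ is contractible, this isotopy is globally generated by a smooth time-dependent Hamiltonian $H_t$; moreover $\eta_t|_{\mathbb{R}^{n,k}}={\rm id}$ forces $dH_t|_{\mathbb{R}^{n,k}}=0$, and subtracting the (locally constant) value of $H_t$ on $\mathbb{R}^{n,k}$ one may additionally arrange $H_t|_{\mathbb{R}^{n,k}}=0$.

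Then I would pick a smooth cutoff $\chi:\mathbb{R}^{2n}\to[0,1]$ compactly supported inside the common domain of the $H_t$ that equals $1$ on an open neighborhood of the compact set $\bigcup_{t\in[0,1]}\eta_t(\overline{A})$. The truncated $\widetilde{H}_t:=\chi H_t$ extends by zero to all of $\mathbb{R}^{2n}$, and the double vanishing $H_t|_{\mathbb{R}^{n,k}}=dH_t|_{\mathbb{R}^{n,k}}=0$ makes $d\widetilde{H}_t=\chi\,dH_t+H_t\,d\chi$ vanish on $\mathbb{R}^{n,k}$, so its Hamiltonian flow $\widetilde\eta_t$ is a compactly supported global symplectic isotopy fixing $\mathbb{R}^{n,k}$ pointwise; it coincides with $\eta_t$ on the region $\{\chi\equiv 1\}$, through which the trajectories from $\overline{A}$ pass by construction. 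Hence $\Phi:=\Psi\circ\widetilde\eta_1$ satisfies $\Phi|_{\mathbb{R}^{n,k}}={\rm id}$ and $\Phi=\varphi$ near $\overline{A}$, and Theorem~\ref{th:coEHC.4} closes the argument. The ``In particular'' clause reduces to the main case by replacing $U$ with the $\epsilon$-neighborhood $U_\epsilon(\overline{A})$, which remains starshaped about $w_0$ when $A$ is. The hardest part I anticipate is the cutoff step: producing $\chi$ that lives inside the time-varying common domain of the $H_t$ while enveloping the whole trajectory $\bigcup_t\eta_t(\overline{A})$, and verifying that the coisotropic-fixing property survives globally on $\mathbb{R}^{n,k}$---it is precisely the normalization $H_t|_{\mathbb{R}^{n,k}}=0$ that ensures the Leibniz term $H_t\,d\chi$ drops out.
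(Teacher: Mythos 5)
Your proposal follows essentially the same route as the paper's proof: reduce to $w_0=0$, apply the Alexander/Schlenk trick $\varphi_t(z)=t^{-1}\varphi(tz)$ smoothed in $t$, extract a Hamiltonian isotopy that fixes $\mathbb{R}^{n,k}$ pointwise (via $dH_t|_{\mathbb{R}^{n,k}}=0$ plus the normalization $H_t|_{\mathbb{R}^{n,k}}=0$), and cut off to obtain a compactly supported symplectic isotopy agreeing with $\varphi$ near $\overline A$. Two minor organizational remarks are worth making. First, you factor out $\Psi=d\varphi(0)$ explicitly and then repackage everything into a single global symplectomorphism $\Phi=\Psi\circ\widetilde\eta_1$ fed to Theorem~\ref{th:coEHC.4}; the paper instead invokes Proposition~\ref{prop:coEHC.3} (relative monotonicity for compactly supported symplectic isotopies fixing $\mathbb{R}^{n,k}$) directly on the cut-off flow $\psi_t$, implicitly having normalized $d\varphi(0)$ ``as above''---the two are interchangeable since Theorem~\ref{th:coEHC.4} itself rests on Proposition~\ref{prop:coEHC.3}. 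Second, your cutoff $\chi$ is time-independent and supported in ``the common domain of the $H_t$''; this common domain (the intersection over $t$ of the shrinking domains of $H_t$, which at $t=1$ is only $\eta_1(U)=\Psi^{-1}\varphi(U)$) need not contain the full trajectory $\bigcup_t\eta_t(\overline A)$, and in particular need not contain $\overline A=\eta_0(\overline A)$. The clean fix, and what the paper does, is to use a \emph{time-dependent} cutoff $\chi(t,z)$ supported in a bounded relatively open neighborhood $W$ of the compact set $K=\bigcup_{t}\{t\}\times\eta_t(\overline A)$ inside $\bigcup_t\{t\}\times(\text{domain of }H_t)$. You correctly anticipate this as the delicate step; with the time-dependent cutoff it goes through, and your Leibniz-rule observation ($d\widetilde H_t=\chi\,dH_t+H_t\,d\chi$ vanishes on $\mathbb{R}^{n,k}$ because both $dH_t$ and $H_t$ do) is exactly the point the paper glosses over with ``$\hat H(t,z)=0$ and so $\psi_t(z)=z$''.
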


There exists a natural class of symplectic mappings satisfying the conditions in Corollary~\ref{cor:coEHC.5}.
For $\epsilon>0$ small, let $\mathbb{R}^{n,k}_\epsilon=\{(q_1,\cdots, q_n, p_1,\cdots,p_n)\,|\, p_{k+1}^2+\cdots+ p_n^2<\epsilon^2\}$,
that is, the tubular open neighborhood of $\mathbb{R}^{n,k}$ of radius $\epsilon$.
 Let $U$ be as in Corollary~\ref{cor:coEHC.5}, and let $H:[0,1]\times\mathbb{R}^{2n}\to\mathbb{R}$
 be any smooth Hamiltonian that vanishes in $[0, 1]\times (\mathbb{R}^{n,k}_\epsilon\cap U)$.
 Suppose that $X_H$ can determine a $1$-parameter family of symplectic
mappings $\phi_H^t$ for $t\in [0, 1]$ as usual, (for example, this can be satisfied if $H$ has compact support).
Then $\varphi:=(\psi_{w_0}\circ \phi_H^1)|_U$
satisfies the conditions in Corollary~\ref{cor:coEHC.5},  where $\psi_{w_0}\in {\rm Symp}(\mathbb{R}^{2n},\omega_0)$
is the translation defined by $\psi_{w_0}(w)=w-w_0$ for $w\in\mathbb{R}^{2n}$.

For a bounded convex domain $D$ in $(\mathbb{R}^{2n},\omega_0)$ with boundary  $\mathcal{S}$,
recall that a nonconstant  absolutely continuous curve $z:[0,T]\to \mathbb{R}^{2n}$ (for some $T>0$)
  is said to be a  \textbf{generalized characteristic} on $\mathcal{S}$  if $z([0,T])\subset \mathcal{S}$ and
    $\dot{z}(t)\in J_{2n}N_\mathcal{S}(z(t))\;\hbox{a.e.}$, where
    $N_\mathcal{S}(x)=\{y\in\mathbb{R}^{2n}\,|\, \langle u-x, y\rangle\le 0\;\forall u\in D\}$
    is the normal cone to $D$ at $x\in\mathcal{S}$ (\cite[Definition~1.1]{JinLu1916}).
When $D\cap \mathbb{R}^{n,k}\ne\emptyset$, such a generalized characteristic $z:[0,T]\to\mathcal{S}$
is called a \textbf{generalized leafwise chord} (abbreviated GLC) on $\mathcal{S}$ for $\mathbb{R}^{n,k}$ if
   $z(0), z(T)\in \mathbb{R}^{n,k}$ and $z(0)-z(T)\in V_0^{n,k}$.
(Generalized characteristics and generalized leafwise chords on $\mathcal{S}$
 become characteristics and leafwise chords on $\mathcal{S}$ respectively if $\mathcal{S}$ is of class $C^1$.)
 The action of a GLC $z:[0,T]\to\mathcal{S}$ is defined by
$$
A(z)=\frac{1}{2}\int_0^T\langle -J_{2n}\dot{z},z\rangle dt,
$$
where $\langle\cdot,\cdot\rangle$ denotes the Euclid norm on $\mathbb{R}^{2n}$.
As generalizations of representation formulas for the Ekeland-Hofer capacities of bounded convex domains
we have:

\begin{thm}\label{th:EHconvex}
Let $D\subset \mathbb{R}^{2n}$ be a bounded convex  domain with
 $C^{1,1}$  boundary $S=\partial D$. If $D\cap\mathbb{R}^{n,k}\ne\emptyset$
 (and so $\partial D$ contains at least two points of $\mathbb{R}^{n,k}$),
then there exists a  leafwise chord
  $x^\ast$ on $\partial D$ for $\mathbb{R}^{n,k}$ such that
   \begin{eqnarray}\label{e:fixpt}
A(x^{\ast})&=&\min\{A(x)>0\;|\;x\;\hbox{is a leafwise chord on\;$\partial D$\;for \;$\mathbb{R}^{n,k}$}\}\nonumber\\
&=&c^{n,k}(D)\label{e:fixpt.1}\\
&=&c^{n,k}(\partial D).\label{e:fixpt.2}
\end{eqnarray}
Moreover, if $D\subset \mathbb{R}^{2n}$ is only a bounded convex  domain such that
 $D\cap\mathbb{R}^{n,k}\ne\emptyset$, then the above conclusions are still true
 after all words ``leafwise chord" are replaced by ``generalized leafwise chord".
\end{thm}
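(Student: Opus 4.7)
The plan is to adapt the Clarke--Ekeland dual variational principle used by Ekeland--Hofer \cite{EH89} for convex bodies to accommodate the coisotropic boundary condition $z(0),z(T)\in\mathbb{R}^{n,k}$, $z(0)-z(T)\in V^{n,k}_0$ defining a leafwise chord. By Proposition~\ref{prop:coEHC.2}(iv) we may translate $D$ by a point of $\mathbb{R}^{n,k}$ and so assume $0\in D\cap\mathbb{R}^{n,k}$; then the Minkowski gauge $j_D$ is a nonnegative, positively $1$-homogeneous convex function with $D=\{j_D<1\}$, and under the $C^{1,1}$ hypothesis $H_\alpha(z):=\alpha\, j_D(z)^q$ (for a suitable $q\ge 2$ and $\alpha>0$) serves as a smooth enough Hamiltonian near $\partial D$.

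The variational problem lives on the Hilbert space
\[
E^{n,k}:=\left\{z\in H^{1/2}([0,1];\mathbb{R}^{2n})\;\big|\; z(0),z(1)\in\mathbb{R}^{n,k},\ z(0)-z(1)\in V^{n,k}_0\right\},
\]
modulo the finite-dimensional subspace of constants in $\mathbb{R}^{n,k}$. I expect $c^{n,k}(D)$ to admit, via its defining minimax construction (only the properties in Proposition~\ref{prop:coEHC.2} are given here), the equivalent representation as the constrained minimum of the Clarke dual action $\Psi(u)=\int_0^1 H_\alpha^\ast(-J_{2n}\dot u)\,dt$ on $\{\mathcal A(u)=1\}\cap E^{n,k}$. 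Convexity and positivity of $H_\alpha^\ast$ give coerciveness, while the compact embedding $H^{1/2}\hookrightarrow L^q$ supplies the Palais--Smale condition, producing a minimizer $u^\ast$. The Euler--Lagrange equation reads $\dot u^\ast=\mu J_{2n}\nabla H_\alpha(u^\ast)$ for some $\mu>0$, and the boundary terms produced by integration by parts of the symplectic action on $E^{n,k}$ vanish precisely under $u^\ast(0),u^\ast(1)\in\mathbb{R}^{n,k}$ and $u^\ast(0)-u^\ast(1)\in V^{n,k}_0$ (using the orthogonal decomposition $\mathbb{R}^{2n}=J_{2n}V^{n,k}_0\oplus\mathbb{R}^{n,k}$ from the introduction); the standard rescaling $x^\ast(t)=\lambda u^\ast(\lambda^{-1}t)$ then yields a leafwise chord $x^\ast$ on $\partial D$ with $A(x^\ast)=c^{n,k}(D)$, and minimality of $\Psi(u^\ast)$ forces $A(x^\ast)$ to equal the smallest positive action of any leafwise chord, giving (\ref{e:fixpt.1}).

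For $c^{n,k}(\partial D)=c^{n,k}(D)$, monotonicity together with $c^{n,k}(B)=c^{n,k}(\overline B)$ yields $\le$; the reverse direction uses exterior regularity (Proposition~\ref{prop:coEHC.2}(iii)) on thin tubular neighborhoods $U_\varepsilon(\partial D)$ together with the observation that each leafwise chord on $\partial D$ persists in the tube, followed by $\varepsilon\downarrow 0$. For a general bounded convex $D$ without boundary regularity, I would approximate by $C^{1,1}$ bodies $D_m\nearrow D$ (e.g.\ outer parallel sets of inscribed balls), apply the smooth case to obtain smooth leafwise chords $x_m^\ast$, and extract a generalized leafwise chord via Arzel\`a--Ascoli plus upper semicontinuity of the differential inclusion $\dot z\in J_{2n}N_{\mathcal S}(z)$ under Hausdorff convergence of the bodies, with the numerical identification supplied by continuity of $c^{n,k}$ on monotone sequences.

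The main obstacle is the variational setup under the mixed boundary condition $z(0)-z(1)\in V^{n,k}_0$: unlike the periodic (EH) or Lagrangian ($k=0$) settings, neither Fourier series on $S^1$ nor a reflection trick is immediately available, so one must construct the spectral decomposition of the symplectic action operator on $E^{n,k}$ from scratch, verify the Palais--Smale condition there, check that the integration-by-parts boundary terms along a critical point reproduce exactly the leaf relation $z(0)-z(1)\in V^{n,k}_0$, and---most subtly---match the analytically produced minimum to the abstractly defined $c^{n,k}(D)$ rather than merely bounding it from one side.
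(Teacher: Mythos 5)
Your plan correctly identifies the Clarke--Ekeland duality as the right tool and correctly names the crux --- ``most subtly, match the analytically produced minimum to the abstractly defined $c^{n,k}(D)$'' --- but it then leaves exactly that crux unaddressed. Producing a minimizer $u^\ast$ of the dual action and observing that $A(x^\ast)$ equals $\min\Sigma^{n,k}_{\partial D}$ is what the cited companion paper \cite[Theorem~1.5]{JinLu1917} already supplies; the content of the present theorem is the two-sided comparison between that minimum $\varrho$ and the minimax number $c^{n,k}(D)=\inf_H\sup_{\gamma\in\Gamma_{n,k}}\inf_{x\in\gamma(S^+)}\Phi_H(x)$. The paper carries this out in two steps that your sketch does not contain. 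For $c^{n,k}(\partial D)\ge\varrho$ it introduces a cofinal family $\overline H=f\circ j_D^2$, proves a $(PS)$ lemma for $\Phi_{\overline H}$, and then shows (Lemma~\ref{lem:EH9}) that \emph{every} positive critical value of $\Phi_{\overline H}$ exceeds $\min\Sigma^{n,k}_{\partial D}-\epsilon$, by solving the Euler--Lagrange equation, reading off $f'(c_0)\in\Sigma^{n,k}_{\partial D}$, and using the shape of $f$. For $c^{n,k}(D)\le\varrho$ it uses the Clarke-dual element $w$ with $A(w)=1$, $I(w)=\varrho$, to derive the pointwise estimate $\mathfrak a(x)\le\varrho\int_0^1 H(x)$ on $E^-\oplus E^0\oplus\mathbb R_+y$, then shows for a suitable $H_\tau\in\mathscr F_{n,k}(\mathbb R^{2n},D)$ that $\Phi_{H_\tau}<\varrho+\varepsilon$ somewhere on every $\gamma(S^+)$ via the intersection property (Proposition~\ref{prop:EH.1.1}). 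Neither step is a formal consequence of ``the minimax should equal the dual minimum,'' and skipping them means the theorem is not actually proved.

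Two smaller points. First, you describe needing to ``construct the spectral decomposition of the symplectic action operator on $E^{n,k}$ from scratch''; this is already in place from Section~\ref{sec:2} (the splitting $E=E^+\oplus E^0\oplus E^-$ of $H^{1/2}_{n,k}$, imported from Lisi--Rieser and \cite{JinLu1917}), so it is not an obstacle here. Second, your proposed route to $c^{n,k}(\partial D)=c^{n,k}(D)$ via exterior regularity on thin tubular neighborhoods $U_\varepsilon(\partial D)$ does not obviously close: $U_\varepsilon(\partial D)$ is an annular region, not comparable to $D$ by inclusion, so ``each leafwise chord persists in the tube'' gives no lower bound on $c^{n,k}(\partial D)$. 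The paper instead gets $c^{n,k}(\partial D)\ge\varrho$ directly from Step~1 (the Hamiltonians there vanish near $\partial D$, not near $D$), and combines this with $c^{n,k}(\partial D)\le c^{n,k}(D)\le\varrho$ to force all equalities. For the non-smooth case the paper sandwiches $D$ between strictly convex $C^\infty$ bodies $D_j^-\subset D\subset D_j^+$ and invokes the already-established identity with $c_{\rm LR}$ plus its interior/exterior regularity, rather than an Arzel\`a--Ascoli extraction of a generalized chord; your alternative is plausible in spirit but would still need the numerical identification that the $c_{\rm LR}$-bridge provides for free.
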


This theorem  may be false if the domain is not convex.
 Consider the following domain
\begin{eqnarray*}
A^{2n}(r_1, r_2):=\{z\in\mathbb{R}^{2n}\;|\; r_1^2<|z|<r_2^2\}=B^{2n}(r_2)\setminus\overline{B^{2n}(r_1)},
\end{eqnarray*}
where $0<r_1<r_2<\infty$.
Then by monotonicity of coisotropic Ekeland-Hofer capacity and Theorem 1.4,
$$
c^{n,k}(\partial B^{2n-1}(r_2))\le c^{n,k}(\overline{A^{2n}(r_1, r_2)})\le c^{n,k}(\overline{B^{2n}(r_2)})=c^{n,k}(\partial B^{2n-1}(r_2))
$$
and
$$
c^{n,k}(A^{2n}(r_1, r_2))=c^{n,k}(\overline{A^{2n}(r_1, r_2)})=\begin{cases}
\frac{\pi}{2}r_2^2,\quad k<n,\\
\pi r_2^2, \quad k=n.
\end{cases}
$$
However, since $\partial A^{2n}(r_1, r_2)=\partial B^{2n-1}(r_1)\cup \partial B^{2n-1}(r_2)$,
 \begin{eqnarray*}
&&\min\{A(x)>0\;|\;x\;\hbox{is a leafwise chord on\;$\partial A^{2n}(r_1, r_2)$\;for \;$\mathbb{R}^{n,k}$}\}\nonumber\\
&=&\min\big\{\min\{A(x)>0\;|\;x\;\hbox{is a leafwise chord on\;$\partial B^{2n-1}(r_1)$\;for \;$\mathbb{R}^{n,k}$}\},\\
&&\hspace{30mm}\min\{A(x)>0\;|\;x\;\hbox{is a leafwise chord on\;$\partial B^{2n-1}(r_2)$\;for \;$\mathbb{R}^{n,k}$}\}\big\}\\
&=&\min\big\{c^{n,k}(\partial B^{2n-1}(r_1)), c^{n,k}(\partial B^{2n-1}(r_2))\bigr\}\\
&=&\begin{cases}
\frac{\pi}{2}r_1^2,\quad k<n,\\
\pi r_1^2, \quad k=n.
\end{cases}
\end{eqnarray*}
Hence Theorem 1.4 is false for $A^{2n}(r_1, r_2)$.

Theorem~\ref{th:EHconvex} and \cite[Theorem~1.5]{JinLu1917} show that
 $c^{n,k}(D)=c_{\rm LR}(D,D\cap \mathbb{R}^{n,k})$ for a bounded convex  domain $D\subset\mathbb{R}^{2n}$
as in Theorem~\ref{th:EHconvex}.
It follows from (\ref{e:coCap+}) and interior regularity of $c_{\rm LR}$  that
\begin{eqnarray}\label{e:fixpt.3}
c^{n,k}(D)=c_{\rm LR}(D,D\cap \mathbb{R}^{n,k})
\end{eqnarray}
for any  convex  domain $D\subset\mathbb{R}^{2n}$ such that $D\cap \mathbb{R}^{n,k}\ne\emptyset$.
Hence Theorems~1.6, 1.12 and Corollaries~1.7--1.10 in \cite{JinLu1917} are still true if $c_{\rm LR}$ is replaced by suitable $c^{n,k}$.


Moser \cite{Mos78} first studied Hamiltonian leaf-wise chords  for understanding perturbations of Hamiltonian dynamical systems,
  his framework has been extended in many directions, which promotes the research of symplectic topology, 
see \cite{AlFr10, AlMo10, Dr08, Gin07, Gur10, Ho90, Ka13, LiRi13, Us11, Zi10} etc.
 Given two autonomous $C^{1,1}$ Hamiltonians $H, G:\mathbb{R}^{2n}\to\mathbb{R}$ and a regular energy surface $G^{-1}(c')$,
one may ask the following natural mechanics problem:
\begin{center}
\textsf{Is there a point on $G^{-1}(c')$ from which two particles start,
move respectively along Hamiltonian trajectories of $X_H$ and $X_G$ and after some finite time return to
an intersection point of these two trajectories}?
\end{center}

Suppose for some $c\in\mathbb{R}$ that $D_0:=\{z\in\mathbb{R}^{2n}\,|\, H(z)<c\}$ is a bounded
convex domain whose intersection with $G^{-1}(c')$ is a nonempty relative open subset
in a $(2n-1)$-dimensional coisotropic subspace $V$ in $\mathbb{R}^{2n}$.
Then  Theorem~\ref{th:EHconvex} implies an affirmative answer to the problem. In fact,
since there exists a linear symplectic transformation $\Psi:(\mathbb{R}^{2n},\omega_0)\to (\mathbb{R}^{2n},\omega_0)$
such that $\Psi(V)=\mathbb{R}^{n,n-1}$, we can replace $H$ and $G$ by $H\circ\Psi^{-1}$ and $G\circ\Psi^{-1}$, respectively,
and therefore reduce the question to the case $V=\mathbb{R}^{n,n-1}$.
The desired conclusion follows from Theorem~\ref{th:EHconvex}.

Theorem~\ref{th:EHconvex} is also closely related to the famous Arnold's chord conjecture in \cite[\S8]{Ar86}.
Many cases for this problem have been proved to be true, see \cite{Abb99, Abb04, Cie02, CriHut16, HutTa11, HutTa13, Mer14, Moh01, Riz15,
RizSu20, San12,Zi16} etc. When $k=0$, the intersection $S\cap\mathbb{R}^{n,0}$ is a closed $C^{1,1}$ Legendrian submanifold of dimension $n-1$
in the contact manifold $S$ with the standard contact form, which is diffeomorphic to the sphere $S^{n-1}$,
and Theorem~\ref{th:EHconvex} affirms the conjecture in this case though for the smooth $S$ it was proved by Mohnke \cite{Moh01}
with a different method. Clearly, our result also gives the action of this chord.


As the Ekeland-Hofer capacity, $c^{n,k}$ satisfies the following product formulas,
which play key roles for computations of $c_{\rm LR}$ and the proof of \cite[Theorem~1.12]{JinLu1917}.

\begin{thm}\label{th:EHproduct}
 For   convex domains $D_i\subset\mathbb{R}^{2n_i}$ containing the origin, $i=1,\cdots,m\ge 2$,
 and integers $0\le l_0\le n:=n_1+\cdots+n_m$, $l_j=\max\{l_{j-1}-n_j,0\}$, $j=1,\cdots, m-1$,
    it holds that
 \begin{eqnarray}\label{e:product1}
 c^{n,l_0}(D_1\times\cdots\times D_m)=\min_ic^{n_i,\min\{n_i,l_{i-1}\}}(D_i).
 \end{eqnarray}
 Moreover, if all these domains $D_i$ are also bounded then
 \begin{eqnarray}\label{e:product2}
 c^{n,l_0}(\partial D_1\times\cdots\times \partial D_m)=\min_ic^{n_i,\min\{n_i,l_{i-1}\}}(D_i).
 \end{eqnarray}
 \end{thm}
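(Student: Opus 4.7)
The plan is to reduce to $m=2$ by induction on $m$ and then invoke the representation formula Theorem~\ref{th:EHconvex}. For the induction, regroup $D_2\times\cdots\times D_m$ as a single factor in $\mathbb{R}^{2(n-n_1)}$; the recursion $l_j = \max\{l_{j-1}-n_j,0\}$ is compatible with nested grouping, so that the $m=2$ identity applied to $(D_1,\,D_2\times\cdots\times D_m)$, combined with the inductive hypothesis on the second factor, yields both (\ref{e:product1}) and (\ref{e:product2}).

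For $m=2$, assume first that $D_1, D_2$ are bounded, and set $k_1=\min\{n_1,l_0\}$ and $l_1=\max\{l_0-n_1,0\}$ (noting $\min\{n_2,l_1\}=l_1$ since $l_1\le n_2$). The crucial geometric observation is the product decomposition
$$
\mathbb{R}^{n,l_0}\;\cong\;\mathbb{R}^{n_1,k_1}\times\mathbb{R}^{n_2,l_1},\qquad V_0^{n,l_0}\;\cong\;V_0^{n_1,k_1}\times V_0^{n_2,l_1},
$$
under the natural identification $\mathbb{R}^{2n}\cong\mathbb{R}^{2n_1}\times\mathbb{R}^{2n_2}$ (with all $q$-coordinates listed before all $p$-coordinates), which splits the endpoint and leafwise conditions coordinatewise. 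Since $N_{D_1\times D_2}(x_1,x_2)=N_{D_1}(x_1)\times N_{D_2}(x_2)$ and $J_{2n}=J_{2n_1}\oplus J_{2n_2}$, any generalized characteristic $z=(z_1,z_2)$ on $\partial(D_1\times D_2)$ satisfies $\dot z_i\in J_{2n_i}N_{D_i}(z_i)$, hence $\dot z_i\equiv 0$ on $\{t:z_i(t)\in\mathrm{int}\,D_i\}$, and the action splits as $A(z)=A_1(z_1)+A_2(z_2)$ with $A_i(z_i):=\frac{1}{2}\int_0^T\langle -J_{2n_i}\dot z_i,z_i\rangle\,dt\ge 0$ by convexity. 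The upper bound $c^{n,l_0}(D_1\times D_2)\le c^{n_i,k_i}(D_i)$ (with $k_2=l_1$) then follows by taking a GLC $z_i^\ast$ on $\partial D_i$ for $\mathbb{R}^{n_i,k_i}$ from Theorem~\ref{th:EHconvex} and forming the pure-type chord $(z_1^\ast(t),0)$ (resp.\ $(0,z_2^\ast(t))$): since $0\in D_i$ and the splittings above hold, this is a GLC on $\partial(D_1\times D_2)$ for $\mathbb{R}^{n,l_0}$ of the same action, and Theorem~\ref{th:EHconvex} applied to $D_1\times D_2$ yields the inequality.

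The matching lower bound is the principal technical obstacle: given an arbitrary GLC $z=(z_1,z_2)$ on $\partial(D_1\times D_2)$ for $\mathbb{R}^{n,l_0}$, one must show $A(z)\ge\min\{c^{n_1,k_1}(D_1),\,c^{n_2,l_1}(D_2)\}$. The intended strategy is to partition $[0,T]=I_1\cup I_2$ with $I_i=\{t:z_i(t)\in\partial D_i\}$, isolate $z_1$ on $\overline{I_1}$, and close it up into an honest GLC on $\partial D_1$ for $\mathbb{R}^{n_1,k_1}$ by using the product decompositions of $\mathbb{R}^{n,l_0}$ and $V_0^{n,l_0}$, obtaining $A_1(z_1)\ge c^{n_1,k_1}(D_1)$; together with $A_2(z_2)\ge 0$ this forces $A(z)\ge c^{n_1,k_1}(D_1)$, and symmetrically $A(z)\ge c^{n_2,l_1}(D_2)$. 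The delicate step is to preserve the leafwise condition on the transition points of $z_1$ through this excision---these are the instants where $z_1$ passes between $\mathrm{int}\,D_1$ and $\partial D_1$---which is enabled by the $V_0^{n,l_0}$-splitting but must be verified carefully, possibly splicing short constant pieces along directions in $V_0^{n_1,k_1}$.

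The unbounded case of (\ref{e:product1}) follows by exhausting each $D_i$ by bounded convex subdomains, applying monotonicity and conformality, and using the identification (\ref{e:fixpt.3}) with the coisotropic Hofer--Zehnder capacity. For (\ref{e:product2}): the inclusion $\partial D_1\times\cdots\times\partial D_m\subset\partial(D_1\times\cdots\times D_m)$ together with Theorem~\ref{th:EHconvex} and (\ref{e:product1}) immediately gives the $\le$ direction. For the $\ge$ direction, the pure-type GLC used in the upper-bound construction can be relocated into $\partial D_1\times\cdots\times\partial D_m$ by replacing the zero coordinate in each constant slot $j\ne i$ with a point of $\partial D_j\cap\mathbb{R}^{n_j,k_j}$ (which exists by shooting a ray from $0\in\mathrm{int}\,D_j$ along a direction of $\mathbb{R}^{n_j,k_j}$); a corresponding lower-bound argument run on a neighborhood of $\partial D_1\times\cdots\times\partial D_m$ via the variational definition of $c^{n,l_0}$ then delivers the matching inequality.
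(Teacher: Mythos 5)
Your proposal takes a genuinely different route from the paper, and the difference exposes a real gap. The paper does \emph{not} try to analyze generalized characteristics on $\partial(D_1\times D_2)$ directly. Instead it (a) proves the identity $c^{n,k}(D\times\mathbb{R}^{2n_2})=c^{n_1,\min\{n_1,k\}}(D)$ by exhausting $D\times\mathbb{R}^{2n_2}$ with smooth ellipsoid-type domains $E_R$ and classifying leafwise chords on $\partial E_R$, giving the upper bound for $c^{n,l_0}(D_1\times D_2)$ by monotonicity, and (b) proves the lower bound for $c^{n,l_0}(\partial D_1\times\partial D_2)$ \emph{variationally}, via a new deformation lemma (Lemma~\ref{lem:9.2}): for any admissible $H$ on $\partial D$ one can find $\gamma\in\Gamma_{n,k}$ with $\Phi_H|\gamma(B^+\setminus\epsilon B^+)\ge c^{n,k}(D)-\epsilon$ and $\Phi_H|\gamma(B^+)\ge 0$. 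Dominating a given $H$ by a split Hamiltonian $\widehat H_1(z_1)+\widehat H_2(z_2)$ and taking $\gamma=\gamma_1\times\gamma_2$ then gives the estimate. Equation (\ref{e:product1}) is obtained \emph{from} (\ref{e:product2}) by monotonicity, not the other way around.

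Two concrete gaps in your approach. First, your lower bound for (\ref{e:product1}) hinges on ``excising'' the portion of a GLC $z=(z_1,z_2)$ during which $z_1\in\partial D_1$ and closing it up. This is not developed and, as phrased, would be delicate; in fact the picture simplifies dramatically once one notes that for each $i$, either $z_i(t)\in\partial D_i$ for all $t$, or $z_i$ is constant at a single interior point (if $z_i(t_0)\in\mathrm{Int}\,D_i$, then on the maximal interval around $t_0$ where $z_i$ is interior we have $\dot z_i\equiv 0$, so $z_i$ is constant there with an interior limit at the endpoints, forcing that interval to be all of $[0,T]$). You did not make this observation; the ``splicing short constant pieces'' language suggests a plan that does not match this structure and whose correctness you yourself flag as unverified.

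Second, and more seriously, your argument for the lower bound in (\ref{e:product2}) is essentially absent. The set $\partial D_1\times\cdots\times\partial D_m$ is not the boundary of a convex body, so Theorem~\ref{th:EHconvex} gives no GLC characterization of its capacity, and constructing a GLC with the right action and relocating it into $\partial D_1\times\cdots\times\partial D_m$ can only help the \emph{upper} bound. The sentence ``a corresponding lower-bound argument run on a neighborhood\ldots via the variational definition'' is exactly where the paper's Lemma~\ref{lem:9.2} and the product-deformation estimate do the work, and you supply no substitute. Relatedly, even granted the $m=2$ case, your induction for (\ref{e:product2}) does not close: $\partial D_1\times(\partial D_2\times\cdots\times\partial D_m)$ is a proper subset of $\partial D_1\times\partial(D_2\times\cdots\times D_m)$, so the $m=2$ formula applied to the pair $(D_1,D_2\times\cdots\times D_m)$ does not give information about the smaller corner set. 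The paper's variational argument, by contrast, works directly for arbitrary $m$ (take $\widehat H=\sum_i\widehat H_i$ and $\gamma=\gamma_1\times\cdots\times\gamma_m$).

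In short: your upper bound and your treatment of (\ref{e:product1}) are in the right spirit (the paper's ellipsoid case analysis is the rigorous version of it), but the lower bound for (\ref{e:product2})---which is where the theorem's content really lies---needs a variational deformation lemma of the type the paper proves, and that is missing from the proposal.
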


Hereafter  $\mathbb{R}^{2n_1}\times \mathbb{R}^{2n_2}\times\cdots\times \mathbb{R}^{2n_m}$ is identified with $\mathbb{R}^{2(n_1+\cdots+n_m)}$
via
$$
\mathbb{R}^{2n_1}\times \mathbb{R}^{2n_2}\times\cdots\times \mathbb{R}^{2n_m}
\ni((q^{(1)},p^{(1)}),\cdots, (q^{(m)},p^{(m)}))\mapsto (q^{(1)},\cdots, q^{(m)},p^{(1)},\cdots, p^{(m)})\in \mathbb{R}^{2n}.
$$

 If $l_0=n$ then $l_i=\sum_{j>i}n_j$ and thus $\min\{n_i,l_{i-1}\}=n_i$ for $i=1,\cdots,m$.
It follows that Theorem~\ref{th:EHproduct} becomes Theorem in \cite[\S~6.6]{Sik90}.
We pointed out in 
\cite[Remark~1.11]{JinLu1917} that
Theorems~\ref{th:EHconvex}, ~\ref{th:EHproduct} and \cite[Theorem~1.5]{JinLu1917}
can be combined together to improve some results therein.

\begin{corollary}\label{cor:EHproduct}
Let $S^1(r_i)$ be boundaries of discs $B^2(0,r_i)\subset\mathbb{R}^{2}$, $i=1,\cdots,n\ge 2$,
 and integers $0\le l_0\le n$, $l_j=\max\{l_{j-1}-1,0\}$, $j=1,\cdots, n-1$. Then
 \begin{eqnarray*}
 c^{n,l_0}(S^1(r_1)\times\cdots\times S^1(r_n))=\min_ic^{1,\min\{1,l_{i-1}\}}(B^2(0,r_i)).
 \end{eqnarray*}
Here $c^{1,1}(B^2(0,r_i))=\pi r_i^2$ and $c^{1,0}(B^2(0,r_i))=\pi r_i^2/2$. Precisely,
\begin{eqnarray*}
&&c^{n,0}(S^1(r_1)\times\cdots\times S^1(r_n))=\min\{\pi r_1^2/2, \cdots, \pi r_n^2/2\},\\
&&c^{n,k}(S^1(r_1)\times\cdots\times S^1(r_n))=\min\{\min_{i\le k}\pi r_i^2, \min_{i>k}\pi r_i^2/2\},\quad 0<k<n,\\
&&c^{n,n}(S^1(r_1)\times\cdots\times S^1(r_n))=\min\{\pi r_1^2, \cdots, \pi r_n^2\}.
\end{eqnarray*}
\end{corollary}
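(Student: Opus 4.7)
The plan is to deduce the corollary directly from Theorem~\ref{th:EHproduct}, formula (\ref{e:product2}), applied with $m=n$, $n_i=1$, and $D_i=B^2(0,r_i)$ for each $i$. Two ingredients are needed: the values of the two one-dimensional capacities $c^{1,1}(B^2(0,r))$ and $c^{1,0}(B^2(0,r))$, and an unwinding of the recursion $l_j=\max\{l_{j-1}-1,0\}$ for $n_j=1$.

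For the capacities of the disc I would argue as follows. Since $k=n=1$, the coisotropic Ekeland-Hofer capacity coincides with the classical (first) Ekeland-Hofer capacity, as noted in the paragraph introducing $c^{n,k}$, and the latter of $B^2(0,r)$ equals $\pi r^2$; hence $c^{1,1}(B^2(0,r))=\pi r^2$. For $c^{1,0}(B^2(0,r))$ I would invoke Theorem~\ref{th:EHconvex}. Here $\mathbb{R}^{1,0}$ is the $q$-axis in $\mathbb{R}^2$ and $V_0^{1,0}=\mathbb{R}^{1,0}$, so a leafwise chord on $\partial B^2(0,r)$ for $\mathbb{R}^{1,0}$ is simply an arc of the circle $|z|=r$ both of whose endpoints lie on the $q$-axis (the difference condition in $V_0^{1,0}$ is automatic). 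A direct computation with the parametrization $z(t)=(r\cos t,-r\sin t)$ shows that traversing an arc of angular length $\theta$ in the symplectically positive sense has action $\theta r^2/2$; the shortest arc between two distinct points of the $q$-axis on the circle is a semicircle ($\theta=\pi$), giving action $\pi r^2/2$. Hence $c^{1,0}(B^2(0,r))=\pi r^2/2$.

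For the combinatorial step, since every $n_i=1$ the recursion gives $l_j=\max\{l_0-j,0\}$, so $\min\{1,l_{i-1}\}$ equals $1$ precisely when $i\le l_0$ and $0$ otherwise. Substituting into (\ref{e:product2}) yields a factor $\pi r_i^2$ from each of the first $l_0$ coordinates and $\pi r_i^2/2$ from each of the remaining $n-l_0$ coordinates, and taking the minimum produces the three displayed formulas according to whether $l_0=0$, $0<l_0<n$, or $l_0=n$. The only nonroutine point is the identification of the shortest generalized leafwise chord on $\partial B^2(0,r)$, but since $\partial B^2(0,r)$ is smooth its characteristics are (up to reparametrization) the standard parametrizations of the circle, so this reduces to the one-variable minimization above and presents no real obstacle.
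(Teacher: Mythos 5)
Your proposal is correct and matches the paper's (implicit) route: the paper offers no separate proof, treating the corollary as an immediate consequence of formula (\ref{e:product2}) of Theorem~\ref{th:EHproduct} applied with $m=n$, $n_i=1$, $D_i=B^2(0,r_i)$, together with the values $c^{1,1}(B^2(0,r))=\pi r^2$ and $c^{1,0}(B^2(0,r))=\pi r^2/2$, the latter being exactly what Theorem~\ref{th:EHconvex} (equivalently the specialisation (\ref{e:Ball5}) plus conformality) gives via the semicircle computation you carry out. Your unwinding of the recursion to $l_j=\max\{l_0-j,0\}$ and the resulting case split on $l_0=0$, $0<l_0<n$, $l_0=n$ is also the intended bookkeeping.
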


%


Note that Corollary~\ref{cor:EHproduct} becomes \cite[Corollary~6.6]{Sik90} for $l_0=n$.

Define  $U^2(1)=\{(q_n,p_n)\in\mathbb{R}^2\,|\,q_n^2+p_n^2<1\;\hbox{or
$-1<q_n<1$ and $p_n<0$}\}$ and
\begin{eqnarray}\label{e:product3.0}
U^{2n}(1)=\mathbb{R}^{2n-2}\times U^2(1)\quad\hbox{and}\quad U^{n,k}(1)=U^{2n}(1)\cap\mathbb{R}^{n,k}.
\end{eqnarray}
 By (\ref{e:fixpt.3}) and
 \cite[Corollary~1.9]{JinLu1917} we obtain for $k=0,1,\cdots,n-1$,
\begin{eqnarray}\label{e:product3.1}
c^{n,k}(U^{2n}(1))=c_{\rm LR}(U^{2n}(1),U^{2n}(1)\cap \mathbb{R}^{n,k})=\frac{\pi}{2}.
\end{eqnarray}

The proof of Theorem~\ref{th:EHproduct} relies partially on the representation of coisotropic Ekeland-Hofer capacity of convex domains given by Theorem~\ref{th:EHconvex}. It is possible that Theorem~\ref{th:EHproduct} is still true for some product of  non-convex domains.
For integers $0\le l_0\le n:=n_1+ n_2$ and $l_1=\max\{l_0-n_1, 0\}$, as the arguments below Theorem~\ref{th:EHproduct} we can get
\begin{eqnarray*}
c^{n,l_0}({B^{2n_1}(r_2)}\times B^{2n_2}(r_3))&=&
c^{n,l_0}(\partial B^{2n_1-1}(r_2)\times \partial B^{2n_2-1}(r_3))\\
&\le& c^{n,l_0}(\overline{A^{2n_1}(r_1, r_2)}\times \overline{B^{2n_2}(r_3)})\\
&\le& c^{n,l_0}(\overline{B^{2n_1}(r_2)}\times \overline{B^{2n_2}(r_3)})
\end{eqnarray*}
and therefore
\begin{eqnarray*}
c^{n,l_0}({A^{2n_1}(r_1, r_2)}\times B^{2n_2}(r_3))&=&c^{n,l_0}(\overline{A^{2n_1}(r_1, r_2)}\times \overline{B^{2n_2}(r_3)})\\
&=&c^{n,l_0}(\overline{B^{2n_1}(r_2)}\times\overline{ B^{2n_2}(r_3)})\\
&=&\min\{c^{n_1,\min\{n_1,l_0\}}(B^{2n_1}(r_2)), c^{n_2,\min\{n_2,l_{1}\}}(B^{2n_2}(r_3))\}\\
&=&\begin{cases}
\min\{\frac{\pi}{2}r_2^2, \frac{\pi}{2}r_3^2\},\quad l_0<n_1,\\
\min\{\pi r_2^2, c^{n_2, l_0-n_1}(B^{2n_2}(r_3))\}, \quad l_0\ge n_1.
\end{cases}
\end{eqnarray*}
On the other hand
\begin{eqnarray*}
&&\min\bigl\{c^{n_1,\min\{n_1,l_0\}}(A^{2n_1}(r_1, r_2)), c^{n_2,\min\{n_2,l_1\}}(B^{2n_2}(r_3))\bigr\}\\
&=&\begin{cases}
\min\{\frac{\pi}{2}r_2^2, \frac{\pi}{2}r_3^2\},\quad l_0<n_1,\\
\min\{\pi r_2^2, c^{n_2, l_0-n_1}(B^{2n_2}(r_3))\}, \quad l_0\ge n_1.
\end{cases}
\end{eqnarray*}
Hence (\ref{e:product1}) is also true for the produce of ${A^{2n_1}(r_1, r_2)}$ and $B^{2n_2}(r_3)$.

Recall that a vector field $X$ defined on an open set $U\subset\mathbb{R}^{2n}$ is called a Liouville vector field if $L_X\omega_0=\omega_0$. A hypersurface $\mathcal{S}\subset\mathbb{R}^{2n}$ is said to be of  restricted contact type if there exists a Liouville vector field $X$ globally defined on $\mathbb{R}^{2n}$ which is transversal to $\mathcal{S}$. Corresponding to the representation of the Ekeland-Hofer capacity of a bounded domain in $\mathbb{R}^{2n}$
with boundary of restricted contact type we have:

\begin{thm}\label{th:EHcontact}
Let $U\subset (\mathbb{R}^{2n},\omega_0)$ be a bounded domain with $C^{2n+2}$ boundary
$\mathcal{S}$ of restricted contact type. Suppose that $U$ contains the origin and that
there exists a globally defined $C^{2n+2}$ Liouville vector field $X$ transversal to $\mathcal{S}$
 whose flow $\phi^t$ maps $\mathbb{R}^{n,k}$ to $\mathbb{R}^{n,k}$ and preserves the leaf relation of $\mathbb{R}^{n,k}$.  Then
   \begin{equation}\label{e:rescontact}
   \Sigma_{\mathcal{S}}:=
  \{A(x)>0\;|\;x\;\hbox{is a leafwise chord on\;$\mathcal{S}$\;for \;$\mathbb{R}^{n,k}$}\}.
 \end{equation}
 has empty interior and contains $c^{n,k}(U)=c^{n,k}(\mathcal{S})$.
\end{thm}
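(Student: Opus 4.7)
The plan is to adapt Ekeland and Hofer's original argument for restricted contact type hypersurfaces to the coisotropic setting, with the Liouville flow $\phi^t$ of $X$ playing the role of the radial rescaling that turns the single hypersurface $\mathcal{S}$ into a one-parameter family $\mathcal{S}_t:=\phi^t(\mathcal{S})$ bounding $U_t:=\phi^t(U)$. Since $(\phi^t)^*\omega_0 = e^t\omega_0$, and since by hypothesis $\phi^t$ preserves $\mathbb{R}^{n,k}$ setwise together with its leaf relation, the map $x\mapsto \phi^t\circ x$ is a bijection between generalized leafwise chords on $\mathcal{S}$ and those on $\mathcal{S}_t$, with action scaled by $e^t$; thus $\Sigma_{\mathcal{S}_t} = e^t\Sigma_{\mathcal{S}}$. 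Writing $\phi^t = e^{t/2}\cdot(e^{-t/2}\phi^t)$, where $e^{-t/2}\phi^t$ is a genuine symplectomorphism preserving the coisotropic structure, one gets a companion identity $c^{n,k}(U_t) = e^t c^{n,k}(U)$ from conformality in Proposition~\ref{prop:coEHC.2}(ii) combined with the variational definition of $c^{n,k}$ used in the construction.

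For the first claim, that $\Sigma_{\mathcal{S}}$ has empty interior, the plan is to realize $\Sigma_{\mathcal{S}}$ as (a subset of) the set of critical values of a dual action functional $\Psi_H$ on a Hilbert space for an appropriate Hamiltonian $H$ supported in a thin Liouville collar of $\mathcal{S}$ and depending only on the flow parameter, so that critical points of $\Psi_H$ project to generalized leafwise chords on the level sets $\mathcal{S}_t$ of $H$. The $C^{2n+2}$ regularity of $\mathcal{S}$ provides enough smoothness for a Sard-type argument applied to $\Psi_H$ along finite-dimensional Galerkin approximations, forcing the set of critical values to have Lebesgue measure zero and hence empty interior.

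For the second claim, $c^{n,k}(U) = c^{n,k}(\mathcal{S}) \in \Sigma_{\mathcal{S}}$, monotonicity Proposition~\ref{prop:coEHC.2}(i) gives $c^{n,k}(\mathcal{S})\le c^{n,k}(U)$. For the reverse inequality and membership, take admissible Hamiltonians $H_j$ supported in Liouville collars of $\mathcal{S}$ of shrinking thickness, and apply the minimax/pseudo-gradient machinery of the coisotropic EH construction to extract critical points $z_j$ of $\Psi_{H_j}$ whose projections are generalized leafwise chords on level sets $\mathcal{S}_{t_j}$ with $t_j\to 0$ and with actions $A_j\to c^{n,k}(\mathcal{S})$ by exterior regularity Proposition~\ref{prop:coEHC.2}(iii). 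Rescaling back by $\phi^{-t_j}$ using the bijection of Step~1 produces generalized leafwise chords on $\mathcal{S}$ itself with the same limit of actions; by the scaling identity $c^{n,k}(U_t) = e^t c^{n,k}(U)$ this common limit sandwiches between $c^{n,k}(\mathcal{S})$ and $c^{n,k}(U)$, pinning them equal and placing the value in $\Sigma_{\mathcal{S}}$. The main obstacle is the compactness step for the critical points $z_j$ as the supporting collar shrinks: ensuring that the limiting curve is a genuine generalized leafwise chord on $\mathcal{S}$ (rather than collapsing to a constant or escaping the level set) requires careful use of the Liouville invariance of $\mathbb{R}^{n,k}$ and of the leaf relation, and of the a priori bounds on the dual action functional that the $C^{2n+2}$ hypothesis underwrites.
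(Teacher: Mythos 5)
Your outline for the empty-interior part tracks the paper's argument: one introduces a Hamiltonian adapted to the Liouville collar $\psi(s,z)=\phi^{s}(z)$, characterizes critical points of $\Phi_{H}$ as (rescaled) leafwise chords on the level sets $\mathcal{S}_{\tau}$, and then applies the Sard-type Lemma~\ref{lem:empty} (finite-dimensional reduction, using the $C^{2n+2}$ regularity). That part is fine in spirit. However, the argument for the second claim has two genuine gaps.

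First, the scaling identity $c^{n,k}(U_t)=e^{t}c^{n,k}(U)$ that you extract by factoring $\phi^{t}=e^{t/2}\cdot(e^{-t/2}\phi^{t})$ is not available with the tools of this paper. Conformality, Proposition~\ref{prop:coEHC.2}(ii), handles the dilation $e^{t/2}\cdot$, but you then need $e^{-t/2}\phi^{t}$ to preserve $c^{n,k}$. That symplectomorphism maps $\mathbb{R}^{n,k}$ into itself as a set but does not fix it pointwise (not even up to a translation within $\mathbb{R}^{n,k}$), so Theorem~\ref{th:coEHC.4} does not apply, and no other invariance result in the paper covers it; the whole point of the paper's more elaborate argument is to avoid any such identity.

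Second, and more importantly, the ``sandwich'' via shrinking collars misses the central technical difficulty. For an adapted Hamiltonian $H_{C,\eta}$ with collar width $\eta\to 0$, a critical point with positive critical value lies on some level set $\mathcal{S}_{\tau}$, $\tau>0$, on which the Hamiltonian value is either close to $0$ \emph{or close to $C$}. In the latter case the action of the associated leafwise chord is $\approx c^{n,k}(H_{C,\eta})+C$, not $\approx c^{n,k}(H_{C,\eta})$, so the limiting actions need not converge to $c^{n,k}(\mathcal{S})$ at all. Ruling this out is precisely the content of the dichotomy in Claim~\ref{cl:EH.4.2} (either $\Upsilon(C)\in\overline{\Sigma_{\mathcal{S}}}$ or $\Upsilon(C)+C\in\overline{\Sigma_{\mathcal{S}}}$) and the monotonicity-in-$C$ contradiction of Claim~\ref{cl:EH.4.3}; your proposal does not address it. Finally, the equality $c^{n,k}(U)=c^{n,k}(\mathcal{S})$ needs its own argument: in the paper this is a homotopy between two adapted Hamiltonians that agree outside $\mathcal{S}$ and differ only inside $U$, combined once more with Sard. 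Your sandwich — even granting the unavailable scaling identity — controls only $c^{n,k}(\mathcal{S})$ and gives no lower bound on $c^{n,k}(U)$ matching it.
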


In order to show that $c^{n,k}$ is a coisotropic capacity (with the weaker monotonicity), we need to prove that $c^{n,k}$ satisfies
the non-triviality as in (\ref{e:Ball4}).
By Theorem~\ref{th:EHconvex} we immediately obtain
\begin{eqnarray}\label{e:Ball5}
    c^{n,k}(B^{2n}(1)) =\frac{\pi}{2},\quad k=0,\cdots,n-1.
    \end{eqnarray}
Proposition~\ref{prop:coEHC.2}(i)
and (\ref{e:product3.1}) also lead to
$c^{n,k}(W^{2n}(1))\ge c^{n,k}(U^{2n}(1)) =\frac{\pi}{2}$ directly.
Using the extension monotonicity of $c_{\rm LR}$ in \cite[Lemma~2.4]{LiRi13},
Lisi and  Rieser proved that
$$
c_{\rm LR}\left(W^{2n}(1), W^{n,k}(1)\right) =c_{\rm LR}\left(U^{2n}(1), U^{n,k}(1)\right)
$$
above \cite[Proposition~3.1]{LiRi13}. However, our Proposition~\ref{prop:coEHC.2} and Theorem~\ref{th:coEHC.4}
cannot yield such strong extension monotonicity for $c^{n,k}$. Instead, we may use
Theorem~\ref{th:EHproduct} and 
Theorem~\ref{th:EHcontact},
(though the latter does not hold for $c_{\rm LR}$ in general), to derive:

\begin{thm}\label{th:non-triviality}
 For $k=0,\cdots,n-1$, it holds that
    \begin{eqnarray*}
    c^{n,k}(W^{2n}(1)) =\frac{\pi}{2}.
    \end{eqnarray*}
\end{thm}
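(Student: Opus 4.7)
The theorem reduces to proving $c^{n,k}(W^{2n}(1))\ge \pi/2$ and $c^{n,k}(W^{2n}(1))\le \pi/2$ separately.

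The lower bound is immediate: $U^{2}(1)\subset W^{2}(1)$ yields $U^{2n}(1)\subset W^{2n}(1)$, and the monotonicity in Proposition~\ref{prop:coEHC.2}(i) combined with (\ref{e:product3.1}) gives
\[
c^{n,k}(W^{2n}(1))\ge c^{n,k}(U^{2n}(1))=\pi/2.
\]

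For the upper bound, I would exhaust $W^{2n}(1)$ from the inside by bounded sub-domains of the product form $\widetilde V_R=B^{2n-2}(R)\times \widetilde W_R \subset W^{2n}(1)$, where $\widetilde W_R\subset W^2(1)$ is a $C^{2n+2}$-smooth bounded domain of restricted contact type containing the closed upper unit half-disc together with a progressively larger portion of the lower half-plane $\{p_n<0\}$, and carries a globally defined Liouville vector field on $\mathbb{R}^2$ mapping $\mathbb{R}^{1,0}$ to itself and preserving its leaf relation. By Theorem~\ref{th:EHcontact}, $c^{n,k}(\widetilde V_R)$ is the action of some positive-action leafwise chord on $\partial \widetilde V_R$. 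Using the product structure together with Theorem~\ref{th:EHproduct} applied to the inscribed convex product $B^{2n-2}(R)\times (U^2(1)\cap \widetilde W_R)\subset U^{2n}(1)$, one gets the lower bound $c^{n,k}(\widetilde V_R)\ge \pi/2$, realised by the upper unit semicircle $t\mapsto(-\cos t,\sin t)$ in the $(q_n,p_n)$-plane, which has action $\pi/2$. The matching upper bound $c^{n,k}(\widetilde V_R)\le \pi/2$ is obtained by a symplectic folding in the $(q_n,p_n)$-plane: construct a symplectomorphism $\varphi$ of a starshaped-in-$\mathbb{R}^{n,k}$ neighbourhood of $\overline{\widetilde V_R}$ which fixes $\mathbb{R}^{n,k}$ pointwise, has $d\varphi(0)\in\mathrm{Sp}(2n,k)$, and maps $\widetilde V_R$ into $U^{2n}(1)$ by folding the ``tails'' $\widetilde W_R\cap\{|q_n|>1,\,p_n<-\epsilon\}$ area-preservingly into the strip $\{|q_n|<1,\,p_n<0\}\subset U^2(1)$; Corollary~\ref{cor:coEHC.5} then yields $c^{n,k}(\widetilde V_R)=c^{n,k}(\varphi(\widetilde V_R))\le c^{n,k}(U^{2n}(1))=\pi/2$. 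Hence $c^{n,k}(\widetilde V_R)=\pi/2$ for all $R\ge 1$.

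Finally, every bounded open $V\subset W^{2n}(1)$ with $\overline V\cap \mathbb{R}^{n,k}\neq\emptyset$ sits inside some $\widetilde V_R$, so the inner regularity of $c^{n,k}$ on open sets (a standard consequence of its variational construction via compactly-supported admissible Hamiltonians) yields
\[
c^{n,k}(W^{2n}(1))=\sup_R c^{n,k}(\widetilde V_R)=\pi/2.
\]
The main technical obstacle is the construction of the symplectic folding $\varphi$ meeting the hypotheses of Corollary~\ref{cor:coEHC.5}: while the 2-dimensional folding of the tails into the strip is classical, enforcing simultaneously that $\varphi$ fix $\mathbb{R}^{n,k}$ pointwise and that $d\varphi(0)\in\mathrm{Sp}(2n,k)$ (so that Corollary~\ref{cor:coEHC.5} indeed applies) requires a careful coordinate-dependent patching in the transition region between the identity region near $\mathbb{R}^{n,k}$ and the folded region.
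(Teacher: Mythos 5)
Your lower bound matches the paper's: $U^{2n}(1)\subset W^{2n}(1)$ and monotonicity, with $c^{n,k}(U^{2n}(1))=\pi/2$ from (\ref{e:product3.1}). For the upper bound, however, you take a genuinely different route from the paper. The paper enlarges $W^{2n}(1)$ to a slightly bigger smoothed domain $W^{2n}_g(1)$, exhausts it by bounded star-shaped domains $W^{2n}_{g,\varepsilon,R}(1,N)$ with smooth boundary transversal to the radial Liouville field $X(z)=z$, applies Theorem~\ref{th:EHcontact} to conclude that $c^{n,k}$ lies in the leafwise-chord action spectrum of the boundary, and then computes that spectrum explicitly in the $(q_n,p_n)$-factor: there are exactly two chords, the semicircular cap (action a bit more than $\pi/2$) and the long outer chord (action about $2N^2$), and the a priori bound $c^{n,k}\le\frac\pi2 N^2<2N^2-\varepsilon$ excludes the latter and the $\frac{\pi R^2}{2}\mathbb{N}$ contributions. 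You instead propose to fold each bounded piece $\widetilde V_R$ into $U^{2n}(1)$ and invoke Corollary~\ref{cor:coEHC.5}. No folding is used anywhere in the paper's proof.

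The gap is precisely the fold $\varphi$, which you correctly flag but do not construct, and it is not a routine step; several of its required features fight each other. Since $\varphi$ must fix $\mathbb{R}^{n,k}\cap U$ pointwise and $\{p_n=0\}\subset\mathbb{R}^{n,n-1}\supseteq\mathbb{R}^{n,k}$, the 2D component of a product-form fold must fix the whole $q_n$-axis inside $U$; by continuity a neighbourhood of each $(q_n,0)$ with $|q_n|>1$ is then mapped near itself, hence outside $U^2(1)$. This forces $\widetilde W_R$ to stay below $\{p_n<-\epsilon_R\}$ over $\{|q_n|>1\}$, confines the transition layer of the fold to $\{-\epsilon_R<p_n<0\}$, and requires $\epsilon_R\to 0$ for the $\widetilde V_R$ to exhaust $W^{2n}(1)$ as in (\ref{e:coCap+}). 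One must then simultaneously verify that a starshaped $U\supseteq\overline{\widetilde V_R}$ exists on which the fold is a globally defined, injective, area-preserving map with $d\varphi(w_0)\in{\rm Sp}(2n,k)$, and that it moves tails of area of order $R^2$ into the strip without overlap while remaining the identity near $\{p_n=0\}$. None of this is written down, and the paper's remark just above Theorem~\ref{th:non-triviality} --- that Proposition~\ref{prop:coEHC.2} and Theorem~\ref{th:coEHC.4} cannot yield the Lisi--Rieser extension monotonicity, which is why the authors resort to Theorems~\ref{th:EHproduct} and~\ref{th:EHcontact} --- is a strong warning that this is exactly where a naive folding argument runs into trouble. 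A minor additional bug: $U^2(1)\cap\widetilde W_R$ is not convex, so Theorem~\ref{th:EHproduct} does not apply to your inscribed product; you should inscribe something like $B^{2n-2}(R)\times B^2(1)$ instead (though the lower bound on $c^{n,k}(\widetilde V_R)$ is not actually needed for the theorem).
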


By this theorem, Corollary~\ref{cor:EHproduct} and Theorem~\ref{th:coEHC.4} we deduce:

\begin{cor}\label{cor:embedding}
If $\min\{2\min_{i\le k}r_i^2, \min_{i>k}r_i^2\}>1$ for some $0<k<n$, then
there is no $\phi\in{\rm Symp}(\mathbb{R}^{2n},\omega_0)$ which
satisfies $\phi(w)=w-w_0\;\;\forall w\in\mathbb{R}^{n,k}$ and $d\phi(w_0)\in {\rm Sp}(2n,k)_0$
for some $w_0\in \mathbb{R}^{n,k}$,
such that $\phi$ maps
$S^1(r_1)\times\cdots\times S^1(r_n)= \{ (x_1, \dots, x_n, y_1, \dots, y_n) \in \R^{2n} \; | \; x_i^2 + y_i^2=r_i^2,\;i=1,\cdots,n\}$
 into $W^{2n}(1)$.
\end{cor}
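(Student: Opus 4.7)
The plan is a standard capacity obstruction argument: suppose such a $\phi$ exists, compute $c^{n,k}$ of both $S^1(r_1)\times\cdots\times S^1(r_n)$ and $W^{2n}(1)$, and use monotonicity plus symplectic invariance to reach a contradiction with the hypothesis on the radii.

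First I would set $A:=S^1(r_1)\times\cdots\times S^1(r_n)$ and note that $A\cap\mathbb{R}^{n,k}\neq\emptyset$ (for each $i\le k$ pick any point of $S^1(r_i)$, and for each $i>k$ pick $(\pm r_i,0)$), so $c^{n,k}(A)$ is defined; since $\phi$ is a diffeomorphism and $\phi(\mathbb{R}^{n,k})\subset\mathbb{R}^{n,k}$, the same holds for $\phi(A)\subset W^{2n}(1)$. Because $\mathrm{Sp}(2n,k)$ is connected by the result in Appendix~\ref{sec:app}, $\mathrm{Sp}(2n,k)_0=\mathrm{Sp}(2n,k)$, so the hypothesis on $\phi$ is precisely the hypothesis of Theorem~\ref{th:coEHC.4}, giving $c^{n,k}(\phi(A))=c^{n,k}(A)$.

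Next I would apply monotonicity (Proposition~\ref{prop:coEHC.2}(i)) to $\phi(A)\subset W^{2n}(1)$ and invoke Theorem~\ref{th:non-triviality} to obtain
\begin{equation*}
c^{n,k}(A)=c^{n,k}(\phi(A))\le c^{n,k}(W^{2n}(1))=\frac{\pi}{2}.
\end{equation*}
On the other hand, Corollary~\ref{cor:EHproduct} evaluates the left-hand side as
\begin{equation*}
c^{n,k}(A)=\min\Bigl\{\min_{i\le k}\pi r_i^2,\;\min_{i>k}\pi r_i^2/2\Bigr\}.
\end{equation*}
The assumption $\min\{2\min_{i\le k}r_i^2,\min_{i>k}r_i^2\}>1$ is equivalent to $\min_{i\le k}\pi r_i^2>\pi/2$ and $\min_{i>k}\pi r_i^2/2>\pi/2$, hence $c^{n,k}(A)>\pi/2$, contradicting the previous inequality.

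There is no real obstacle here beyond keeping the indexing between Corollary~\ref{cor:EHproduct} and the stated hypothesis straight; the heart of the matter is already packaged into the three preceding results (symplectic invariance under the prescribed class of $\phi$'s, the value $\pi/2$ on $W^{2n}(1)$, and the product formula on the torus). The only thing to double check while writing up is that Theorem~\ref{th:coEHC.4} genuinely applies to the prescribed $\phi$: the required conditions $\phi(w)=w-w_0$ on $\mathbb{R}^{n,k}$ and $d\phi(w_0)\in\mathrm{Sp}(2n,k)$ are exactly what the corollary assumes, so no verification beyond connectedness of $\mathrm{Sp}(2n,k)$ is needed.
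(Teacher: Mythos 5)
Your proposal is correct and coincides with the paper's (implicit) proof: the paper simply deduces the corollary from Theorem~\ref{th:non-triviality}, Corollary~\ref{cor:EHproduct}, and Theorem~\ref{th:coEHC.4}, which is precisely the chain of monotonicity, symplectic invariance, and the two capacity computations that you spell out, including the observation that connectedness of ${\rm Sp}(2n,k)$ makes ${\rm Sp}(2n,k)_0={\rm Sp}(2n,k)$ so that Theorem~\ref{th:coEHC.4} applies.
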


Under the assumptions of Corollary~\ref{cor:embedding} it is easy to see that there always exists a
$\phi\in{\rm Symp}(\mathbb{R}^{2n},\omega_0)$ such that
$\phi(S^1(r_1)\times\cdots\times S^1(r_n))\subset W^{2n}(1)$.

Let $\tau_0\in \mathcal{L}(\mathbb{R}^{2n})$ be the canonical involution on $\mathbb{R}^{2n}$ given by
$\tau_0(x,y)=(x,-y)$. For a subset $B\subset\mathbb{R}^{2n}$ such that $\tau_0B=B$ and $B\cap L^n_0\neq \emptyset$,
let $c_{\rm EH,\tau_0}(B)$ be the $\tau_0$-symmetrical Ekeland-Hofer capacity constructed in
\cite{JinLu1915}.  We shall prove in Section~\ref{sec:compare}:

\begin{thm}\label{th:compare}
The $\tau_0$-symmetrical Ekeland-Hofer capacity $c_{\rm EH,\tau_0}(B)$ of each subset $B\subset\mathbb{R}^{2n}$
satisfying $\tau_0B=B$ and $B\cap L^n_0\neq \emptyset$
is greater than or equal to $c^{n,0}(B)$.
\end{thm}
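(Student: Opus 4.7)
The plan is to exploit the fact that for $k=0$ the coisotropic subspace $\mathbb{R}^{n,0}=L_0^n$ is Lagrangian and coincides with the fixed-point set of the involution $\tau_0$. Since $V_0^{n,0}=L_0^n$, the leaf relation on $L_0^n$ identifies any two of its points, so that a (generalized) leafwise chord on a hypersurface for $\mathbb{R}^{n,0}$ is simply a (generalized) characteristic with both endpoints in $L_0^n$. On the other hand, a $\tau_0$-equivariant $1$-periodic curve $z\colon S^1\to\mathbb{R}^{2n}$ satisfying $z(-t)=\tau_0 z(t)$ automatically verifies $z(0),z(1/2)\in\mathrm{Fix}(\tau_0)=L_0^n$, and its restriction $z\bigl|_{[0,1/2]}$ is precisely a candidate leafwise chord for $\mathbb{R}^{n,0}$. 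This dictionary is the backbone of the proof.

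My steps would be: (1) recall the explicit variational setup of $c_{\mathrm{EH},\tau_0}$ from \cite{JinLu1915}, namely the $\tau_0$-equivariant action functional $\mathcal A_H$ on a $\tau_0$-invariant fractional Sobolev space of $1$-periodic loops with a $\tau_0$-invariant admissible Hamiltonian $H$ vanishing on a neighborhood of $B$; (2) recall in parallel the variational construction of $c^{n,0}$, carried out over paths with both endpoints on $L_0^n$; (3) given a $\tau_0$-invariant admissible Hamiltonian $H$ for $c_{\mathrm{EH},\tau_0}(B)$, produce from it via time-rescaling $s=2t$ a Hamiltonian $\widetilde H$ which is admissible in the coisotropic Ekeland--Hofer setup, and verify that the restriction map $z\mapsto z\bigl|_{[0,1/2]}$ sends $\tau_0$-symmetric critical points of $\mathcal A_H$ to leafwise-chord critical points of the coisotropic action functional, with the two action values agreeing after the universal rescaling factor produced by $s=2t$.

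With these correspondences in place, the inequality follows by comparing the two minimax values: any admissible minimax datum in the $\tau_0$-equivariant setup for $c_{\mathrm{EH},\tau_0}(B)$ descends, under the restriction map, to an admissible datum for $c^{n,0}(B)$, and the descended minimax level is bounded above by the symmetric one. Taking infimum over $H$ on one side and comparing to the infimum defining $c^{n,0}(B)$ on the other then gives $c_{\mathrm{EH},\tau_0}(B)\ge c^{n,0}(B)$.

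The hard part will be step (3): one has to match the two minimax geometries rather than just compare critical sets, because both capacities are defined as minimax values over specific linking classes. Concretely, I expect to need to check that the distinguished linking sphere (or cohomology class) used to define $c_{\mathrm{EH},\tau_0}$ on the $\tau_0$-invariant loop space restricts under $z\mapsto z\bigl|_{[0,1/2]}$ to an admissible minimax class in the Lagrangian-boundary setting underlying $c^{n,0}$, and that the $\tau_0$-equivariant pseudo-gradient flow used to obtain the $(PS)$ condition and deformation lemma is compatible with its restriction on the space of half-period chords. Once this compatibility with the deformation arguments is established, conformality under the rescaling $s=2t$ delivers the desired comparison of action levels.
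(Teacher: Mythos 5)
Your plan is built on a reparametrization picture (rescale $s=2t$, restrict a $\tau_0$-symmetric loop to its half period, rescale the Hamiltonian) and on pushing minimax data forward under that map. The paper's proof takes a different and much more direct route: for $k=0$ one has $V_1^{n,0}=\{0\}$, so the Fourier-type decomposition of $E=H^{1/2}_{n,0}$ collapses to curves of the form $\sum_m e^{m\pi tJ_{2n}}a_m$ with $a_m\in L_0^n$, and the space $\mathbb{E}$ underlying the symmetric Ekeland--Hofer capacity (which, after the $\tau_0$-equivariance is unpacked, consists of curves $\sum_m e^{2m\pi tJ_{2n}}z_m$ with $z_m\in L_0^n$) is then literally a subspace of $E$ \emph{as functions on $[0,1]$, with the same functional $\Phi_H$}. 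No time rescaling, no half-period restriction, and no conversion of Hamiltonians occurs. From $\widehat{S}^+\subset S^+_{n,0}$ and $\Gamma_{n,0}|_{\mathbb{E}}\subset\widehat{\Gamma}$, the inequality $c^{n,0}(H)\le c_{\mathrm{EH},\tau_0}(H)$ drops out of a two-line $\sup\inf$ chain, and passing to the infimum over $H$ (noting that the symmetric-admissible Hamiltonians are automatically $\mathbb{R}^{n,0}$-admissible) finishes the proof.

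Beyond being a different route, your step (4) has a directional gap. Both capacities have the form $\sup_\gamma\inf_{x\in\gamma(\cdot)}\Phi_H(x)$, so to obtain $c^{n,0}(H)\le c_{\mathrm{EH},\tau_0}(H)$ one must dominate the coisotropic supremum by the symmetric one, i.e.\ produce, from each \emph{coisotropic} deformation $\gamma\in\Gamma_{n,0}$, a \emph{symmetric} deformation $\widehat\gamma\in\widehat{\Gamma}$ whose minimax level is at least as large (the paper does this by restricting $\gamma$ to $\mathbb{E}$ and using $\widehat{S}^+\subset S^+_{n,0}$). You instead propose to ``descend'' each symmetric datum to a coisotropic one, with the descended level bounded above by the symmetric one. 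That gives $\sup(\text{descents})\le\sup_{\widehat\Gamma}$, but the set of descents is only a \emph{subset} of $\Gamma_{n,0}$ and so controls neither $\sup_{\Gamma_{n,0}}$ from above nor from below; the inequality you want does not follow without an additional surjectivity or extension argument, which does not hold on the nose because a deformation pushed forward along the restriction embedding is only defined on the image, not on all of $E$. Finally, your rescaling introduces a scale factor in the action and forces you to pass from $H$ to $\widetilde H=H/2$; even if the rest were repaired, you would end up comparing $c_{\mathrm{EH},\tau_0}(H)$ to $c^{n,0}(H/2)$ rather than to $c^{n,0}(H)$, which is an extra normalization to reconcile. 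The paper avoids both issues entirely by keeping $\Phi_H$ fixed and working with a genuine subspace inclusion.
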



\noindent{\bf Plan of the paper}.  
In Section 2 we provide  necessary variational  preparations
on the basis of \cite{LiRi13, JinLu1917}.
In  Section~\ref{sec:def} we give the definition of the coisotropic Ekeland-Hofer capacity and proofs of
Proposition~\ref{prop:coEHC.2}, Theorem~\ref{th:coEHC.4} and
Corollary~\ref{cor:coEHC.5}.
In Section~\ref{sec:formula} we prove Theorem~\ref{th:EHconvex}.
In Section 5 we prove a product formula, Theorem~\ref{th:EHproduct}.
In Section 6 we prove Theorem~\ref{th:EHcontact} about the representation of the coisotropic capacity $c^{n,k}$ of a bounded domain in $\mathbb{R}^{2n}$
with boundary of restricted contact type.
In Section 7 we prove Theorem~\ref{th:non-triviality}.


\noindent{\bf Acknowledgments}. We are deeply grateful to the
anonymous referees for giving very helpful comments and suggestions to improve the
exposition.

\section{Variational  preparations}\label{sec:2}
\setcounter{equation}{0}

We follow \cite{LiRi13} and \cite{JinLu1917} to present necessary variational  materials.
Fix an integer $0\le k<n$.
Consider the Hilbert space defined in \cite[Definition 3.6]{LiRi13}
\begin{eqnarray}\label{e:space1}
L^2_{n,k}=\Big\{x\in L^2([0,1],\mathbb{R}^{2n})\,\Big| &&x\stackrel{L^2}{=}\sum_{m\in\mathbb{Z}}e^{m\pi tJ_{2n}}a_m+\sum_{m\in\mathbb{Z}}e^{2m\pi tJ_{2n}}b_m\nonumber\\
&&a_m\in V^{n,k}_0,\quad b_m\in V^{n,k}_1,\nonumber\\
&&\sum_{m\in\mathbb{Z}}(|a_m|^2+|b_m|^2)<\infty
\Big\}
\end{eqnarray}
with $L^2$-inner product.
We proved  in \cite[Proposition~2.3]{JinLu1917} that the Hilbert space $L^2_{n,k}$ is exactly $L^2([0,1],\mathbb{R}^{2n})$.
(If $k=n$ this is clear as usual because  $V^{n,n}_0=\{0\}$ and $V^{n,n}_1=\mathbb{R}^{2n}$.)
For any real $s\ge0$ we follow \cite[Definition~3.6]{LiRi13} to define
\begin{eqnarray}\label{e:space2}
H^s_{n,k}=\Big\{x\in L^2([0,1],\mathbb{R}^{2n})\,\Big| &&x\stackrel{L^2}{=}\sum_{m\in\mathbb{Z}}e^{m\pi tJ_{2n}}a_m+\sum_{m\in\mathbb{Z}}e^{2m\pi tJ_{2n}}b_m\nonumber\\
&&a_m\in V^{n,k}_0, \quad b_m\in V^{n,k}_1,\nonumber\\
&&\sum_{m\in\mathbb{Z}}|m|^{2s}(|a_m|^2+|b_m|^2)<\infty
\Big\}.
\end{eqnarray}

\begin{lemma}[\hbox{\cite[Lemmas 3.8, 3.9]{LiRi13}}]\label{lem:compactemb}
For each $s\ge 0$, $H^s_{n,k}$ is a Hilbert space with the inner product
$$
\langle\phi,\psi\rangle_{s,n,k}=\langle a_0,a_0'\rangle+\langle b_0,b_0'\rangle
+\pi\sum_{m\ne 0}(|m|^{2s}\langle a_m, a_m'\rangle+|2m|^{2s}\langle b_m, b_m'\rangle).
$$
Furthermore, if $s>t$, then the inclusion $\jmath:H^s_{n,k}\hookrightarrow H^t_{n,k}$
and its Hilbert adjoint $\jmath^\ast:H^t_{n,k}\rightarrow H^s_{n,k}$ are compact.
\end{lemma}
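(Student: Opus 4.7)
The plan is to identify $H^s_{n,k}$ isometrically with a weighted $\ell^2$-type sequence space through the Fourier coefficients $(a_m),(b_m)$, so that the Hilbert-space structure, completeness, and the embedding properties are inherited from standard facts about weighted sequence spaces. First I would check that the family $\{t\mapsto e^{m\pi tJ_{2n}}a\}_{m\in\mathbb{Z},\,a\in V_0^{n,k}}\cup\{t\mapsto e^{2m\pi tJ_{2n}}b\}_{m\in\mathbb{Z},\,b\in V_1^{n,k}}$ is pairwise orthogonal in $L^2([0,1],\mathbb{R}^{2n})$. Using $e^{\alpha tJ_{2n}}=\cos(\alpha t)I+\sin(\alpha t)J_{2n}$, the $L^2$ inner product of two such modes reduces to
\[
\int_0^1\bigl[\cos(k\pi t)\langle u,v\rangle+\sin(k\pi t)\langle u,J_{2n}v\rangle\bigr]\,dt
\]
for some integer $k$ determined by the mode indices and for vectors $u,v$ drawn from $V_0^{n,k}$ or $V_1^{n,k}$. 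For $k\ne 0$, $\int_0^1\cos(k\pi t)\,dt=0$ always, and $\int_0^1\sin(k\pi t)\,dt=0$ when $k$ is even; the remaining odd-$k$ case requires $\langle u,J_{2n}v\rangle=0$, which follows from the explicit coordinate descriptions~(\ref{e:V0})--(\ref{e:V1}): the four subspaces $V_0^{n,k}$, $V_1^{n,k}$, $J_{2n}V_0^{n,k}$ and $J_{2n}V_1^{n,k}$ occupy pairwise disjoint coordinate blocks. The residual $k=0$ case for mixed modes reduces likewise to $V_0^{n,k}\perp V_1^{n,k}$. Bilinearity, symmetry and positive-definiteness of $\langle\cdot,\cdot\rangle_{s,n,k}$ are then immediate from the positivity of the weights $\pi|m|^{2s}$ and $\pi|2m|^{2s}$, and completeness follows because the map $x\mapsto((a_m),(b_m))$ embeds $H^s_{n,k}$ isometrically onto a direct sum of two weighted $\ell^2$ spaces, each of which is complete.

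For the compactness of $\jmath:H^s_{n,k}\hookrightarrow H^t_{n,k}$ with $s>t$, I would approximate $\jmath$ in operator norm by the finite-rank truncation operators $P_N$ which retain only modes with $|m|\le N$. On the orthogonal complement of the range of $P_N$ the operator $\jmath-P_N$ acts diagonally on Fourier coefficients by the scalar factor $|m|^{t-s}$, giving
\[
\|\jmath-P_N\|_{H^s_{n,k}\to H^t_{n,k}}\le N^{-(s-t)}\longrightarrow 0.
\]
Hence $\jmath$ is a norm-limit of finite-rank operators and therefore compact. The adjoint $\jmath^*:H^t_{n,k}\to H^s_{n,k}$ is then compact by the standard fact that the Hilbert-space adjoint of a compact operator is compact.

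I do not foresee a genuine obstacle. The main piece of bookkeeping is the orthogonality computation in the first step, and the decisive input is that $J_{2n}V_1^{n,k}$ sits in the $q$- and $p$-coordinate slots indexed by $1,\dots,k$ while $V_0^{n,k}$ lives in the $q$-coordinate slots indexed by $k+1,\dots,n$, so the two are orthogonal and the potentially dangerous odd-$k$ cross-terms vanish.
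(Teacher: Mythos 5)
The paper does not prove this lemma itself; it cites it verbatim from Lisi--Rieser \cite[Lemmas 3.8 and 3.9]{LiRi13}, so there is no ``paper's own proof'' to compare against. Your argument is the standard one and is essentially correct: establish $L^2$-orthogonality of the two families of modes, transfer the Hilbert-space structure from weighted $\ell^2$, and prove compactness of the inclusion by approximating it with finite-rank truncations, noting $\|\jmath-P_N\|\le (N+1)^{t-s}\to 0$ and that the adjoint of a compact operator is compact.

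One small inaccuracy worth fixing: your claim that ``the four subspaces $V_0^{n,k}$, $V_1^{n,k}$, $J_{2n}V_0^{n,k}$ and $J_{2n}V_1^{n,k}$ occupy pairwise disjoint coordinate blocks'' is false, because $J_{2n}V_1^{n,k}=V_1^{n,k}$ (applying $J_{2n}$ to a vector supported in the $q_1,\dots,q_k,p_1,\dots,p_k$ slots simply permutes and negates entries within those same slots). This does not damage the proof: for two $V_1$-modes $e^{2m\pi tJ_{2n}}b$, $e^{2m'\pi tJ_{2n}}b'$ the frequency difference is $2(m-m')\pi$, always an \emph{even} multiple of $\pi$, so both the cosine and sine integrals over $[0,1]$ vanish automatically and no relation of the form $J_{2n}V_1^{n,k}\perp V_1^{n,k}$ is ever invoked. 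The relations that are genuinely needed --- $J_{2n}V_0^{n,k}\perp V_0^{n,k}$ (for $V_0$--$V_0$ with odd index difference), $V_0^{n,k}\perp V_1^{n,k}$, and $J_{2n}V_0^{n,k}\perp V_1^{n,k}$ (for the mixed terms) --- all do follow from the disjointness of the relevant coordinate blocks, exactly as you compute. With that one sentence corrected, the proof stands.
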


Let $\|\cdot\|_{s,n,k}$ denote the norm induced by $\langle\cdot,\cdot\rangle_{s,n,k}$.
For $r\in\mathbb{N}$ or $r=\infty$ let
$C^{r}_{n,k}([0,1],\mathbb{R}^{2n})$
denote the space of $C^r$ maps $x:[0,1]\to\mathbb{R}^{2n}$ such that $x(i)\in\mathbb{R}^{n,k}$, $i=0,1$, and $x(1)\sim x(0)$,
 where $\sim$ is the leaf relation on $\mathbb{R}^{n,k}$.
 ({\bf Note}: $H^s_{n,n}$ is exactly the space $H^s$ on the page 83 of \cite{HoZe94};
 $C^{r}_{n,n}([0,1],\mathbb{R}^{2n})$ is $C^r(\mathbb{R}/\mathbb{Z}, \mathbb{R}^{2n})$.)

\begin{lemma}[\hbox{\cite[Lemma 3.10]{LiRi13}}]
If $x\in H^s_{n,k}$ for $s>1/2+r$ where $r$ is an integer, then $x\in C^{r}_{n,k}([0,1],\mathbb{R}^{2n})$.
\end{lemma}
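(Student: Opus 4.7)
The plan is to treat this as a standard Sobolev-type embedding argument adapted to the specific Fourier-like basis underlying $H^s_{n,k}$, then separately verify that the natural boundary/leaf conditions survive the passage to the pointwise limit.

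First I would write $x\stackrel{L^2}{=}\sum_{m\in\mathbb{Z}}e^{m\pi tJ_{2n}}a_m+\sum_{m\in\mathbb{Z}}e^{2m\pi tJ_{2n}}b_m$ with $a_m\in V^{n,k}_0$ and $b_m\in V^{n,k}_1$, and differentiate term by term. Since $J_{2n}$ is an isometry, the $j$-th derivative of $e^{m\pi tJ_{2n}}a_m$ has pointwise norm $(|m|\pi)^{j}|a_m|$, and analogously $(2|m|\pi)^{j}|b_m|$ for the second family. I would then bound the corresponding tails uniformly in $t\in[0,1]$ by Cauchy--Schwarz:
\[
\sum_{m\neq 0}|m|^{j}|a_m|\le \Bigl(\sum_{m\neq 0}|m|^{-(2s-2j)}\Bigr)^{1/2}\Bigl(\sum_{m\neq 0}|m|^{2s}|a_m|^2\Bigr)^{1/2},
\]
which is finite precisely when $2s-2j>1$. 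Because $s>1/2+r$, this holds for every $j=0,1,\dots,r$, so each differentiated series converges absolutely and uniformly on $[0,1]$. Standard Weierstrass-type reasoning then shows that the formal sum defines a $C^r$ map $\widetilde{x}:[0,1]\to\mathbb{R}^{2n}$; since $\widetilde{x}=x$ in $L^2$ and $\widetilde{x}$ is continuous, $x$ admits a $C^r$ representative.

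Next I would check the three membership conditions defining $C^{r}_{n,k}([0,1],\mathbb{R}^{2n})$. The values $x(0)=\sum_m a_m+\sum_m b_m$ and $x(1)=\sum_m e^{m\pi J_{2n}}a_m+\sum_m e^{2m\pi J_{2n}}b_m$ are well-defined by the $j=0$ case above. Since $V^{n,k}_0,V^{n,k}_1\subset\mathbb{R}^{n,k}$, clearly $x(0)\in\mathbb{R}^{n,k}$. Using that $e^{2m\pi J_{2n}}=\mathrm{id}$ and that on $V^{n,k}_0$ the rotation $e^{m\pi J_{2n}}$ acts as multiplication by $(-1)^{m}$ (each coordinate pair $(q_i,0)$ with $i>k$ rotates to $((-1)^{m}q_i,0)$), I get $x(1)=\sum_m(-1)^{m}a_m+\sum_m b_m\in\mathbb{R}^{n,k}$ and
\[
x(1)-x(0)=\sum_m\bigl((-1)^{m}-1\bigr)a_m=-2\!\!\sum_{m\text{ odd}}\!a_m\in V^{n,k}_0,
\]
which is exactly the leaf relation $x(1)\sim x(0)$. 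This confirms $x\in C^{r}_{n,k}([0,1],\mathbb{R}^{2n})$.

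The only potentially delicate point is the identification of the $L^2$-class $x$ with its continuous representative; but this is routine once absolute-uniform convergence of all derivatives up to order $r$ has been established. Consequently I expect the main conceptual input to be not analytic but algebraic, namely the observation that $e^{m\pi J_{2n}}$ acts as $(-1)^{m}\mathrm{id}$ on $V^{n,k}_0$ and trivially (after period $2\pi$) on $V^{n,k}_1$; this is precisely what forces $x(1)-x(0)$ to land in $V^{n,k}_0$ and what motivates the two-frequency Fourier basis in the definition of $H^s_{n,k}$.
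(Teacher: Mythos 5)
Your proof is correct. Note that the paper does not actually prove this lemma itself; it simply cites \cite[Lemma~3.10]{LiRi13}, and the argument you give is the standard one: Cauchy--Schwarz gives uniform absolute convergence of all term-by-term derivatives up to order $r$ (using $2s-2j>1$ for $j\le r$), and the boundary conditions then follow from the global identity $e^{m\pi J_{2n}}=(-1)^m I_{2n}$ (hence $e^{2m\pi J_{2n}}=I_{2n}$), which is slightly stronger than the statement you use that this holds on $V^{n,k}_0$, and which immediately yields $x(0),x(1)\in V^{n,k}_0\oplus V^{n,k}_1=\mathbb{R}^{n,k}$ and $x(1)-x(0)=-2\sum_{m\ \mathrm{odd}}a_m\in V^{n,k}_0$, i.e.\ $x(1)\sim x(0)$.
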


\begin{lemma}[\hbox{\cite[Lemma 3.11]{LiRi13}}]
$\jmath^\ast(L^2)\subset H^1_{n,k}$ and $\|\jmath^\ast(y)\|_{1,n,k}\le \|y\|_{L^2}$.
\end{lemma}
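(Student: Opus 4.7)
The plan is to reduce both assertions to the single norm inequality
$$
\|\phi\|_{L^2}\le \|\phi\|_{1,n,k}\qquad\forall\,\phi\in H^1_{n,k}.
$$
Granting this, the inclusion $\jmath:H^1_{n,k}\hookrightarrow L^2$ has operator norm at most $1$, so its Hilbert adjoint $\jmath^\ast$ is automatically a bounded linear map from $L^2$ into $H^1_{n,k}$ (in particular $\jmath^\ast(L^2)\subset H^1_{n,k}$); moreover, for any $y\in L^2$ with $\jmath^\ast(y)\ne 0$,
$$
\|\jmath^\ast(y)\|_{1,n,k}^2=\langle \jmath^\ast(y),\jmath^\ast(y)\rangle_{1,n,k}=\langle y,\jmath\jmath^\ast(y)\rangle_{L^2}\le \|y\|_{L^2}\,\|\jmath\jmath^\ast(y)\|_{L^2}\le \|y\|_{L^2}\,\|\jmath^\ast(y)\|_{1,n,k},
$$
by Cauchy--Schwarz and the above inequality applied to $\phi=\jmath^\ast(y)$. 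Dividing by $\|\jmath^\ast(y)\|_{1,n,k}$ then delivers the claimed bound.

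To establish the norm inequality, I will first compute $\|\phi\|_{L^2}^2$ in terms of the coefficients of $\phi=\sum_m e^{m\pi tJ_{2n}}a_m+\sum_m e^{2m\pi tJ_{2n}}b_m$. The $a$-series takes values in $V_0^{n,k}\oplus J_{2n}V_0^{n,k}$ at every $t$ while the $b$-series takes values in $V_1^{n,k}$; since the introduction records the orthogonal decomposition $\R^{2n}=V_0^{n,k}\oplus J_{2n}V_0^{n,k}\oplus V_1^{n,k}$, these two series are $L^2$-orthogonal. Within each series, a direct integration, using $\langle v,J_{2n}w\rangle=0$ for all $v,w\in V_0^{n,k}$ to kill the sine cross terms, together with $\int_0^1\cos((m-m')\pi t)\,dt=\delta_{mm'}$ and the analogue at frequencies $2m\pi$, shows that the summands are mutually $L^2$-orthogonal with $\|e^{m\pi tJ_{2n}}a_m\|_{L^2}=|a_m|$ and $\|e^{2m\pi tJ_{2n}}b_m\|_{L^2}=|b_m|$. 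Therefore
$$
\|\phi\|_{L^2}^2=\sum_{m\in\Z}|a_m|^2+\sum_{m\in\Z}|b_m|^2.
$$
Comparing this with $\|\phi\|_{1,n,k}^2=|a_0|^2+|b_0|^2+\pi\sum_{m\ne 0}(m^2|a_m|^2+4m^2|b_m|^2)$, the $m=0$ contributions match exactly, while for $|m|\ge 1$ one has $\pi m^2\ge\pi>1$ and $4\pi m^2\ge 4\pi>1$, so $|a_m|^2\le \pi m^2|a_m|^2$ and $|b_m|^2\le 4\pi m^2|b_m|^2$ termwise. Summing delivers $\|\phi\|_{L^2}^2\le \|\phi\|_{1,n,k}^2$.

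The only slightly delicate step I anticipate is the orthogonality identity $\int_0^1\langle e^{m\pi tJ_{2n}}v,e^{m'\pi tJ_{2n}}v'\rangle\,dt=\delta_{mm'}\langle v,v'\rangle$ for $v,v'\in V_0^{n,k}$: the family $\{e^{m\pi tJ_{2n}}v\}_{m\in\Z}$ is \emph{not} orthogonal in $L^2([0,1],\R^{2n})$ for a generic $v\in\R^{2n}$ (since $\{e^{im\pi t}\}_{m\in\Z}$ is not a periodic orthogonal system on $[0,1]$), and the identity relies crucially on the constraint $v\in V_0^{n,k}$ to exploit $V_0^{n,k}\perp J_{2n}V_0^{n,k}$ and eliminate the sine contributions. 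Conceptually this is just the orthogonality of the eigenfunctions of the self-adjoint operator $-J_{2n}\,d/dt$ on $L^2([0,1],\R^{2n})$ with the Lisi--Rieser boundary conditions underlying the definition of $H^s_{n,k}$.
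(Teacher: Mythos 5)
Your proposal misidentifies the operator $\jmath$, and this changes the logical shape of the lemma. In the paper (following \cite[Lemmas 3.8--3.11]{LiRi13} and the analogous setup in \cite{HoZe94}), $\jmath$ is the inclusion $E=H^{1/2}_{n,k}\hookrightarrow L^2$, and $\jmath^\ast:L^2\to H^{1/2}_{n,k}$ is its Hilbert adjoint relative to $\langle\cdot,\cdot\rangle_{1/2,n,k}$ and $\langle\cdot,\cdot\rangle_{L^2}$; this is visible, for instance, in the Palais--Smale argument where $y_j^+-y_j^--\jmath^\ast(\nabla H(x_j)/\|x_j\|_E)$ must live in $E$. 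The content of the lemma is therefore a genuine regularity gain: a priori $\jmath^\ast(L^2)\subset H^{1/2}_{n,k}$, and the statement upgrades this to the strictly smaller subspace $H^1_{n,k}$, which is what later powers the $C^1$-regularity of critical points. You instead take $\jmath$ to be the inclusion $H^1_{n,k}\hookrightarrow L^2$, under which $\jmath^\ast(L^2)\subset H^1_{n,k}$ is a tautology and only the norm estimate remains. The fact that the first assertion collapses to nothing under your reading is the warning sign that the wrong map has been used; your argument proves a true statement, but not this one.

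Your intermediate computations are correct and are exactly what the real proof needs: the $L^2$-orthogonality of the families $\{e^{m\pi tJ_{2n}}a:a\in V_0^{n,k}\}_m$ and $\{e^{2m\pi tJ_{2n}}b:b\in V_1^{n,k}\}_m$, and the resulting Parseval identity $\|\phi\|_{L^2}^2=\sum_m|a_m|^2+\sum_m|b_m|^2$. But they must be deployed differently. Fix $y=\sum_m e^{m\pi tJ_{2n}}c_m+\sum_m e^{2m\pi tJ_{2n}}d_m\in L^2$, write $\jmath^\ast y=\sum_m e^{m\pi tJ_{2n}}\alpha_m+\sum_m e^{2m\pi tJ_{2n}}\beta_m$, and identify coefficients from the defining identity $\langle\jmath^\ast y,\phi\rangle_{1/2,n,k}=\langle y,\phi\rangle_{L^2}$ for all $\phi\in H^{1/2}_{n,k}$: using your Parseval formula on the right-hand side and the definition of $\langle\cdot,\cdot\rangle_{1/2,n,k}$ on the left, one gets $\alpha_0=c_0$, $\beta_0=d_0$, and, for $m\neq 0$, $\alpha_m=c_m/(\pi|m|)$ and $\beta_m=d_m/(2\pi|m|)$. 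Then
\begin{equation*}
\|\jmath^\ast y\|_{1,n,k}^2=|c_0|^2+|d_0|^2+\pi\sum_{m\neq 0}\Bigl(m^2\,\frac{|c_m|^2}{\pi^2 m^2}+4m^2\,\frac{|d_m|^2}{4\pi^2 m^2}\Bigr)=|c_0|^2+|d_0|^2+\frac{1}{\pi}\sum_{m\neq 0}\bigl(|c_m|^2+|d_m|^2\bigr)\le\|y\|_{L^2}^2,
\end{equation*}
since $1/\pi<1$. The finiteness of the weighted series on the left is precisely the statement $\jmath^\ast y\in H^1_{n,k}$, which is the nontrivial half of the lemma; the norm bound then drops out of the same calculation.
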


Let
\begin{equation}\label{e:space3}
E=H^{1/2}_{n,k}\quad\hbox{and}\quad \|\cdot\|_E:=\|\cdot\|_{1/2,n,k}.
\end{equation}
It has an orthogonal decomposition $E=E^-\oplus E^0\oplus E^+$, where
\begin{eqnarray*}
&&E^-=\Big\{x\in H^{1/2}_{n,k}\,\Big|\,x\stackrel{L^2}{=}\sum_{m<0}e^{m\pi tJ_{2n}}a_m+\sum_{m<0}e^{2m\pi tJ_{2n}}b_m\Big\},\\
&&E^0=\{x=x_0\in\mathbb{R}^{n,k}\},\\
&&E^+=\Big\{x\in H^{1/2}_{n,k}\,\Big|\,x\stackrel{L^2}{=}\sum_{m>0}e^{m\pi tJ_{2n}}a_m+\sum_{m>0}e^{2m\pi tJ_{2n}}b_m\Big\}.
\end{eqnarray*}
Let $P^+$, $P^0$ and $P^-$ be the orthogonal projections to $E^+$, $E^0$ and $E^-$
respectively. For $x\in E$ we write
$$x^+=P^+x,\quad x^0=P^0x\quad\hbox{and} \quad x^-=P^-x.$$
Define a functional $\mathfrak{a}:E\rightarrow\mathbb{R}$ by
$$
\mathfrak{a}(x)=\frac{1}{2}(\|x^+\|^2_E-\|x^-\|^2_E).
$$
Then there holds
$$
\mathfrak{a}(x)=\frac{1}{2}\int_0^1\langle -J_{2n}\dot{x},x\rangle dt,\quad\forall x\in C^1_{n,k}([0,1],\mathbb{R}^{2n}).
$$
(See \cite{LiRi13}.) The functional $\mathfrak{a}$ is differentiable with gradient
$\nabla \mathfrak{a}(x)=x^+-x^-$.

From now on we assume that for some $L>0$,
\begin{equation}\label{e:function}
H\in C^1(\mathbb{R}^{2n}, \mathbb{R})\quad\hbox{and}\quad
\|\nabla H(x)-\nabla H(y)\|_{\mathbb{R}^{2n}}\le L\|x-y\|_{\mathbb{R}^{2n}}\;\forall x,y\in \mathbb{R}^{2n}.
\end{equation}
Then there exist positive real numbers $C_i$, $i=1,2,3,4$, such that
$$
|\nabla H(z)|\le C_1|z|+C_2,\quad |H(z)|\le C_3|z|^2+C_4
$$
for all $z\in\mathbb{R}^{2n}$. Define functionals $\mathfrak{b}, \Phi_H: E\rightarrow\mathbb{R}$ by
\begin{equation}\label{e:functional}
\mathfrak{b}(x)=\int_0^1 H(x(t))dt\quad\hbox{and}\quad\Phi_H=\mathfrak{a}-\mathfrak{b}.
\end{equation}

\begin{lemma}[\hbox{\cite[Section~3.3, Lemma~4]{HoZe94}}]\label{lem:Lipsch}
The functional $\mathfrak{b}$ is differentiable. Its gradient $\nabla \mathfrak{b}$ is compact and
satisfies a global Lipschitz condition on $E$. In particular, $\mathfrak{b}$ is $C^{1,1}$.
\end{lemma}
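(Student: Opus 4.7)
The plan is to adapt the classical Hofer-Zehnder argument (the case $k=n$) to the coisotropic setting, relying on the three preceding lemmas of this section. The key observation is that $\nabla\mathfrak{b}$ will factor through the compact embedding $H^1_{n,k}\hookrightarrow H^{1/2}_{n,k}$, from which both compactness and the Lipschitz estimate fall out. First I would verify well-definedness: the quadratic bound $|H(z)|\le C_3|z|^2+C_4$ together with the continuous inclusion $E\hookrightarrow L^2([0,1],\mathbb{R}^{2n})$ (from Lemma~\ref{lem:compactemb}) shows $\mathfrak{b}$ is finite-valued. For Fr\'echet differentiability, the fundamental theorem of calculus gives
$$
\mathfrak{b}(x+y)-\mathfrak{b}(x)-\int_0^1\langle\nabla H(x(t)),y(t)\rangle\,dt=\int_0^1\int_0^1\langle\nabla H(x+sy)-\nabla H(x),y\rangle\,ds\,dt,
$$
and the Lipschitz hypothesis on $\nabla H$ combined with $\|y\|_{L^2}\le C\|y\|_E$ shows this remainder is $O(\|y\|_E^2)$. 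Hence $\mathfrak{b}$ is differentiable with $D\mathfrak{b}(x)[y]=\langle\nabla H\circ x,y\rangle_{L^2}$.

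Since $\langle u,v\rangle_{L^2}=\langle\jmath^\ast u,v\rangle_E$ by definition of the adjoint of the inclusion $\jmath:E\hookrightarrow L^2$, the gradient is $\nabla\mathfrak{b}(x)=\jmath^\ast(\nabla H\circ x)$. The global Lipschitz property then follows from the bound $\|\jmath^\ast f\|_{1,n,k}\le\|f\|_{L^2}$ of the immediately preceding lemma together with $\|\cdot\|_{1/2,n,k}\le\|\cdot\|_{1,n,k}$ and the pointwise Lipschitz estimate for $\nabla H$:
$$
\|\nabla\mathfrak{b}(x)-\nabla\mathfrak{b}(y)\|_E\le\|\jmath^\ast(\nabla H\circ x-\nabla H\circ y)\|_{1,n,k}\le\|\nabla H\circ x-\nabla H\circ y\|_{L^2}\le L\|x-y\|_{L^2}\le CL\|x-y\|_E.
$$
For compactness, note that $\nabla\mathfrak{b}$ factors as
$$
E\hookrightarrow L^2\xrightarrow{\nabla H\circ\cdot}L^2\xrightarrow{\jmath^\ast}H^1_{n,k}\hookrightarrow H^{1/2}_{n,k}=E,
$$
where the Nemytskii map is Lipschitz on $L^2$ by the pointwise Lipschitz property of $\nabla H$, $\jmath^\ast$ is bounded, and the final inclusion is compact by Lemma~\ref{lem:compactemb}; composition of continuous maps with a compact one is compact. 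The $C^{1,1}$ conclusion then follows immediately.

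The main obstacle is purely bookkeeping: one must check at each step that functions remain in the coisotropic spaces $H^s_{n,k}$ rather than the classical $H^s$, and that adjoints are taken with respect to the correct inner products. The three preceding lemmas---compactness of $H^1_{n,k}\hookrightarrow H^{1/2}_{n,k}$, the Sobolev-type embedding into $C^r_{n,k}$, and the bound $\|\jmath^\ast\|_{L^2\to H^1_{n,k}}\le 1$---supply exactly the analytic tools used in the classical Hofer-Zehnder proof, so no genuinely new estimates are required.
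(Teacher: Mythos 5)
The paper does not prove this lemma; it simply cites it from Hofer--Zehnder (Section~3.3, Lemma~4 of \cite{HoZe94}), leaving it to the reader to check that the argument transfers to the coisotropic spaces $H^s_{n,k}$. Your reconstruction follows precisely the classical Hofer--Zehnder route, correctly plugs in the three preparatory lemmas from Lisi--Rieser as the required analytic inputs (compact inclusion $H^1_{n,k}\hookrightarrow H^{1/2}_{n,k}$, the regularity gain $\jmath^*(L^2)\subset H^1_{n,k}$ with norm $\le1$), and the factorization $E\hookrightarrow L^2\xrightarrow{\nabla H\circ\cdot}L^2\xrightarrow{\jmath^*}H^1_{n,k}\hookrightarrow E$ is exactly the right way to see both the Lipschitz bound and compactness at once; the argument is correct.
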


\begin{lemma}[\hbox{\cite[Lemma 2.8]{JinLu1917}}]\label{lem:critic}
$x\in E$ is a critical point of $\Phi_H$ if and only if $x\in C^1_{n,k}([0,1],\mathbb{R}^{2n})$ and solves
$$
\dot{x}=X_H(x)=J_{2n}\nabla H(x).
$$
Moreover, if $H$ is of class $C^l$ ($l\ge 2$) then each critical point of $\Phi_H$ on $E$ is $C^l$.
\end{lemma}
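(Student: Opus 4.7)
The plan is to reduce the critical point equation $\nabla\Phi_H(x)=0$ on $E=H^{1/2}_{n,k}$ to the Hamilton equation with coisotropic boundary conditions by a bootstrap of Ekeland--Hofer type, with the novel boundary terms handled as in Lisi--Rieser. Direct differentiation gives $\nabla\mathfrak{a}(x)=x^+-x^-$ (since $\mathfrak{a}(x)=\tfrac12(\|x^+\|_E^2-\|x^-\|_E^2)$ and $E^+,E^0,E^-$ are mutually $\langle\cdot,\cdot\rangle_E$-orthogonal), while the identity $\mathfrak{b}'(x)[y]=\langle\nabla H\circ x, y\rangle_{L^2}$ together with the inclusion $\jmath:E\hookrightarrow L^2=L^2_{n,k}$ gives $\nabla\mathfrak{b}(x)=\jmath^\ast(\nabla H\circ x)$. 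The critical equation is therefore
$$
x^+-x^-=\jmath^\ast(\nabla H\circ x)\quad\text{in }E.
$$

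Next I upgrade regularity. The Lipschitz hypothesis on $\nabla H$ together with $x\in E\subset L^2$ gives $\nabla H\circ x\in L^2$, so the quoted bound $\jmath^\ast(L^2)\subset H^1_{n,k}$ places the right-hand side in $H^1_{n,k}$. Since $E^+$ and $E^-$ are spanned by disjoint Fourier modes (and $E^0\subset \mathbb{R}^{n,k}$ lies in every $H^s_{n,k}$), the identity $x^+-x^-\in H^1_{n,k}$ forces $x^+,x^-\in H^1_{n,k}$; hence $x=x^++x^0+x^-\in H^1_{n,k}$, so $\dot x\in L^2$. Using $e^{m\pi J_{2n}}=(-1)^m I$, the Fourier expansion defining $H^1_{n,k}$ then yields $x\in C^0$ with $x(0),x(1)\in \mathbb{R}^{n,k}$ and $x(1)-x(0)\in V^{n,k}_0$.

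Finally, for every $y\in C^\infty_{n,k}([0,1],\mathbb{R}^{2n})\subset E$ I integrate by parts in $\int_0^1\langle-J_{2n}\dot x,y\rangle\,dt$; the boundary terms vanish thanks to $x(i),y(i)\in\mathbb{R}^{n,k}$, $x(1)-x(0),\,y(1)-y(0)\in V^{n,k}_0$, and the orthogonal decomposition $\mathbb{R}^{2n}=\mathbb{R}^{n,k}\oplus J_{2n}V^{n,k}_0$ recorded in the introduction---the same cancellation underlying the formula $\mathfrak{a}(x)=\tfrac12\int_0^1\langle -J_{2n}\dot x,x\rangle\,dt$ on $C^1_{n,k}$. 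Consequently the critical equation becomes
$$
\int_0^1\langle -J_{2n}\dot x-\nabla H(x),y\rangle\,dt=0\qquad\forall\,y\in C^\infty_{n,k},
$$
so $y$ compactly supported in $(0,1)$ forces $\dot x=J_{2n}\nabla H(x)$ almost everywhere, while general $y$ confirms no extra endpoint condition is imposed. Continuity of $\nabla H\circ x$ then elevates $x$ to $C^1_{n,k}$, and for $H\in C^l$ with $l\ge 2$ the inductive implication $x\in C^m\Rightarrow \dot x\in C^{\min(m,l-1)}\Rightarrow x\in C^{\min(m+1,l)}$ yields $x\in C^l$. The converse direction is immediate by the same integration by parts. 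The delicate point throughout is the vanishing of the boundary terms, which is exactly where the coisotropic geometry of $\mathbb{R}^{n,k}$ and the leaf relation enter---this is the coisotropic analogue of the classical Ekeland--Hofer bootstrap.
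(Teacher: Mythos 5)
Your proof is correct and follows the standard Ekeland--Hofer regularity bootstrap adapted to the coisotropic setting, which is exactly what the cited \cite[Lemma~2.8]{JinLu1917} (relying on Lisi--Rieser's $\jmath^\ast(L^2)\subset H^1_{n,k}$) does: compute $\nabla\Phi_H=x^+-x^--\jmath^\ast(\nabla H\circ x)$, use the smoothing of $\jmath^\ast$ plus disjointness of Fourier modes to upgrade a critical point to $H^1_{n,k}$, identify the Hamilton equation by testing against dense subspaces, and bootstrap to $C^l$. One small remark: the identity $\langle x^+-x^-,y\rangle_E=\int_0^1\langle -J_{2n}\dot x,y\rangle\,dt$ for $x\in H^1_{n,k}$, $y\in E$ can be read off directly from the Fourier coefficients (the $E$-inner product puts weight $\pi|m|$ on the mode $e^{m\pi tJ_{2n}}a_m$, which is exactly what differentiation and the $L^2$-pairing produce), so no boundary terms ever appear; your explicit integration-by-parts cancellation via the orthogonal splittings $\mathbb{R}^{2n}=J_{2n}V^{n,k}_0\oplus\mathbb{R}^{n,k}=J_{2n}\mathbb{R}^{n,k}\oplus V^{n,k}_0$ is an equivalent and equally valid verification.
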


Since $\nabla\Phi_H(x)=x^+-x^--\nabla\mathfrak{b}(x)$ satisfies the global Lipschitz condition,
it has a unique global flow $\mathbb{R}\times E\to E: (u,x)\mapsto
\varphi_u(x)$.

\begin{lemma}[\hbox{\cite[Lemma 3.25]{LiRi13}}]\label{lem:flow}
$\varphi_u(x)$ has the following form
$$
\varphi_u(x) = e^{-u} x^{-} + x^{0} + e^{u}x^{+} + K(u,x),
$$
where $K:\mathbb{R}\times E \to E$ is continuous and maps bounded sets into precompact
sets.
\end{lemma}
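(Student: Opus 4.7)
The plan is to derive the stated representation directly from the defining ODE $\dot\varphi_u(x) = \nabla\Phi_H(\varphi_u(x))$ by applying the variation of constants formula componentwise to the spectral splitting $E = E^-\oplus E^0\oplus E^+$, and then to extract the precompactness of $K$ from the compactness of $\nabla\mathfrak{b}$ recorded in Lemma~\ref{lem:Lipsch}.

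First I would decompose the flow as $\varphi_u(x) = \varphi_u^-(x) + \varphi_u^0(x) + \varphi_u^+(x)$ and use the identity $\nabla\Phi_H(y) = y^+ - y^- - \nabla\mathfrak{b}(y)$ to split the ODE into three pieces
\begin{align*}
\dot\varphi_u^+ &= \varphi_u^+ - P^+\nabla\mathfrak{b}(\varphi_u(x)),\\
\dot\varphi_u^0 &= -P^0\nabla\mathfrak{b}(\varphi_u(x)),\\
\dot\varphi_u^- &= -\varphi_u^- - P^-\nabla\mathfrak{b}(\varphi_u(x)).
\end{align*}
Duhamel's formula on each piece, summed, yields the claimed representation with
$$
K(u,x) = -\int_0^u T_{u-s}\,\nabla\mathfrak{b}(\varphi_s(x))\,ds,\qquad T_\sigma := e^{\sigma}P^+ + P^0 + e^{-\sigma}P^-.
$$
Continuity of $K$ in $(u,x)$ is then immediate from continuity of the flow, continuity of $\nabla\mathfrak{b}$, and the standard dominated convergence argument for parameter-dependent Bochner integrals.

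The substantive task is precompactness. Fix a bounded set $B' \subset \mathbb{R}\times E$ contained in $[-U,U]\times\overline{B}_R$. Because $\nabla\mathfrak{b}$ is globally Lipschitz (Lemma~\ref{lem:Lipsch}), the vector field $\nabla\Phi_H$ has at most linear growth, and a Gronwall estimate bounds the orbit set $\mathcal{O} := \{\varphi_s(x) : |s|\le U,\ \|x\|_E\le R\}$ in $E$. Since $\nabla\mathfrak{b}:E\to E$ is compact (again Lemma~\ref{lem:Lipsch}), the closure $C := \overline{\nabla\mathfrak{b}(\mathcal{O})}$ is compact in $E$. The operator family $\sigma\mapsto T_\sigma$ is continuous in the operator norm, so $C' := \{T_\sigma y : |\sigma|\le 2U,\ y\in C\}$ is the continuous image of the compact set $[-2U,2U]\times C$ and is therefore compact. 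For $(u,x)\in B'$ the integrand of $K(u,x)$ takes values in $C'$, hence $K(u,x) \in [-U, U]\cdot\overline{\mathrm{conv}}(C')$; the closed convex hull of a compact set in the Hilbert space $E$ is compact by Mazur's theorem, and $K(B')$ therefore lies in a compact set.

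The main obstacle is this last step: one has to confirm that a Bochner integral whose integrand ranges in a compact set itself lies in a compact set, which is where Mazur's theorem enters. Everything earlier is formal manipulation of the ODE and uses nothing about $H$ beyond what Lemma~\ref{lem:Lipsch} provides.
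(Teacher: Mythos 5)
The paper does not give its own proof of this lemma: it cites \cite[Lemma 3.25]{LiRi13} and remarks that the statement ``may follow from the proof of Lemma~7 in \cite[Section~3.3]{HoZe94} directly.'' Your argument is a correct and complete rendering of exactly that standard route: split the gradient ODE along $E^{+}\oplus E^{0}\oplus E^{-}$, apply variation of constants to obtain
$K(u,x)=-\int_0^u T_{u-s}\nabla\mathfrak{b}(\varphi_s(x))\,ds$ with $T_\sigma=e^\sigma P^{+}+P^{0}+e^{-\sigma}P^{-}$, and extract precompactness from the compactness of $\nabla\mathfrak{b}$ together with Gronwall control on the orbits. The only place you make a genuine choice is the last step: you pass from the compact set $C'$ of integrand values to a compact set containing the Bochner integrals by taking the closed convex hull and invoking Mazur's theorem. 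This is a clean and entirely standard way to finish, interchangeable with the Riemann-sum/finite-dimensional-approximation style argument that Hofer--Zehnder use; nothing is gained or lost mathematically, though Mazur makes the compactness assertion about the integral completely explicit. In short: correct, and essentially the same approach the paper is implicitly relying on.
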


This may follow from the proof of Lemma~7 in \cite[Section~3.3]{HoZe94} directly.

\section{The  Ekeland-Hofer capacity relative to a coisotropic subspace}\label{sec:def}
\setcounter{equation}{0}


We  closely follow  Sikorav's approach \cite{Sik90} to the Ekeland-Hofer capacity in \cite{EH89}.
Fix an integer $0\le k\le n$.
Let  $E=H^{1/2}_{n,k}$ be as in (\ref{e:space3}) and $S^+=\{x\in E^+\,|\,\|x\|_E=1\}$.

\begin{definition}\label{defdeform}
  A continuous map $\gamma:E\rightarrow E$ is called an {\bf admissible deformation} if there exists
  a homotopy $(\gamma_u)_{0\le u\le 1}$ such that $\gamma_0={\rm id}$, $\gamma_1=\gamma$ and satisfies
\begin{description}
\item[(i)] $\forall u\in [0,1]$, $\gamma_u(E\setminus(E^-\oplus E^0))\subset E\setminus(E^-\oplus E^0)$, i.e.,
 $\gamma_u(x)^+\neq 0$ for any $x\in E$ such that $x^+\neq 0$.
\item[(ii)]  $\gamma_u(x)=a(x,u)x^++b(x,u)x^0+c(x,u)x^-+K(x,u)$, where $(a,b,c,K)$ is a continuous map
from $E\times [0,1]$ to $(0,+\infty)^3\times E$ and maps any closed bounded sets to compact sets.
\end{description}
\end{definition}

Let $\Gamma_{n,k}$ be the set of all admissible deformations on $E$.
It is not hard to verify that the composition $\gamma \circ\tilde{\gamma} \in\Gamma_{n,k}$ for
 any  $\gamma, \tilde{\gamma}\in\Gamma_{n,k}$. (If $k=n$, $\Gamma_{n,k}$ is equal to $\Gamma$ in  \cite{Sik90}.)
  Corresponding to \cite[Section 3, Proposition 1]{Sik90} or \cite[Section II, Proposition 1]{EH89}
we  can easily prove  the following intersection property.

\begin{proposition}\label{prop:EH.1.1}
   $\gamma(S^+)\cap (E^-\oplus E^0\oplus \mathbb{R}_+e)\neq \emptyset$ for any $e\in E^+\setminus\{0\}$ and $\gamma\in\Gamma_{n,k}$.
\end{proposition}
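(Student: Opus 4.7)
The plan is to adapt Sikorav's Leray--Schauder-type intersection argument from \cite{Sik90} to the coisotropic class $\Gamma_{n,k}$. Decompose $E^+=\mathbb{R}e\oplus e^{\perp}$ orthogonally with respect to $\langle\cdot,\cdot\rangle_E$, and let $P^+:E\to E^+$ denote the orthogonal projection. The condition $\gamma(x)\in E^-\oplus E^0\oplus \mathbb{R}_+e$ is equivalent to $P^+\gamma(x)$ being a strictly positive multiple of $e$, so the task reduces to producing $x\in S^+$ with $P^+\gamma_1(x)\in\mathbb{R}_+e$.

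For $x\in S^+$ one has $x=x^+$ and $x^0=x^-=0$, so the structural form in Definition~\ref{defdeform}(ii) reads
\[
\gamma_u(x)=a(x,u)\,x+K(x,u),\qquad P^+\gamma_u(x)=a(x,u)\,x+K^+(x,u),
\]
with $a(x,u)>0$ and $K^+:=P^+\circ K$ sending bounded sets to precompact sets. By Definition~\ref{defdeform}(i) one has $P^+\gamma_u(x)\neq 0$ on $S^+\times[0,1]$, so the radial normalization
\[
f_u:S^+\to S^+,\qquad f_u(x):=\frac{P^+\gamma_u(x)}{\|P^+\gamma_u(x)\|_E}
\]
is a continuous homotopy from $f_0=\mathrm{id}_{S^+}$ to $f_1$, and after dividing numerator and denominator by $a(x,u)$ it exhibits $f_u$ as an identity-plus-compact map in the usual Leray--Schauder sense.

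To finish, I would show $e/\|e\|_E\in f_1(S^+)$ by a Galerkin/Brouwer-degree argument: fix an increasing sequence of finite-dimensional subspaces $F_n\subset E^+$ with $e\in F_1$ and $\bigcup_n F_n$ dense in $E^+$, let $\Pi_n:E^+\to F_n$ denote the orthogonal projection, and set $f_u^{(n)}(x):=\Pi_n f_u(x)/\|\Pi_n f_u(x)\|_E$ on $S^+\cap F_n$ (well-defined for $n$ large, since $\Pi_n K^+\to K^+$ uniformly on the precompact set $K^+(S^+\cap F_n\times[0,1])$). Because $f_0^{(n)}=\mathrm{id}$ on the finite-dimensional sphere $S^+\cap F_n$ has Brouwer degree one, homotopy invariance produces $x_n\in S^+\cap F_n$ with $\Pi_n f_1(x_n)=e/\|e\|_E$. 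The compactness of $K^+$ then lets me extract a convergent subsequence $x_n\to x_\infty\in S^+$ with $f_1(x_\infty)=e/\|e\|_E$, so that $P^+\gamma_1(x_\infty)=\|P^+\gamma_1(x_\infty)\|_E\cdot e/\|e\|_E\in\mathbb{R}_+e$, which is the desired intersection.

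The main obstacle is the topological bookkeeping on the infinite-dimensional sphere $S^+$: one must verify that the approximations $f_u^{(n)}$ are uniformly well-defined, that no zero of $f_u^{(n)}-e/\|e\|_E$ escapes during the $u$-homotopy, and that the compactness-extracted limit $x_\infty$ genuinely sits on $S^+$ rather than on its weak closure. Each of these points is handled through the combination of the non-vanishing from Definition~\ref{defdeform}(i), the compact perturbation structure from Definition~\ref{defdeform}(ii), and the positivity $a(x,u)>0$, which together make the sign $\lambda>0$ in $P^+\gamma_1(x_\infty)=\lambda e$ automatic and place the whole argument squarely inside Sikorav's framework.
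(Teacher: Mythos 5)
Your proposal is correct and is essentially the Leray--Schauder/Brouwer-degree argument from the references that the paper cites without reproducing the proof (Proposition~1 in Section~3 of \cite{Sik90}; Proposition~1 in Section~II of \cite{EH89}). The uniform lower bound $\inf_{S^+\times[0,1]}\|P^+\gamma_u\|_E>0$ that makes your radial normalization and Galerkin step uniformly well-defined is exactly where Definition~\ref{defdeform}(ii) enters: $a(S^+\times[0,1])$ is compact in $(0,\infty)$, so $a$ is bounded below by a positive constant, and a sequence with $\|P^+\gamma_{u_j}(x_j)\|_E\to 0$ would, by compactness of $K^+$, yield a limit point in $S^+$ contradicting Definition~\ref{defdeform}(i).
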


\begin{definition}
 For $H\in C^0(\mathbb{R}^{2n},\mathbb{R}_{\ge 0})$,  the $\mathbb{R}^{n,k}$-\textsf{coisotropic capacity of $H$} is defined by
\begin{eqnarray}\label{e:coCap0}
c^{n,k}(H):=\sup_{h\in\Gamma_{n,k}}\inf_{x\in h(S^+)}\Phi_H(x)
\end{eqnarray}
where $\Phi_H$ is as in (\ref{e:functional}).
\end{definition}

By Proposition~1 in \cite[Section~3.3]{Sik90}, for any $H\in C^0(\mathbb{R}^{2n},\mathbb{R}_{\ge 0})$ there holds
 \begin{eqnarray}\label{e:coCap1}
 c^{n,n}(H)\le \sup_{z\in\mathbb{C}^n}\left(\pi|z_1|^2-H(z)\right),
 \end{eqnarray}
 where $z_1\in\mathbb{C}$ is the projection of $z\in\mathbb{C}^n\equiv\mathbb{C}\times\mathbb{C}^{n-1}$ to $\mathbb{C}$.
  Correspondingly, we have

\begin{proposition}\label{prop:EH.1.3}
For any $H\in C^0(\mathbb{R}^{2n},\mathbb{R}_{\ge 0})$ there holds
  \begin{equation}\label{e:EH.1.3}
  c^{n,k}(H)\le \sup_{z\in\mathbb{C}^n}\left(\frac{ \pi}{2}|z|^2-H(z)\right),\quad k=0,1,\cdots,n-1.
\end{equation}
\end{proposition}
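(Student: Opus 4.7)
The plan is to adapt Sikorav's classical argument for (\ref{e:coCap1}), with the test direction chosen inside $V_0^{n,k}$ instead of inside $V_1^{n,k}$; the resulting halving of the relevant frequency ($2\pi\leadsto\pi$) is what turns Sikorav's coefficient $\pi$ into $\pi/2$ here.

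Fix a unit vector $a_0\in V_0^{n,k}$ (possible since $k\le n-1$) and define
$$e(t):=\tfrac{1}{\sqrt{\pi}}\,e^{\pi tJ_{2n}}a_0.$$
From the definitions (\ref{e:space2})--(\ref{e:space3}) one checks directly that $e\in E^+$, $\|e\|_E=1$, $|e(t)|^2\equiv 1/\pi$, and hence
$$\mathfrak{a}(se)=\tfrac{s^2}{2}=\int_0^1\tfrac{\pi}{2}|se(t)|^2\,dt\qquad\forall\,s\ge 0.$$
Set $M:=\sup_{z\in\mathbb{C}^n}\bigl(\tfrac{\pi}{2}|z|^2-H(z)\bigr)$. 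Given any $h\in\Gamma_{n,k}$, the intersection property in Proposition~\ref{prop:EH.1.1} produces
$$x^*\in h(S^+)\cap\bigl(E^-\oplus E^0\oplus\mathbb{R}_+e\bigr),\qquad x^*=x^-+x^0+se,\;\;s\ge 0,$$
with $x^0\in\mathbb{R}^{n,k}$ and $x^-\in E^-$. Since $\inf_{h(S^+)}\Phi_H\le\Phi_H(x^*)$, it suffices to prove $\Phi_H(x^*)\le M$ for this particular point, and then take the supremum over $h\in\Gamma_{n,k}$.

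The heart of the argument is the Pythagorean identity
$$\int_0^1|x^*(t)|^2\,dt=|x^0|^2+\|x^-\|_{L^2}^2+\tfrac{s^2}{\pi}.$$
This reduces to the pairwise $L^2$-orthogonality of the basis functions $e^{m\pi tJ_{2n}}a_m$ with $a_m\in V_0^{n,k}$ and $e^{2m\pi tJ_{2n}}b_m$ with $b_m\in V_1^{n,k}$ in the expansion (\ref{e:space2}). The integrals $\int_0^1 e^{j\pi tJ_{2n}}dt$ vanish for every nonzero even $j$ and equal $\tfrac{2}{j\pi}J_{2n}$ when $j$ is odd, so each potentially nonvanishing cross term reduces to an inner product of the form $\langle a,J_{2n}a'\rangle$ with $a,a'\in V_0^{n,k}$ or $\langle a,J_{2n}b\rangle$ with $a\in V_0^{n,k},b\in V_1^{n,k}$; both vanish because of the orthogonal decomposition $\mathbb{R}^{2n}=V_1^{n,k}\oplus V_0^{n,k}\oplus J_{2n}V_0^{n,k}$, together with $J_{2n}V_1^{n,k}=V_1^{n,k}$. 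The same calculation shows $\int_0^1 x^-(t)\,dt\in J_{2n}V_0^{n,k}$, so the remaining mixed term $\int_0^1\langle x^0,x^-(t)\rangle dt$ also vanishes because $\mathbb{R}^{n,k}\perp J_{2n}V_0^{n,k}$.

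Once the identity is in hand, the pointwise inequality $\tfrac{\pi}{2}|z|^2-H(z)\le M$ integrates to
$$\int_0^1\!\bigl[\tfrac{\pi}{2}|x^*(t)|^2-H(x^*(t))\bigr]dt\le M,$$
which rearranges into
$$\tfrac{s^2}{2}-\int_0^1\!H(x^*(t))\,dt\le M-\tfrac{\pi}{2}|x^0|^2-\tfrac{\pi}{2}\|x^-\|_{L^2}^2\le M.$$
Combined with $-\tfrac12\|x^-\|_E^2\le 0$, this gives $\Phi_H(x^*)\le M$, and taking $\sup_{h}\inf_{h(S^+)}$ yields (\ref{e:EH.1.3}). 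The only nontrivial step is the mode-orthogonality bookkeeping behind the Pythagorean identity, which depends on compatibility of the Fourier decomposition of $H^{1/2}_{n,k}$ with the subspaces $V_0^{n,k}$, $V_1^{n,k}$ and $J_{2n}V_0^{n,k}$; everything else is immediate from the definitions.
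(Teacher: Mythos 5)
Your proof is correct and follows essentially the same strategy as the paper's: choose the test direction $e$ proportional to $e^{\pi tJ_{2n}}a_0$ with $a_0\in V_0^{n,k}$, invoke Proposition~\ref{prop:EH.1.1} to get an intersection point $x^*\in h(S^+)\cap(E^-\oplus E^0\oplus\mathbb{R}_+e)$, and bound $\mathfrak{a}(x^*)$ above by $\frac{\pi}{2}\int_0^1|x^*(t)|^2\,dt$ using $L^2$-orthogonality of the Fourier modes. The one cosmetic difference is the route to that last inequality: the paper applies Cauchy--Schwarz to $\lambda=\int_0^1\langle x^*(t),e^{\pi tJ_{2n}}a_0\rangle\,dt$ to obtain $\lambda^2\le\|x^*\|_{L^2}^2$ in one stroke, whereas you work out the full Pythagorean identity $\int_0^1|x^*|^2=|x^0|^2+\|x^-\|_{L^2}^2+\tfrac{s^2}{\pi}$ and then discard the nonnegative extra terms. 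Both rest on the same mode-orthogonality bookkeeping you carried out correctly, so the two arguments are equivalent in substance; the Cauchy--Schwarz version is marginally shorter.
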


\begin{proof}
Let $e(t)=e^{\pi J_{2n}t}X$, where $X\in V^{n,k}_0$ and $|X|=1$.   For any $x=y+\lambda e$, where
$y\in E^-\oplus E^0$ and $\lambda>0$, it holds that
$$
\mathfrak{a}(x)\le\frac{1}{2}\|\lambda e\|^2_E= \frac{ \pi }{2}\lambda^2
$$
 and
$$
\int_0^1\langle x(t),e^{ \pi J_{2n}t}X\rangle dt=\int_0^1\langle \lambda e^{ \pi J_{2n}t}X,e^{ \pi J_{2n}t}X\rangle dt=\lambda.
$$
It follows that
$$
\mathfrak{a}(x)\le\frac{\pi }{2}\left(\int_0^1\langle x(t),e^{ \pi J_{2n}t}X\rangle dt\right)^2\le\frac{ \pi }{2}\int_0^1|x(t)|^2 dt.
$$
This and Proposition~\ref{prop:EH.1.1} lead to
$$
\inf_{x\in\gamma(S^+)}\Phi_H(x) \le\sup_{x\in E^-\oplus E^0\oplus\mathbb{R}_+e}\Phi_H(x)\le \sup_{z\in\mathbb{R}^{2n}}\left\{\frac{\pi }{2}|z|^2-H(z)\right\}
\quad\forall\gamma\in\Gamma_{n,k},
$$
and hence (\ref{e:EH.1.3}) is proved.
\end{proof}

 A function $H\in C^0(\mathbb{R}^{2n},\mathbb{R}_{\ge 0})$ is called $\mathbb{R}^{n,k}$-{\bf admissible} if it satisfies:
\begin{description}
\item[(H1)] ${\rm Int}(H^{-1}(0))\ne \emptyset$ and intersects with $\mathbb{R}^{n,k}$,
\item[(H2)] there exists $z_0\in\mathbb{R}^{n,k}$, real numbers $a, b$
such that $H(z)=a|z|^2+ \langle z, z_0\rangle+ b$ outside a compact subset of $\mathbb{R}^{2n}$,
where $a>\pi$ for $k=n$, and $a>\pi/2$ for $0\le k<n$.
\end{description}

Moreover,  a $\mathbb{R}^{n,n}$-admissible $H$
is said to be  {\bf  nonresonant} if $a$ in (H2) does not belong to $\pi\mathbb{N}$; and
a $\mathbb{R}^{n,k}$-admissible $H$ with $k<n$
is called {\bf strong nonresonant} if $a$ in (H2) does not sit in
$\mathbb{N}\pi/2$.

Clearly, for any $\mathbb{R}^{n,k}$-admissible $H\in C^2(\mathbb{R}^{2n},\mathbb{R}_{\ge 0})$,
$\nabla H:\mathbb{R}^{2n}\to\mathbb{R}^{2n}$ satisfies a global Lipschitz condition.

Note that $c^{n,k}(H)<+\infty$ if $H\in C^0(\mathbb{R}^{2n},\mathbb{R}_{\ge 0})$ satisfies
\begin{equation}\label{e:EH.2.1.0}
H(z)\ge a|z|^2+C,\quad\forall z\in\mathbb{R}^{2n}
\end{equation}
for some constant $C$,
 where  $a=\pi$ for $k=n$, and $a=\pi/2$ for $0\le k<n$.
   In particular, we have $c^{n,k}(H)<+\infty$ for any $H\in C^0(\mathbb{R}^{2n},\mathbb{R}_{\ge 0})$ satisfying (H2).
 In fact, for $k=n$ this can be derived from (\ref{e:coCap1})
(cf. \cite{Sik90}). For $0\le k<n$, since there exist constants $a>\pi/2, b$
such that $H(z)\ge a|z|^2+\langle z,z_0\rangle+b$ for all $z\in\mathbb{R}^{2n}$,
using the inequality
$$
|\langle z,z_0\rangle|\le \varepsilon |z|^2+ \frac{1}{4\varepsilon}|z_0|^2
$$
for any $0<\varepsilon<a-\frac{\pi}{2}$,
we deduce that
$$
\frac{\pi}{2}|z|^2-H(z)\le \left(\varepsilon-\left(a-\frac{\pi}{2}\right)\right)|z|^2+\frac{|z_0|^2}{4\varepsilon}-b\le
\frac{|z_0|^2}{4\varepsilon}-b<\infty.
$$
Then Proposition~\ref{prop:EH.1.3} leads to $c^{n,k}(H)<+\infty$.

It is easy proved that  $c^{n,k}(H)$ satisfies:

\begin{proposition}\label{prop:coEHC.1}
Let $H, K\in C^0(\mathbb{R}^{2n},\mathbb{R}_{\ge 0})$  satisfy
(H1) and (H2). Then the following holds:
\begin{description}
\item[(i)]{\rm (Monotonicity)} If $H\le K$ then $c^{n,k}(H)\ge c^{n,k}(K)$.
\item[(ii)] {\rm (Continuity)} $|c^{n,k}(H)-c^{n,k}(K)|\le \sup_{z\in\mathbb{R}^{2n}}|H(z)-K(z)|$.
\item[(iii)] {\rm (Homogeneity)}  $c^{n,k}(\lambda^2H(\cdot/\lambda))=\lambda^2 c^{n,k}(H)$ for $\lambda\ne 0$.
\end{description}
\end{proposition}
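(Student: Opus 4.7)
The plan is to read each assertion directly off the min-max definition~(\ref{e:coCap0}) of $c^{n,k}(H)$. Parts~(i) and (ii) reduce to pointwise estimates on the action functional $\Phi_H=\mathfrak{a}-\mathfrak{b}$ that are uniform in $x\in E$, while part~(iii) requires a change of variables on $E$ together with a compatible bijection on the admissible-deformation class $\Gamma_{n,k}$.

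For (i), if $H\le K$ on $\mathbb{R}^{2n}$ then $\int_0^1 H(x(t))\,dt\le \int_0^1 K(x(t))\,dt$ for every $x\in E$, hence $\Phi_H(x)\ge \Phi_K(x)$. This inequality is preserved under $\inf_{x\in h(S^+)}$ and then under $\sup_{h\in\Gamma_{n,k}}$, giving $c^{n,k}(H)\ge c^{n,k}(K)$. For (ii) the same pointwise bound yields $|\Phi_H(x)-\Phi_K(x)|\le \sup_z|H(z)-K(z)|$ uniformly in $x\in E$, and since both $\inf$ and $\sup$ are $1$-Lipschitz with respect to the uniform norm, the estimate propagates to $|c^{n,k}(H)-c^{n,k}(K)|$.

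For (iii), set $H_\lambda(z):=\lambda^2 H(z/\lambda)$. The substitution $y=\lambda\tilde y$ and the fact that $\mathfrak{a}$ is quadratic give
$$
\Phi_{H_\lambda}(\lambda\tilde y)=\lambda^2\mathfrak{a}(\tilde y)-\int_0^1\lambda^2 H(\tilde y(t))\,dt=\lambda^2\Phi_H(\tilde y)
$$
for every $\tilde y\in E$ and every $\lambda\ne 0$. It therefore suffices to exhibit, for each such $\lambda$, a bijection $h\mapsto h'$ of $\Gamma_{n,k}$ satisfying $h'(S^+)=\tfrac{1}{\lambda}h(S^+)$. For $\lambda>0$ the homothety $D_\lambda:x\mapsto \lambda x$ belongs to $\Gamma_{n,k}$ via the straight-line homotopy from the identity (the coefficients $a=b=c=1+u(\lambda-1)$ are positive for all $u\in[0,1]$, and $K\equiv 0$), so $h':=D_{1/\lambda}\circ h$ works and is inverted by post-composition with $D_\lambda$; closure of $\Gamma_{n,k}$ under composition was already noted after Definition~\ref{defdeform}. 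For $\lambda<0$ the naive homothety is forbidden by the positivity in Definition~\ref{defdeform}(ii), so one combines the $\lambda>0$ construction with the involution $k\mapsto\widehat{k}$, $\widehat{k}(x):=-k(-x)$: a direct check shows $\widehat{k}\in\Gamma_{n,k}$ whenever $k\in\Gamma_{n,k}$ (the coefficients of Definition~\ref{defdeform}(ii) are merely re-evaluated at $-x$, hence remain positive, and the compact remainder becomes $-K(-x,u)$), while $\widehat{k}(S^+)=-k(S^+)$ because $-S^+=S^+$. Taking $\sup_h$ of the two sides of the resulting identity yields $c^{n,k}(H_\lambda)=\lambda^2 c^{n,k}(H)$.

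The main obstacle is precisely the sign issue in (iii): the positivity hard-wired into Definition~\ref{defdeform}(ii) prevents the direct use of $x\mapsto\lambda x$ when $\lambda<0$, which is why negative dilations must be factored through the involution $\widehat{\phantom{k}}$. Once that trick is in place, every step is routine bookkeeping and the three properties follow.
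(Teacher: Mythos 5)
Your proof is correct, and since the paper simply asserts this proposition with the remark that ``it is easy to prove,'' your argument is essentially the expected one: read (i) and (ii) directly off the min-max formula via the pointwise bound $|\Phi_H(x)-\Phi_K(x)|\le\sup_z|H(z)-K(z)|$, and for (iii) combine the quadratic scaling $\Phi_{\lambda^2 H(\cdot/\lambda)}(\lambda\tilde y)=\lambda^2\Phi_H(\tilde y)$ with a dilation-induced bijection of $\Gamma_{n,k}$. Your observation that negative $\lambda$ must be routed through the involution $k\mapsto\widehat k$ (because Definition~\ref{defdeform}(ii) forces the coefficients $a,b,c$ to be positive) is a genuine point that many expositions gloss over, and your verification that $\widehat k\in\Gamma_{n,k}$ and $\widehat k(S^+)=-k(S^+)$ is correct.
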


 By Proposition~2 in \cite[Section~3.3]{Sik90}
  the following proposition holds for $k=n$.

\begin{proposition}\label{prop:EH.1.4}
Suppose that   $H\in C^0(\mathbb{R}^{2n},\mathbb{R}_{\ge 0})$ satisfies
 \begin{equation}\label{e:EH.2.1+}
 H(z_0+z)\le C_1|z|^2\quad\hbox{and}\quad
 H(z_0+z)\le C_2|z|^3\quad\forall z\in\mathbb{R}^{2n}
\end{equation}
for some $z_0\in\mathbb{R}^{n,k}$ and for constants $C_1>0$ and $C_2>0$. Then $c^{n,k}(H)>0$.
In particular,  $c^{n,k}(H)>0$ for any $\mathbb{R}^{n,k}$-admissible $H\in C^2(\mathbb{R}^{2n},\mathbb{R}_{\ge 0})$.
\end{proposition}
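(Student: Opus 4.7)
My strategy is to exhibit one admissible deformation $h\in\Gamma_{n,k}$ on which $\Phi_H$ stays uniformly positive; by the definition (\ref{e:coCap0}) of $c^{n,k}$ as a supremum over infima, this will suffice. For a small parameter $\rho>0$ I would use the homotopy $\gamma_u(x):=\bigl((1-u)+u\rho\bigr)x + u z_0$. In the notation of Definition~\ref{defdeform}, the three coefficients $a(x,u)=b(x,u)=c(x,u)=(1-u)+u\rho$ are strictly positive, while $K(x,u)=uz_0\in E^0$ is a constant map (hence maps bounded sets to compact sets), and $\gamma_u(x)^+\ne 0$ whenever $x^+\ne 0$; thus $\gamma_u\in\Gamma_{n,k}$. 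Setting $h:=\gamma_1$ yields $h(S^+)=\Sigma_\rho:=\rho S^+ + z_0$.

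Next I estimate $\Phi_H$ on $\Sigma_\rho$. Since $z_0\in\mathbb{R}^{n,k}=E^0$ is annihilated by $\mathfrak{a}$, for $x=\rho e+z_0$ with $e\in S^+$ one has $\mathfrak{a}(x)=\tfrac{\rho^2}{2}\|e\|_E^2=\tfrac{\rho^2}{2}$. Using the cubic bound in (\ref{e:EH.2.1+}) together with the continuous embedding $H^{1/2}_{n,k}\hookrightarrow L^3([0,1],\mathbb{R}^{2n})$ with some embedding constant $C_3$, I obtain
\[
\mathfrak{b}(x)=\int_0^1 H\bigl(z_0+\rho e(t)\bigr)\,dt \;\le\; C_2\rho^3\int_0^1|e(t)|^3\,dt\;\le\; C_2 C_3^{3}\rho^3\,\|e\|_E^3 \;=\; C_2 C_3^{3}\rho^3.
\]
Consequently $\Phi_H(x)\ge \tfrac{\rho^2}{2}-C_2 C_3^{3}\rho^3$ on $\Sigma_\rho$, which is strictly positive once $\rho<(2C_2 C_3^{3})^{-1}$, and this gives $c^{n,k}(H)>0$.

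For the ``in particular'' clause, given an $\mathbb{R}^{n,k}$-admissible $H\in C^2$, I would choose $z_0\in\mathrm{Int}(H^{-1}(0))\cap\mathbb{R}^{n,k}$ supplied by (H1), so that $H\equiv 0$ on some ball $B(z_0,r_0)$. Both estimates in (\ref{e:EH.2.1+}) then hold trivially on $|z|<r_0$. Outside this ball, the quadratic growth hypothesis (H2) gives $H(z_0+z)\le C'(1+|z|^2)\le C_1|z|^2$ with $C_1:=C'(1+r_0^{-2})$, and hence also $H(z_0+z)\le (C_1/r_0)|z|^3$; so (\ref{e:EH.2.1+}) is satisfied globally and the first part applies. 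The only genuine technical point I anticipate is verifying the continuous embedding $H^{1/2}_{n,k}\hookrightarrow L^3$: for the classical periodic $H^{1/2}$ this is the standard borderline Sobolev embedding, and what needs checking is that the mixed-period Fourier decomposition in (\ref{e:space2}) does not destroy it; this is the step that produces the cubic-power margin over $\rho^2$ in the estimate above. Apart from that, the argument is a direct transfer of the minimax scheme of Proposition~2 in \cite[Section~3.3]{Sik90} to the coisotropic deformation class $\Gamma_{n,k}$.
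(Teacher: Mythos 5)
Your deformation is the same one the paper uses (affine map $x\mapsto z_0+\rho x$, homotopied to the identity through $\gamma_u(x)=((1-u)+u\rho)x+uz_0$), and your verification that it lies in $\Gamma_{n,k}$ is fine; the ``in particular'' reduction via $H\equiv 0$ near $z_0$ plus the (H2) growth is also valid and in fact a little cleaner than the paper's appeal to a Taylor expansion. Where you genuinely diverge from the paper is in how $\mathfrak b$ is estimated on $\Sigma_\rho$. You invoke a continuous embedding $H^{1/2}_{n,k}\hookrightarrow L^3([0,1],\mathbb{R}^{2n})$ together with only the \emph{cubic} bound in (\ref{e:EH.2.1+}), obtaining the quantitative estimate $\mathfrak b\le C\rho^3$; the paper instead proves the weaker but sufficient limit $\lim_{\|x\|_E\to 0}\big(\int_0^1 H(z_0+x)\big)/\|x\|_E^2=0$ by a contradiction argument that uses only the compactness of $E\hookrightarrow L^2$ from Lemma~\ref{lem:compactemb}, pointwise a.e.\ convergence along a subsequence, and the dominated convergence theorem, with the \emph{quadratic} bound furnishing the $L^1$ dominating function and the cubic bound furnishing the pointwise vanishing. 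So the two approaches use the two inequalities of (\ref{e:EH.2.1+}) differently: you use only the cubic one, the paper uses both, and the hypotheses are plainly formatted to make the paper's DCT argument go through.

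The tradeoff is clear. Your route is shorter and gives a rate, but it hinges on the $L^3$ embedding of $E=H^{1/2}_{n,k}$, and you correctly flag this as the one step requiring verification. That embedding is true (it is the analogue of \cite[Section~3.3, Lemma~3]{HoZe94} for the mixed period-$1$/period-$2$ Fourier model of $H^{1/2}_{n,k}$; the basis vectors $e^{m\pi tJ}a_m$, $e^{2m\pi tJ}b_m$ all have constant pointwise norm, so the dyadic block estimate carries over verbatim), but it is nowhere stated or proved in this paper, whose toolkit for this space stops at $L^2$ compactness (Lemma~\ref{lem:compactemb}) and the cited Lipschitz lemma for $\mathfrak b$. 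Until that embedding lemma is written out for $H^{1/2}_{n,k}$, your proof has a real gap at exactly the point you identify; the paper's proof, while less explicit, is self-contained on its own infrastructure. If you prefer your quantitative version, you should add a short lemma establishing $\|\cdot\|_{L^p}\le c_p\|\cdot\|_E$ for $p<\infty$ on $H^{1/2}_{n,k}$; otherwise the DCT argument is the safer path.
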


\begin{proof}
We assume $k<n$. For a constant $\varepsilon>0$
define $\gamma_\varepsilon\in\Gamma_{n,k}$ by
$\gamma_\varepsilon(x)=z_0+\varepsilon x\;\forall x\in E$. We claim that
\begin{equation}\label{e:EH.2.2-}
\inf_{y\in\gamma_\varepsilon(S^+)}\Phi_H(y)>0
\end{equation}
for sufficiently small $\varepsilon$. Since
\begin{equation}\label{e:EH.2.2}
\Phi_H(z_0+x)=\frac{1}{2}\|x\|^2_{E}-\int_0^1H(z_0+x) dt\quad\forall x\in E^+,
\end{equation}
it suffices to prove that
\begin{equation}\label{e:EH.2.3}
\lim_{\|x\|_E\rightarrow 0}\frac{\int_0^1 H(z_0+x) dt}{\|x\|^2_E}=0.
\end{equation}

Otherwise, suppose there exists a sequence $(x_j)\subset E$ and $d>0$ satisfying
  \begin{equation}\label{e:EH.2.4}
  \|x_j\|_E\rightarrow 0 \quad \hbox{and} \quad \frac{\int_0^1 H(z_0+x_j) dt}{\|x_j\|^2_E}
  \geq d>0\quad\forall j.
  \end{equation}
  Let $y_j=\frac{x_j}{\|x_j\|_E}$ and hence $\|y_j\|_E=1$. Then Lemma
  \ref{lem:compactemb} implies that $(y_j)$ has a convergent subsequence in $L^2$.
  By a standard result in $L^p$ theory,
  we have $w\in L^2$ and a subsequence of $(y_j)$, still denoted by $(y_j)$, such that
   $y_j(t)\rightarrow y(t)$ a.e. on $(0,1)$ and that
   $|y_j(t)|\leq w(t)$  a.e. on $(0,1)$ for each $j$.
  It follows from (\ref{e:EH.2.1+}) that
  \begin{eqnarray*}
  &&\frac{H(z_0+x_j(t))}{\|x_j\|_E^2}\leq C_1\frac{|x_j(t)|^2}{\|x_j\|_E^2}
  =C_1|y_j(t)|^2\le C_1w(t)^2,\quad \hbox{a.e. on $ (0,1)$},\;\forall j,\\
 &&  \frac{H(z_0+x_j(t))}{\|x_j\|_E^2}\leq C_2\frac{|x_j(t)|^3}{\|x_j\|_E^2}
  =C_2|x_j(t)|\cdot |y_j(t)|^2\le C_2|x_j(t)|w(t)^2,\quad \hbox{a.e. on $(0,1)$},\;\forall j.
   \end{eqnarray*}
  The first claim in (\ref{e:EH.2.4}) implies that $(x_j)$
  has a subsequence such that
  $x_{j_l}(t)\rightarrow 0$, a.e. in $(0,1)$.
   Hence the Lebesgue dominated convergence theorem leads to
  $$
  \int_0^1 \frac{H(z_0+x_{j_l}(t))}{\|x_{j_l}\|_E^2} dt\rightarrow 0.
  $$
  This contradicts the second claim in (\ref{e:EH.2.4}).

  For any fixed $\mathbb{R}^{n,k}$-admissible $H\in C^2(\mathbb{R}^{2n},\mathbb{R}_{\ge 0})$,
   pick some $z_0\in\mathbb{R}^{n,k}\cap{\rm Int}(H^{-1}(0))$. Since (H1) implies that
  $H$ vanishes near $z_0$,  by (H2) and the Taylor expansion of $H$ at $z_0\in\mathbb{R}^{2n}$,
  we have constants $C_1>0$ and $C_2>0$ such that $H$ satisfes (\ref{e:EH.2.1+}).
   \end{proof}

 By (\ref{e:coCap1}) and Propositions~\ref{prop:EH.1.3} and~\ref{prop:EH.1.4} we see that
 $c^{n,k}(H)$ is a finite positive number for each
  $\mathbb{R}^{n,k}$-admissible $H$. The following is a generalization of Lemma~3 in \cite[\S3.4]{Sik90}.

\begin{lemma}\label{lem:PositiveInvariance}
Let $H\in C^0(\mathbb{R}^{2n},\mathbb{R}_{\ge 0})$ satisfy (\ref{e:EH.2.1.0}) and
(\ref{e:EH.2.1+}). Then
$$c^{n,k}(H)=\sup_{F\in\mathcal{F}_{n,k}}\inf_{x\in F}\Phi_H(x),
$$
 where
 \begin{equation}\label{e:fnk}
   \mathcal{F}_{n,k}:=\{\gamma(S^+)\,|\,\gamma\in\Gamma_{n,k}
   \;\text{and}\;\inf(\Phi_H|_{\gamma(S^+)})>0\}.
   \end{equation}
Moreover, if $H$ is also of class $C^2$ and has bounded derivatives of second order, then
$\mathcal{F}_{n,k}$ is positive invariant under the flow $\varphi_u$ of $\nabla \Phi_H$
(which must exist as pointed out above Lemma~\ref{lem:flow}).
   \end{lemma}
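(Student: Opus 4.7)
The plan has two parts, mirroring the two assertions.

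For the equality of the two suprema, the inequality
$\sup_{F\in\mathcal{F}_{n,k}}\inf_F\Phi_H\le c^{n,k}(H)$ is automatic from the
inclusion $\mathcal{F}_{n,k}\subset\{\gamma(S^+):\gamma\in\Gamma_{n,k}\}$. For the
reverse inequality I use that, under the hypothesis (\ref{e:EH.2.1+}),
Proposition~\ref{prop:EH.1.4} furnishes $c^{n,k}(H)>0$. Hence any maximizing sequence
$\gamma_j\in\Gamma_{n,k}$ with $\inf_{\gamma_j(S^+)}\Phi_H\to c^{n,k}(H)$
eventually satisfies $\inf_{\gamma_j(S^+)}\Phi_H>0$, placing
$\gamma_j(S^+)\in\mathcal{F}_{n,k}$ for all large $j$, and the desired
inequality follows.

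For the positive invariance under $\varphi_u$ with $u\ge 0$, I fix
$F=\gamma(S^+)\in\mathcal{F}_{n,k}$. Two things must be verified: that the
$\Phi_H$-infimum on $\varphi_u(F)$ remains strictly positive, and that
$\varphi_u(F)=\gamma'(S^+)$ for some $\gamma'\in\Gamma_{n,k}$. The first is
immediate from $\tfrac{d}{du}\Phi_H(\varphi_u(x))=\|\nabla\Phi_H(\varphi_u(x))\|_E^2\ge 0$,
giving $\inf_{\varphi_u(F)}\Phi_H\ge\inf_F\Phi_H>0$. For the second I take
$\gamma':=\varphi_u\circ\gamma$ together with the concatenated homotopy
$\tilde\gamma_\tau:=\gamma_{2\tau}$ on $[0,1/2]$ and
$\tilde\gamma_\tau:=\varphi_{(2\tau-1)u}\circ\gamma$ on $[1/2,1]$. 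Condition (ii)
of Definition~\ref{defdeform} follows by a direct computation: substituting
$\gamma_v(x)=a(x,v)x^++b(x,v)x^0+c(x,v)x^-+K(x,v)$ into Lemma~\ref{lem:flow}
yields
$\varphi_s(\gamma(x))=e^s a(x,1)x^++b(x,1)x^0+e^{-s}c(x,1)x^-+M(s,x)$,
whose three scalar coefficients are strictly positive and whose remainder
$M(s,x)=e^sP^+K(x,1)+P^0K(x,1)+e^{-s}P^-K(x,1)+K(s,\gamma(x))$ is continuous
and sends bounded sets to precompact sets (combining Lemma~\ref{lem:flow} with
the compactness of $K$ from $\gamma$). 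Condition (i) on the first half of the
homotopy is inherited directly from $\gamma\in\Gamma_{n,k}$.

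The main obstacle will be verifying condition (i) on the second half, that is,
showing $(\varphi_s\circ\gamma)(x)^+\ne 0$ whenever $x^+\ne 0$ and
$s\in[0,u]$. On flow orbits starting in $F$ this is transparent, since $H\ge 0$
gives $\Phi_H(y)\le\tfrac{1}{2}\|y^+\|_E^2$, so $\Phi_H(y)>0$ forces $y^+\ne 0$;
and the $\Phi_H$-monotonicity of $\varphi_s$ propagates this positivity from
$F\subset\{\Phi_H>0\}$. To promote the property to all
$x\in E\setminus(E^-\oplus E^0)$, as required globally by (i), I follow the
technical device of Sikorav in \cite[\S3.4]{Sik90}: one modifies $\gamma$
outside a ball containing $F$ so that the composition with $\varphi_s$
satisfies (i) globally without altering $\varphi_u(F)=\gamma'(S^+)$. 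The
$C^2$ hypothesis with bounded second derivatives enters precisely at this
point, via the $C^1$-dependence of $\varphi_u$ on $x$, which is what makes
the modification give a continuous family of admissible deformations.
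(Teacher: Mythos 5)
You correctly identify the central obstacle — condition (i) of Definition~\ref{defdeform} must hold globally for $\varphi_s\circ\gamma$, yet the gradient flow $\varphi_s$ need not preserve $E\setminus(E^-\oplus E^0)$ — but you do not actually resolve it. Your appeal to ``the technical device of Sikorav'' of ``modifying $\gamma$ outside a ball containing $F$'' is not a construction: you do not say what modification is made, how it could guarantee $(\varphi_s\circ\gamma)(x)^+\neq 0$ for arbitrary $x$ with $x^+\neq 0$, nor why the modified object would still be an admissible deformation whose image of $S^+$ equals $\varphi_u(F)$. As written, this is a gap, not a proof.

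The paper resolves the obstacle with a different and cleaner device: it modifies the \emph{vector field}, not $\gamma$. Set $\alpha:=\inf(\Phi_H|F)>0$, pick a smooth cutoff $\rho:\mathbb{R}\to[0,1]$ with $\rho\equiv 0$ on $(-\infty,0]$ and $\rho\equiv 1$ on $[\alpha,\infty)$, and define $V(x)=x^+-x^--\rho(\Phi_H(x))\nabla\mathfrak{b}(x)$, with global flow $\Upsilon_u$. On $E^-\oplus E^0$ one has $\Phi_H\le 0$, so $V(x)=-x^-$ is tangent to $E^-\oplus E^0$; hence $\Upsilon_u$ preserves both $E^-\oplus E^0$ and its complement, and together with the structure formula of Lemma~\ref{lem:flow} this shows $\Upsilon_u\in\Gamma_{n,k}$ with no further verification of condition (i) required. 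On the superlevel set $\{\Phi_H\ge\alpha\}$ one has $V=\nabla\Phi_H$, and since $\Phi_H$ is nondecreasing along $\varphi_s$ this set is forward-invariant, whence $\Upsilon_u=\varphi_u$ there; in particular $\Upsilon_u(F)=\varphi_u(F)$. Then $(\Upsilon_u\circ\gamma)(S^+)=\varphi_u(F)$ with $\Upsilon_u\circ\gamma\in\Gamma_{n,k}$, and $\inf(\Phi_H|\varphi_u(F))\ge\inf(\Phi_H|F)>0$ by monotonicity, giving $\varphi_u(F)\in\mathcal{F}_{n,k}$. Note also that in this approach the $C^2$/bounded second derivative hypothesis is used only to ensure that the gradient flow $\varphi_u$ exists globally and has the Lemma~\ref{lem:flow} form, not for any ``$C^1$-dependence on $x$'' as you suggest.

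Your argument for the first assertion (the equality of suprema via positivity of $c^{n,k}(H)$ from Proposition~\ref{prop:EH.1.4}) and your verification of condition~(ii) for $\varphi_s\circ\gamma$ are both sound and agree in spirit with the paper; only the treatment of condition~(i) is missing, and that is precisely where the paper's cutoff-vector-field trick is doing all the work.
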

\begin{proof}
  Since $c^{n,k}(H)$  is a finite positive number
by Proposition~\ref{prop:EH.1.4} ,
the first claim follows from the arguments above Proposition~\ref{prop:coEHC.1}.

When $H$ has  bounded derivatives of second order, (\ref{e:function}) is satisfied naturally.
Then $\nabla\Phi_H$ satisfies the global Lipschitz condition,
and thus  has a unique global flow $\mathbb{R}\times E\to E: (u,x)\mapsto
\varphi_u(x)$ satisfying Lemma~\ref{lem:flow}, that is,
  $\varphi_u(x)=e^{-u}x^-+x^0+e^ux^++\widetilde{K}(u,x)$,
  where $\widetilde{K}:\mathbb{R}\times E\rightarrow E$ is 
  is continuous and maps bounded sets into precompact sets.
  For a set $F=\gamma(S^+)\in\mathcal{F}_{n,k}$ with $\gamma\in\Gamma_{n,k}$,  we have $\alpha:=\inf(\Phi_H|_{\gamma(S^+)})>0$ by the
  definition of $\mathcal{F}_{n,k}$. Let $\rho:\mathbb{R}\rightarrow [0,1]$ be a smooth function such that $\rho(s)=0$ for $s\le 0$ and $\rho(s)=1$ for $s\ge \alpha$. Define a vector field $V$ on $E$ by
  $$
  V(x)=x^+-x^--\rho(\Phi_H(x))\nabla \mathfrak{b}(x).
  $$
 Clearly $V$ is locally Lipschitz and has linear growth. These imply that $V$ has a unique global flow,
  denoted by $\Upsilon_u$.
  Moreover, it is obvious that $\Upsilon_u$
   has the same property as $\varphi_u$ described in Lemma~\ref{lem:flow}.
   For $x\in E^-\oplus E^0$, we have $\Phi_H(x)\le 0$ and
    hence  $V(x)=-x^-$, which implies that $\Upsilon_u(E^-\oplus E^0)=E^-\oplus E^0$ and
    $\Upsilon_u(E\setminus E^-\oplus E^0)=E\setminus E^-\oplus E^0$
    since $\Upsilon_u$ is a homeomorphism for each $u\in\mathbb{R}$. Therefore, $\Upsilon_u\in\Gamma_{n,k}$ for all $u\in\mathbb{R}$.

    Note that $V|_{\Phi_H^{-1}([\alpha,\infty])}=\nabla \Phi_H(x)$.
    For each $u\ge 0$ we have $\Upsilon_u(x)=\varphi_u(x)$ for any $x\in\Phi_H^{-1}([\alpha,\infty])$,
    and especially $\Upsilon_u(F)=\varphi_u(F)$,
    that is,  $(\Upsilon_u\circ\gamma)(S^+)=\varphi_u(F)$. Since $\Gamma_{n,k}$ is closed
    for the composition operation and
    $$
    \inf(\Phi_H|_{(\Upsilon_u\circ\gamma)(S^+)})=\inf(\Phi_H|_{\varphi_u(F)})\ge\inf(\Phi_H|_F)>0,
    $$
     we obtain $\varphi_u(F)\in \mathcal{F}_{n,k}$, that is,
        $\mathcal{F}_{n,k}$ is positively invariant under the flow $\varphi_u $ of $\nabla\Phi_H$.
  \end{proof}

Clearly, a $\mathbb{R}^{n,k}$-admissible $H\in C^2(\mathbb{R}^{2n},\mathbb{R}_{\ge 0})$
satisfies the conditions of Lemma~\ref{lem:PositiveInvariance}.

  \begin{theorem}\label{th:EH.1.6}
If  an $\mathbb{R}^{n,k}$-admissible $H\in C^2(\mathbb{R}^{2n},\mathbb{R}_{\ge 0})$
is nonresonant for $k=n$, and strong nonresonant for $k<n$,
then $c^{n,k}(H)$ is a positive critical value of $\Phi_H$.
\end{theorem}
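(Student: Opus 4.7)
The strategy is the classical Ekeland--Hofer--Sikorav minimax argument adapted to the coisotropic setting. By Proposition~\ref{prop:EH.1.3} and Proposition~\ref{prop:EH.1.4} (together with the admissibility conditions (H1) and (H2)), the number $c:=c^{n,k}(H)$ lies in $(0,+\infty)$. Assume for contradiction that $c$ is not a critical value of $\Phi_H$. The goal is to produce $\epsilon>0$ and $T>0$ so that the flow $\varphi_u$ of $\nabla\Phi_H$ (Lemma~\ref{lem:flow}) raises $\Phi_H$ from level $c-\epsilon$ to level $c+\epsilon$ on the relevant strip, while Lemma~\ref{lem:PositiveInvariance} guarantees that the minimax class $\mathcal{F}_{n,k}$ is preserved under $\varphi_u$; this will contradict the definition of $c$ as the supremum of $\inf_F\Phi_H$ over $F\in\mathcal{F}_{n,k}$.

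The decisive input is a Palais--Smale condition at level $c$: every sequence $(x_j)\subset E$ with $\Phi_H(x_j)\to c$ and $\nabla\Phi_H(x_j)\to 0$ admits a convergent subsequence. Boundedness is the delicate point, and it is precisely where (strong) nonresonance enters. Suppose toward contradiction that $\|x_j\|_E\to\infty$, and set $y_j:=x_j/\|x_j\|_E$. From (H2) the map $z\mapsto\nabla H(z)-2az-z_0$ is bounded on $\mathbb{R}^{2n}$, so the relation $\nabla\Phi_H(x_j)/\|x_j\|_E\to 0$, combined with the compactness of $\nabla\mathfrak{b}$ (Lemma~\ref{lem:Lipsch}) and of the Sobolev embedding (Lemma~\ref{lem:compactemb}), yields along a subsequence a weak limit $y\in E$ satisfying the asymptotic linear equation $\dot y = 2aJ_{2n}y$ in the sense of the coisotropic boundary data encoded in $H^{1/2}_{n,k}$. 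Reading this against the Fourier expansion (\ref{e:space2}) shows that a nonzero such $y$ exists if and only if $a\in\pi\mathbb{N}$ (when $k=n$, through the $V^{n,k}_1$-modes) or $a\in\tfrac{\pi}{2}\mathbb{N}$ (when $k<n$, combining the $V^{n,k}_0$- and $V^{n,k}_1$-modes). The (strong) nonresonance hypothesis excludes precisely these values, forcing $y=0$ and contradicting $\|y\|_E=1$. With boundedness secured, compactness of $\nabla\mathfrak{b}$ applied to $x_j^+-x_j^- = \nabla\mathfrak{b}(x_j)+o(1)$, together with finite-dimensionality of $E^0\subset\mathbb{R}^{n,k}$, delivers the convergent subsequence.

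With (PS) at level $c$ in hand, the standard deformation lemma supplies $\epsilon>0$ with $\inf\{\|\nabla\Phi_H(x)\|_E : c-\epsilon\le\Phi_H(x)\le c+\epsilon\}>0$, and consequently a time $T>0$ such that $\Phi_H(\varphi_T(x))\ge c+\epsilon$ whenever $\Phi_H(x)\ge c-\epsilon$ (using that $\Phi_H$ is strictly increasing along the flow $\varphi_u$ at rate $\|\nabla\Phi_H\|^2$). Now choose $F=\gamma(S^+)\in\mathcal{F}_{n,k}$ with $\inf_F\Phi_H>c-\epsilon$, which is possible by the definition of the supremum. Lemma~\ref{lem:PositiveInvariance} ensures $\varphi_T(F)\in\mathcal{F}_{n,k}$, and by construction $\inf_{\varphi_T(F)}\Phi_H\ge c+\epsilon>c$, contradicting $c=\sup_{F\in\mathcal{F}_{n,k}}\inf_F\Phi_H$. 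Hence $c$ must be a critical value.

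The main obstacle is the boundedness half of the Palais--Smale condition: it is the unique place where the (strong) nonresonance hypothesis is used, and the verification requires carefully matching the spectral parameter $a$ from (H2) against the precise Fourier mode structure of $H^{1/2}_{n,k}$ dictated by the coisotropic subspaces $V^{n,k}_0$ and $V^{n,k}_1$, which is exactly what distinguishes the case $k<n$ (spectrum $\tfrac{\pi}{2}\mathbb{N}$) from the classical loop-space case $k=n$ (spectrum $\pi\mathbb{N}$). Everything downstream, including the positive invariance of $\mathcal{F}_{n,k}$ under $\varphi_u$, has already been prepared in Section~\ref{sec:def}.
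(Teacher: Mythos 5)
Your proposal is correct and follows essentially the same route as the paper: establish the Palais--Smale condition via the blow-up/rescaling argument, using (strong) nonresonance of the asymptotic Hamiltonian $Q(z)=a|z|^2+\langle z,z_0\rangle+b$ to rule out nonzero rescaled limits, then invoke positive invariance of $\mathcal{F}_{n,k}$ (Lemma~\ref{lem:PositiveInvariance}) and a standard deformation argument. The paper isolates the (PS) step as Lemma~\ref{lem:PSmale} and then remarks that the theorem "follows by a standard minimax argument," which is exactly the flow-pushing contradiction you spell out. The only cosmetic difference is that the paper works with the shifted sequence $y_j=x_j/\|x_j\|_E - z_0/(2a)$, which cancels the linear term of $\nabla Q$ and gives the cleaner normalization $\|y+z_0/(2a)\|_E=1$; your unshifted $y_j=x_j/\|x_j\|_E$ leads to the same limiting equation $\dot y=2aJ_{2n}y$ since $z_0/\|x_j\|_E\to 0$, and nonconstancy of the limit is still immediate from $\|y\|_E=1$. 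Your spectral reading that a nonzero coisotropic solution forces $a\in\pi\mathbb{N}$ (for $k=n$, only $V^{n,n}_1$-modes) and $a\in\tfrac{\pi}{2}\mathbb{N}$ (for $k<n$) is exactly the point where nonresonance is used, matching the paper.
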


The case of $k=n$ was proved in \cite[Section~II, Proposition~2]{EH89} (see also
\cite[Section~3.4, Proposition~1]{Sik90}).
It remains to prove the case $k<n$. By Lemma~\ref{lem:Lipsch},
the functional $\Phi_H$ is $C^{1,1}$ and its gradient $\nabla \Phi_H$
satisfies a global Lipschitz condition on $E$.
By a standard minimax argument, Theorem~\ref{th:EH.1.6} follows from
Lemma~\ref{lem:PositiveInvariance} and the following

\begin{lemma}\label{lem:PSmale}
  If  an $\mathbb{R}^{n,k}$-admissible $H\in C^1(\mathbb{R}^{2n},\mathbb{R}_{\ge 0})$
is  strong nonresonant,
 then each sequence $(x_j)\subset E$ with $\nabla\Phi_{H}(x_j)\to 0$
 has a convergent subsequence. In particular,  $\Phi_H$ satisfies the (PS) condition.
\end{lemma}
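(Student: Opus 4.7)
The plan is to express $\nabla\Phi_H$ as the sum of an invertible bounded linear operator $A$ and a uniformly bounded compact nonlinear perturbation, so that any sequence $(x_j)\subset E$ with $\nabla\Phi_H(x_j)\to 0$ is automatically bounded and contains a convergent subsequence. Using assumption (H2), I would first write $H(z)=a|z|^2+\langle z,z_0\rangle+b+h(z)$ with $h\in C^1_c(\mathbb{R}^{2n})$, so that $\nabla H(z)=2az+z_0+g(z)$ where $g:=\nabla h$ is continuous, bounded, and compactly supported. Since $\nabla\mathfrak{b}(x)=\jmath^\ast(\nabla H\circ x)$, this yields the identity
$$
\nabla\Phi_H(x)=Ax-c-K(x),\qquad x\in E,
$$
where $A:=\nabla\mathfrak{a}-2a\,\jmath^\ast\jmath$, the fixed vector $c:=\jmath^\ast(z_0\cdot\mathbf{1})\in E$, and $K(x):=\jmath^\ast(g\circ x)$. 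By Lemma~\ref{lem:compactemb}, $\jmath$ and $\jmath^\ast$ are compact, so $2a\,\jmath^\ast\jmath$ is compact, and the uniform pointwise bound $\|g\circ x\|_{L^2}\le\|g\|_\infty$ combined with compactness of $\jmath^\ast$ makes $K:E\to E$ map bounded sets to precompact sets.

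The next step is to prove that $A$ is invertible. Both $\nabla\mathfrak{a}$ and $\jmath^\ast\jmath$ preserve the Fourier decomposition underlying (\ref{e:space2}), so $A$ is self-adjoint and diagonal in that basis. A direct computation of $L^2$- versus $E$-norms of the basis elements shows that $A$ acts as $-2a\,\mathrm{Id}$ on $E^0=\mathbb{R}^{n,k}$, as the scalar $\mathrm{sgn}(m)-\frac{2a}{\pi|m|}$ on each $V_0^{n,k}$-mode $e^{m\pi tJ_{2n}}a_m$ with $m\ne 0$, and as $\mathrm{sgn}(m)-\frac{a}{\pi|m|}$ on each $V_1^{n,k}$-mode $e^{2m\pi tJ_{2n}}b_m$ with $m\ne 0$. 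These eigenvalues accumulate only at $\pm 1$ as $|m|\to\infty$, while the strong nonresonance hypothesis $a\notin\pi\mathbb{N}/2$ rules out both $\frac{2a}{\pi|m|}=1$ and $\frac{a}{\pi|m|}=1$ for every positive integer $m$, and clearly $-2a\neq 0$. Hence $\inf_{\lambda\in\sigma(A)}|\lambda|>0$ and $A^{-1}:E\to E$ is bounded.

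Finally, from the identity $Ax_j=\nabla\Phi_H(x_j)+c+K(x_j)$, the uniform bound $\|K(x_j)\|_E\le\|\jmath^\ast\|_{L^2\to E}\|g\|_\infty$ together with $\nabla\Phi_H(x_j)\to 0$ shows that $(x_j)$ is bounded in $E$; the invertibility of $A$ is what converts this asymptotic estimate into boundedness without any separate a priori control. Passing to a subsequence, compactness of $K$ gives convergence of $K(x_j)$ in $E$, so $Ax_j$ converges, and applying the bounded operator $A^{-1}$ yields convergence of $(x_j)$. The (PS) condition follows at once. The main obstacle I anticipate is the spectral analysis of $A$: one must carefully track the different frequency scalings of $V_0^{n,k}$- and $V_1^{n,k}$-modes (frequencies $m\pi$ versus $2m\pi$) and verify that $a\notin\pi\mathbb{N}/2$—rather than $a\notin\pi\mathbb{N}$, which would suffice in the free loop case $k=n$—is precisely the sharp threshold excluding both resonance families simultaneously.
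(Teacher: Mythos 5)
Your argument is correct, and it takes a genuinely different route from the paper. The paper's proof is a dichotomy argument: if $(x_j)$ is bounded one uses compactness of $\nabla\mathfrak{b}$ and finite-dimensionality of $E^0$ directly; if $(x_j)$ is unbounded one normalizes by setting $y_j=x_j/\|x_j\|_E-z_0/(2a)$, extracts a limit $y$, shows it solves $\dot y=J_{2n}\nabla Q(y)$ with the relative boundary conditions and $\|y+z_0/(2a)\|_E=1$, and then observes that the nonconstant solution $e^{2aJ_{2n}t}(y(0)+z_0/(2a))$ forces $a\in\pi\mathbb{N}/2$, contradicting strong nonresonance. Your approach instead isolates the linear part at infinity once and for all, writing $\nabla\Phi_H=A-c-K$ with $A=\nabla\mathfrak{a}-2a\,\jmath^\ast\jmath$, and shows $A$ is boundedly invertible via the mode-by-mode spectral computation (which you do carefully, tracking the $m\pi$ vs.\ $2m\pi$ frequency scaling of the $V_0^{n,k}$- and $V_1^{n,k}$-modes and reading off eigenvalues $\mathrm{sgn}(m)-2a/(\pi|m|)$, $\mathrm{sgn}(m)-a/(\pi|m|)$, and $-2a$). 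Since $K$ has uniformly bounded and precompact range, boundedness of $(x_j)$ and existence of a convergent subsequence then drop out simultaneously. The payoff of your route is conceptual clarity: it makes explicit that strong nonresonance $a\notin\pi\mathbb{N}/2$ is exactly the condition for invertibility of the asymptotic linearization, and it handles the bounded and unbounded cases by one uniform estimate instead of a case split. The paper's route is more elementary in that it avoids the Fourier-mode diagonalization of $A$ and relies only on the compactness of $\nabla\mathfrak{b}$ and the explicit flow of the linear system, at the cost of the normalization trick. One small point worth recording in your write-up: the continuity of $K:E\to E$ (used implicitly when extracting the convergent subsequence) follows from $\jmath^\ast$ being bounded together with $L^2$-continuity of the Nemytskii operator $x\mapsto g\circ x$, which holds since $g$ is continuous, bounded and compactly supported; this is the same fact the paper invokes through Lemma~\ref{lem:Lipsch}.
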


\begin{proof}
The functional $\mathfrak{b}$ is differentiable. Its gradient $\nabla \mathfrak{b}$ is compact and
satisfies a global Lipschitz condition on $E$.
   Since $\nabla\Phi_{H}(x)=x^+-x^--\nabla \mathfrak{b}(x)$ for any $x\in E$, we have
   \begin{equation}\label{e:critic}
   x_j^+-x_j^--\nabla \mathfrak{b}(x_j)\rightarrow 0.
   \end{equation}
   \noindent{\bf Case 1}. {\it $(x_j)$ is bounded in $E$}. Then $(x^0_j)$ is a bounded sequence in
   the space $\mathbb{R}^{n,k}$ of finite dimension.
    Hence $(x^0_j)$ has a convergent subsequence. Moreover, since $\nabla \mathfrak{b}$ is compact,
      $(\nabla \mathfrak{b}(x_j))$ has a convergent
   subsequence, and so both $(x_j^+)$ and $(x_j^-)$ have convergent subsequences in $E$.
   It follows that $(x_j)$ has a convergent subsequence.

   \noindent{\bf Case 2}. {\it $(x_j)$ is unbounded in $E$}.  Without loss of generality, we may assume
   $\lim_{j\rightarrow +\infty }\|x_j\|_E=+\infty$.
      For $z_0\in\mathbb{R}^{n,k}$  defined as in (H2), let
   $$
   y_j=\frac{x_j}{\|x_j\|_E}-\frac{1}{2a}z_0.
    $$
     Then $|y_j^0|\le\|y_j\|_E\le 1+|\frac{z_0}{2a}|$, and (\ref{e:critic}) implies
   \begin{equation}\label{e:critic+}
   y_j^+-y_j^--\jmath^{\ast}\left(\frac{ \nabla H(x_j)}{\|x_j\|_E}\right)\rightarrow 0.
   \end{equation}
   Also by (H2) there exist constants $C_1$ and $C_2$ such that
   $$
   \left\|\frac{ \nabla H(x_j)}{\|x_j\|_E}\right\|_{L^2}^2\leq \frac{8a^2 \|x_j\|_{L^2}^2+C_1 }{\|x_k\|_E^2}\leq C_2
   $$
  that is, $(\nabla H(x_j)/\|x_j\|_E)$ is bounded in $L^2$. Hence the sequence $\jmath^{\ast}\left( \nabla H(x_j)/\|x_j\|_E\right)$ is compact.
   (\ref{e:critic+}) implies that $(y_j)$ has a convergent subsequence in $E$. Without loss
   of generality, we may assume that  $y_{j}\rightarrow  y$ in $E$. Since
    (H2) implies that $H(z)=Q(z):=a|z|^2+ \langle z, z_0\rangle+ b$
        for $|z|$ sufficiently large, there exists a constant $C>0$ such that
   $|\nabla H(z)-\nabla Q(z)|\leq C$ for all $z\in\mathbb{R}^{2n}$.
      It follows that as $j\to\infty$,
   \begin{eqnarray*}
    \left\|\frac{\nabla H(x_j)}{\|x_j\|_{E}}-\nabla Q(y)\right\|_{L^2}
   &\leq&\left\|\frac{\nabla H(x_j)}{\|x_j\|_{E}}-\nabla Q(y_j)\right\|_{L^2}+\left\|\nabla Q(y_j)-\nabla Q(y)\right\|_{L^2}\\
   &\leq& \left\|\frac{\nabla H(x_j)-\nabla Q(x_j)}{\|x_j\|_E}\right\|_{L^2}+\frac{|z_0|}{\|x_j\|_E}+2a\|y_j-y\|_{L^2}\\
   &\leq &\frac{C}{\|x_j\|_E}+\frac{|z_0|}{\|x_j\|_E}+2a\|y_j-y\|_{L^2}
   \rightarrow 0.
   \end{eqnarray*}
   This implies that $\jmath^{\ast}\left(\nabla H(x_k)/\|x_k\|_{E}\right)$ tends to $\jmath^{\ast}(\nabla Q(y))$ in $E$, and thus we arrive at
   \begin{eqnarray*}
   y^+-y^--\jmath^{\ast}(\nabla Q (y))=0\quad\hbox{and}\quad \left\|y+\frac{z_0}{2a} \right\|_E=1.
   \end{eqnarray*}
  Then $y$ is smooth and satisfies
    \begin{eqnarray*}
   \dot{y}=J_{2n}\nabla Q(y)\quad\hbox{and}\quad y(1)\sim y(0),\; y(0), \, y(1)\in\mathbb{R}^{n,k}.
   \end{eqnarray*}
   Clearly $y(t)$ is given by
   $$
   y(t)+\frac{1}{2a}z_0=e^{2aJ_{2n}t}(y(0)+\frac{1}{2a}z_0).
   $$
   Since
   $\|y+\frac{1}{2a}z_0\|_{E}=1$
   implies that $y+\frac{1}{2a}z_0$
    is nonconstant, using the boundary condition satisfied by $y$ and the assumption that $z_0\in \mathbb{R}^{n,k}$,
    we deduce that $2a\in m\mathbb{N}\pi$.
     This gives rise to a contradiction because $H$ is strong non-resonant.
\end{proof}

Corresponding to \cite[Section~3.5, Lemma]{Sik90} we have

\begin{lemma}\label{lem:empty}
 Suppose that  $H:\mathbb{R}^{2n}\to\mathbb{R}$ is of class $C^{2n+2}$  and that
$\nabla H:\mathbb{R}^{2n}\to\mathbb{R}^{2n}$ satisfies a global Lipschitz condition.
  Then the set of critical values of $\Phi_{H}$ has empty interior in $\mathbb{R}$.
\end{lemma}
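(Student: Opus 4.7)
The plan is to adapt Sikorav's argument \cite[Section~3.5, Lemma]{Sik90}, which treats the case $k=n$, to the coisotropic case $0\le k<n$.

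First I would reduce to a finite-dimensional picture. Since $\nabla H$ is globally Lipschitz and $H\in C^{2n+2}$, the Hamiltonian flow $\phi^t$ of $X_H=J_{2n}\nabla H$ is globally defined and $C^{2n+1}$ in its initial argument. By Lemma~\ref{lem:critic}, critical points of $\Phi_H$ on $E=H^{1/2}_{n,k}$ correspond bijectively, via $x\leftrightarrow x(0)$, with the set
$$
N:=\{z\in\mathbb{R}^{n,k}\,|\,\phi^1(z)-z\in V_0^{n,k}\},
$$
and the associated critical value is
$$
\alpha(x_0):=\int_0^1\Bigl(\tfrac12\langle -J_{2n}\dot x,x\rangle-H(x)\Bigr)dt,\qquad x(t)=\phi^t(x_0).
$$
Fixing a linear projection $\Pi:\mathbb{R}^{2n}\to\mathbb{R}^{n+k}$ with $\ker\Pi=V_0^{n,k}$ and setting $\Psi(x_0):=\Pi(\phi^1(x_0)-x_0)$, one has $N=\Psi^{-1}(0)$, and both $\Psi$ and $\alpha$ are of class $C^{2n+1}$ on $\mathbb{R}^{n,k}\cong\mathbb{R}^{n+k}$.

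Next I would establish the key identity. Integration by parts combined with the critical point equation $\dot x=J_{2n}\nabla H(x)$ yields
$$
d\alpha(x_0)\cdot v=\tfrac12\bigl[\omega_0(d\phi^1(x_0)\cdot v,\;\phi^1(x_0))-\omega_0(v,x_0)\bigr],\qquad v\in\mathbb{R}^{n,k}.
$$
For $x_0\in N$ and $v\in\ker d\Psi(x_0)$ I would write $\phi^1(x_0)=x_0+w$ and $d\phi^1(x_0)\cdot v=v+u$ with $u,w\in V_0^{n,k}$. Since $V_0^{n,k}=(\mathbb{R}^{n,k})^{\omega_0}$ is the symplectic orthogonal of $\mathbb{R}^{n,k}$ and is itself isotropic, each of the cross terms $\omega_0(u,x_0)$, $\omega_0(v,w)$ and $\omega_0(u,w)$ vanishes, leaving $d\alpha(x_0)\cdot v=0$ for every $v\in\ker d\Psi(x_0)$.

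I would then conclude via a stratified Sard argument. Stratify $N$ by the rank $r$ of $d\Psi$: where $r=n+k$, the inverse function theorem makes $N$ locally discrete, contributing only countably many $\alpha$-values; where $r<n+k$, the rank theorem realizes $N$ locally as a $C^1$ submanifold of dimension $n+k-r$ whose tangent space lies in $\ker d\Psi$, so that by the key identity $\alpha$ is locally constant there. Since each stratum is separable, it has at most countably many connected components, and summing over finitely many ranks makes $\alpha(N)$ countable, hence empty of interior in $\mathbb{R}$.

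The main obstacle is rigorously justifying the rank stratification in the $C^{2n+1}$ category, because loci of constant rank are not automatically smooth submanifolds without additional regularity. The standard remedy, paralleling Sikorav, is to factor $\phi^1=\phi^{1/M}\circ\cdots\circ\phi^{1/M}$ with $M$ large enough that each near-identity factor admits a generating function; the critical set and action of $\Phi_H$ then reduce to the critical set and values of an explicit $C^{2n+1}$ function $W$ on a finite-dimensional linear subspace (cut out by the leafwise-chord constraint using that $V_0^{n,k}$ is the symplectic orthogonal of $\mathbb{R}^{n,k}$), to which the classical Morse--Sard theorem applies directly---and the regularity $C^{2n+1}$ is precisely what forces the hypothesis $H\in C^{2n+2}$.
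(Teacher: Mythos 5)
Your first route (the stratified Sard argument) has exactly the gap you flag: after the key identity you need a $C^1$ stratification of $N=\Psi^{-1}(0)$ by the rank of $d\Psi$, and the constant-rank loci are not manifolds in general, so the argument as stated does not close. The key identity itself is correct, and the computation that $\omega_0(v,w)=\omega_0(u,x_0)=\omega_0(u,w)=0$ because $V_0^{n,k}=(\mathbb{R}^{n,k})^{\omega_0}$ is isotropic is exactly right; the problem is only in the Sard step.

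Your proposed remedy --- factoring $\phi^1$ into near-identity maps and using generating functions, \`a la Sikorav --- is a legitimate alternative route, but you do not execute it, and it is not what the paper does. The paper sidesteps both the stratification issue and generating functions entirely by constructing an explicit $C^{2n}$ map $\Omega:\mathbb{R}^{n,k}\to C^1_{n,k}([0,1],\mathbb{R}^{2n})$. Concretely, pick a cutoff $g:[0,1]\to[0,1]$ equal to $1$ near $0$ and $0$ near $1$, let $P_0,P_1$ be the orthogonal projections onto $V_0^{n,k},V_1^{n,k}$, and set
$$
\psi(t,z)=g(t)\phi^t(z)+(1-g(t))\phi^{t-1}\bigl(P_0\phi^1(z)+P_1 z\bigr),\qquad \Omega(z)=\psi(\cdot,z).
$$
For every $z\in\mathbb{R}^{n,k}$ one has $\psi(0,z)=z$, $\psi(1,z)=P_0\phi^1(z)+P_1z\in\mathbb{R}^{n,k}$, and $\psi(1,z)-\psi(0,z)=P_0(\phi^1(z)-z)\in V_0^{n,k}$, so $\Omega(z)$ satisfies the leafwise boundary conditions unconditionally; and when $z\in N$ one checks $P_0\phi^1(z)+P_1z=\phi^1(z)$, so $\Omega(z)=\phi^{\cdot}(z)$ recovers the actual orbit. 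Hence every critical point of $\Phi_H$ lies in $\Omega(\mathbb{R}^{n,k})$ with the same action value, and the critical value set of $\Phi_H$ is contained in that of the $C^{2n}$ function $\Phi_H\circ\Omega$ on the $(n+k)$-dimensional space $\mathbb{R}^{n,k}$, to which the classical Morse--Sard theorem applies directly ($2n> n+k$). That one-step finite-dimensional reduction is what buys the conclusion without any stratification or symplectic generating-function machinery, and it is why $C^{2n+2}$ regularity of $H$ (giving $C^{2n}$ regularity of $\Omega$) is the hypothesis needed.
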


\begin{proof}
The method is similar to that of \cite[Lemma~3.5]{JinLu1917}. For clearness we give it in details.
By Lemma~\ref{lem:Lipsch}, $\Phi_H$ is $C^{1,1}$.
Lemma~\ref{lem:critic} implies that
 all critical points of $\Phi_{H}$ sit in $C^{2n+2}_{n,k}([0,1],\mathbb{R}^{2n})$.
 Thus  the restriction of $\Phi_{H}$ to $C^{1}_{n,k}([0,1],\mathbb{R}^{2n})$, denoted by $\hat\Phi_{H}$,
  and  $\Phi_{H}$ have the same critical value sets.
 As in the proof of \cite[Claim~4.4]{JinLu1916} we can deduce that $\hat\Phi_{H}$ is of class $C^{2n+1}$.

Let $P_0$ and $P_1$ be the orthogonal projections of $\mathbb{R}^{2n}$ to the spaces $V_0^{n,k}$
and $V_1^{2k}$ in (\ref{e:V0}) and (\ref{e:V1}), respectively. Take a smooth
$g:[0,1]\rightarrow [0,1]$
 such that $g$ equals $1$ (resp. $0$)
near $0$ (resp. $1$). Denote by $\phi^t$ the flow of $X_{H}$.
 Since $X_{H}$ is $C^{2n+1}$, we have a $C^{2n+1}$ map
$$
\psi:[0,1]\times\mathbb{R}^{n,k}\rightarrow \mathbb{R}^{2n},\;(t,z)\mapsto g(t)\phi^t(z)+(1-g(t))\phi^{t-1}(P_0\phi^1(z)+P_1z).
$$
For any $z\in\mathbb{R}^{n,k}$, since $\psi(0,z)=\phi^0(z)=z$ and $\psi(1,z)=P_0\phi^1(z)+P_1z$, we have
$$
\psi(1,z),\;\psi(0,z)\in\mathbb{R}^{n,k}\quad\hbox{and}\quad \psi(1,z)\sim\psi(0,z).
$$
These and \cite[Corollary~B.2]{JinLu1916} show that $\psi$ gives rise to a $C^{2n}$ map
$$
\Omega:\mathbb{R}^{n,k}\to C^1_{n,k}([0,1],\mathbb{R}^{2n}),\quad z\mapsto \psi(\cdot,z).
$$
Hence
$\Phi_{H}\circ\Omega: \mathbb{R}^{n,k}\to\mathbb{R}$
is of class $C^{2n}$. By Sard's Theorem we deduce that the critical value sets of $\Phi_{H}\circ\Omega$ is nowhere dense
(since $\dim \mathbb{R}^{n,k}<2n$).

Let $z\in\mathbb{R}^{n,k}$ be such that  $\phi^1(z)\in\mathbb{R}^{n,k}$ and $\phi^1(z)\sim z$.
Then $P_0\phi^1(z)-P_0z=\phi^1(z)-z$ and therefore
$P_0\phi^1(z)+P_1z=\phi^1(z)$,
which implies
$\psi(t,z)=\phi^t(z)\;\forall t\in [0,1]$.

For a critical point $y$ of $\Phi_H$, that is, $y\in C^{2n+2}_{n,k}([0,1],\mathbb{R}^{2n})$ and solves
$\dot{y}=J_{2n}\nabla H(y)=X_H(y)$,
%
with $z_y:=y(0)\in \mathbb{R}^{n,k}$ we have  $y(t)=\phi^t(z_y)\;\forall t\in [0,1]$, which implies that
 $\phi^1(z_y)\in\mathbb{R}^{n,k}$, $\phi^1(z_y)\sim z_y$ and therefore
 $y=\psi(\cdot,z_y)=\Omega(z_y)$. Hence $z_y$ is a critical point of $\Phi_{H}\circ\Omega$
 and $\Phi_{H}\circ\Omega(z_y)=\Phi_{H}(y)$. Thus
the critical value set of $\Phi_{H}$ is contained in that of $\Phi_{H}\circ\Omega$.
The desired claim is obtained.
\end{proof}

Having this lemma we can prove the following proposition, which
corresponds to Proposition~3 in \cite[Section~II]{EH89}.

\begin{proposition}\label{prop:Sinvariance}
 Let $H\in C^{2n+2}(\mathbb{R}^{2n},\mathbb{R}_{\ge 0})$ be
$\mathbb{R}^{n,k}$-admissible with $k<n$ and  strong nonresonant. Suppose that
$[0,1]\ni s\mapsto\psi_s$ is a smooth homotopy of the identity in ${\rm Symp}(\mathbb{R}^{2n},\omega_0)$ satisfying
\begin{equation}\label{e:Sinvariance}
\psi_s(\mathbb{R}^{n,k})=\mathbb{R}^{n,k},\quad
\psi_s(w+ V^{n,k}_0)=\psi_s(w)+ V^{n,k}_0\quad\forall w\in\mathbb{R}^{n,k}
\end{equation}
and
$$
\psi_s(z)=z+ w_s\quad\forall z\in \mathbb{R}^{2n}\setminus B^{2n}(0, R),
$$
where $R>0$ and $[0,1]\ni s\mapsto w_s$ is a smooth path in $\mathbb{R}^{n,k}$.
Then $s\mapsto c^{n,k}(H\circ\psi_s)$ is constant. Moreover, the same conclusion holds true if
all $\psi_s$ are replaced by translations $\mathbb{R}^{2n}\ni z\mapsto z+ w_s$, where
$[0,1]\ni s\mapsto w_s$ is a smooth path in $\mathbb{R}^{n,k}$.
In particular, $c^{n,k}(H(\cdot+ w))=c^{n,k}(H)$ for any $w\in\mathbb{R}^{n,k}$.
\end{proposition}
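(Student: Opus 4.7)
The plan is to establish Proposition~\ref{prop:Sinvariance} in three stages. First I handle the pure translation case $\psi_s=T_{w_s}$ (which also delivers the final ``in particular'' claim). Second I treat the case where each $\psi_s$ lies in $\mathrm{Symp}^c(\mathbb{R}^{2n},\omega_0)$ by combining the characterization of $c^{n,k}(H\circ\psi_s)$ as a critical value of $\Phi_{H\circ\psi_s}$ with the nowhere-density of critical values coming from Lemma~\ref{lem:empty}. Finally the general case will reduce to the previous two via the decomposition $\psi_s=T_{w_s}\circ\widetilde\psi_s$ with $\widetilde\psi_s:=T_{-w_s}\circ\psi_s\in\mathrm{Symp}^c(\mathbb{R}^{2n},\omega_0)$ still satisfying (\ref{e:Sinvariance}).

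For the translation step with $w\in\mathbb{R}^{n,k}=E^0$, the formula $T_w(x)=1\cdot x^++1\cdot x^0+1\cdot x^-+w$ together with the linear homotopy $u\mapsto T_{uw}$ exhibits $T_w$ as an element of $\Gamma_{n,k}$; since $\Gamma_{n,k}$ is closed under composition and $T_w^{-1}=T_{-w}\in\Gamma_{n,k}$, the map $h\mapsto T_w\circ h$ is a bijection of $\Gamma_{n,k}$ onto itself. Because $w\in E^0$ gives $\mathfrak{a}(x+w)=\mathfrak{a}(x)$, we have $\Phi_{H\circ T_w}(x)=\Phi_H(T_w x)$, and reindexing the min-max defining $c^{n,k}$ by $h'=T_w\circ h$ produces $c^{n,k}(H\circ T_w)=c^{n,k}(H)$.

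For the compactly supported case, let $(\phi_t)_{t\in[0,1]}$ be a smooth path from the identity in $\mathrm{Symp}^c(\mathbb{R}^{2n},\omega_0)$ with each $\phi_t$ satisfying (\ref{e:Sinvariance}). Since $\phi_t\equiv\mathrm{id}$ outside a common compact set, $H\circ\phi_t$ shares the quadratic behaviour of $H$ at infinity, so it remains $\mathbb{R}^{n,k}$-admissible and strong nonresonant, and Theorem~\ref{th:EH.1.6} makes $c^{n,k}(H\circ\phi_t)$ a positive critical value of $\Phi_{H\circ\phi_t}$. The map $y\mapsto x:=\phi_t\circ y$ sends smooth critical points of $\Phi_{H\circ\phi_t}$ (Hamiltonian trajectories, by Lemma~\ref{lem:critic}) to smooth critical points of $\Phi_H$, preserving the boundary conditions via $\phi_t(\mathbb{R}^{n,k})=\mathbb{R}^{n,k}$ and $\phi_t(w+V_0^{n,k})=\phi_t(w)+V_0^{n,k}$, and preserving the action: writing $\phi_t^*\lambda-\lambda=dF_t$ with $\lambda=\tfrac12(p\,dq-q\,dp)$, the orthogonality $\langle J_{2n}\mathbb{R}^{n,k},V_0^{n,k}\rangle=0$ makes both $\lambda$ and $\phi_t^*\lambda$ annihilate $V_0^{n,k}$ at points of $\mathbb{R}^{n,k}$, so $F_t$ is constant along every leaf $w+V_0^{n,k}$ and hence $A(x)=A(y)$. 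Thus every critical value of $\Phi_{H\circ\phi_t}$ lies in the critical value set of $\Phi_H$, which has empty interior by Lemma~\ref{lem:empty}. Meanwhile $\|H\circ\phi_t-H\circ\phi_{t_0}\|_\infty\to 0$ as $t\to t_0$ (uniform convergence on the common compact support, using the local Lipschitz character of $H$), so Proposition~\ref{prop:coEHC.1}(ii) gives continuity of $t\mapsto c^{n,k}(H\circ\phi_t)$; a continuous map $[0,1]\to\mathbb{R}$ whose image lies in a closed nowhere-dense set has connected, hence singleton, image, and we conclude that $c^{n,k}(H\circ\phi_t)$ is constant in $t$.

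To finish, I verify that $\widetilde\psi_s=T_{-w_s}\circ\psi_s$ equals $\mathrm{id}$ outside the same ball $B^{2n}(0,R)$ from the hypothesis on $\psi_s$, depends smoothly on $s$, equals $\mathrm{id}$ at $s=0$ (since $\psi_0=\mathrm{id}$ forces $w_0=0$), and still satisfies (\ref{e:Sinvariance}) because translations by vectors of $\mathbb{R}^{n,k}$ preserve $\mathbb{R}^{n,k}$ and its $V_0^{n,k}$-fibration. Applying the compactly supported case to the family $(\widetilde\psi_u)_{u\in[0,s]}$ and the admissible strong-nonresonant Hamiltonian $K:=H\circ T_{w_s}$ yields $c^{n,k}(K\circ\widetilde\psi_s)=c^{n,k}(K)$; combined with the translation identity this delivers $c^{n,k}(H\circ\psi_s)=c^{n,k}(H\circ T_{w_s})=c^{n,k}(H)$ for every $s$. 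I expect the action-preservation step to be the main obstacle: since $y\mapsto\phi_t\circ y$ is not naturally a $C^1$ map on all of $E=H^{1/2}_{n,k}$, the equality of critical values must be extracted pointwise on smooth critical points (produced by Lemma~\ref{lem:critic}) rather than through a global conjugacy of $\Phi_{H\circ\phi_t}$ with $\Phi_H$ on $E$.
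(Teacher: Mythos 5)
Your proof is correct, but it takes a genuinely different route from the paper's, in two respects. First, the paper's proof handles translations by essentially rerunning the critical-value argument, whereas you dispatch the translation case directly by observing that for $w\in\mathbb{R}^{n,k}=E^0$ one has $T_w\in\Gamma_{n,k}$, $\Phi_{H\circ T_w}=\Phi_H\circ T_w$ on $E$ (because $\mathfrak{a}$ ignores the $E^0$-component), and $h\mapsto T_w\circ h$ is a bijection of $\Gamma_{n,k}$, so the min-max is literally unchanged; this is shorter and cleaner than what the paper does. Second, for the key identity $A(\psi_s\circ y)=A(y)$ the paper caps the chord with a segment in $\mathbb{R}^{n,k}$ to get a loop, fills the loop by a disc $u$, and uses Stokes plus $\int_D(\psi_s\circ u)^\ast\omega_0=\int_D u^\ast\omega_0$; you instead write $\phi_t^\ast\lambda-\lambda=dF_t$ and show $dF_t$ annihilates $V^{n,k}_0$ along $\mathbb{R}^{n,k}$ (using that $\lambda$ does, via the orthogonality $\langle J_{2n}\mathbb{R}^{n,k},V^{n,k}_0\rangle=0$, and that $d\phi_t$ preserves the $V^{n,k}_0$-directions along $\mathbb{R}^{n,k}$ by the second condition in (\ref{e:Sinvariance})), so $F_t$ is constant on leaves and $A(x)-A(y)=F_t(y(1))-F_t(y(0))=0$. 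This avoids the disc-filling construction entirely. Your decomposition $\psi_s=T_{w_s}\circ\widetilde\psi_s$ with $\widetilde\psi_s\in\mathrm{Symp}^c$ satisfying (\ref{e:Sinvariance}) is also a clean way to separate the two mechanisms, whereas the paper simply runs the argument once on the full $\psi_s$ and notes afterwards that the same works for pure translations. Both approaches invoke the same core ingredients (admissibility and strong nonresonance of $H\circ\psi_s$, Theorem~\ref{th:EH.1.6}, continuity from Proposition~\ref{prop:coEHC.1}(ii), and the empty-interior statement of Lemma~\ref{lem:empty}), so the final step — a continuous $\mathbb{R}$-valued function with connected image in a set of empty interior is constant — is identical. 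One small remark: when you pass to the family $(\widetilde\psi_u)_{u\in[0,s]}$ with the fixed Hamiltonian $K=H\circ T_{w_s}$, you need (and have) that each $\widetilde\psi_u$, not just $\widetilde\psi_s$, is supported in $\overline{B^{2n}(0,R)}$ and satisfies (\ref{e:Sinvariance}); you noted this, and indeed $w_0=0$ because $\psi_0=\mathrm{id}$.
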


\begin{proof}
By the assumptions each $H\circ\psi_s$ is also $\mathbb{R}^{n,k}$-admissible  and  strong nonresonant.
Hence $c(H\circ\psi_s)$ is a positive critical value for each $s$.
Let $x\in E$ be a critical point of $\Phi_{H\circ\psi_s}$ with critical value $c(H\circ\psi_s)$. Then
$x\in C^{2n+2}_{n,k}([0,1],\mathbb{R}^{2n})$ and solves $\dot{x}=J_{2n}\nabla (H\circ\psi_s)(x)=X_{H\circ\psi_s}(x)$.
Let $y_s=\psi_s\circ x$. Then $y_s\in C^{2n+2}_{n,k}([0,1],\mathbb{R}^{2n})$ and satisfies
 $$
 \dot{y}_s(t)=(d\psi_s(x(t))\dot{x}(t)= (d\psi_s(x(t))X_{H\circ\psi_s}(x(t))= X_{H}(\psi_s(x(t))=J_{2n}\nabla H(y_s(t))
 $$
since $d\psi_s(z)X_H(z)=X_{H}(\psi_s(z))$ for any $z\in\mathbb{R}^{2n}$ by \cite[page 9]{HoZe94}.
Therefore $y_s$ is a critical point of $\Phi_H$ on $E$.
We claim that
\begin{equation}\label{e:two-action}
\Phi_H(y_s)=\Phi_{H\circ\psi_s}(x).
\end{equation}
Clearly, it suffices to prove the following equality:
\begin{equation}\label{e:two-action+}
A(y_s)=\frac{1}{2}\int^1_0 \langle -J_{2n}\dot{y}_s, y_s\rangle dt= \frac{1}{2}\int_0^1\langle -J_{2n}\dot{x},x\rangle dt=A(x).
\end{equation}
Extend $x$ into a piecewise $C^{2n+2}$-smooth loop $x^\ast:[0, 2]\to \mathbb{R}^{2n}$ by setting $x^\ast(t)=(2-t)x(1)+(t-1)x(0)$ for any $1\le t\le 2$.
We get a piecewise $C^{2n+2}$-smooth loop extending of $y_s$, $y_s^\ast=\psi_s(x^\ast)$. Clearly,
we can extend $x^\ast$ into a piecewise $C^{2n+2}$-smooth $u:D^2\to\mathbb{R}^{2n}$,
where $D^2$ is a closed disc bounded by $\partial D^2\equiv [0,2]/\{0,2\}$. Then $\psi_s\circ u:D^2\to\mathbb{R}^{2n}$
is  piecewise $C^{2n+2}$-smooth and $\psi_s\circ u|_{\partial D^2}=y_s^\ast$.  Stokes theorem yields
\begin{eqnarray*}
&&\frac{1}{2}\int_0^2\langle -J_{2n}\dot{x}^\ast,x^\ast\rangle dt=\int_{D^2}u^\ast\omega_0,\\
&&\frac{1}{2}\int^2_0 \langle -J_{2n}\dot{y}^\ast_s, y^\ast_s\rangle dt=\int_{D^2}(\psi_s\circ u)^\ast\omega_0=\int_{D^2}u^\ast\omega_0.
\end{eqnarray*}
Moreover, for any $t\in [1,2]$ we have $\dot{x}^\ast(t)=x(0)-x(1)\in V_0^{n,k}$ and
$x^\ast(t)\in\mathbb{R}^{n,k}$, and therefore  $\langle -J_{2n}\dot{x}^\ast(t), x^\ast(t)\rangle=0$
because $\mathbb{R}^{2n}$ has the orthogonal decomposition
$\mathbb{R}^{2n}=J_{2n}V^{n,k}_0\oplus \mathbb{R}^{n,k}$.
Then (\ref{e:two-action+}) follows from these.

Since $s\mapsto c^{n,k}(H\circ\psi_s)$ is continuous by Proposition~\ref{prop:coEHC.1},
and a critical point $x$ of $\Phi_{H\circ\psi_s}$ with critical value $c(H\circ\psi_s)$ yields
a critical point $y_s$ of $\Phi_H$ on $E$ satisfying (\ref{e:two-action}), we deduce
that each $c(H\circ\psi_s)$ is also a critical value of $\Phi_H$.
Lemma~\ref{lem:empty} shows that $s\mapsto c^{n,k}(H\circ\psi_s)$ must be constant.

Finally, let $\psi_s(z)=z+ w_s$. It is clear that
$H\circ\psi_s$ is  $\mathbb{R}^{n,k}$-admissible  and  strong nonresonant.
Thus $c(H\circ\psi_s)$ is a positive critical value.
If $x\in E$ is a critical point of $\Phi_{H\circ\psi_s}$ with critical value $c(H\circ\psi_s)$, then
 $y_s:=\psi_s\circ x$ is a critical point of $\Phi_H$ on $E$ and (\ref{e:two-action}) holds.
Hence $s\mapsto c^{n,k}(H(\cdot+ w_s))$ is constant.
\end{proof}

Let $\mathscr{F}_{n,k}(\mathbb{R}^{2n})=\{H\in C^0(\mathbb{R}^{2n},\mathbb{R}_{\ge 0})\,|\,H\;\hbox{satisfies (H2)}\}$.
For each bounded subset $B\subset\mathbb{R}^{2n}$ such that $\overline{B}\cap \mathbb{R}^{n,k}\neq \emptyset$,  we define
\begin{eqnarray}\label{e:fnkb}
\mathscr{F}_{n,k}(\mathbb{R}^{2n},B)=\{H\in \mathscr{F}_{n,k}(\mathbb{R}^{2n})\,|\,H\;\hbox{vanishes near $\overline{B}$}\}.
\end{eqnarray}

\begin{definition}
For each bounded subset $B\subset\mathbb{R}^{2n}$ such that $\overline{B}\cap \mathbb{R}^{n,k}\neq \emptyset$,
\begin{equation}\label{e:coCap}
c^{n,k}(B):=\inf\{c^{n,k}(H)\,|\, H\in \mathscr{F}_{n,k}(\mathbb{R}^{2n},B)\}\in [0, +\infty)
\end{equation}
is called the \textbf{coisotropic Ekeland-Hofer capacity} of $B$ (relative to $\mathbb{R}^{n,k}$).
For any unbounded subset $B\subset\mathbb{R}^{2n}$ such that $\overline{B}\cap \mathbb{R}^{n,k}\neq \emptyset$,
its \textbf{coisotropic Ekeland-Hofer capacity} is defined by
\begin{equation}\label{e:coCap+}
c^{n,k}(B)=\sup\{c^{n,k}(A)\,|\, A\subset B,\;\hbox{$A$ is bounded and $\overline{A}\cap\mathbb{R}^{n,k}\neq \emptyset$}\}.
\end{equation}
\end{definition}

\begin{remark}
{\rm When $k=n$ in the above definition, $c^{n,n}(B)$ is the (first) Ekeland-Hofer capacity of $B$.}
\end{remark}

For each bounded $B\subset\mathbb{R}^{2n}$ such that $\overline{B}\cap \mathbb{R}^{n,k}\neq \emptyset$, we write
\begin{eqnarray*}
&&\mathscr{E}_{n,k}(\mathbb{R}^{2n},B)=\{H\in \mathscr{F}_{n,k}(\mathbb{R}^{2n},B)\,|\,H\;\hbox{is strong nonresonant}\}\quad\hbox{if}\;k<n,\\
&&\mathscr{E}_{n,n}(\mathbb{R}^{2n},B)=\{H\in \mathscr{F}_{n,k}(\mathbb{R}^{2n},B)\,|\,H\;\hbox{is  nonresonant}\}.
\end{eqnarray*}
Clearly,  each $H\in \mathscr{E}_{n,k}(\mathbb{R}^{2n},B)$ satisfies (H1), and
$\mathscr{E}_{n,k}(\mathbb{R}^{2n},B)$ is a {\bf cofinal family} of $\mathscr{F}_{n,k}(\mathbb{R}^{2n},B)$,
that is, for any $H\in\mathscr{F}_{n,k}(\mathbb{R}^{2n},B)$ there exists
$G\in \mathscr{E}_{n,k}(\mathbb{R}^{2n},B)$ such that $G\ge H$.
Moreover, for each $l\in\mathbb{N}\cup\{\infty\}$ the smaller subset
     $\mathscr{E}_{n,k}(\mathbb{R}^{2n},B)\cap C^l(\mathbb{R}^{2n},\mathbb{R}_{\ge 0})$ is also
 a cofinal family of $\mathscr{F}_{n,k}(\mathbb{R}^{2n},B)$.
By the definition, we immediately get:

\begin{proposition}\label{prop:coEHC.2+}
\begin{description}
\item[(i)] $c^{n,k}(B)=c^{n,k}(\overline{B})$.
\item[(ii)]  $\mathscr{F}_{n,k}(\mathbb{R}^{2n},B)$ in  (\ref{e:coCap}) can be replaced by  any cofinal subset of it.
   \item[(iii)] Suppose that $\overline{B}\subset B^{2n}(R)$. For each $l\in\mathbb{N}\cup\{\infty\}$ let
   $\mathscr{E}^l_{n,k}(\mathbb{R}^{2n},B)$ consist of $H\in\mathscr{F}_{n,k}(\mathbb{R}^{2n},B)\cap C^l(\mathbb{R}^{2n},\mathbb{R}_{\ge 0})$
   for which  there exists $z_0\in\mathbb{R}^{n,k}$, real numbers $a, b$
such that $H(z)=a|z|^2+ \langle z, z_0\rangle+ b$ outside the closed ball $\overline{B^{2n}(R)}$,
where $a>\pi$ and $a\notin \pi\mathbb{N}$ for $k=n$, and $a>\pi/2$ and  $a\notin \pi\mathbb{N}/2$   for $0\le k<n$.
Then each $\mathscr{E}^l_{n,k}(\mathbb{R}^{2n},B)$ is a cofinal subset of $\mathscr{F}_{n,k}(\mathbb{R}^{2n},B)$.
   \end{description}
   \end{proposition}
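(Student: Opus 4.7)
Parts (i) and (ii) are essentially formal, so I would dispose of them first. For (i), the family $\mathscr{F}_{n,k}(\mathbb{R}^{2n},B)$ in (\ref{e:fnkb}) depends on $B$ only through $\overline{B}$, since ``vanishing near $\overline{B}$'' and ``vanishing near $\overline{\overline{B}}=\overline{B}$'' are the same condition; hence $\mathscr{F}_{n,k}(\mathbb{R}^{2n},B)=\mathscr{F}_{n,k}(\mathbb{R}^{2n},\overline{B})$ and the two infima in (\ref{e:coCap}) agree. For (ii), given a cofinal $\mathscr{G}\subset\mathscr{F}_{n,k}(\mathbb{R}^{2n},B)$, the inclusion $\mathscr{G}\subset\mathscr{F}_{n,k}(\mathbb{R}^{2n},B)$ gives one inequality for free; for the reverse, any $H\in\mathscr{F}_{n,k}(\mathbb{R}^{2n},B)$ is dominated by some $G\in\mathscr{G}$, and the monotonicity in Proposition~\ref{prop:coEHC.1}(i) gives $c^{n,k}(G)\le c^{n,k}(H)$, so taking infima in the right order yields equality.

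The substantive part is (iii). Given $H\in\mathscr{F}_{n,k}(\mathbb{R}^{2n},B)$, I will construct $G\in\mathscr{E}^l_{n,k}(\mathbb{R}^{2n},B)$ with $G\ge H$. By (H2) there exist $z_0'\in\mathbb{R}^{n,k}$ and $a',b'\in\mathbb{R}$, with $a'>\pi/2$ for $k<n$ (resp.\ $a'>\pi$ for $k=n$), such that $H(z)=a'|z|^2+\langle z,z_0'\rangle+b'$ outside some compact set $K$; enlarging $K$ if necessary, I may assume $K\supset \bar{B}^{2n}(R)$. Because $\pi\mathbb{N}/2$ (resp.\ $\pi\mathbb{N}$) is discrete, I choose $a>a'$ avoiding this set, set $z_0:=z_0'$, and take $b$ sufficiently large that the quadratic
\[ Q(z):=a|z|^2+\langle z,z_0\rangle+b \]
satisfies both $Q\ge 0$ and $Q\ge H$ on all of $\mathbb{R}^{2n}$: outside $K$ the difference $Q-H=(a-a')|z|^2+(b-b')$ is nonnegative once $b\ge b'$, while on the compact set $K$ any sufficiently large $b$ dominates the boundedness of $H$ and forces $Q\ge 0$.

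To pass from $Q$ to $G$, let $U$ be an open neighborhood of $\overline{B}$ on which $H$ vanishes, and pick nested open sets
\[ \overline{B}\subset W\subset\overline{W}\subset W'\subset\overline{W'}\subset U\cap B^{2n}(R), \]
which is possible since $\overline{B}$ is a compact subset of the open set $U\cap B^{2n}(R)$. Take a $C^\infty$ cutoff $\rho:\mathbb{R}^{2n}\to[0,1]$ with $\rho\equiv 0$ on $\overline{W}$ and $\rho\equiv 1$ on $\mathbb{R}^{2n}\setminus W'$, and set $G:=\rho Q$. Then $G$ is $C^\infty$ (hence $C^l$), nonnegative, vanishes on the neighborhood $W$ of $\overline{B}$, and equals $Q$ outside $\bar{B}^{2n}(R)$, because $W'\subset B^{2n}(R)$ implies $\mathbb{R}^{2n}\setminus\bar{B}^{2n}(R)\subset\mathbb{R}^{2n}\setminus W'$. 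Finally $G\ge H$ holds everywhere: on $W'\subset U$ one has $H\equiv 0\le \rho Q=G$, while on $\mathbb{R}^{2n}\setminus W'$ one has $G=Q\ge H$ by construction. This verifies $G\in\mathscr{E}^l_{n,k}(\mathbb{R}^{2n},B)$.

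The only mildly delicate point in the argument is simultaneously matching the specific compact set $\bar{B}^{2n}(R)$ built into the definition of $\mathscr{E}^l_{n,k}$, enforcing the vanishing of $G$ near $\overline{B}$, and maintaining $G\ge H$ across the transition region for $\rho$. This is handled cleanly by the nested inclusion above, which confines the entire transition region $W'\setminus\overline{W}$ to $U$, where $H$ is identically zero and $\rho$ is free to interpolate without disturbing $G\ge H$.
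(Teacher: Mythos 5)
Your proof is correct and follows essentially the same route as the paper's for part (iii): construct a dominating quadratic $Q$ with the required coefficient $a$ avoiding $\pi\mathbb{N}$ (resp.\ $\pi\mathbb{N}/2$), then multiply by a smooth cutoff that vanishes near $\overline{B}$ and equals $1$ outside a region confined to where $H$ already vanishes, so that $G=\rho Q\ge H$ follows immediately. Your use of nested open sets $\overline{B}\subset W\subset\overline{W}\subset W'\subset\overline{W'}\subset U\cap B^{2n}(R)$ is a slightly more explicit version of the paper's $\epsilon$-neighborhood construction, and your direct reduction from $H\in\mathscr{F}_{n,k}(\mathbb{R}^{2n},B)$ is marginally cleaner than the paper's preliminary appeal to (ii), but the content is the same.
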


\begin{proof}
We only prove (iii). By (ii) it suffices to prove that for each given
$H\in\mathscr{E}_{n,k}(\mathbb{R}^{2n},B)$ 
there exists
$G\in\mathscr{E}^l_{n,k}(\mathbb{R}^{2n},B)$ such that $G\ge H$.
We may assume that $H(z)=a|z|^2+ \langle z, z_0\rangle+ b$ outside a larger closed ball $\overline{B^{2n}(R_1)}$,
where $a>\pi$ and $a\notin \pi\mathbb{N}$ for $k=n$, and $a>\pi/2$ and  $a\notin \pi\mathbb{N}/2$   for $0\le k<n$.
Let $U_\epsilon(B)$ be the $\epsilon$-neighborhood of $B$. We can also assume that $H$ vanishes in
$U_{2\epsilon}(B)$. Since $\overline{B^{2n}(R_1)}$ is compact, we may find numbers $a'>a$, $b'$ such that
$a'\notin \pi\mathbb{N}$ for $k=n$,  $a'\notin \pi\mathbb{N}/2$   for $0\le k<n$, and
$a'|z|^2+ \langle z, z_0\rangle+ b'\ge H(z)$ for all $z\in\mathbb{R}^{2n}$.
Take a smooth function $f:\mathbb{R}^{2n}\to\mathbb{R}_{\ge 0}$ such that it equals to zero in
$U_{\epsilon}(B)$ and $1$ outside $U_{2\epsilon}(B)$. Define $G(z):=f(z)(a'|z|^2+ \langle z, z_0\rangle+ b')$
for $z\in\mathbb{R}^{2n}$. Then $G\ge H$ and
 $G\in\mathscr{E}^\infty_{n,k}(\mathbb{R}^{2n},B)$.
\end{proof}

\begin{remark}\label{rm:Sikorav}
{\rm Let $\mathscr{H}_{n,k}(\mathbb{R}^{2n},B)$ consist of $H\in C^\infty(\mathbb{R}^{2n},\mathbb{R}_{\ge 0})$
 which vanishes near $\overline{B}$ and for which  there exists $z_0\in\mathbb{R}^{n,k}$ and a real number $a$
such that $H(z)=a|z|^2$ outside a compact subset, where $a>\pi$ and $a\notin \pi\mathbb{N}$ for $k=n$,
and $a>\pi/2$ and  $a\notin \pi\mathbb{N}/2$   for $0\le k<n$.
As in the proof of Proposition~\ref{prop:coEHC.2} it is not hard to prove that
 $\mathscr{H}_{n,k}(\mathbb{R}^{2n},B)$ is a cofinal subset of $\mathscr{F}_{n,k}(\mathbb{R}^{2n},B)$.
When $k=n$ this shows that Sikorav's approach \cite{Sik90} to the Ekeland-Hofer capacity in \cite{EH89}
defines the same capacity.}
\end{remark}

%

\begin{proof}[Proof of Proposition~\ref{prop:coEHC.2}]
Proposition~\ref{prop:coEHC.1}(i)-(iii) lead to the first three claims.
Let us prove (iv). We may assume that $B$ is bounded.
By (\ref{e:coCap}) we have a sequence $(H_j)\subset \mathscr{F}_{n,k}(\mathbb{R}^{2n},B)$ such that
$c^{n,k}(H_j)\to  c^{n,k}(B)$. Note that $H_j(\cdot-w)\in \mathscr{F}_{n,k}(\mathbb{R}^{2n},B+w)$ for each $j$.
Hence
$$
c^{n,k}(B+w)\le\inf_j c^{n,k}(H_j(\cdot-w))= \inf_j c^{n,k}(H_j)=c^{n,k}(B)
$$
by the final claim in Proposition~\ref{prop:Sinvariance}. The same reasoning leads to
$c^{n,k}(B)=c^{n,k}(B+w+(-w))\le c^{n,k}(B+w)$ and so  $c^{n,k}(B+w)=c^{n,k}(B)$.
\end{proof}

\begin{proposition}[\hbox{\rm relative monotonicity}]\label{prop:coEHC.3}
Let subsets $A, B\subset \mathbb{R}^{2n}$ satisfy $\overline{A}\cap \mathbb{R}^{n,k}\neq \emptyset$
and $\overline{B}\cap \mathbb{R}^{n,k}\neq \emptyset$. If there exists
a smooth homotopy of the identity in ${\rm Symp}(\mathbb{R}^{2n},\omega_0)$
as in Proposition~\ref{prop:Sinvariance}, $[0,1]\ni s\mapsto\psi_s$,
 such that  $\psi_1(A)\subset B$, then
 $c^{n,k}(A)=c^{n,k}(\psi_s(A))$ for all $s\in [0,1]$, and in particular
   $c^{n,k}(A)\le c^{n,k}(B)$ by Proposition~\ref{prop:coEHC.2}(i).
\end{proposition}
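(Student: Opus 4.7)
The plan is to prove $c^{n,k}(A)=c^{n,k}(\psi_s(A))$ for each $s\in[0,1]$; specialized to $s=1$ and combined with the standard monotonicity Proposition~\ref{prop:coEHC.2}(i) applied to the inclusion $\psi_1(A)\subset B$, this immediately gives the closing inequality $c^{n,k}(A)\le c^{n,k}(B)$. The identity $c^{n,k}(A)=c^{n,k}(\psi_s(A))$ should then be derived by feeding the given homotopy into Proposition~\ref{prop:Sinvariance}, so the task reduces to translating that invariance statement (phrased for individual Hamiltonians) into one for the capacity of a set.

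Assume first that $A$ is bounded and fix $s\in[0,1]$. For the inequality $c^{n,k}(A)\le c^{n,k}(\psi_s(A))$ I would start from an arbitrary $H$ in the cofinal subset $\mathscr{E}^{2n+2}_{n,k}(\mathbb{R}^{2n},\psi_s(A))$ furnished by Proposition~\ref{prop:coEHC.2+}(iii) and verify that $H\circ\psi_s$ belongs to $\mathscr{F}_{n,k}(\mathbb{R}^{2n},A)$. Vanishing near $\overline{A}$ is immediate since $\psi_s$ is a diffeomorphism with $\psi_s(\overline{A})=\overline{\psi_s(A)}$, and (H2) is preserved because outside a large ball $\psi_s$ reduces to the translation $z\mapsto z+w_s$ with $w_s\in\mathbb{R}^{n,k}$: substituting into $H(z)=a|z|^2+\langle z,z_0\rangle+b$ yields
\[
H(\psi_s(z))=a|z|^2+\langle z,\,2aw_s+z_0\rangle+\mathrm{const},
\]
with $2aw_s+z_0\in\mathbb{R}^{n,k}$. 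The reparametrization $\tilde\psi_r:=\psi_{rs}$, $r\in[0,1]$, is still a smooth homotopy of the identity fulfilling the hypotheses of Proposition~\ref{prop:Sinvariance} (the path $r\mapsto w_{rs}$ remains smooth in $\mathbb{R}^{n,k}$), so that proposition gives $c^{n,k}(H\circ\psi_s)=c^{n,k}(H\circ\tilde\psi_0)=c^{n,k}(H)$. Taking the infimum over $H$ produces $c^{n,k}(A)\le c^{n,k}(\psi_s(A))$. The reverse inequality I would obtain by running the identical argument with the inverse homotopy $r\mapsto\psi_{rs}^{-1}$: it is a smooth homotopy of the identity, preserves $\mathbb{R}^{n,k}$ setwise, and applying $\psi_{rs}$ to both sides of the desired relation $\psi_{rs}^{-1}(w+V_0^{n,k})=\psi_{rs}^{-1}(w)+V_0^{n,k}$ reduces it to hypothesis (\ref{e:Sinvariance}) for $\psi_{rs}$; outside a large ball $\psi_{rs}^{-1}$ is the translation by $-w_{rs}$. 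Then for $G$ in a cofinal subset of $\mathscr{F}_{n,k}(\mathbb{R}^{2n},A)$ the composition $G\circ\psi_s^{-1}$ lies in $\mathscr{F}_{n,k}(\mathbb{R}^{2n},\psi_s(A))$ by the same computation, and Proposition~\ref{prop:Sinvariance} applied to the reparametrized inverse homotopy yields $c^{n,k}(\psi_s(A))\le c^{n,k}(A)$.

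The unbounded case follows directly from the definition (\ref{e:coCap+}): since $\psi_s$ is a diffeomorphism of $\mathbb{R}^{2n}$ preserving $\mathbb{R}^{n,k}$ setwise, it induces a bijection $A'\leftrightarrow\psi_s(A')$ between bounded subsets of $A$ and of $\psi_s(A)$ whose closures meet $\mathbb{R}^{n,k}$, and the bounded result applied to each such $A'$ shows that the two suprema agree. The main obstacle I anticipate is really the small but necessary bookkeeping that composition with $\psi_s$ (and with $\psi_s^{-1}$) preserves the admissibility class $\mathscr{F}_{n,k}$ and its reparametrization does not destroy strong nonresonance; once this administrative step is settled, Proposition~\ref{prop:Sinvariance} carries the analytic content for free.
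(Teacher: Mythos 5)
Your proof is correct and takes essentially the same route as the paper's: both rest on the bijection $H\mapsto H\circ\psi_s^{-1}$ (or equivalently, the two injections you write out) between the admissible classes for $A$ and $\psi_s(A)$, with Proposition~\ref{prop:Sinvariance} supplying the invariance $c^{n,k}(H\circ\psi_s)=c^{n,k}(H)$. The paper packages this as a one-to-one correspondence on a cofinal $C^\infty$ family and leaves the verification that composition preserves (H2), strong nonresonance, and the unbounded case implicit; you have merely filled in those routine checks.
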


\begin{proof}
Note that
$$
\mathscr{E}_{n,k}(\mathbb{R}^{2n}, A)\cap C^\infty(\mathbb{R}^{2n},\mathbb{R}_{\ge 0})\to
\mathscr{E}_{n,k}(\mathbb{R}^{2n}, \psi_s(A))\cap C^\infty(\mathbb{R}^{2n},\mathbb{R}_{\ge 0}),\;H\mapsto H\circ\psi_s^{-1}
$$
is a one-to-one correspondence. Then
\begin{eqnarray*}
c^{n,k}(\psi_s(A))&=&\inf\{c^{n,k}(G)\,|\, G\in \mathscr{E}_{n,k}(\mathbb{R}^{2n}, \psi_s(A))\cap C^\infty(\mathbb{R}^{2n},\mathbb{R}_{\ge 0})\}\\
&=&\inf\{c^{n,k}(H\circ\psi_s^{-1})\,|\, H\in \mathscr{E}_{n,k}(\mathbb{R}^{2n}, A)\cap C^\infty(\mathbb{R}^{2n},\mathbb{R}_{\ge 0})\}\\
&=&\inf\{c^{n,k}(H)\,|\, H\in \mathscr{E}_{n,k}(\mathbb{R}^{2n}, A)\cap C^\infty(\mathbb{R}^{2n},\mathbb{R}_{\ge 0})\}=c^{n,k}(A).
\end{eqnarray*}
Here the third equality comes from Proposition~\ref{prop:Sinvariance}.
\end{proof}

\begin{proof}[Proof of Theorem~\ref{th:coEHC.4}]
We may assume that $B$ is bounded, and complete the proof in two steps.

{\bf Step 1}. {\it Prove $c^{n,k}(\Phi(B))=c^{n,k}(B)$ for every $\Phi\in{\rm Sp}(2n,k)$.}
Take a smooth path $[0, 1]\ni t\mapsto\Phi_t\in {\rm Sp}(2n,k)$ such that $\Phi_0=I_{2n}$ and $\Phi_1=\Phi$.
We have a smooth function $[0,1]\times \mathbb{R}^{2n}\ni (t,z)\mapsto G_t(z)\in\mathbb{R}^{2n}$ such that
 the path $\Phi_t$ is generated by $X_{G_t}$ and that $G_t(z)=0\;\forall z\in\mathbb{R}^{n,k}$ (see Step 2 below).
 Since $\cup_{t\in [0, 1]}\Phi_t(\overline{B})$ is compact, it can be contained in
 a ball $B^{2n}(0, R)$ for some  $R>0$.
  Take a smooth cut function $\rho:\mathbb{R}^{2n}\to [0, 1]$
  such that $\rho=1$ on $B^{2n}(0, 2R)$ and $\rho=0$ outside $B^{2n}(0, 3R)$.
 Define a smooth function $\tilde{G}:[0,1]\times \mathbb{R}^{2n}\to \mathbb{R}$ by $\tilde{G}(t,z)=\rho(z)G_t(z)$
 for $(t,z)\in [0,1]\times\mathbb{R}^{2n}$. Denote by $\psi_t$ the Hamiltonian path generated by $\tilde{G}$ in
${\rm Ham}^c(\mathbb{R}^{2n},\omega_0)$. Then $\psi_t(z)=\Phi_t(z)$ for all $(t,z)\in [0,1]\times B^{2n}(0, R)$.
Moreover each $\psi_t$ restricts to the identity on $\mathbb{R}^{n,k}$
 because $\tilde{G}(t,z)=\rho(z)G_t(z)=0$ for all $(t,z)\in [0,1]\times\mathbb{R}^{n,k}$.
 Hence we obtain $c^{n,k}(\Phi(B))=c^{n,k}(\Phi_1(B))=c^{n,k}(B)$ by Proposition~\ref{prop:coEHC.3}.

{\bf Step 2}. {\it Prove $c^{n,k}(\phi(B))=c^{n,k}(B)$ in case $w_0=0$.}
Let $\Phi=(d\phi(0))^{-1}$. Since $c^{n,k}(\Phi\circ\phi(B))=c^{n,k}(\phi(B))$ by Step 1,
and $\Phi\circ\phi(w)=w\;\forall w\in\mathbb{R}^{n,k}$,
replacing $\Phi\circ\phi$ by $\phi$,  we may assume $d\phi(0)={\rm id}_{\mathbb{R}^{2n}}$.
Define a continuous path in ${\rm Symp}(\mathbb{R}^{2n},\omega_0)$,
\begin{equation}\label{e:coEHC.5}
\varphi_t(z)=\left\{\begin{array}{ll}
z& \hbox{if}\;t\le 0,\\
\frac{1}{t}\phi(tz) &\hbox{if}\;t> 0,
\end{array}\right.
\end{equation}
which is smooth except possibly at $t=0$. As in \cite[Proposition A.1]{Schl05} we can smoothen it with a smooth function
$\eta:\mathbb{R}\to\mathbb{R}$ defined by
\begin{equation}\label{e:coEHC.6}
\eta(t)=\left\{\begin{array}{ll}
0& \hbox{if}\;t\le 0,\\
e^2e^{-2/t} &\hbox{if}\;t> 0,
\end{array}\right.
\end{equation}
where $e$ is the Euler number. Namely,  defining $\phi_t(z):=\varphi_{\eta(t)}(z)$ for $z\in\mathbb{R}^{2n}$ and $t\in\mathbb{R}$,
we get a smooth path $\mathbb{R}\ni t\mapsto\phi_t\in {\rm Symp}(\mathbb{R}^{2n},\omega_0)$ such that
 \begin{equation}\label{e:coEHC.7}
\phi_0={\rm id}_{\mathbb{R}^{2n}},\quad \phi_1=\phi,\quad \phi_t(z)=z,\;\forall z\in\mathbb{R}^{n,k},\;\forall t\in\mathbb{R}.
\end{equation}
 Define $X_t(z)=\left(\frac{d}{dt}\phi_t\right)(\phi_t^{-1}(z))$ and
 \begin{equation}\label{e:coEHC.8}
 H_t(z)=\int^z_0 i_{X_t}\omega_0,
 \end{equation}
 where the integral is along any piecewise smooth curve from $0$ to $z$ in $\mathbb{R}^{2n}$.
 Then $\mathbb{R}\times \mathbb{R}^{2n}\ni (t,z)\mapsto H_t(z)\in\mathbb{R}$ is smooth and $X_t=X_{H_t}$.
 By the final condition in (\ref{e:coEHC.7}), for each $(t,z)\in\mathbb{R}\times\mathbb{R}^{n,k}$ we have $X_t(z)=0$
 and therefore $H_t(z)=0$.
As in Step~1, we can assume that $\cup_{t\in [0, 1]}\phi_t(\overline{B})$ is contained a ball
$B^{2n}(0, R)$. Take a smooth cut function $\rho:\mathbb{R}^{2n}\to [0, 1]$ as above, and define
a smooth function $\tilde{H}:[0,1]\times \mathbb{R}^{2n}\to \mathbb{R}$ by $\tilde{H}(t,z)=\rho(z)H_t(z)$
 for $(t,z)\in [0,1]\times\mathbb{R}^{2n}$. Then the Hamiltonian path $\psi_t$ generated by $\tilde{H}$ in
${\rm Ham}^c(\mathbb{R}^{2n},\omega_0)$ satisfies
$$
\psi_t(z)=\phi_t(z), \;\forall (t,z)\in [0,1]\times B^{2n}(0, R)\quad\hbox{and}\quad
\psi_t(z)=z, \;\forall (t,z)\in [0,1]\times \mathbb{R}^{n,k}.
$$
It follows from Proposition~\ref{prop:coEHC.3} that
 $c^{n,k}(\phi(B))=c^{n,k}(\psi_1(B))=c^{n,k}(B)$ as above.

 {\bf Step 3}. {\it Prove $c^{n,k}(\phi(B))=c^{n,k}(B)$ in case $w_0\ne 0$.}
 Define $\varphi(w)=\phi(w+w_0)$ for $w\in \mathbb{R}^{2n}$.
Then $d\varphi(0)=d\phi(w_0)\in {\rm Sp}(2n,k)$ and $\varphi(w)=\phi(w+w_0)=w\;\forall w\in\mathbb{R}^{n,k}$.
By Step 2 we arrive at $c^{n,k}(\varphi(B-w_0))=c^{n,k}(B-w_0)$.
The desired equality follows because $\phi(B)=\varphi(B-w_0)$
and $c^{n,k}(B-w_0)= c^{n,k}(B)$ by Proposition~\ref{prop:coEHC.2}.
\end{proof}



\begin{proof}[Proof of Corollary~\ref{cor:coEHC.5}]
As discussed above the proof is reduced to the case $w_0=0$.
Moreover we can assume that both sets $A$ and $U$ are bounded and that $U$ is also starshaped with respect to
the origan $0\in\mathbb{R}^{2n}$.

Next the proof can be completed following  \cite[Proposition A.1]{Schl05}.
Now $[0,1]\ni t\mapsto\phi_t(\cdot):=\varphi_{\eta(t)}(\cdot)$ given by
(\ref{e:coEHC.5}) and (\ref{e:coEHC.6}) is a smooth path of symplectic embeddings from $U$ to $\mathbb{R}^{2n}$ with properties
 \begin{equation}\label{e:coEHC.9}
\phi_0={\rm id}_U,\quad \phi_1=\varphi,\quad \phi_t(z)=z,\;\forall z\in\mathbb{R}^{n,k}\cap U,\;\forall t\in\mathbb{R}.
\end{equation}
Thus $X_t(z):=\left(\frac{d}{dt}\phi_t\right)(\phi_t^{-1}(z))$ is a symplectic vector field defined on $\phi_t(U)$,
and (\ref{e:coEHC.8}) (where the integral is along any piecewise smooth curve from $0$ to $z$ in $\phi_t(U)$)
defines a smooth function $H_t$ on $\phi_t(U)$ in the present case. Obverse that
$H:\cup_{t\in [0,1]}(\{t\}\times\phi_t(U))\to \mathbb{R}$ defined by $H(t,z)=H_t(z)$ is smooth and generates the path $\phi_t$.
Since $K=\cup_{t\in [0,1]}\{t\}\times\phi_t(\overline{A})$ is a compact subset in $[0,1]\times\mathbb{R}^{2n}$ we can choose
a bounded and relative open neighborhood $W$ of $K$ in $[0,1]\times\mathbb{R}^{2n}$ such that $W\subset \cup_{t\in [0,1]}(\{t\}\times\phi_t(U))$.
Take a smooth cut function $\chi:[0,1]\times\mathbb{R}^{2n}\to\mathbb{R}$ such that $\chi|_K=1$ and $\chi$ vanishes outside $W$.
 Define $\hat{H}:[0,1]\times\mathbb{R}^{2n}\to\mathbb{R}$ by $\hat{H}(t,z)=\chi(t,z)H(t,z)$. It generates
 a smooth homotopy $\psi_t$ ($t\in [0,1]$) of the identity in ${\rm Ham}^c(\mathbb{R}^{2n},\omega_0)$ such that
 $\psi_t(z)=\phi_t(z)$ for all $(t,z)\in [0,1]\times A$.
Moreover, the final condition in (\ref{e:coEHC.7}) implies that $\mathbb{R}^{n,k}\cap U\subset\phi_t(U)$ and
$X_t(z)=0$ for any $t\in [0,1]$ and $z\in \mathbb{R}^{n,k}\cap U$. Hence
for any $(t,z)\in [0,1]\times\mathbb{R}^{n,k}$ we have $\hat{H}(t,z)=\chi(t,z)H(t,z)=0$
and so $\psi_t(z)=z$. Then Proposition~\ref{prop:coEHC.3} leads to
$c^{n,k}(A)= c^{n,k}(\psi_1(A))=c^{n,k}(\phi_1(A))=c^{n,k}(\varphi(A))$.
\end{proof}

\section{Proof of Theorem~\ref{th:EHconvex}}\label{sec:formula}
\setcounter{equation}{0}

The case of $k=n$ was proved in \cite{EH89, EH90, Sik90}. We assume $k<n$ below.
 By Proposition~\ref{prop:coEHC.2}(iv),
 $c^{n,k}(D)=c^{n,k}(D+w)$ for any $w\in\mathbb{R}^{n,k}$.
 Moreover, for each $x\in C^{1}_{n,k}([0,1])$ there holds
$$
A(x)=\frac{1}{2}\int_0^1\langle -J_{2n}\dot{x},x\rangle dt=\frac{1}{2}\int_0^1\langle -J_{2n}\dot{x},x+w\rangle dt=A(x+w),\quad\forall w\in\mathbb{R}^{n,k}.
$$
Recalling that $D\cap\mathbb{R}^{n,k}\ne\emptyset$, we may assume that $D$  contains the origin  $0$ below.

Let $j_D$ be the Minkowski functional associated to $D$, $H:=j_D^2$ and $H^{\ast}$ be the Legendre transform of $H$.
Then    $\partial D=H^{-1}(1)$, and
 there exists a constant $R\geq 1$ such that
\begin{eqnarray}\label{e:dualestimate}
 \frac{|z|^2}{R}\leq H(z)\leq R|z|^2\quad\hbox{and so}\quad
\frac{|z|^2}{4R}\leq H^{\ast}(z)\leq \frac{R}{4}|z|^2
\end{eqnarray}
for all $z\in\mathbb{R}^{2n}$.
 Moreover $H$ is $C^{1,1}$ with uniformly Lipschitz constant.

By  \cite[Theorem~1.5]{JinLu1917}
$$
\Sigma^{n,k}_{\partial D}:=\{A(x)>0\;|\;x\;\hbox{is a leafwise chord on\;$\partial D$\;for \;$\mathbb{R}^{n,k}$}\}
$$
contains a minimum number $\varrho$, that is,  there exists a leafwise chord  $x^\ast$  on $\partial D$ for $\mathbb{R}^{n,k}$
such that
$A(x^\ast)=\min\Sigma^{n,k}_{\partial D}=\varrho$.
Actually, the arguments therein shows that  there exists $w\in C^1_{n,k}([0,1])$ such that
\begin{equation}\label{e:clarke}
A(w)=1\quad\hbox{and}\quad
I(w):=\int_0^1H^\ast(-J_{2n}\dot{w})dt=A(x^\ast)=\varrho.
\end{equation}


Let us prove  (\ref{e:fixpt.1}) and (\ref{e:fixpt.2}) by the following two steps.
As done in  \cite{JinLu1915, JinLu1916} (see also Step 4 below),
by approximating arguments we can assume that $\partial D$ is smooth and strictly convex.
In this case $\Sigma^{n,k}_{\partial D}$
has no interior points in $\mathbb{R}$ because of  \cite[Lemma~3.5]{JinLu1917}, and
we give a complete proof though the ideas which are similar to those of
the proof of \cite{Sik90}  (and \cite[Theorem~1.11]{JinLu1915} and \cite[Theorem~1.17]{JinLu1916}).

\noindent
{\bf Step 1.} {\it Prove that $c^{n,k}(D)\ge \varrho$.}
By the monotonicity of $c^{n,k}$ it suffices to prove $c^{n,k}(\partial D)\ge \varrho$.
 For a given $\epsilon>0$, consider a cofinal family of $\mathcal{F}_{n,k}(\mathbb{R}^{2n},\partial D)$,
\begin{eqnarray}\label{e:EH.3.7}
\mathscr{E}^{n,k}_\epsilon(\mathbb{R}^{2n},\partial D)
\end{eqnarray}
consisting of $\overline{H}=f\circ H$, where $f\in C^\infty(\mathbb{R},\mathbb{R}_{\ge 0})$ satisfies
\begin{eqnarray}\label{e:EH.3.8}
\left.\begin{array}{ll}
&f(s)=0\;\hbox{for $s$ near $1\in\mathbb{R}$},\\
&f'(s)\le 0\;\forall s\le 1,\quad  f'(s)\ge 0\;\forall\;s\ge 1,\\
& f'(s)=\alpha\in\mathbb{R}\setminus\Sigma^{n,k}_{\partial D}
\;\hbox{if}\;f(s)\ge\epsilon\;\&\;s>1
\end{array}\right\}
\end{eqnarray}
and where  $\alpha$ is required to satisfy for some constant $C>0$
\begin{eqnarray}\label{e:EH.3.8+}
\alpha H(z)\ge \frac{\pi}{2}|z|^2-C\quad\hbox{for}\, |z|\,\hbox{sufficiently large}
\end{eqnarray}
because of (\ref{e:dualestimate}) and ${\rm Int}(\Sigma^{n,k}_{\partial D})=\emptyset$.

Then each $\overline{H}\in\mathscr{E}^{n,k}_\epsilon(\mathbb{R}^{2n},\partial D)$
satisfies all conditions in Lemma~\ref{lem:PositiveInvariance}. Indeed, it
 belongs to $C^\infty(\mathbb{R}^{2n},\mathbb{R}_{\ge 0})$, restricts to zero near $\partial D$ and thus satisfies (H1).
Note that $f(s)=\alpha s+ \epsilon-\alpha s_0$ for $s\ge s_0$, where $s_0=\inf\{s>1\,|\, f(s)\ge\epsilon\}$.
(\ref{e:EH.3.8+}) implies that $\overline{H}(z)\ge\frac{\pi}{2}|z|^2-C'\;\forall z\in\mathbb{R}^{2n}$
for some constant $C'>0$, and therefore  $c^{n,k}(\overline{H})<+\infty$ by the arguments above Proposition~\ref{prop:coEHC.1}.
Moreover, it is clear that  $\mathbb{R}^{n,k}\cap{\rm Int}(\overline{H}^{-1}(0))\ne\emptyset$ and
$|\overline{H}_{zz}(z)|$ is bounded on $\mathbb{R}^{2n}$. Then
 (\ref{e:EH.2.1+}) is  satisfied with any $z_0\in\mathbb{R}^{n,k}\cap{\rm Int}(\overline{H}^{-1}(0))$
by the arguments at the end of proof of Proposition~\ref{prop:EH.1.4}.
Hence $c^{n,k}(\overline{H})>0$.

By combining proofs of Lemma~\ref{lem:PSmale}
and \cite[Lemma~3.7]{JinLu1917} we can obtain the first claim of the following.

\begin{lemma}\label{lem:EH8}
For every $\overline{H}\in\mathscr{E}^{n,k}_\epsilon(\mathbb{R}^{2n},\partial D)$, $\Phi_{\overline{H}}$ satisfies the
$(PS)$ condition and hence $c^{n,k}(\overline{H})$ is a positive critical value of $\Phi_{\overline{H}}$.
\end{lemma}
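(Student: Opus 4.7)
The plan is to first establish the Palais--Smale condition for $\Phi_{\overline{H}}$ by adapting the unbounded-case argument of Lemma~\ref{lem:PSmale}, exploiting positive $2$-homogeneity of $H=j_D^2$ in place of the quadratic background used there, and then to deduce the critical-value statement by a standard minimax deformation argument based on the positive invariance in Lemma~\ref{lem:PositiveInvariance}.

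For the (PS) condition, let $(x_j)\subset E$ satisfy $\nabla\Phi_{\overline{H}}(x_j)=x_j^+-x_j^--\jmath^\ast(\nabla\overline{H}(x_j))\to 0$. If $(x_j)$ is bounded in $E$, the argument is identical to Case~1 of Lemma~\ref{lem:PSmale}: $(x_j^0)$ lies in the finite-dimensional space $\mathbb{R}^{n,k}$, while $\jmath^\ast(\nabla\overline{H}(x_j))$ is relatively compact by Lemma~\ref{lem:Lipsch}, so $(x_j)$ has a convergent subsequence. The main case is $\|x_j\|_E\to\infty$. Setting $y_j=x_j/\|x_j\|_E$ and dividing the gradient equation by $\|x_j\|_E$ gives
\begin{equation*}
y_j^+ - y_j^- - \jmath^\ast\bigl(\nabla\overline{H}(x_j)/\|x_j\|_E\bigr)\to 0.
\end{equation*}
Outside a fixed ball $B^{2n}(0,R_0)$ one has $\nabla\overline{H}(z)=\alpha\nabla H(z)$, and positive $1$-homogeneity of $\nabla H$ (inherited from $2$-homogeneity of $H$) gives $\nabla\overline{H}(x_j(t))/\|x_j\|_E=\alpha\nabla H(y_j(t))$ whenever $|x_j(t)|\ge R_0$; on the complementary set the quantity is $O(1/\|x_j\|_E)$. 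Extracting an a.e.\ convergent subsequence $y_j\to y$ with $L^2$-domination as in the proof of Proposition~\ref{prop:EH.1.4}, the linear growth of $\nabla H$ together with dominated convergence yields $\nabla\overline{H}(x_j)/\|x_j\|_E\to\alpha\nabla H(y)$ in $L^2$. Compactness of $\jmath^\ast$ then upgrades the displayed relation to $y_j\to y$ in $E$ with $\|y\|_E=1$, and Lemma~\ref{lem:critic} applied to $\alpha H$ shows that $y\in C^1_{n,k}([0,1],\mathbb{R}^{2n})$ solves $\dot y=\alpha J_{2n}\nabla H(y)$ with $y(0),y(1)\in\mathbb{R}^{n,k}$, $y(1)-y(0)\in V_0^{n,k}$. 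Since $\|y\|_E=1$, $y$ is nonconstant, so $H(y)\equiv c$ for some $c>0$, and the rescaled curve $w:=y/\sqrt{c}$ lies in $\partial D$ and satisfies $\dot w=\alpha J_{2n}\nabla H(w)$ with the same boundary conditions. Euler's identity $\langle\nabla H(y),y\rangle=2H(y)$ gives $A(y)=\alpha c$ and hence $A(w)=A(y)/c=\alpha$, so $\alpha\in\Sigma^{n,k}_{\partial D}$, contradicting the choice of $\alpha$ in (\ref{e:EH.3.8}). The unbounded case is therefore impossible and (PS) holds.

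For the critical-value statement, we have $c:=c^{n,k}(\overline{H})\in(0,\infty)$ by the paragraph preceding the lemma, and Lemma~\ref{lem:PositiveInvariance} applies to $\overline{H}$ (whose gradient is Lipschitz, because $H$ is $C^{1,1}$ and $f$ is smooth), so $\mathcal{F}_{n,k}$ is positively invariant under the flow $\varphi_u$ of $\nabla\Phi_{\overline{H}}$. If $c$ were not a critical value, (PS) would supply $\delta,\varepsilon>0$ with $\|\nabla\Phi_{\overline{H}}\|\ge\delta$ on the strip $\Phi_{\overline{H}}^{-1}([c-\varepsilon,c+\varepsilon])$; choosing $F\in\mathcal{F}_{n,k}$ with $\inf_F\Phi_{\overline{H}}>c-\varepsilon/2$ (which exists by the sup-definition of $c$) and running $\varphi_u$ for $u$ large enough would yield $\varphi_u(F)\in\mathcal{F}_{n,k}$ with $\inf_{\varphi_u(F)}\Phi_{\overline{H}}>c$, a contradiction. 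The principal technical hurdle will be justifying the $L^2$ convergence in the rescaling step, namely showing that the transitional region $\{t:|x_j(t)|<R_0\}$ contributes negligibly compared with $\|x_j\|_E$; this is exactly where positive homogeneity of $H$ is indispensable and replaces the affine shift $x_j/\|x_j\|_E-z_0/(2a)$ employed in Lemma~\ref{lem:PSmale}.
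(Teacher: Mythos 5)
Your proof is correct and follows essentially the route the paper indicates: the paper only sketches the argument by referring the reader to ``combining proofs of Lemma~\ref{lem:PSmale} and \cite[Lemma~3.7]{JinLu1917},'' and your blow-up argument is exactly the expected adaptation of Lemma~\ref{lem:PSmale} to the setting $\overline{H}=f\circ j_D^2$, with the quadratic background $a|z|^2+\langle z,z_0\rangle+b$ replaced by $\alpha H$ (so no affine shift is needed since $\nabla H(0)=0$), and the non-resonance condition $a\notin\pi\mathbb{N}/2$ replaced by $\alpha\notin\Sigma^{n,k}_{\partial D}$, the contradiction coming from the Euler identity $\langle\nabla H(y),y\rangle=2H(y)$ and the rescaling $w=y/\sqrt{c}$ onto $\partial D$. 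Your worry about the transitional region $\{|x_j(t)|<R_0\}$ is easily dispatched by noting that $\nabla\overline{H}-\alpha\nabla H$ is bounded with compact support, so that $(\nabla\overline{H}(x_j)-\alpha\nabla H(x_j))/\|x_j\|_E\to 0$ uniformly, and one small gap worth filling is the implicit deduction that the limit $y$ has $H(y)\equiv c>0$: this follows because $\nabla H(0)=0$ (by $1$-homogeneity of $\nabla H$) and $\nabla H$ is Lipschitz, so by ODE uniqueness $y(t_0)=0$ would force $y\equiv 0$, contradicting $\|y\|_E=1$.
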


\begin{lemma}\label{lem:EH9}
  For every $\overline{H}\in\mathscr{E}^{n,k}_\epsilon(\mathbb{R}^{2n},\partial D)$, any positive critical value $c$ of
$\Phi_{\overline{H}}$ is greater than $\min\Sigma^{n,k}_{\partial D}-\epsilon$.
  In particular, $c^{n,k}(\overline{H})>\min\Sigma^{n,k}_{\partial D}-\epsilon$.
\end{lemma}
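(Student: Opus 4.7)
The plan is to analyze a critical point $x$ of $\Phi_{\overline{H}}$ with critical value $c>0$, extract a leafwise chord on $\partial D$ from it, and then apply the slope restriction on $f$ encoded in (\ref{e:EH.3.8}). First I would invoke Lemma~\ref{lem:critic} so that $x\in C^1_{n,k}([0,1],\mathbb{R}^{2n})$ solves $\dot x=f'(H(x))J_{2n}\nabla H(x)$ with the boundary conditions $x(0),x(1)\in\mathbb{R}^{n,k}$ and $x(1)-x(0)\in V_0^{n,k}$. Since $H$ is $C^{1,1}$, the $C^1$ chain rule together with the antisymmetry of $J_{2n}$ yields $(H\circ x)'\equiv 0$, so $H(x)\equiv c_0$ for some constant $c_0\ge 0$ and the equation collapses to $\dot x=\alpha J_{2n}\nabla H(x)$ with $\alpha:=f'(c_0)$. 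Using Euler's identity $\langle\nabla H(z),z\rangle=2H(z)$ valid for the $2$-homogeneous $H$, a short calculation of the two terms in $\Phi_{\overline{H}}(x)$ produces the clean formula
\[
c=\alpha c_0-f(c_0).
\]

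Next I would use the sign conditions in (\ref{e:EH.3.8}) to eliminate the trivial regimes. If $c_0\le 1$ then $\alpha\le 0$ and $f(c_0)\ge 0$, so $c\le 0$; if $c_0>1$ but $\alpha=0$ then $x$ is constant and $c=-f(c_0)\le 0$. Hence $c>0$ forces $c_0>1$ and $\alpha>0$. In this regime I would exploit $2$-homogeneity of $H$ and rescale by
\[
z(s):=\frac{1}{\sqrt{c_0}}\,x(s/\alpha),\qquad s\in[0,\alpha].
\]
Using $\nabla H(\lambda z)=\lambda\nabla H(z)$ one checks that $\dot z=J_{2n}\nabla H(z)$ and $H(z)\equiv 1$, so $z$ runs along $\partial D$. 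Since $\mathbb{R}^{n,k}$ and $V_0^{n,k}$ are linear subspaces, the boundary data of $x$ pass to $z$ intact, so $z$ is a leafwise chord on $\partial D$ for $\mathbb{R}^{n,k}$, and the same $2$-homogeneity gives $A(z)=\alpha$. Therefore $\alpha\in\Sigma^{n,k}_{\partial D}$, and in particular $\alpha\ge\varrho:=\min\Sigma^{n,k}_{\partial D}$.

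The last step is the decisive use of (\ref{e:EH.3.8}): if one had $f(c_0)\ge\epsilon$, the third clause of (\ref{e:EH.3.8}) would force $f'(c_0)=\alpha\notin\Sigma^{n,k}_{\partial D}$, contradicting the previous step. Hence $f(c_0)<\epsilon$, and combining with $\alpha\ge\varrho$ and $c_0>1$ gives
\[
c=\alpha c_0-f(c_0)>\varrho-\epsilon,
\]
which is exactly the desired inequality. I do not foresee a conceptual obstacle; the only points requiring some care are justifying the conservation of $H$ along $x$ at the regularity $H\in C^{1,1}$ (immediate from the $C^1$ chain rule) and verifying that the rescaling preserves each piece of the boundary data, which is straightforward since the relevant subspaces are linear.
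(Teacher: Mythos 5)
Your proposal is correct and follows essentially the same route as the paper's proof: conservation of $H$ along the critical orbit giving $H(x)\equiv c_0$, the formula $c=f'(c_0)c_0-f(c_0)$ via the Euler identity, the rescaling $z(s)=c_0^{-1/2}x(s/\alpha)$ producing a leafwise chord on $\partial D$ with action $\alpha=f'(c_0)\in\Sigma^{n,k}_{\partial D}$, and then the third clause of (\ref{e:EH.3.8}) forcing $f(c_0)<\epsilon$ to obtain $c>\varrho-\epsilon$. You simply spell out a few steps the paper treats as immediate (the Euler identity, the elimination of the regimes $c_0\le 1$ and $\alpha=0$, and phrasing the use of (\ref{e:EH.3.8}) as a contradiction rather than a direct implication).
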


\begin{proof}
   For a critical point $x$ of $\Phi_{\overline{H}}$ with positive critical values there holds
$$
-J\dot{x}(t)=\nabla \overline{H}(x(t))=f'(H(x(t)))\nabla H(x(t)),\quad x(1)\sim x(0),\quad x(1), x(0)\in\mathbb{R}^{n,k}
$$
and $H(x(t))\equiv c_0$ (a positive constant). Since
\begin{eqnarray*}
0<\Phi_{\overline{H}}(x)&=&\frac{1}{2}\int^1_0\langle J_{2n}x(t),\dot{x}(t)\rangle dt-\int^1_0\overline{H}(x(t))dt\\
&=&\frac{1}{2}\int^1_0\langle x(t),f'(c_0)\nabla H(x(t))\rangle dt-\int^1_0f(s)dt\\
&=&f'(c_0)c_0-f(c_0),
\end{eqnarray*}
 we deduce $\beta:=f'(c_0)>0$, and so $c_0>1$. Define
$y(t)=\frac{1}{\sqrt{c_0}}x(t/\beta)$ for $0\le t\le\beta$.
Then
$$
H(y(t))=1,\quad -J_{2n}\dot{y}=\nabla H(y(t)),\quad y(\beta)\sim y(0),\quad y(\beta), y(0)\in\mathbb{R}^{n,k}
$$
and therefore  $f'(c_0)=\beta=A(y)\in\Sigma^{n,k}_{\partial D}$. By the definition of $f$ this implies
 $f(c_0)<\epsilon$ and so
$$
\Phi_{\overline{H}}(x)=f'(c_0)c_0-f(c_0)> f'(c_0)-\epsilon\ge \min\Sigma^{n,k}_{\partial D}-\epsilon.
$$
\end{proof}

Since for any $\epsilon>0$ and $G\in \mathcal{F}_{n,k}(\mathbb{R}^{2n},\partial D)$, there exists
$\overline{H}\in\mathscr{E}^{n,k}_\epsilon(\mathbb{R}^{2n},\partial D)$ such that $\overline{H}\ge G$,
we deduce that $c^{n,k}(G)\ge c^{n,k}(\overline{H})\ge\min\Sigma^{n,k}_{\partial D}-\epsilon$. Hence
$c^{n,k}(\partial D)\ge \min\Sigma^{n,k}_{\partial D}=\varrho$.

\noindent
{\bf Step 2.} {\it Prove that $c^{n,k}(D)\le \varrho$.}
 Denote by $w^\ast$
 the projections of $w$ in (\ref{e:clarke})
 onto $E^\ast$ (according to the decomposition $E = E^{1/2}
=E^+\oplus E^-\oplus E^0$), $\ast=0,-,+$.  Then $w^+\ne 0$.
 (Otherwise, a contradiction occurs because
$1=A(w) = A(w^0\oplus w^-) =-\frac{1}{2}\|w^-\|^2$.)
Define $y:=w/\sqrt{\varrho}$. Then $y\in C^{1}_{n,k}([0,1])$ satisfies
$I(y)=1$ and $A(y)=\frac{1}{\varrho}$.
It follows from the definition of $H^\ast$ that for any $\lambda\in\mathbb{R}$ and $x\in E$,
\begin{eqnarray*}
\lambda^2=I(\lambda y)&=&\int^1_0H^\ast(-\lambda J_{2n}\dot{y}(t))dt
\ge\int^1_0\{\langle x(t), -\lambda J_{2n}\dot{y}(t)\rangle- H(x(t))\}dt
\end{eqnarray*}
and so
\begin{eqnarray*}
\int^1_0H(x(t))dt\ge \int^1_0\langle x(t), -\lambda J_{2n}\dot{y}(t)\rangle dt-\lambda^2
=\lambda \int^1_0\langle x(t), - J_{2n}\dot{y}(t)\rangle dt-\lambda^2.
\end{eqnarray*}
In particular, taking $\lambda=\frac{1}{2} \int^1_0\langle x(t), - J_{2n}\dot{y}(t)\rangle dt$
we arrive at
\begin{eqnarray}\label{e:EH.3.1}
\int^1_0H(x(t))dt\ge\left(\frac{1}{2} \int^1_0\langle x(t), - J_{2n}\dot{y}(t)\rangle dt\right)^2,\quad
\forall x\in E.
\end{eqnarray}

Since $y^+=w^+/\sqrt{\varrho}\ne 0$ and $E^-\oplus E^0+\mathbb{R}_+y=E^-\oplus E^0\oplus\mathbb{R}_+y^+$, by  Proposition~\ref{prop:EH.1.1}(ii),
$$
\gamma(S^+)\cap(E^-\oplus E^0+\mathbb{R}_+y)\ne\emptyset,\quad\forall\gamma\in\Gamma_{n,k}.
$$
Fixing  $\gamma\in\Gamma_{n,k}$ and $x\in \gamma(S^+)\cap(E^-\oplus E^0+\mathbb{R}_+y)$,
 write $x=x^{-0}+ sy=x^{-0}+ sy^{-0}+ sy^+$ where $x^{-0}\in E^-\oplus E^0$, and
consider the polynomial
$$
P(t)=\mathfrak{a}(x+ty)=\mathfrak{a}(x)+ t\int^1_0\langle x, - J_{2n}\dot{y}\rangle dt + \mathfrak{a}(y)t^2=\mathfrak{a}(x^{-0}+(t+s)y).
$$
 Since $\mathfrak{a}|_{E^-\oplus E^0}\le 0$
implies $P(-s)\le 0$, and $\mathfrak{a}(y)=1/\varrho>0$ implies
$P(t)\to+\infty$ as $|t|\to+\infty$,
 there exists $t_0\in\mathbb{R}$ such that $P(t_0)=0$. It follows that
$$
\left(\int^1_0\langle x, - J_{2n}\dot{y}\rangle dt\right)^2\ge 4\mathfrak{a}(y)\mathfrak{a}(x).
$$
This and (\ref{e:EH.3.1}) lead to
\begin{eqnarray}\label{e:EH.3.2}
\mathfrak{a}(x)&\le& (\mathfrak{a}(y))^{-1}\left(\frac{1}{2}\int^1_0\langle x, - J_{2n}\dot{y}\rangle dt\right)^2
\le\varrho\int^1_0H(x(t))dt.
\end{eqnarray}

 In order to prove that that $c^{n,k}(D)\le \varrho$, it suffices to prove that for any $\varepsilon>0$ there exists  $\tilde{H}\in  \mathscr{F}_{n,k}(\mathbb{R}^{2n}, D)$
such that $c^{n,k}(\tilde{H})< \varrho+\varepsilon$,
which is reduced to prove: \textsf{for any given $\gamma\in\Gamma_{n,k}$ there exists $x\in h(S^+)$ such that}
\begin{eqnarray}\label{e:EH.3.10}
\Phi_{\tilde{H}}(x)< \varrho+\varepsilon.
\end{eqnarray}

Now for $\tau>0$ there exists $H_\tau\in \mathscr{F}_{n,k}(\mathbb{R}^{2n}, D)$
such that
\begin{eqnarray}\label{e:EH.3.11}
H_\tau\ge \tau\left(H-\left(1+\frac{\varepsilon}{2\varrho}\right)\right).
\end{eqnarray}
For $\gamma\in\Gamma_{n,k}$ choose $x\in h(S^+)$ satisfying (\ref{e:EH.3.2}). We shall prove that for $\tau>0$
large enough $\tilde{H}=H_\tau$ satisfies the requirements.

$\bullet$ If $\int^1_0H(x(t))dt\le\left(1+\frac{\varepsilon}{\varrho}\right)$, then by $H_\tau\ge 0$ and (\ref{e:EH.3.2}), we have
$$
\Phi_{H_\tau}(x)\le \mathfrak{a}(x)\le \varrho\int^1_0H(x(t))dt\le \varrho\left(1+\frac{\varepsilon}{\varrho}\right)<\varrho+\varepsilon.
$$

$\bullet$ If $\int^1_0H(x(t))dt>\left(1+\frac{\varepsilon}{\varrho}\right)$, then (\ref{e:EH.3.11}) implies
\begin{eqnarray}\label{e:EH.3.12}
\int^1_0H_\tau(x(t))dt&\ge& \tau\left(\int^1_0H(x(t))dt-\left(1+\frac{\varepsilon}{2\varrho}\right)\right)\nonumber\\
&\ge&\tau \frac{\varepsilon}{2a}\left(1+\frac{\varepsilon}{\varrho}\right)^{-1}\int^1_0H(x(t))dt
\end{eqnarray}
because
$$
\left(1+\frac{\varepsilon}{2\varrho}\right)=
\left(1+\frac{\varepsilon}{2\varrho}\right)\left(1+\frac{\varepsilon}{\varrho}\right)^{-1}
\left(1+\frac{\varepsilon}{\varrho}\right)<
\left(1+\frac{\varepsilon}{2\varrho}\right)\left(1+\frac{\varepsilon}{\varrho}\right)^{-1}\int^1_0H(x(t))dt
$$
and
$$
1-\left(1+\frac{\varepsilon}{2\varrho}\right)\left(1+\frac{\varepsilon}{\varrho}\right)^{-1}=
\left(1+\frac{\varepsilon}{\varrho}\right)^{-1}\left[\left(1+\frac{\varepsilon}{\varrho}\right)-
\left(1+\frac{\varepsilon}{2\varrho}\right)\right]=
\frac{\varepsilon}{2\varrho}\left(1+\frac{\varepsilon}{\varrho}\right)^{-1}.
$$
Choose $\tau>0$ so large that the right side of the last equality is more than $\varrho$.
Then
$$
\int^1_0H_\tau(x(t))dt\ge \varrho\int^1_0H(x(t))dt
$$
by (\ref{e:EH.3.12}),
and hence (\ref{e:EH.3.2}) leads to
$$
\Phi_{H_\tau}(x)=\mathfrak{a}(x)-\int^1_0H_\tau(x(t))dt\le \mathfrak{a}(x)-\varrho\int^1_0H(x(t))dt\le 0.
$$

In summary, in the above two cases we have $\Phi_{H_\tau}(x)<\varrho+\varepsilon$.
(\ref{e:EH.3.10}) is proved.


\noindent
{\bf Step 3.} {\it Prove the final claim.}
By \cite[Theorem~1.5]{JinLu1917} we have
$$
 c_{\rm LR}(D,D\cap \mathbb{R}^{n,k})=\min\{A(x)>0\;|\;x\;\hbox{is a leafwise chord on\;$\partial D$\;for \;$\mathbb{R}^{n,k}$}\}.
$$
Using Proposition~1.12 and Corollary~2.41 in \cite{Kr15}
we can choose two sequences of $C^\infty$ strictly convex domains with  boundaries,
 $(D^+_j)$ and $(D^-_j)$, such that
 \begin{description}
 \item[(i)] $D^-_1\subset D^-_2\subset\cdots\subset D$ and $\cup^\infty_{j=1}D^-_j=D$,
 \item[(ii)] $D^+_1\supseteq D^+_2\supseteq\cdots\supseteq D$ and $\cap^\infty_{j=1}D^+_j=D$,
 \item[(iii)] for any small neighborhood $O$ of $\partial D$ there exists an integer
 $N>0$ such that $\partial D^+_k\cup\partial D^-_k\subset O\;\forall k\ge N$.
 \end{description}
Now Step 1-Step 2 and \cite[Theorem~1.5]{JinLu1917} give rise to
$c_{\rm LR}(D^+_j,D\cap \mathbb{R}^{n,k})=c^{n,k}(D^+_j)$ and $c_{\rm LR}(D^-_j,D\cap \mathbb{R}^{n,k})=c^{n,k}(D^-_j)$
for each $j=1,2,\cdots$. We have also that the sequence
$c_{\rm LR}(D^+_j,D\cap \mathbb{R}^{n,k})$  converges decreasingly to $c_{\rm LR}(D,D\cap \mathbb{R}^{n,k})$ as $j\to\infty$
and that the sequence $c_{\rm LR}(D^-_j,D\cap \mathbb{R}^{n,k})$  converges increasingly to $c_{\rm LR}(D,D\cap \mathbb{R}^{n,k})$ as $j\to\infty$.
Moreover for each $j$ there holds $c^{n,k}(D^-_j)\le c^{n,k}(D)\le c^{n,k}(D^+_j)$ by the monotonicity of $c^{n,k}$.
These lead to $c^{n,k}(D)=c_{\rm LR}(D,D\cap \mathbb{R}^{n,k})$.


\section{Proof of Theorem~\ref{th:EHproduct}}\label{sec:product}
\setcounter{equation}{0}

Clearly, the proof of Theorem~\ref{th:EHproduct}
 can be reduced to the case that $m=2$ and all $D_i$ are also bounded.
 Moreover, by an approximation argument in Step 3 of Section~\ref{sec:formula}
we only need to prove the  following:

\begin{theorem}\label{th:EHproduct1}
 For  bounded strictly convex domains $D_i\subset\mathbb{R}^{2n_i}$ with $C^2$-smooth boundary and containing
 the origin, $i=1,2$,
 and any integer  $0\le k\le n:=n_1+n_2$
 it holds that
 \begin{eqnarray*}
 &&c^{n,k}(\partial D_1\times\partial D_2)=c^{n,k}(D_1\times D_2)\\
 &=&\min\{c^{n_1,\min\{n_1,k\}}(D_1),c^{n_2,\max\{k-n_1,0\}}(D_2)\}.
 \end{eqnarray*}
 \end{theorem}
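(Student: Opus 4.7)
The plan is to use the characterization of $c^{n,k}$ by generalized leafwise chords (GLCs) in Theorem~\ref{th:EHconvex} and to exploit the resulting product decomposition of the ambient data. Setting $k_1:=\min\{n_1,k\}$ and $k_2:=\max\{k-n_1,0\}$, a direct check from the coordinate formulas gives
\[
\mathbb{R}^{n,k}=\mathbb{R}^{n_1,k_1}\times\mathbb{R}^{n_2,k_2},\qquad V_0^{n,k}=V_0^{n_1,k_1}\times V_0^{n_2,k_2}
\]
under the identification $\mathbb{R}^{2n}\cong\mathbb{R}^{2n_1}\times\mathbb{R}^{2n_2}$, and in the same coordinates $J_{2n}=J_{2n_1}\oplus J_{2n_2}$, so that the action splits as $A(z_1,z_2)=A(z_1)+A(z_2)$. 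By the generalized form of Theorem~\ref{th:EHconvex}, $c^{n,k}(D_1\times D_2)$ equals the minimum positive action of a GLC on $\partial(D_1\times D_2)$ for $\mathbb{R}^{n,k}$, and each $c^{n_i,k_i}(D_i)$ equals the minimum positive action of a leafwise chord on the $C^2$ hypersurface $\partial D_i$. Write $\alpha:=\min\{c^{n_1,k_1}(D_1),c^{n_2,k_2}(D_2)\}$.

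For the upper bound $c^{n,k}(D_1\times D_2)\le\alpha$, assume without loss of generality $\alpha=c^{n_1,k_1}(D_1)$, realized by a leafwise chord $x^*:[0,T]\to\partial D_1$ for $\mathbb{R}^{n_1,k_1}$. Since $0\in D_2^\circ$, the curve $z(t):=(x^*(t),0)$ lies in $\partial D_1\times D_2^\circ\subset\partial(D_1\times D_2)$; its normal cone reduces to $N_{\partial D_1}(x^*(t))\times\{0\}$, confirming that $z$ is a generalized characteristic, and the splittings above upgrade the leafwise boundary conditions of $x^*$ to GLC conditions for $\mathbb{R}^{n,k}$. Since $A(z)=A(x^*)=\alpha$, this gives the upper bound.

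For the lower bound $c^{n,k}(D_1\times D_2)\ge\alpha$, let $z^*=(z_1^*,z_2^*)$ be a GLC realizing the minimum positive action. On any interval where $z_1^*(t)\in D_1^\circ$, the normal cone $N_{D_1\times D_2}(z^*(t))$ lies in $\{0\}\times\mathbb{R}^{2n_2}$, forcing $\dot z_1^*(t)=0$; thus $z_1^*$ is constant on each component of $\{t:z_1^*(t)\in D_1^\circ\}$. Collapsing these intervals via the monotone Lipschitz reparametrization $\phi_1(t):=|\{s\le t:z_1^*(s)\in\partial D_1\}|$, with $T_1:=\phi_1(T)$, produces a generalized characteristic $\hat z_1^*:[0,T_1]\to\partial D_1$ with the same endpoints as $z_1^*$; the splittings of $\mathbb{R}^{n,k}$ and $V_0^{n,k}$ then guarantee that $\hat z_1^*$ is a GLC on $\partial D_1$ for $\mathbb{R}^{n_1,k_1}$, and a change of variables yields $A(\hat z_1^*)=A(z_1^*)$. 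Moreover $A(z_i^*)\ge 0$: on $\partial D_i$ one has $\dot z_i^*(t)=a_i(t)J_{2n_i}\nu_i(z_i^*(t))$ a.e. with $a_i(t)\ge 0$ (from $N_{\partial D_i}(z_i^*)=\mathbb{R}_{\ge 0}\nu_i(z_i^*)$) and $\langle\nu_i(z_i^*),z_i^*\rangle>0$ (by strict convexity and $0\in D_i^\circ$), making the integrand of $A(z_i^*)$ pointwise nonnegative. Defining $\hat z_2^*$ symmetrically, we have $A(z^*)=A(z_1^*)+A(z_2^*)>0$, so $A(z_i^*)>0$ for some $i$, whence $c^{n,k}(D_1\times D_2)=A(z^*)\ge A(\hat z_i^*)\ge c^{n_i,k_i}(D_i)\ge\alpha$.

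It remains to prove $c^{n,k}(\partial D_1\times\partial D_2)=\alpha$. Since $\partial D_1\times\partial D_2\subset\overline{D_1\times D_2}$, Propositions~\ref{prop:coEHC.2}(i) and~\ref{prop:coEHC.2+}(i) give $c^{n,k}(\partial D_1\times\partial D_2)\le c^{n,k}(\overline{D_1\times D_2})=c^{n,k}(D_1\times D_2)=\alpha$. The reverse inequality is the main obstacle, since $\partial D_1\times\partial D_2$ is not the boundary of a convex domain and Theorem~\ref{th:EHconvex} does not apply directly. I plan to handle it by adapting Sikorav's critical-point analysis from \cite[\S 6.6]{Sik90} to the coisotropic setting: take a cofinal subfamily of $\mathscr{F}_{n,k}(\mathbb{R}^{2n},\partial D_1\times\partial D_2)$ consisting of Hamiltonians $H_\varepsilon(z_1,z_2):=f_1(H_1(z_1))+f_2(H_2(z_2))$ with $H_i=j_{D_i}^2$ and $f_i\in C^\infty(\mathbb{R},\mathbb{R}_+)$ vanishing on $(1-\varepsilon,1+\varepsilon)$ and linear at infinity with sufficient slope. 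Under the splitting $E=H^{1/2}_{n_1,k_1}\oplus H^{1/2}_{n_2,k_2}$ the functional $\Phi_{H_\varepsilon}$ decomposes additively, so its critical points and critical values decouple; an intersection argument in the spirit of Proposition~\ref{prop:EH.1.1} applied to $S^+\subset E_1^+\oplus E_2^+$ then yields $c^{n,k}(H_\varepsilon)\ge\min\{c^{n_1,k_1}(f_1\circ H_1),c^{n_2,k_2}(f_2\circ H_2)\}$, and letting $\varepsilon\to 0$ with the Legendre-transform estimates used in the proof of Theorem~\ref{th:EHconvex} brings this lower bound up to $\alpha$.
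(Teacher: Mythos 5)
Your proof of the middle equality, $c^{n,k}(D_1\times D_2)=\alpha:=\min\{c^{n_1,k_1}(D_1),c^{n_2,k_2}(D_2)\}$ with $k_1=\min\{n_1,k\}$, $k_2=\max\{k-n_1,0\}$, is a genuinely different route from the paper's. You apply the generalized form of Theorem~\ref{th:EHconvex} directly to the non-smooth convex body $D_1\times D_2$: the upper bound comes from inserting a minimizing chord of one factor (paired with the constant $0$ in the other), and the lower bound from splitting a minimizing GLC into factor components and collapsing the rest intervals. The paper instead proves the upper bound via Lemma~\ref{lem:EHproductlem1}, computing $c^{n,k}(D\times\mathbb{R}^{2n_2})$ by exhausting it with the ellipsoid-like domains $E_R=\{H(z)+(|z'|/R)^2<1\}$ whose boundaries are $C^{1,1}$, and then classifying the leafwise chords on $\partial E_R$. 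Your route is geometrically cleaner, but be aware that the reparametrization step needs care: a GLC on $\partial(D_1\times D_2)$ is only absolutely continuous, so you should verify that $\hat z_i^*$ is again absolutely continuous and that $\dot{\hat z}_i^*(\tau)\in J_{2n_i}N_{D_i}(\hat z_i^*(\tau))$ a.e.\ survives the change of variables. The paper avoids this issue entirely by always working on $C^{1,1}$ boundaries where chords are $C^1$.

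The genuine gap is in the last and hardest step, $c^{n,k}(\partial D_1\times\partial D_2)\ge\alpha$. Your sketch invokes ``an intersection argument in the spirit of Proposition~\ref{prop:EH.1.1} applied to $S^+\subset E_1^+\oplus E_2^+$,'' but that argument cannot produce the needed inequality as stated. The reason: $S^+=S^+_{n,k}$ is the unit sphere in $E^+_{n,k}=E^+_{n_1,k_1}\oplus E^+_{n_2,k_2}$, which is \emph{not} the product $S^+_{n_1,k_1}\times S^+_{n_2,k_2}$. For $x=(x_1,x_2)\in S^+$ with $\|x_1\|$ small, no bound on $\Phi_{\widehat H_1}(\gamma_1(x_1))$ comes from the sphere-based minimax for the first factor, so one cannot simply add the factor inequalities. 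The paper resolves exactly this via Lemma~\ref{lem:9.2}: using the flow of $\nabla\Phi_{\widehat H_i}$ and Lemma~\ref{lem:PositiveInvariance}, it produces $\gamma_i\in\Gamma_{n_i,k_i}$ with $\Phi_{\widehat H_i}\ge 0$ on all of $\gamma_i(B^+_{n_i,k_i})$ (the ball, not just the sphere) and $\Phi_{\widehat H_i}\ge c^{n_i,k_i}(D_i)-\epsilon$ on $\gamma_i(B^+_{n_i,k_i}\setminus\epsilon B^+_{n_i,k_i})$. Then for any $x=(x_1,x_2)\in S^+$ at least one $\|x_j\|>\epsilon$, so $\Phi_{\widehat H}(\gamma(x))=\Phi_{\widehat H_1}(\gamma_1(x_1))+\Phi_{\widehat H_2}(\gamma_2(x_2))\ge\min_i c^{n_i,k_i}(D_i)-\epsilon$. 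That ball-version flow lemma is the missing ingredient your sketch does not identify, and without it the final lower bound is not established.
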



We first prove two lemmas. For convenience we write $E=H^{\frac{1}{2}}_{n,k}$ as
$E_{n,k}$, and $E^\ast$ as $E^\ast_{n,k}$, $\ast=+,-, 0$. As a generalization of
Lemma 2 in \cite[\S~6.6]{Sik90} we have:

\begin{lemma}\label{lem:9.2}
Let $D\subset\mathbb{R}^{2n}$ be a bounded strictly convex domain
  with $C^2$-smooth boundary and containing $0$. Then for any given integer $0\le k\le n$,
  function ${H}\in\mathscr{F}_{n,k}(\mathbb{R}^{2n}, \partial D)$ and any $\epsilon>0$
  there exists $\gamma\in\Gamma_{n,k}$ such that
   \begin{equation}\label{e:EH.3.14}
      \Phi_{{H}}|_{\gamma(B^+_{n,k}\setminus\epsilon B^+_{n,k})}\ge c^{n,k}(D)-\epsilon
      \quad\hbox{and}\quad \Phi_{H}|_{\gamma(B^+_{n,k})}\ge 0,
   \end{equation}
    where $B_{n,k}^{+}$ is the closed unit ball in $E_{n,k}^{+}$.
\end{lemma}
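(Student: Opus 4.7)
The plan is to adapt Sikorav's proof of Lemma~2 in \cite[\S~6.6]{Sik90} to the coisotropic setting, using in place of a closed characteristic the action-minimizing leafwise chord on $\partial D$. Write $\varrho := c^{n,k}(D)$. From Step~2 of Section~\ref{sec:formula} (the rescaling $y = w/\sqrt{\varrho}$ of (\ref{e:clarke})) we have $y \in C^{1}_{n,k}([0,1],\mathbb{R}^{2n})$ with $y^{+} \ne 0$, $\mathfrak{a}(y) = A(y) = 1/\varrho$, and such that $\varrho y$ parametrizes the extremal leafwise chord on $\partial D$. Since $H$ vanishes on some open neighborhood $U$ of $\partial D$, the strict convexity and $C^{2}$-regularity of $\partial D$ together with the compactness of $\partial D$ let us fix $\delta \in (0, 1/2)$ so that the shell $\Sigma_{\delta} := \{z \in \mathbb{R}^{2n} : 1 - \delta \le j_{D}(z) \le 1 + \delta\}$ is contained in $U$, whence $H \equiv 0$ on $\Sigma_\delta$.

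Next I would construct an admissible homotopy $\gamma_{u} \in \Gamma_{n,k}$ of the form
$$
\gamma_{u}(x) \;=\; a_{u}(x)\, x^{+} \;+\; x^{0} \;+\; x^{-} \;+\; u\, \mu(x)\, y,
$$
where $a_{u}: E \to (0, \infty)$ and $\mu: E \to \mathbb{R}_{\ge 0}$ are continuous with $a_{0} \equiv 1$, so that the tuple $(a_{u}, 1, 1, u\mu(x) y)$ meets Definition~\ref{defdeform}: positivity of the scalars is direct, and $K(x, u) = u \mu(x) y$ is compact on bounded sets because its image lies in the line $\mathbb{R}\cdot y$. Condition~(i) is enforced by keeping $a_{u}(x)$ suitably large whenever $x^{+}$ risks being cancelled by $u\mu(x) y^{+}$. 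For $z = z^{+} \in B^{+}_{n,k}$ the value $\mu(z)$ is arranged to be close to $\varrho$ away from the origin (so that $\gamma_1(z)$ is a small $E^{+}$-perturbation of the extremal $\varrho y \subset \partial D$), to taper smoothly to $0$ as $\|z\|_{E} \to 0$ inside $\epsilon B^{+}_{n,k}$, and to be finely adjusted on $B^{+}_{n,k}\setminus \epsilon B^{+}_{n,k}$ so that the curve $\gamma_{1}(z)(t) = a_{1}(z)z(t) + \mu(z) y(t)$ stays inside $\Sigma_{\delta}$ pointwise in $t$.

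With this pointwise shell constraint enforced, $\int_{0}^{1} H(\gamma_{1}(z)(t))\, dt = 0$ for $z \in B^{+}_{n,k} \setminus \epsilon B^{+}_{n,k}$, and expanding the quadratic form $\mathfrak{a}$ (using $\mathfrak{a}(a_{1}(z) z) = \tfrac{1}{2} a_{1}(z)^{2} \|z\|_{E}^{2}$ since $z \in E^{+}_{n,k}$) gives
$$
\Phi_{H}(\gamma_{1}(z)) \;=\; \mathfrak{a}(\gamma_{1}(z)) \;=\; \tfrac{1}{2} a_{1}(z)^{2} \|z\|_{E}^{2} \;+\; \mu(z)\, B(a_{1}(z) z,\, y) \;+\; \mu(z)^{2}\, \mathfrak{a}(y),
$$
where $B$ is the symmetric bilinear form polarizing $\mathfrak{a}$. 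Since $\mathfrak{a}(y) = 1/\varrho > 0$ and $\mu(z)$ is within a controlled $\epsilon$-window of $\varrho$, the right-hand side exceeds $\varrho - \epsilon$ on the annulus. The second bound $\Phi_{H} \circ \gamma_{1} \ge 0$ on $B^{+}_{n,k}$ then follows from the taper of $\mu$ near the origin, $\mathfrak{a} \ge 0$ on $E^{+}_{n,k}$, and $H \ge 0$.

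The main obstacle is to arrange $\mu(z)$ so that $a_{1}(z) z(t) + \mu(z) y(t)$ remains in the narrow shell $\Sigma_{\delta}$ \emph{pointwise in} $t$, uniformly over $z \in B^{+}_{n,k} \setminus \epsilon B^{+}_{n,k}$. Since $H^{1/2}_{n,k}$ does not embed into $C^{0}([0,1], \mathbb{R}^{2n})$, pointwise values of $z(t)$ are not controlled by $\|z\|_{E} \le 1$. The standard workaround, which I expect to carry over from \cite{Sik90}, is to precompose $\gamma_{u}$ with the orthogonal projection $\pi_{F}$ onto a finite-dimensional subspace $F \subset E^{+}_{n,k}$ containing a high-frequency truncation of $y^{+}$, and to absorb the error $x^{+} - \pi_{F} x^{+}$ into the compact part $K(x, u)$ without damaging Definition~\ref{defdeform}~(ii). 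The minimax is then reduced to a finite-dimensional problem on the simplex $E^{-} \oplus E^{0} \oplus \mathbb{R}_{+} y^{+}$ of Proposition~\ref{prop:EH.1.1}, where pointwise control of $z(t)$ follows from the equivalence of all norms on the finite-dimensional $F$.
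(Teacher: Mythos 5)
Your proposal diverges substantially from the paper's argument, and the divergence is not merely a matter of style: the approach you outline has a gap that the paper's proof is specifically designed to avoid.

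The paper does \emph{not} construct $\gamma$ by an explicit additive perturbation of $z$ in the direction of the extremal leafwise chord $y$. Instead, after replacing $H$ by a larger function in $\mathscr{E}^{n,k}_{\epsilon/2}(\mathbb{R}^{2n},\partial D)$, it picks a point $z_0\in\mathbb{R}^{n,k}\cap{\rm Int}(H^{-1}(0))$ and a small $\alpha>0$ so that $\Phi_H\ge 0$ on $z_0+\alpha B^+_{n,k}$ and $\inf\Phi_H|(z_0+\alpha S^+_{n,k})>0$. It then invokes Lemma~\ref{lem:PositiveInvariance} (positive invariance of $\mathcal{F}_{n,k}$ under the gradient flow $\varphi_u$ of $\nabla\Phi_H$), the (PS) condition from Lemma~\ref{lem:EH8}, and Lemma~\ref{lem:EH9} to deduce that $d(H)=\sup_{u\ge 0}\inf(\Phi_H|\varphi_u(z_0+\alpha S^+_{n,k}))$ is a positive critical value $\ge c^{n,k}(D)-\epsilon/2$, hence some $S_r=\varphi_r(z_0+\alpha S^+_{n,k})$ has $\Phi_H|S_r\ge c^{n,k}(D)-\epsilon$. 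The required $\gamma$ is then obtained by interpolating radially between the original scaling (on $\tfrac{\epsilon}{2}B^+_{n,k}$), the flow times $0\le u\le r$ (on the intermediate annulus), and $\varphi_r$ (outside $\epsilon B^+_{n,k}$). Because the flow $\varphi_u$ has the compact-plus-diagonal structure of Lemma~\ref{lem:flow}, admissibility is automatic, and no pointwise information about functions in $E=H^{1/2}_{n,k}$ is ever needed. (Sikorav's Lemma~2 in \S6.6, which you cite, is itself this flow-pushing argument.)

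Your construction, by contrast, runs into exactly the obstacle you flag: you need $a_1(z)z(t)+\mu(z)y(t)$ to stay pointwise in a thin shell around $\partial D$ uniformly over the whole annulus $B^+_{n,k}\setminus\epsilon B^+_{n,k}$, and since $H^{1/2}_{n,k}\not\hookrightarrow C^0$ this cannot be achieved by controlling $\|z\|_E$. The proposed fix — precomposing with a finite-dimensional projection $\pi_F$ and dumping the tail into $K$ — is only sketched, and it does not obviously help: even on a finite-dimensional $F\subset E^+_{n,k}$ the annulus still contains elements of arbitrarily large $C^0$-norm relative to any fixed shell width once you require $\|z\|_E\in[\epsilon,1]$, so the shell constraint still has to be forced by choosing $a_1(z)$ tiny, and then the term $\tfrac12 a_1(z)^2\|z\|_E^2$ no longer contributes. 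Moreover the cross term in your expansion, $2a_1(z)\mu(z)B(z,y^+)=a_1(z)\mu(z)\langle z,y^+\rangle_E$, can be negative (take $z$ aligned with $-y^+$), and nothing in your sketch rules out $\Phi_H(\gamma_1(z))<\varrho-\epsilon$ on that part of the annulus. Finally, replacing $x^+$ by $\pi_F x^+$ inside $\gamma_u$ and moving $x^+-\pi_F x^+$ into the compact term $K$ would violate Definition~\ref{defdeform}(ii), since $x\mapsto x^+-\pi_F x^+$ is not a compact map on bounded sets. So the proposal as written does not close; the gradient-flow mechanism used in the paper is the missing ingredient.
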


 \begin{proof}
 The case $k=n$ was proved in Lemma 2 of \cite[\S~6.6]{Sik90}. We assume $k<n$ below.
Let $S_{n,k}^{+}=\partial B_{n,k}^{+}$ and $\mathscr{E}^{n,k}_{\epsilon/2}(\mathbb{R}^{2n},\partial D)$ be as in
 (\ref{e:EH.3.7}). Replacing $H$ by a greater function we may assume
 $H\in\mathscr{E}^{n,k}_{\epsilon/2}(\mathbb{R}^{2n},\partial D)$.
 Since $H=0$ near $\partial{D}$,
by the arguments at the end of proof of Proposition~\ref{prop:EH.1.4}, the condition
 (\ref{e:EH.2.1+}) may be satisfied with any $z_0\in\mathbb{R}^{n,k}\cap{\rm Int}({H}^{-1}(0))$.
 Fix such a $z_0\in\mathbb{R}^{n,k}\cap{\rm Int}({H}^{-1}(0))$.
 It follows that  there exists $\alpha>0$ such that
\begin{equation}\label{e:EH.3.15}
\inf \Phi_{H}|_{(z_0+\alpha S^+_{n,k})}>0\quad\hbox{and}\quad \Phi_{H}|_{(z_0+\alpha B^+_{n,k})}\ge 0,
 \end{equation}
 (see (\ref{e:EH.2.2-})-(\ref{e:EH.2.3}) in the proof of Proposition~\ref{prop:EH.1.4}).
 Define $\gamma_\varepsilon:E_{n,k}\to E_{n,k}$ by $\gamma_\varepsilon(z)=z_0+ \alpha z$.
 It is easily seen that $\gamma_\varepsilon\in\Gamma_{n,k}$.
The first inequality in (\ref{e:EH.3.15}) shows that $\gamma_\varepsilon(S^+_{n,k})$ belongs to
the set $\mathcal{F}_{n,k}=\{\gamma(S^+_{n,k})\,|\,\gamma\in\Gamma_{n,k}
   \;\text{and}\;\inf(\Phi_H|_{\gamma(S^+_{n,k})})>0\}$ in  (\ref{e:fnk}).
 Lemma~\ref{lem:PositiveInvariance} shows that
$$
c^{n,k}(H)=\sup_{F\in\mathcal{F}_{n,k}}\inf_{x\in F}\Phi_H(x),
$$
 and $\mathcal{F}_{n,k}$ is positively invariant under the flow $\varphi_u$ of $\nabla \Phi_H$.
  Define $S_u=\varphi_u(z_0+\alpha S^+_{n,k})$ and $d(H)=\sup_{u\ge 0}\inf(\Phi_{H}|_{S_u})$.
It follows from these and (\ref{e:EH.3.15})  that
$$
0<\inf \Phi_{H}|_{S_0}\le d(H)\le \sup_{F\in\mathcal{F}_{n,k}}\inf_{x\in F}\Phi_H(x)=c^{n,k}(H)<\infty.
$$
Since $\Phi_{H}$ satisfies the (PS) condition by Lemma~\ref{lem:EH8}, $d(H)$ is a positive critical value of $\Phi_{H}$, and
  $d(H)\ge c^{n,k}(D)-\epsilon/2$ by Lemma~\ref{lem:EH9}. Moreover,
  by the definition of
$d({H)}$ there exists $r>0$ such that $\Phi_{H}|_{S_r}\ge d(H)-\epsilon/2$ and thus
\begin{equation}\label{e:EH.3.16}
 \Phi_{H}|_{S_r}\ge c^{n,k}(D)-\epsilon.
 \end{equation}
 Because $\Phi_{H}$ is nondecreasing along the flow $\varphi_u$, we arrive at
\begin{equation}\label{e:EH.3.17}
\Phi_{H}|_{S_u}\ge \Phi_{H}|_{S_0}\ge \inf (\Phi_{H}|_{S_0})>0,\quad\forall u\ge 0.
 \end{equation}
Define $\gamma:E_{n,k}\to E_{n,k}$ by
$\gamma(x^++x^0+x^-)=\widetilde{\gamma}(x^+)+x^0+x^-$,
where
\begin{eqnarray*}
&&\widetilde{\gamma}(x)=z_0+ 2(\alpha/\epsilon)x\hspace{14mm}\quad\quad\hbox{if}\quad x\in E^+_{n,k}\;\hbox{and}\;\|x\|_{E_{n,k}}\le\frac{1}{2}\epsilon,\\
&&\widetilde{\gamma}(x)=\varphi_{r(2\|x\|_{E_{n,k}}-\epsilon)/\epsilon}(z_0+\alpha x/\|x\|_{E_{n,k}})\quad\hbox{if}\quad x\in E^+_{n,k}\;\hbox{and}\;
\frac{1}{2}\epsilon<\|x\|_{E_{n,k}}\le \epsilon,\\
&&\widetilde{\gamma}(x)=\varphi_{r}(z_0+\alpha x/\|x\|_{E_{n,k}})\hspace{8mm}\quad\quad\hbox{if}\quad x\in E^+_{n,k}\;\hbox{and}\;
\|x\|_{E_{n,k}}>\epsilon.
\end{eqnarray*}
The first and second lines imply
$\gamma(\frac{\epsilon}{2}B^+_{n,k})=(z_0+\alpha B^+_{n,k})$ and
$\gamma(B^+_{n,k}\setminus\frac{\epsilon}{2}B^+_{n,k})=\bigcup_{0\le u\le r}S_u$,
 respectively, and so
$$
\gamma(B^+_{n,k})=(z_0+\alpha B^+_{n,k})\bigcup_{0\le u\le r}S_u;
$$
the third line implies $\gamma(B^+_{n,k}\setminus\epsilon B^+_{n,k})=S_r$.
It follows from these, (\ref{e:EH.3.15}) and (\ref{e:EH.3.16})-(\ref{e:EH.3.17})
 that $\gamma$ satisfies (\ref{e:EH.3.14}).

Finally, we can also know that $\gamma\in\Gamma_{n,k}$ by considering the homotopy
\begin{eqnarray*}
\gamma_0(x)=2(\alpha/\epsilon)x^++x^0+x^-,\quad
\gamma_u(x)=\frac{1}{u}{(\gamma(ux)-z_0)}+ z_0,\quad 0<u\le 1.
\end{eqnarray*}
\end{proof}

\begin{lemma}\label{lem:EHproductlem1}
 Let integers $n_1, n_2\ge 1$, $0\le k\le n:=n_1+n_2$.
For a bounded strictly convex domain $D\subset\mathbb{R}^{2n_1}$  with $C^2$ smooth boundary $\mathcal{S}$
and containing $0$, it holds that
\begin{equation}\label{e:product11}
c^{n,k}(D\times\mathbb{R}^{2n_2})=c^{n_1,\min\{n_1,k\}}(D).
\end{equation}
Moreover, if $\Omega\subset\mathbb{R}^{2n_2}$ is  a bounded strictly convex domain with $C^2$ smooth boundary
and containing $0$, then
 $$
  c^{n,k}(\mathbb{R}^{2n_1}\times \Omega)=c^{n_2,\max\{k-n_1,0\}}(\Omega).
  $$
\end{lemma}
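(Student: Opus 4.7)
The two statements are symmetric under $n_1\leftrightarrow n_2$, so I focus on (\ref{e:product11}). The plan is to exploit the orthogonal splitting of $E_{n,k}$ induced by $\mathbb{R}^{2n}=\mathbb{R}^{2n_1}\times\mathbb{R}^{2n_2}$ and test the coisotropic action against separable Hamiltonians. Under the coordinate identification of Section~\ref{sec:main} one checks that $\mathbb{R}^{n,k}=\mathbb{R}^{n_1,k_1}\times\mathbb{R}^{n_2,k_2}$, $V_0^{n,k}=V_0^{n_1,k_1}\times V_0^{n_2,k_2}$, and $V_1^{n,k}=V_1^{n_1,k_1}\times V_1^{n_2,k_2}$, with $k_1=\min\{n_1,k\}$ and $k_2=\max\{k-n_1,0\}$. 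Since $J_{2n}=J_{2n_1}\oplus J_{2n_2}$, the Fourier-type bases in (\ref{e:space1})--(\ref{e:space2}) factor and produce orthogonal Hilbert sums $L^2_{n,k}=L^2_{n_1,k_1}\oplus L^2_{n_2,k_2}$ and $E_{n,k}=E_{n_1,k_1}\oplus E_{n_2,k_2}$, compatible with the $E^{\pm},E^0$ decomposition. In particular $\Phi_{H_1+H_2}(x_1+x_2)=\Phi_{H_1}(x_1)+\Phi_{H_2}(x_2)$ for any separable $H(z_1,z_2)=H_1(z_1)+H_2(z_2)$.

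For the upper bound $c^{n,k}(D\times\mathbb{R}^{2n_2})\le c^{n_1,k_1}(D)$, by (\ref{e:coCap+}) it is enough to bound $c^{n,k}(A)$ for every bounded $A\subset D\times\mathbb{R}^{2n_2}$ with $\bar A\cap\mathbb{R}^{n,k}\neq\emptyset$. Fix $R$ with $\bar A\subset\bar D\times\overline{B^{2n_2}(R)}$, pick $H_1\in\mathscr{E}_{n_1,k_1}(\mathbb{R}^{2n_1},D)$ with $c^{n_1,k_1}(H_1)<c^{n_1,k_1}(D)+\epsilon$, and take a smooth rotationally invariant convex $H_2\ge 0$ on $\mathbb{R}^{2n_2}$ vanishing near $\overline{B^{2n_2}(R)}$ and equal to $a_2|z_2|^2+b_2$ outside a larger ball, with $a_2\in(\pi/2,\pi)$ outside $\pi\mathbb{N}/2$. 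Then $H=H_1+H_2\in\mathscr{F}_{n,k}(\mathbb{R}^{2n},A)$. Critical points of $\Phi_H$ split as $(x_1,x_2)$ with each $x_i$ critical of $\Phi_{H_i}$, and a direct analysis parallel to the proof of Lemma~\ref{lem:PSmale}, using the boundary conditions $x_2(0),x_2(1)\in\mathbb{R}^{n_2,k_2}$ and $x_2(1)-x_2(0)\in V_0^{n_2,k_2}$, shows that the only critical points of $\Phi_{H_2}$ with action in $[0,\tfrac{\pi}{2}R^2)$ are the constants, which have action zero. Combined with Lemma~\ref{lem:empty} applied to $\Phi_H$ and the deformation argument of Proposition~\ref{prop:Sinvariance}, this forces $c^{n,k}(H)$ to be a positive critical value of $\Phi_{H_1}$; together with the reverse inequality $c^{n,k}(H)\ge c^{n_1,k_1}(H_1)$ obtained by embedding $\Gamma_{n_1,k_1}$ into $\Gamma_{n,k}$ via the diagonal $\gamma_1\mapsto\gamma_1\oplus\mathrm{id}_{E_{n_2,k_2}}$, this pins $c^{n,k}(H)=c^{n_1,k_1}(H_1)$, and the infimum over $H_1$ yields the claim.

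For the lower bound $c^{n,k}(D\times\mathbb{R}^{2n_2})\ge c^{n_1,k_1}(D)$, approximate $D$ from inside by bounded strictly convex $C^2$-smooth domains $\tilde D$ with $\overline{\tilde D}\subset D$, $0\in\tilde D$ and $c^{n_1,k_1}(\tilde D)\uparrow c^{n_1,k_1}(D)$, as in Step~3 of Section~\ref{sec:formula}, and test on $A=\tilde D\times B^{2n_2}(R)\subset D\times\mathbb{R}^{2n_2}$. Given $H\in\mathscr{F}_{n,k}(\mathbb{R}^{2n},A)$, I majorize $H$ by a separable admissible $\widetilde H=\widetilde H_1+\widetilde H_2\ge H$ with $\widetilde H_1\in\mathscr{F}_{n_1,k_1}(\mathbb{R}^{2n_1},\partial\tilde D)$ of the profile above, apply Lemma~\ref{lem:9.2} on $\tilde D$ to $\widetilde H_1$ to obtain $\gamma_1\in\Gamma_{n_1,k_1}$ satisfying (\ref{e:EH.3.14}) with $c^{n_1,k_1}(\tilde D)$ in place of $c^{n,k}(D)$, and extend $\gamma_1$ to $\gamma\in\Gamma_{n,k}$ along the splitting by scaling the $E_{n_2,k_2}$-component by a factor $\lambda>0$ large enough that the $z_2$-trajectory of any non-trivial $x_2$ leaves $\overline{B^{2n_2}(R)}$. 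A case analysis on $\gamma(S^+_{n,k})$ according to whether the $E_{n_1,k_1}$-component sits in $\epsilon B^+_{n_1,k_1}$ or not—using (\ref{e:EH.3.14}) in the latter case and the convex quadratic growth of $\widetilde H_2$ in the former—yields $\inf_{\gamma(S^+_{n,k})}\Phi_H\ge c^{n_1,k_1}(\tilde D)-O(\epsilon)$. Hence $c^{n,k}(H)\ge c^{n_1,k_1}(\tilde D)-O(\epsilon)$ for every such $H$, so $c^{n,k}(A)\ge c^{n_1,k_1}(\tilde D)-O(\epsilon)$; letting $\tilde D\uparrow D$, $R\uparrow\infty$ and $\epsilon\downarrow 0$ closes the argument.

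The main obstacle I expect is the upper bound: admissible deformations $\gamma\in\Gamma_{n,k}$ do not in general respect the splitting $E_{n_1,k_1}\oplus E_{n_2,k_2}$, so bounding $c^{n,k}(H)$ from above goes through the critical-value matching enabled by the precise non-resonance arithmetic of $a_2$, the localization of positive critical values of $\Phi_{H_2}$ above $\tfrac{\pi}{2}R^2$, and the empty-interior argument of Lemma~\ref{lem:empty}. This is the coisotropic analogue of the stabilization step in Sikorav's proof of the product formula for the Ekeland-Hofer capacity (\cite[\S~6.6]{Sik90}), and is where essentially all the technical work concentrates.
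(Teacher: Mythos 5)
Your proposal diverges from the paper's proof and, while the lower-bound half is in the spirit of the paper's proof of Theorem~\ref{th:EHproduct1} (product of admissible deformations $\gamma_1\times\gamma_2$ plus Lemma~\ref{lem:9.2}), the upper-bound half has a genuine gap and the overall strategy misses the much simpler route the paper takes.

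The paper proves Lemma~\ref{lem:EHproductlem1} by exhausting $D\times\mathbb{R}^{2n_2}$ by the \emph{bounded convex} domains
$E_R=\{(z,z')\,:\,j_D(z)^2+(|z'|/R)^2<1\}$, which have $C^{1,1}$ boundary, so that $c^{n,k}(D\times\mathbb{R}^{2n_2})=\sup_R c^{n,k}(E_R)$ by (\ref{e:coCap+}). Then Theorem~\ref{th:EHconvex} (already proved at this stage) applies directly to each $E_R$: $c^{n,k}(E_R)=\min\Sigma^{n,k}_{\partial E_R}$. A leafwise chord on $\partial E_R$ satisfies a separable Hamiltonian system whose two factor equations decouple, and an elementary ODE analysis shows $\Sigma^{n,k}_{\partial E_R}\subset\Sigma^{n_1,k_1}_{\mathcal S}\cup\frac{\pi R^2}{2}\mathbb{N}$ (or $\pi R^2\mathbb{N}$ if $k=n$); since chords on $\mathcal S\times\{0\}$ are also chords on $\partial E_R$, one gets $\min\Sigma^{n,k}_{\partial E_R}=\min\Sigma^{n_1,k_1}_{\mathcal S}$ once $R$ is large. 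No variational work is needed beyond Theorem~\ref{th:EHconvex}. This is the key idea your proposal does not use: the stabilization is \emph{geometric}, done at the level of characteristics on a bounded convex level set, not on the functional $\Phi_H$.

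The gap in your upper-bound argument concerns the claim that $c^{n,k}(H_1+H_2)$ must be a positive critical value of $\Phi_{H_1}$. Critical points of $\Phi_{H_1+H_2}$ do split as pairs $(x_1,x_2)$, so $c^{n,k}(H)=\Phi_{H_1}(x_1)+\Phi_{H_2}(x_2)$; but your analysis only rules out \emph{large positive} values of $\Phi_{H_2}(x_2)$ (those $\gtrsim\frac{\pi}{2}R^2$), not the negative values $\Phi_{H_2}(x_2)=-H_2(\text{const})<0$ that constant critical points outside $H_2^{-1}(0)$ produce. Neither Lemma~\ref{lem:empty} (empty interior of the critical value set) nor Proposition~\ref{prop:Sinvariance} (invariance under Hamiltonian isotopies fixing $\mathbb R^{n,k}$) says anything about a \emph{particular} critical point, so neither tool ``forces'' $\Phi_{H_2}(x_2)=0$, and the reduction to $c^{n,k}(H)=c^{n_1,k_1}(H_1)$ is not established. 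There is also a more technical issue: for $H_1+H_2$ to lie in $\mathscr F_{n,k}(\mathbb R^{2n},A)$ the two quadratic growth coefficients $a_1,a_2$ must be taken equal, which is fixable by adjusting the cofinal family but is currently unaddressed. Both difficulties disappear entirely in the paper's approach, which is one reason the representation formula was proved first.
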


\begin{proof}
Let $H(z)=(j_D(z))^2$ for $z\in\mathbb{R}^{n_1}$ and define
$$
E_R=\{(z,z')\in\mathbb{R}^{2n_1}\times\mathbb{R}^{2n_2}\,|\,H(z)+ (|z'|/R)^2<1\}.
$$
By the definition and the monotonicity of $c^{n,k}$ we have
 $$
 c^{n,k}(D\times\mathbb{R}^{2n_2})=\sup_R c^{n,k}(E_R).
 $$
Since the function $\mathbb{R}^{2n_1}\times\mathbb{R}^{2n_2}\ni (z,z')\mapsto G(z,z'):=H(z)+ (|z'|/R)^2\in\mathbb{R}$
is convex and  of class $C^{1,1}$, $E_R$ is convex
 and $\mathcal{S}_R=\partial E_R$ is of class $C^{1,1}$.
By Theorem~\ref{th:EHconvex} we arrive at
 $$
 c^{n,k}(E_R)=\min\Sigma_{\mathcal{S}_R}^{n,k}.
 $$
  Let $\lambda$ be a positive number and  $u=(x,x'):[0,\lambda]\rightarrow \mathcal{S}_R$ satisfy
  \begin{equation}\label{e:product+1}
  \dot{u}=X_G(u)\quad\hbox{and}\quad u(\lambda),u(0)\in\mathbb{R}^{n,k},\quad u(\lambda)\sim u(0).
 \end{equation}
  Namely, $u$ is a leafwise chord on $\mathcal{S}_R$ for  $\mathbb{R}^{n,k}$ with action $\lambda$.
Let $k_1=\min\{n_1,k\}$ and $k_2=\max\{k-n_1,0\}$.
Clearly, $k_1+k_2=k$, and (\ref{e:product+1}) is equivalent to the following
  \begin{eqnarray}
&&\dot{x}=X_H(x)\quad\hbox{and}\quad x(\lambda),x(0)\in\mathbb{R}^{n_1,k_1},\quad x(\lambda)\sim x(0),\label{e:product+2}\\
&&\dot{x}'=2J_{2n_2}x'/R^2\quad\hbox{and}\quad x'(\lambda),x'(0)\in\mathbb{R}^{n_2, k_2},\quad x'(\lambda)\sim x'(0)
\label{e:product+3}
\end{eqnarray}
because $\mathbb{R}^{n,k}\equiv(\mathbb{R}^{n_1,k_1}\times\{0\}^{2n_2})+ (\{0\}^{2n_1}\times\mathbb{R}^{n_2,k_2})$.
Note that nonzero constant vectors cannot be solutions of (\ref{e:product+2}) and (\ref{e:product+3}) and that
$H(z)$ and $(|z'|/R)^2$ take constant values along solutions of (\ref{e:product+2}) and (\ref{e:product+3}), respectively.
There exist three possibilities for solutions of (\ref{e:product+2}) and (\ref{e:product+3}):\\
$\bullet$   $x\equiv 0$, $|x'|=R$ and so $2\lambda/R^2\in\pi\mathbb{N}$ if $k<n_1+n_2$, and $2\lambda/R^2\in 2\pi\mathbb{N}$ if $k=n_1+n_2$
 by (\ref{e:product+3}).\\
 $\bullet$  $x'\equiv 0$, $H(x)\equiv 1$ and so $\lambda\in \Sigma_\mathcal{S}^{n_1,\min\{n_1,k\}}$ by (\ref{e:product+2}).\\
 $\bullet$ $H(x)\equiv \delta^2\in (0,1)$ and $|x'|^2=R^2(1-\delta^2)$, where $\delta>0$.
 Then $y(t):=\frac{1}{\delta}x(t)$ and $y'(t):=x'(t/\delta)$ satisfy respectively the following two lines:
  \begin{eqnarray*}
&& \dot{y}=X_H(y)\quad\hbox{and}\quad y(\lambda), y(0)\in\mathbb{R}^{n_1,k_1},\quad y(\lambda)\sim y(0),\quad H(y)\equiv 1,\\
&& \dot{y}'=2J_{2n_2}y'/R^2\quad\hbox{and}\quad y'(\lambda), y'(0)\in\mathbb{R}^{n_2, k_2},\quad y'(\lambda)\sim y'(0),\quad|y'|\equiv R.
 \end{eqnarray*}
 Hence we have also $\lambda\in \Sigma_\mathcal{S}^{n_1,\min\{n_1,k\}}$ by the first line, and
 \begin{center}
 $\lambda\in \frac{R^2\pi}{2}\mathbb{N}$ if $k<n_1+n_2$, \quad  $\lambda\in \pi R^2\mathbb{N}$ if $k=n_1+n_2$
 \end{center}
 by the second line.

 In summary, we always have
  \begin{eqnarray}
&&\Sigma_{\mathcal{S}_R}^{n,k}\subset \Sigma_\mathcal{S}^{n_1,\min\{n_1,k\}}\bigcup \frac{R^2\pi}{2}\mathbb{N}\quad\hbox{if}\;k<n_1+n_2,
\label{e:product+4}\\
&&\Sigma_{\mathcal{S}_R}^{n,k}\subset \Sigma_\mathcal{S}^{n_1,\min\{n_1,k\}}\bigcup {R^2\pi}\mathbb{N}\quad\hbox{if}\;k=n_1+n_2.\label{e:product+5}
 \end{eqnarray}
A solution $x$ of (\ref{e:product+2}) siting on $\mathcal{S}$ gives a solution $u=(x,0)$ of
 (\ref{e:product+1}) on $\mathcal{S}_R$.
 It follows that
$$
\min\Sigma_{\mathcal{S}_R}^{n,k}=\min\Sigma_\mathcal{S}^{n_1,\min\{n_1,k\}}
$$
for $R$ sufficiently large. (\ref{e:product11}) is proved.

The second claim can be proved in the similar way.
\end{proof}

\begin{proof}[Proof of Theorem~\ref{th:EHproduct1}]
Since $D_1\times D_2\subset D_1\times\mathbb{R}^{2n_2}$ and $D_1\times D_2\subset \mathbb{R}^{2n_1}\times D_2$,
 we get
$$
c^{n,k}(D_1\times D_2)\le \min\{c^{n_1,\min\{n_1,k\}}(D_1),c^{n_2,\max\{k-n_1,0\}}(D_2)\}
$$
by Lemma \ref{lem:EHproductlem1}.
In order to prove  the inverse direction inequality  it suffices to prove
\begin{equation}\label{e:oppoproduct}
c^{n,k}(\partial D_1\times \partial D_2)\ge \min\{c^{n_1,\min\{n_1,k\}}(D_1),c^{n_2,\max\{k-n_1,0\}}(D_2)\}
 \end{equation}
because $c^{n,k}(D_1\times D_2)\ge c^{n,k}(\partial D_1\times \partial D_2)$
by the monotonicity.

 We  assume  $n_1\le k$. (The case $n_1>k$ is similar!)  Then (\ref{e:oppoproduct}) becomes
\begin{eqnarray}\label{e:product111}
 c^{n,k}(\partial D_1\times \partial D_2)\ge \min\{c_{\rm EH}(D_1), c^{n_2,k-n_1}(D_2)\}
 \end{eqnarray}
because $c^{n_1,n_1}(D_1)=c_{\rm EH}(D_1)$ by definition.
Note that for each ${H}\in\mathscr{F}_{n,k}(\mathbb{R}^{2n}, \partial D_1\times\partial D_2)$
we may choose $\widehat{H}_1\in\mathscr{F}_{n_1,n_1}(\mathbb{R}^{2n_1}, \partial D_1)$ and $\widehat{H}_2\in \mathscr{F}_{n_2,k-n_1}(\mathbb{R}^{2n_2}, \partial D_2)$ such that
$$
\widehat{H}(z):=\widehat{H}_1(z_1)+\widehat{H}_2(z_2)\ge H(z),\quad\forall z.
$$
Let $k_1=n_1$ and $k_2=n-k_1$. By Lemma~\ref{lem:9.2}, for any
$$
0<\epsilon<\min\{c^{n_1,n_1}(D_1),c^{n_2,k-n_1}(D_2),  1/4\}
$$
and each $i\in\{1,2\}$ there exists  $\gamma_i\in\Gamma_{n_i,k_i}$
such that
\begin{eqnarray}\label{e:product112}
  \Phi_{\widehat{H}_i}|_{\gamma_i(B^+_{n_i,k_i}\setminus\epsilon B^+_{n_i,k_i})}\ge c^{n_i,k_i}(D_i)-\epsilon
      \quad\hbox{and}\quad \Phi_{\widehat{H}_i}|_{\gamma_i(B^+_{n_i,k_i})}\ge 0.
\end{eqnarray}
 Put $\gamma=\gamma_1\times\gamma_2$, which is in $\Gamma_{n,k}$.
Since for any $x=(x_1,x_2)\in S^+_{n,k}\subset B^+_{n_1,k_1}\times B^+_{n_2,k_2}$ there exists some
$j\in\{1,2\}$ such that
$$
x_j\in B_{n_j,k_j}^+\setminus 4^{-1}B_{n_j,k_j}^+\subset B_{n_j,k_j}^+\setminus \epsilon B_{n_j,k_j}^+,
$$
it follows from this and (\ref{e:product112}) that
$$
\Phi_{\widehat{H}}(\gamma(x))=\Phi_{\widehat{H}_1}(\gamma_1(x_1))+
\Phi_{\widehat{H}_2}(\gamma_2(x_2))
\ge\min\{c^{n_1,n_1}(D_1),c^{n_2,k-n_1}(D_2)\}-\epsilon>0
$$
and hence
$$
c^{n,k}({H})\ge
c^{n,k}(\widehat{H})=\sup_{h\in\Gamma_{n,k}}\inf_{y\in h(S^+_{n,k})}\Phi_{\widehat{H}}(y)\ge
\min\{c^{n_1,n_1}(D_1),c^{n_2,k-n_1}(D_2)\}-\epsilon.
$$
This leads to (\ref{e:product111}) because $c^{n_1,n_1}(D_1)=c_{\rm EH}(D_1)$.
\end{proof}

\section{Proof of Theorem~\ref{th:EHcontact}}\label{sec:contact}
\setcounter{equation}{0}

\subsection{The interior of $\Sigma_{\mathcal{S}}$ is empty}

Let $\lambda:=\imath_X\omega_0$, and  $\lambda_0:=\frac{1}{2}(qdp-pdq)$, where $(q,p)$ is the standard
  coordinate on $\mathbb{R}^{2n}$.

 \begin{claim}
   For every leafwise chord  on $\mathcal{S}$ for $\mathbb{R}^{n,k}$,
$x:[0,T]\rightarrow\mathcal{S}$,  there holds
\begin{equation}\label{e:contact1}
A(x)=\int_x\lambda_0=\int_x\lambda.
\end{equation}
\end{claim}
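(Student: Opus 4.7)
The plan is to handle the two equalities separately. The first equality, $A(x)=\int_x\lambda_0$, should follow by a direct unpacking of definitions. Writing $x(t)=(q(t),p(t))$ and using $J_{2n}(q,p)=(p,-q)$, I would compute $\langle -J_{2n}\dot x(t), x(t)\rangle$ in coordinates and recognize the integrand as $2\lambda_0(\dot x(t))$ (modulo a possible overall sign coming from the $\omega_0$-convention, which I would align with the one implicit in the definition of $A$). This verifies $A(x)=\int_x\lambda_0$ for any absolutely continuous curve, regardless of the leafwise-chord condition.

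For the second equality $\int_x\lambda_0=\int_x\lambda$, the strategy is to exploit that both forms are primitives of $\omega_0$ on the simply connected space $\mathbb{R}^{2n}$, so $\lambda-\lambda_0$ is closed and hence exact: $\lambda-\lambda_0=df$ for some $f\in C^\infty(\mathbb{R}^{2n})$. Then $\int_x(\lambda-\lambda_0)=f(x(T))-f(x(0))$, and it suffices to prove $f(x(T))=f(x(0))$. To get this, I close up the chord $x$ by the straight segment $\gamma:[0,1]\to\mathbb{R}^{2n}$, $\gamma(s)=(1-s)x(T)+sx(0)$, which lies in $\mathbb{R}^{n,k}$ (a linear subspace) and has constant velocity $\dot\gamma\equiv x(0)-x(T)\in V_0^{n,k}$. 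Then $f(x(0))-f(x(T))=\int_\gamma df=\int_\gamma(\lambda-\lambda_0)$, and the task reduces to showing $\int_\gamma\lambda_0=0$ and $\int_\gamma\lambda=0$ separately.

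The vanishing $\int_\gamma\lambda_0=0$ is an elementary coordinate computation: along $\gamma$, only the coordinates $q_i$ with $i>k$ vary, while $p_i(\gamma(s))=0$ for $i>k$ and $\dot p_i\equiv 0$; plugging into $\lambda_0=\frac12(q\,dp-p\,dq)$ gives $\lambda_0(\dot\gamma)\equiv 0$. The vanishing $\int_\gamma\lambda=0$ uses the hypothesis on $X$ nontrivially: because the flow of $X$ preserves $\mathbb{R}^{n,k}$, the vector field $X$ is tangent to $\mathbb{R}^{n,k}$ along $\mathbb{R}^{n,k}$, so $X(\gamma(s))\in\mathbb{R}^{n,k}$. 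A straightforward computation shows that $V_0^{n,k}$ is exactly the symplectic orthogonal $(\mathbb{R}^{n,k})^{\omega_0}$ (this is the coisotropic nature of $\mathbb{R}^{n,k}$), hence $\omega_0(X(\gamma(s)),\dot\gamma(s))=0$ for all $s$, i.e.\ $\lambda(\dot\gamma)\equiv 0$.

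The main obstacle, I expect, is not any one calculation but making sure the hypothesis on the Liouville field $X$ is invoked in the right way. In particular, one must observe that preservation of $\mathbb{R}^{n,k}$ by the flow $\phi^t$ already forces $X|_{\mathbb{R}^{n,k}}$ to be tangent to $\mathbb{R}^{n,k}$ (by differentiating $\phi^t(z)\in\mathbb{R}^{n,k}$ at $t=0$ and using that $\mathbb{R}^{n,k}$ is a closed linear subspace); the additional hypothesis that $\phi^t$ preserves the leaf relation will not be needed for the present claim, but it keeps us consistent with what is used later in the proof of Theorem~\ref{th:EHcontact}. Once tangency and coisotropy are in hand, the two vanishings above close the argument and give $\int_x\lambda=\int_x\lambda_0=A(x)$.
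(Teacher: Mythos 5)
Your argument is correct and takes essentially the same route as the paper: close up the chord by the straight segment $y$ in $\mathbb{R}^{n,k}$ joining $x(T)$ to $x(0)$, and show the segment contributions to $\int\lambda_0$ and $\int\lambda$ both vanish, the latter because $X$ is tangent to $\mathbb{R}^{n,k}$ along $\mathbb{R}^{n,k}$ while $\dot y\in V_0^{n,k}=(\mathbb{R}^{n,k})^{\omega_0}$. Packaging the reduction through the exact primitive $\lambda-\lambda_0=df$ on $\mathbb{R}^{2n}$ is only a cosmetic repackaging of the paper's argument, which instead caps the loop $x\cup y$ by a surface $F$ and applies Stokes to $d\lambda_0=d\lambda=\omega_0$.
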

\begin{proof}
Since $\mathcal{S}$ is of class $C^{2n+2}$, so is $x$.
Define $y:[0, T]\rightarrow\mathbb{R}^{n,k}$ by $y(t)= tx(0)+(1-t)x(T)$.
As below (\ref{e:two-action+}) we can take  a piecewise $C^{2n+2}$-smooth map $u$ from
 a suitable closed disc $D^2$ to $\mathbb{R}^{2n}$ such that $u|\partial D^2$
 is equal to the loop $x\cup(-y)$. Now
it is easily checked that
$\int_y \lambda_0=0$ and hence
\begin{equation}\label{e:contact2}
\int_x\lambda_0=\int_{x\cup (-y)}\lambda_0=\int_{u(D^2)}d\lambda_0=\int_{u(D^2)}\omega_0.
\end{equation}
On the other hand, since the flow of $X$ maps $\mathbb{R}^{n,k}$ to $\mathbb{R}^{n,k}$, $X$
is tangent to $\mathbb{R}^{n,k}$ and therefore
$\omega_0(X,\dot{y})=0$, i.e., $y^\ast\lambda=0$.
It follows that
$$
\int_x\lambda=\int_{x\cup(-y)}\lambda=\int_{u(D^2)}d\lambda=\int_{u(D^2)}\omega_0.
$$
This and (\ref{e:contact2})  lead to (\ref{e:contact1}).
\end{proof}

 Choosing   $\varepsilon>0$  so small that $\mathbb{R}^{2n}\setminus \cup_{t\in (-\varepsilon, \varepsilon)}\phi^t(\mathcal{S})$   has two components, we obtain  a very special parameterized family of $C^{2n+2}$ hypersurfaces modelled on $\mathcal{S}$, given by
 $$
\psi:(-\varepsilon, \varepsilon)\times\mathcal{S}\ni (s, z)\mapsto \psi(s,z)=\phi^s(z)\in \mathbb{R}^{2n}
$$
which is $C^{2n+2}$ because  both $\mathcal{S}$ and $X$ are  $C^{2n+2}$.
 Define $U:=\cup_{t\in (-\varepsilon, \varepsilon)}\phi^t(\mathcal{S})$ and
$$
K_\psi:U\to\mathbb{R},\;w\mapsto \tau
$$
if $w=\psi(\tau,z)\in U$ where $z\in\mathcal{S}$. This is $C^{2n+2}$. Denote by  $X_{K_\psi}$  the
Hamiltonian vector field of $K_\psi$ defined by $\omega_0(\cdot,X_{K_\psi})=dK_\psi$. Then it is not hard to prove
  $$
 X_{K_\psi}(\psi(\tau,z))=e^{-\tau}
  d\phi^\tau(z)[X_{K_\psi}(z)]  \quad\forall (\tau,z)\in(-\varepsilon, \varepsilon)\times\mathcal{S},
$$
and for  $w=\phi^\tau(z)=\psi(\tau,z)\in U$ there holds
  \begin{eqnarray}\label{e:EH.4.8}
 \lambda_w(X_{K_\psi})=(\omega_0)_w(X(w), X_{K_\psi}(w))
 =\frac{d}{ds}|_{s=0}K_\psi(\phi^s(w))=1.
\end{eqnarray}

 %
%

Let $\mathcal{S}_\tau:=\psi(\{\tau\}\times\mathcal{S})$.
Since $\phi^t$ preserves the leaf of $\mathbb{R}^{n,k}$,
$y:[0,T]\to \mathcal{S}_\tau$    satisfies
$$
\dot{y}(t)=X_{K_\psi}(y(t)),\quad y(0), y(T)\in\mathbb{R}^{n,k}\quad\hbox{and}\quad y(T)\sim y(0)
$$
if and only if
    $y(t)=\phi^\tau (x(e^{-\tau} t))$, where $x:[0, e^{-\tau} T]\to\mathcal{S}$
    satisfies
    $$
    \dot{x}(t)=X_{K_\psi}(x(t)),\quad x(0), x(e^{-\tau} T)\in\mathbb{R}^{n,k}\quad\hbox{and}\quad x(e^{-\tau} T)\sim x(0).
    $$
    In addition,  $y(t)=\phi^\tau (x(e^{-\tau} t))$ implies
    $\int_y\lambda =e^\tau\int_x\lambda $. By (\ref{e:contact1}) and (\ref{e:EH.4.8}) we deduce
    $$
    A(y)=\int_y\lambda_0=\int_y\lambda=\int^T_0\lambda(\dot y) dt=\int^T_0\lambda_w(X_{K_\psi}) dt=T
    \quad\hbox{and}\quad A(x)=e^{-\tau}T.
    $$

    Fix $0<\delta<\varepsilon$. Let ${\bf A}_\delta$ and ${\bf B}_\delta$
 denote the unbounded and bounded components of
  $\mathbb{R}^{2n}\setminus \cup_{t\in (-\delta, \delta)}\phi^t(\mathcal{S})$,
  respectively.   Then
$\psi(\{\tau\}\times \mathcal{S})\subset {\bf B}_\delta$ for $-\varepsilon<\tau<-\delta$.
Let $\mathscr{F}_{n,k}(\mathbb{R}^{2n})$ be given by (\ref{e:fnk}).
We call $H\in\mathscr{F}_{n,k}(\mathbb{R}^{2n})$ {\bf adapted to} $\psi$ if
\begin{equation}\label{e:EH.4.10}
H(x)=\left\{\begin{array}{ll}
C_0\ge 0 &{\rm if}\;x\in {\bf B}_\delta,\\
f(\tau) &{\rm if}\;x=\psi(\tau,y),\;y\in\mathcal{S},\;\tau\in [-\delta,\delta],\\
C_1\ge 0 &{\rm if}\;x\in {\bf A}_\delta\cap B^{2n}(0,R),\\
h(|x|^2) &{\rm if}\;x\in {\bf A}_\delta\setminus B^{2n}(0,R),
\end{array}\right.
\end{equation}
where $f:(-1,1)\to\mathbb{R}$ and $h:[0, \infty)\to\mathbb{R}$ are smooth functions satisfying
\begin{eqnarray}
&&f|_{(-1,-\delta]}=C_0,\quad f|_{[\delta,1)}=C_1,\label{e:EH.4.11}\\
&&sh'(s)-h(s)\le 0\quad\forall s.\label{e:EH.4.12}
\end{eqnarray}
Clearly, $H$ defined by (\ref{e:EH.4.10}) is $C^{2n+2}$ and its gradient $\nabla H:\mathbb{R}^{2n}\to\mathbb{R}^{2n}$ satisfies a global Lipschitz condition.

\begin{lemma}\label{lem:coEH.4.2}
\begin{description}
\item[(i)] If $x$ is a nonconstant critical point of $\Phi_H$ on $E$ such that
$x(0)\in \psi(\{\tau\}\times\mathcal{S})$ for some $\tau\in (-\delta,\delta)$ satisfying $f'(\tau)>0$,
then
$$
e^{-\tau}f'(\tau)\in \Sigma_{\mathcal{S}}\quad\hbox{and}\quad \Phi_H(x)=f'(\tau)-f(\tau).
$$
\item[(ii)] If some $\tau\in (-\delta,\delta)$ satisfies $f'(\tau)>0$ and $ e^{-\tau}f'(\tau)\in \Sigma_{\mathcal{S}}$,
then there is a nonconstant critical point $x$ of $\Phi_H$ on $E$ such that $x(0)\in \psi(\{\tau\}\times\mathcal{S})$
and $\Phi_H(x)=f'(\tau)-f(\tau)$.
\end{description}
\end{lemma}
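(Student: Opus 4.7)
I plan to exploit the fact that, inside the tubular region $U=\cup_{|s|<\varepsilon}\mathcal{S}_s$, $H$ is a function of $K_\psi$ alone ($H=f\circ K_\psi$ by (\ref{e:EH.4.10})), so $X_H=f'(K_\psi)X_{K_\psi}$. Since $\omega_0(X_{K_\psi},X_{K_\psi})=0$, the function $K_\psi$ is constant along every orbit of $X_H$; thus a trajectory of $X_H$ that enters $\mathcal{S}_\tau$ stays there and moves along it at speed $f'(\tau)$ in the direction of $X_{K_\psi}$. Two further facts will be used throughout: the identity $\lambda(X_{K_\psi})=1$ from (\ref{e:EH.4.8}), and the correspondence recalled just above (\ref{e:EH.4.10}) between orbits of $X_{K_\psi}$ on $\mathcal{S}_\tau$ and those on $\mathcal{S}$, namely $y(t)=\phi^\tau(x(e^{-\tau}t))$.

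For (i), Lemma~\ref{lem:critic} first gives $x\in C^1_{n,k}([0,1],\mathbb{R}^{2n})$ with $\dot x=X_H(x)$. The invariance above forces $x([0,1])\subset\mathcal{S}_\tau$ and $\dot x=f'(\tau)X_{K_\psi}(x)$; nonconstancy follows from $f'(\tau)>0$ together with $X_{K_\psi}\ne 0$, a consequence of (\ref{e:EH.4.8}). Rescaling time, $\tilde y(s):=x(s/f'(\tau))$ is a $C^1$ solution of $\dot{\tilde y}=X_{K_\psi}(\tilde y)$ on $[0,f'(\tau)]$ with $\tilde y(0),\tilde y(f'(\tau))\in\mathbb{R}^{n,k}$ and $\tilde y(f'(\tau))\sim\tilde y(0)$; applying $\phi^{-\tau}$, which preserves both $\mathbb{R}^{n,k}$ and its leaf relation by the hypothesis of Theorem~\ref{th:EHcontact}, produces $\tilde x(s):=\phi^{-\tau}(\tilde y(e^\tau s))$ on $[0,e^{-\tau}f'(\tau)]$, a leafwise chord on $\mathcal{S}$ for $\mathbb{R}^{n,k}$ of action $e^{-\tau}f'(\tau)$. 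Hence $e^{-\tau}f'(\tau)\in\Sigma_{\mathcal{S}}$. For the value, the computation $A(x)=\int_x\lambda$ from the Claim preceding (\ref{e:contact1}) carries over verbatim to curves on $\mathcal{S}_\tau$, since it only used tangency of $X$ to $\mathbb{R}^{n,k}$ and the fact that a closing segment lies in $V_0^{n,k}=(\mathbb{R}^{n,k})^{\omega_0}$; then $\lambda(\dot x)=f'(\tau)\lambda(X_{K_\psi})=f'(\tau)$ by (\ref{e:EH.4.8}), so $A(x)=f'(\tau)$ and $\Phi_H(x)=A(x)-\int_0^1 H(x(t))\,dt=f'(\tau)-f(\tau)$.

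Part (ii) runs the same construction in reverse. Given a leafwise chord $z:[0,T]\to\mathcal{S}$ for $\mathbb{R}^{n,k}$ with $\dot z=X_{K_\psi}(z)$ and $T=e^{-\tau}f'(\tau)\in\Sigma_{\mathcal{S}}$, I would lift it to $y(s):=\phi^\tau(z(e^{-\tau}s))$ on $[0,f'(\tau)]$, which lies on $\mathcal{S}_\tau$ and satisfies $\dot y=X_{K_\psi}(y)$, and then reparametrize to $x(t):=y(f'(\tau)t)$ on $[0,1]$. Since $\phi^\tau$ maps $\mathbb{R}^{n,k}$ into itself and preserves its leaf relation, the endpoint and leaf conditions for $x$ on $\mathcal{S}_\tau$ follow from those for $z$; moreover $\dot x=f'(\tau)X_{K_\psi}(x)=X_H(x)$, so Lemma~\ref{lem:critic} makes $x$ a critical point of $\Phi_H$ on $E$ with $x(0)=\phi^\tau(z(0))\in\psi(\{\tau\}\times\mathcal{S})$. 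The value $\Phi_H(x)=f'(\tau)-f(\tau)$ is computed exactly as in (i).

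The only delicate point is the transfer between actions on $\mathcal{S}$ and on $\mathcal{S}_\tau$: it relies on $\phi^t$ being a Liouville conformal symplectomorphism that (by the hypothesis of Theorem~\ref{th:EHcontact}) preserves the coisotropic datum $(\mathbb{R}^{n,k},\sim)$, so that leafwise chords really do correspond under $\phi^{\pm\tau}$, and on the extension of $A(\cdot)=\int\lambda$ to curves on $\mathcal{S}_\tau$ via the Claim's argument. Once these compatibilities are in place, the remainder is a routine rescaling.
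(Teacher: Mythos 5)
Your proof is correct and follows essentially the same route as the paper's: conservation of $K_\psi$ (equivalently of $H$) forces the critical orbit onto $\mathcal{S}_\tau$, the Liouville flow $\phi^{-\tau}$ transfers it to a leafwise chord on $\mathcal{S}$ of action $e^{-\tau}f'(\tau)$, and the value of $\Phi_H$ is read off from $\lambda(X_{K_\psi})=1$ together with the Claim preceding (\ref{e:contact1}). You also make explicit a point the paper leaves implicit, namely that the identity $A(\cdot)=\int\lambda$ extends from $\mathcal{S}$ to the nearby hypersurfaces $\mathcal{S}_\tau$, which is a useful clarification rather than a divergence in method.
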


\begin{proof}
  {\bf (i)} By Lemma~\ref{lem:critic} $x$ is $C^{2n+2}$ and satisfies
   $\dot{x}=X_H(x)=f'(\tau)X_{K_\psi}(x)$, $x(j)\in\mathbb{R}^{n,k}$, $j=0,1$, and $x(1)\sim x(0)$.
  Moreover $x(0)\in \psi(\{\tau\}\times\mathcal{S})$ implies $H(x(1))=H(x(0))=f(\tau)$ and therefore
 $x(1)\in \psi(\{\tau\}\times\mathcal{S})$ by the construction of $H$ above.
   These show that $x$ is a leafwise chord on  $\psi(\{\tau\}\times\mathcal{S})$
  for $\mathbb{R}^{n,k}$. By the arguments below (\ref{e:EH.4.8}),
   $[0,1]\ni t\mapsto y(t):=\phi^{-\tau}(y(t))$ is a leafwise chord on  $\mathcal{S}$
 for $\mathbb{R}^{n,k}$. It follows from (\ref{e:EH.4.8}) and (\ref{e:contact1}) that
$$
f'(\tau)=\int_0^1f'(\tau)\lambda(X_{K_\psi})dt=\int_0^1\lambda(X_H)dt=
\int_{[0,1]} x^{\ast}\lambda=\int_{[0,1]} y^{\ast}(\phi^{\tau})^{\ast}\lambda=e^\tau\int_{[0,1]} y^{\ast}\lambda=e^\tau A(y)
$$
These show that $e^{-\tau}f'(\tau)=A(y)\in \Sigma_{\mathcal{S}}$.
 By (\ref{e:contact1}) we have
$$
\Phi_H(x)=A(x)-\int_0^1 H(x(t))dt=\int_{[0,1]} x^{\ast}\lambda-\int_0^1 H(x(t))dt=
f'(\tau)-f(\tau).
$$

\noindent{\bf (ii)} By the assumption there exists $y:[0, 1]\to\mathcal{S}$ satisfying
$$
\dot{y}(t)=e^{-\tau}f'(\tau)X_{K_\psi}(y(t)),\quad y(0), y(1)\in\mathbb{R}^{n,k}\quad\hbox{and}\quad y(1)\sim y(0).
$$
Hence $x(t)=\psi(\tau, y(t))=\phi^\tau(y(t))$ satisfies
\begin{eqnarray*}
&&\dot{x}(t)=d\phi^\tau(y(t))[\dot{y}(t)]=e^{-\tau}f'(\tau)d\phi^\tau(y(t))[X_{K_\psi}(y)]\\
&&\quad\quad=f'(\tau)X_{K_\psi}(\phi^\tau(y(t)))=f'(\tau)X_{K_\psi}(x(t))=X_H(x(t)),\\
&&x(0, x(1)\in\mathbb{R}^{n,k}, j=0,1, \quad x(1)\sim x(0)\in \phi^{\tau}(\mathcal{S}).
\end{eqnarray*}
By Lemma~\ref{lem:critic}, $x$ is a critical point of $\Phi_H$. Moreover $\Phi_H(x)=f'(\tau)-f(\tau)$ as in (i).
\end{proof}

\begin{proposition}\label{prop:EH.4.1}
Let  $\mathcal{S}$ be as in Theorem~\ref{th:EHcontact}. Then the interior of
$\Sigma_{\mathcal{S}}$ in $\mathbb{R}$
is empty.
\end{proposition}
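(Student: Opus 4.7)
The plan is to argue by contradiction. Suppose some open interval $(\alpha,\beta)\subset\mathbb{R}$ with $0<\alpha<\beta$ is contained in $\Sigma_{\mathcal{S}}$. I will produce a single adapted Hamiltonian $H$ of the form (\ref{e:EH.4.10})--(\ref{e:EH.4.12}) whose action functional $\Phi_H$ has an \emph{open interval} of critical values; this will contradict Lemma~\ref{lem:empty}, since $H$ is $C^{2n+2}$ with globally Lipschitz gradient as noted just below (\ref{e:EH.4.12}).

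The construction of $H$ goes as follows. Fix $\delta\in(0,\varepsilon)$ small, and on a small subinterval $J\subset(-\delta/2,\delta/2)$ choose a smooth strictly increasing function $g:J\to(\alpha,\beta)$; then set $f'(\tau)=e^{\tau}g(\tau)$ on $J$ and extend $f$ to a smooth function $(-1,1)\to\mathbb{R}_{+}$ that is constant equal to some $C_{0}\ge 0$ on $(-1,-\delta]$ and constant equal to some $C_{1}\ge 0$ on $[\delta,1)$ (this is possible by a cut-off away from $J$; the constants $C_{0},C_{1}$ can be chosen non-negative because the relevant integrals are bounded). For the outer piece in (\ref{e:EH.4.10}), take $h(s)=as+b$ for $s$ large with $a>\pi/2$ and $b\ge 0$ so that (H2) (with $z_{0}=0$) and (\ref{e:EH.4.12}) hold, and extend $h$ smoothly inward to match $C_{1}$ at $|z|^{2}=R^{2}$. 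Then $H\in\mathscr{F}_{n,k}(\mathbb{R}^{2n})$ is $C^{2n+2}$ with Lipschitz gradient.

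Now for every $\tau\in J$ we have $f'(\tau)=e^{\tau}g(\tau)>0$ and $e^{-\tau}f'(\tau)=g(\tau)\in(\alpha,\beta)\subset\Sigma_{\mathcal{S}}$, so Lemma~\ref{lem:coEH.4.2}(ii) produces a nonconstant critical point $x_{\tau}$ of $\Phi_{H}$ with
$$
\Phi_{H}(x_{\tau})=f'(\tau)-f(\tau).
$$
Differentiating gives
$$
\frac{d}{d\tau}\bigl(f'(\tau)-f(\tau)\bigr)=f''(\tau)-f'(\tau)=e^{\tau}\bigl(g(\tau)+g'(\tau)\bigr)-e^{\tau}g(\tau)=e^{\tau}g'(\tau)>0
$$
on $J$. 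Hence $\tau\mapsto f'(\tau)-f(\tau)$ is a $C^{1}$-diffeomorphism of $J$ onto an open subinterval of $\mathbb{R}$, all of whose points are critical values of $\Phi_{H}$. This contradicts Lemma~\ref{lem:empty} and proves the proposition.

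The main technical point is choosing $f$ so that simultaneously (a) its derivative encodes a one-parameter family of prescribed actions on $\mathcal{S}$ via $e^{-\tau}f'(\tau)$, and (b) the resulting critical values $f'(\tau)-f(\tau)$ are not collapsed to a point---note in particular that the naive choice $g\equiv\mathrm{const}$ yields $f'-f\equiv\mathrm{const}$, which is why one needs the strict monotonicity of $g$. Everything else (smoothness of $H$, Lipschitz gradient, verification of (H2), and fitting $h$ onto the outer region) is routine interpolation.
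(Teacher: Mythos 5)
Your proof is correct and follows essentially the same route as the paper's: construct an adapted Hamiltonian so that Lemma~\ref{lem:coEH.4.2}(ii) yields an open interval of positive critical values of $\Phi_H$, then invoke Lemma~\ref{lem:empty}. The paper uses a simpler affine choice $f(\tau)=T\tau+\overline{C}$ around an interior point $T$ (so $e^{-\tau}f'(\tau)=Te^{-\tau}$ sweeps a neighborhood of $T$ in $\Sigma_{\mathcal{S}}$ and $f'(\tau)-f(\tau)=T(1-\tau)-\overline{C}$ is automatically nonconstant); your remark that a constant $g$ would collapse the critical values is an artifact of your particular ansatz $f'=e^{\tau}g$ and does not apply to the paper's.
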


\begin{proof}
Otherwise, suppose that $T\in\Sigma_{\mathcal{S}}$ is an interior point of $\Sigma_{\mathcal{S}}$.
 Then for some small $0<\epsilon_1<\delta$ the open neighborhood
  $O:=\{e^{-\tau}T\,|\,\tau\in (-\epsilon_1,\epsilon_1)\}$ of $T$ is contained in
  $\Sigma_{\mathcal{S}}$.
 Let us choose the  function $f$ in (\ref{e:EH.4.10}) such that
$f(u)=Tu+\overline{C}\ge 0\;\forall u\in [-\epsilon_1, \epsilon_1]$
  (by shrinking $0<\epsilon_1<\delta$ if necessary). By Lemma~\ref{lem:coEH.4.2}(ii) we deduce
  $$
   (-\epsilon_1,\epsilon_1)\subset\left\{\tau\in (-\epsilon_1,\epsilon_1)\,|\,e^{-\tau}T\in \Sigma_{\mathcal{S}}\right\}\subset \left\{\tau\in (-\epsilon_1,\epsilon_1)\,|\,T-f(\tau)\;\hbox{is a critical value of }\; \Phi_H\right\}
  $$
 It follows that the critical value set of $\Phi_H$ has nonempty interior. This is a contradiction by
Lemma~\ref{lem:empty}.  Hence  $\Sigma_{\mathcal{S}}$ has empty interior.
\end{proof}


\subsection{$c^{n,k}(U)=c^{n,k}(\mathcal{S})$ belongs to $\Sigma_{\mathcal{S}}$}

This can be obtained by slightly modifying the proof of \cite[Theorem~7.5]{Sik90} (or \cite[Theorem~1.18]{JinLu1915} or \cite[Theorem~1.17]{JinLu1916}).
For completeness we give it in details.
For $C>0$ large enough and $\delta>2\eta>0$ small enough, define
  $H=H_{C,\eta}\in\mathscr{F}_{n,k}(\mathbb{R}^{2n})$
   adapted to $\psi$ as follows:
\begin{equation}\label{e:EH.4.14}
H_{C,\eta}(x)=\left\{\begin{array}{ll}
C\ge 0 &{\rm if}\;x\in {\bf B}_\delta,\\
f_{C,\eta}(\tau) &{\rm if}\;x=\psi(\tau,y),\;y\in\mathcal{S},\;\tau\in [-\delta,\delta],\\
C&{\rm if}\;x\in {\bf A}_\delta\cap B^{2n}(0,R),\\
h(|x|^2) &{\rm if}\;x\in {\bf A}_\delta\setminus B^{2n}(0,R)
\end{array}\right.
\end{equation}
where $B^{2n}(0,R)\supseteq\overline{\psi((-\varepsilon,\varepsilon)\times\mathcal{S})}$
(the closure of $\psi((-\varepsilon,\varepsilon)\times\mathcal{S})$),
$f_{C,\eta}:(-\varepsilon, \varepsilon)\to\mathbb{R}$ and $h:[0, \infty)\to\mathbb{R}$ are smooth functions satisfying
\begin{eqnarray*}
&&f_{C,\eta}|_{[-\eta,\eta]}\equiv 0,\quad f_{C,\eta}(s)=C\;\hbox{if}\;|s|\ge 2\eta,\\
&&f'_{C,\eta}(s)s>0\quad\hbox{if}\quad \eta<|s|<2\eta,\\
&&f'_{C,\eta}(s)-f_{C,\eta}(s)>c^{n,k}(\mathcal{S})+1\quad\hbox{if}\;s>0\;\hbox{and}\;\eta<f_{C,\eta}(s)<C-\eta,\\
&&h_{C,\eta}(s)=a_Hs+b\quad\hbox{for $s>0$ large enough}, a_H=C/R^2> \frac{\pi}{2}, a_H\notin \frac{\pi}{2}\mathbb{N},\\
&&sh'_{C,\eta}(s)-h_{C,\eta}(s)\le 0\quad\forall s\ge 0.
\end{eqnarray*}
We can choose such a family $H_{C,\eta}$ ($C\to+\infty$, $\eta\to 0$)  to be cofinal in
 $\mathscr{F}^{n,k}(\mathbb{R}^{2n},\mathcal{S})$ defined by (\ref{e:fnkb}) and
also to have the property that
\begin{equation}\label{e:EH.4.14.1}
C\le C'\Rightarrow H_{C,\eta}\le H_{C',\eta},\qquad \eta\le \eta'\Rightarrow H_{C,\eta}\ge H_{C,\eta'}.
\end{equation}
It follows that
$$
c^{n,k}(\mathcal{S})=\lim_{\eta\to 0, C\to+\infty}c^{n,k}(H_{C,\eta}).
$$
By Proposition~\ref{prop:coEHC.1}(i) and (\ref{e:EH.4.14.1}),
 $\eta\le \eta'$ implies that $c^{n,k}(H_{C,\eta})\le c^{n,k}(H_{C,\eta'})$, and hence
\begin{equation}\label{e:EH.4.14.2}
\Upsilon(C):=\lim_{\eta\to 0}c^{n,k}(H_{C,\eta})
\end{equation}
exists, and
$$
\Upsilon(C)=\lim_{\eta\to 0}c^{n,k}(H_{C,\eta})\ge \lim_{\eta\to 0}c^{n,k}(H_{C',\eta})=\Upsilon(C'),
$$
i.e.,  $C\mapsto\Upsilon(C)$ is non-increasing.
We claim
\begin{equation}\label{e:EH.4.14.3}
 c^{n,k}(\mathcal{S})=\lim_{C\to+\infty}\Upsilon(C).
\end{equation}
In fact, for any $\epsilon>0$ there exists $\eta_0>0$ and $C_0>0$ such that
$|c^{n,k}(H_{C,\eta})-c^{n,k}(\mathcal{S})|<\epsilon$ for all $\eta<\eta_0$ and $C>C_0$.
Letting $\eta\to 0$ leads to
$|\Upsilon(C)-c^{n,k}(\mathcal{S})|\le\epsilon$ for all $C>C_0$.
 (\ref{e:EH.4.14.3}) holds.

\begin{claim}\label{cl:EH.4.1}
Let $\overline{\Sigma_{\mathcal{S}}}$ be the closure of $\Sigma_{\mathcal{S}}$. Then
$\overline{\Sigma_{\mathcal{S}}}\subset \Sigma_{\mathcal{S}}\cup\{0\}$.
\end{claim}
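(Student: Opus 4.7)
\medskip\noindent\textbf{Proof proposal for Claim~\ref{cl:EH.4.1}.}

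The plan is to take a sequence $(T_n)\subset\Sigma_{\mathcal{S}}$ with $T_n\to T>0$ and, using compactness of $\mathcal{S}$ together with continuous dependence of ODE solutions on initial conditions and parameters, extract a limiting leafwise chord of action $T$, thus showing $T\in\Sigma_{\mathcal{S}}$. Since the statement allows $0$ as a possible limit point not belonging to $\Sigma_{\mathcal{S}}$, the interesting case is $T>0$.

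First, for each $n$ pick a leafwise chord $y_n:[0,T_n]\to\mathcal{S}$ realizing $A(y_n)=T_n$ and reparameterize it on the unit interval by $x_n(t):=y_n(T_n t)$, so that
\begin{equation*}
\dot{x}_n(t)=T_n\,X_{K_\psi}(x_n(t)),\qquad x_n(0),x_n(1)\in\mathcal{S}\cap\mathbb{R}^{n,k},\qquad x_n(1)-x_n(0)\in V_0^{n,k},
\end{equation*}
by the discussion following \eqref{e:EH.4.8}. Since $\mathcal{S}$ is compact (it is the boundary of the bounded domain $U$) and $\mathcal{S}\cap\mathbb{R}^{n,k}$ is closed, after passing to a subsequence we have $x_n(0)\to p_0\in\mathcal{S}\cap\mathbb{R}^{n,k}$. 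Let $x:[0,1]\to\mathbb{R}^{2n}$ be the unique solution of $\dot{x}=T\,X_{K_\psi}(x)$ with $x(0)=p_0$; this solution exists on all of $[0,1]$ and stays in $\mathcal{S}$ because $X_{K_\psi}$ is tangent to the compact level set $\mathcal{S}$.

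Next, standard continuous dependence of ODE solutions on initial data and parameters (applied to the $C^{2n+2}$ vector field $(s,T,z)\mapsto T\,X_{K_\psi}(z)$ on the compact invariant set $\mathcal{S}$) yields $x_n\to x$ uniformly on $[0,1]$. Passing to the limit in the endpoint conditions, and using that $\mathbb{R}^{n,k}$ and $V_0^{n,k}$ are closed linear subspaces, gives $x(0),x(1)\in\mathbb{R}^{n,k}$ and $x(1)-x(0)\in V_0^{n,k}$. Moreover $x$ is nonconstant: the assumption $T>0$ together with $X_{K_\psi}\neq 0$ on $\mathcal{S}$ (consequence of \eqref{e:EH.4.8}) forces $\dot{x}\not\equiv 0$. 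Hence $x$ is a genuine leafwise chord on $\mathcal{S}$ for $\mathbb{R}^{n,k}$, and its action equals
\begin{equation*}
A(x)=\int_0^1 x^\ast\lambda=\int_0^1 T\,\lambda_{x(t)}(X_{K_\psi})\,dt=T,
\end{equation*}
again by \eqref{e:EH.4.8} and \eqref{e:contact1}. This shows $T\in\Sigma_{\mathcal{S}}$ and completes the argument.

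The only potentially delicate point is guaranteeing that the whole limit trajectory $x$ remains in $\mathcal{S}$ (so that $X_{K_\psi}$ is nonzero along it and the endpoint condition is meaningful); but since each $x_n$ lies on $\mathcal{S}$ and $\mathcal{S}$ is closed, uniform convergence immediately gives $x([0,1])\subset\mathcal{S}$, so no additional work is needed. Everything else is routine compactness and continuous dependence.
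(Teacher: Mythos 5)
Your argument is correct and follows essentially the same approach as the paper: the paper fixes initial points $z_k\in\mathcal{S}\cap\mathbb{R}^{n,k}$, reparameterizes the chords to $\gamma_k(t)=\varphi^{T_k t}(z_k)$ on $[0,1]$, extracts a limit by Arzel\`a--Ascoli, and passes to the limit in the closed boundary conditions, which is the same compactness-plus-limit argument you give (with continuous dependence on $(T,z)$ standing in for Arzel\`a--Ascoli). The only cosmetic difference is the tool used to obtain uniform convergence; the key observations (compactness of $\mathcal{S}$, closedness of $\mathcal{S}\cap\mathbb{R}^{n,k}$ and of $V_0^{n,k}$, nonvanishing of $X_{K_\psi}$ via $\lambda(X_{K_\psi})=1$, and $A(x)=T$) match the paper's.
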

\begin{proof}
In fact, let $\varphi^t$ denote the flow of $X_{K_\psi}$. It is not hard to prove
$$
\Sigma_{\mathcal{S}}=\{T>0\,|\,\exists z\in\mathcal{S}\cap\mathbb{R}^{n,k}\;\hbox{such that}\;\varphi^T(z)\in\mathcal{S}\cap\mathbb{R}^{n,k}\;\&\; \varphi^T(z)\sim z\}.
$$
Suppose that $(T_k)\subset \Sigma_{\mathcal{S}}$ satisfy $T_k\to T_0\ge 0$.
Then there exists a sequence $(z_k)\subset\mathcal{S}\cap\mathbb{R}^{n,k}$ such that
$\varphi^{T_k}(z_k)\in\mathcal{S}\cap\mathbb{R}^{n,k}$ and $\varphi^{T_k}(z_k)\sim z_k$ for $k=1,2,\cdots$. Define $\gamma_k(t)=\varphi^{T_kt}(z_k)$
for $t\in [0,1]$ and $k\in\mathbb{N}$. Then $\dot{\gamma}_k(t)=T_kX_{K_\psi}(\gamma_k(t))$.
By the Arzel\'a-Ascoli theorem $(\gamma_k)$ has a subsequence converging to
some $\gamma_0$ in $C^\infty([0, 1],\mathcal{S})$, which satisfies
 the following relations
\begin{eqnarray*}
&&\hbox{$\dot{\gamma}_0(t)=T_0X_{K_\psi}(\gamma_0(t))$ for all $t\in [0, 1]$,}\\
&&\hbox{$\gamma_0(0)=\lim_{k\to\infty}\gamma_k(0)=\lim_{k\to\infty}z_k\in \mathcal{S}\cap\mathbb{R}^{n,k}$,}\\
&&\hbox{$\gamma_0(1)=\lim_{k\to\infty}\gamma_k(1)=\lim_{k\to\infty}\varphi^{T_k}(z_k)\in \mathcal{S}\cap\mathbb{R}^{n,k}$,}\\
&&\hbox{$\gamma_0(1)-\gamma_0(0)=\lim_{k\to\infty}(\gamma_k(1)-\gamma_k(0))\in V_0^{n,k}$, i.e., $\gamma_0(1)\sim\gamma_0(0)$.}
\end{eqnarray*}
Hence $\gamma_0(t)=\varphi^{T_0t}(z_0)$ and $T_0\in \Sigma_{\mathcal{S}}$ if $T_0>0$.
It follows that $\overline{\Sigma_{\mathcal{S}}}\subset \Sigma_{\mathcal{S}}\cup\{0\}$.
\end{proof}


{Note that so far  we do not use the assumption $a_H\notin\mathbb{N}\pi/2$.}

\begin{claim}\label{cl:EH.4.2}
If $a_H\notin\mathbb{N}\pi/2$ then either $\Upsilon(C)\in \overline{\Sigma_{\mathcal{S}}}$ or
\begin{eqnarray}\label{e:EH.4.17}
\Upsilon(C)+C\in \overline{\Sigma_{\mathcal{S}}}.
\end{eqnarray}
\end{claim}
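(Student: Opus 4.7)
The plan is to invoke Theorem~\ref{th:EH.1.6} to realize $c^{n,k}(H_{C,\eta})$ as a positive critical value of $\Phi_{H_{C,\eta}}$, use Lemma~\ref{lem:coEH.4.2} to pin down the source of such critical values, and then pass to the limit $\eta\to 0$ so that Claim~\ref{cl:EH.4.1} delivers the desired conclusion.

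First I would verify that $H_{C,\eta}$ is $\mathbb{R}^{n,k}$-admissible and strong nonresonant: the exterior tail $h_{C,\eta}(s)=a_Hs+b$ with $a_H>\pi/2$ furnishes (H2), and the hypothesis $a_H\notin\mathbb{N}\pi/2$ is exactly strong nonresonance. Thus by Theorem~\ref{th:EH.1.6}, $c^{n,k}(H_{C,\eta})>0$ is a critical value of $\Phi_{H_{C,\eta}}$. I would then classify the sources of positive critical values: constant critical points contribute $-H_{C,\eta}(x(0))\leq 0$; in the exterior annulus $\mathbf{A}_\delta\setminus B^{2n}(0,R)$ a critical orbit is $x(t)=e^{2a_HJ_{2n}t}x(0)$, whose leafwise boundary condition, as in the last paragraph of the proof of Lemma~\ref{lem:PSmale}, forces $2a_H\in\mathbb{N}\pi$, contradicting the assumption; and orbits on $\psi((-2\eta,-\eta)\times\mathcal{S})$ have $f'_{C,\eta}<0$ and hence give nonpositive values. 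The only remaining possibility is an orbit on $\psi(\{\tau_\eta\}\times\mathcal{S})$ with $\tau_\eta\in(\eta,2\eta)$ and $f'_{C,\eta}(\tau_\eta)>0$, so Lemma~\ref{lem:coEH.4.2}(i) gives
\[
c^{n,k}(H_{C,\eta})=f'_{C,\eta}(\tau_\eta)-f_{C,\eta}(\tau_\eta),\qquad e^{-\tau_\eta}f'_{C,\eta}(\tau_\eta)\in\Sigma_{\mathcal{S}}.
\]

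Next I would take $\eta_j\downarrow 0$ and, since $\tau_{\eta_j}\in(\eta_j,2\eta_j)$, extract a subsequence along which $\tau_{\eta_j}\to 0$, $f_{C,\eta_j}(\tau_{\eta_j})\to\alpha\in[0,C]$, and $f'_{C,\eta_j}(\tau_{\eta_j})\to T$. Because $e^{-\tau_{\eta_j}}f'_{C,\eta_j}(\tau_{\eta_j})\to T$ and every term of the sequence lies in $\Sigma_{\mathcal{S}}$, Claim~\ref{cl:EH.4.1} forces $T\in\overline{\Sigma_{\mathcal{S}}}$. Passing to the limit,
\[
\Upsilon(C)=T-\alpha.
\]
If $\alpha=0$, this gives $\Upsilon(C)=T\in\overline{\Sigma_{\mathcal{S}}}$; if $\alpha=C$, it gives $\Upsilon(C)+C=T\in\overline{\Sigma_{\mathcal{S}}}$, which is precisely (\ref{e:EH.4.17}).

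The main obstacle is therefore to exclude the possibility $\alpha\in(0,C)$, and I expect this to be the hard step. If $\alpha\in(0,C)$, then for large $j$ one has $\eta_j<f_{C,\eta_j}(\tau_{\eta_j})<C-\eta_j$, and the third defining property of $f_{C,\eta}$ would give $c^{n,k}(H_{C,\eta_j})>c^{n,k}(\mathcal{S})+1$ and hence $\Upsilon(C)\ge c^{n,k}(\mathcal{S})+1$. To derive a contradiction I would use the flexibility inside the cofinal class: by a one-parameter smooth perturbation of $f_{C,\eta}$ supported on the sub-interval where the middle-zone property is active, and preserving (H2) together with strong nonresonance, the pair $(\tau_{\eta},f_{C,\eta}(\tau_{\eta}))$ associated with the minimax varies continuously, so the corresponding value $\alpha$ sweeps a non-degenerate sub-interval of $(0,C)$ while each resulting $T=\alpha+\Upsilon(C)$ continues to lie in $\overline{\Sigma_{\mathcal{S}}}$. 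This would embed an entire interval into $\overline{\Sigma_{\mathcal{S}}}$, contradicting the empty-interior statement of Proposition~\ref{prop:EH.4.1}. Hence $\alpha\in\{0,C\}$ and Claim~\ref{cl:EH.4.2} follows.
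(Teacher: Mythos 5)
Your classification of the possible sources of positive critical values, your appeal to Theorem~\ref{th:EH.1.6} and Lemma~\ref{lem:coEH.4.2}(i), and the passage to subsequences with $\tau_{\eta_j}\to 0$, $f_{C,\eta_j}(\tau_{\eta_j})\to\alpha$, $f'_{C,\eta_j}(\tau_{\eta_j})\to T\in\overline{\Sigma_{\mathcal{S}}}$, $\Upsilon(C)=T-\alpha$ all agree with the paper.

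The gap is precisely where you locate the ``main obstacle.'' You correctly observe that $\alpha\in(0,C)$ would put $\eta_j<f_{C,\eta_j}(\tau_{\eta_j})<C-\eta_j$ for large $j$, so the third defining property of $f_{C,\eta}$ gives $c^{n,k}(H_{C,\eta_j})=f'_{C,\eta_j}(\tau_{\eta_j})-f_{C,\eta_j}(\tau_{\eta_j})>c^{n,k}(\mathcal{S})+1$. But you stop short of noticing that this is already the contradiction: the paper works with $C$ chosen large enough that $c^{n,k}(H_{C,\eta})<c^{n,k}(\mathcal{S})+1$ (recall $\Upsilon(C)\to c^{n,k}(\mathcal{S})$ as $C\to\infty$ and Claim~\ref{cl:EH.4.2} is only applied in Step~1 for large $C$), so the middle zone is impossible at \emph{each fixed} $\eta$, not merely in the limit. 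The paper then passes to a subsequence along which every term is in Case~1 (i.e.\ $0\le f_{C,\eta_n}(\tau_n)<\eta_n$, hence $\alpha=0$) or every term is in Case~2 (i.e.\ $C-\eta_n<f_{C,\eta_n}(\tau_n)\le C$, hence $\alpha=C$), so $\alpha\in(0,C)$ simply never arises. No auxiliary argument is needed.

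The perturbation scheme you propose instead is not sound as written. You would need the parameter $\tau_\eta$ attached to a critical point realizing the minimax value to depend continuously on a perturbation of $f_{C,\eta}$; nothing in the paper guarantees this, and in general the minimizing/minimaxing critical orbit can jump even when the critical value varies continuously. Moreover, perturbing $f_{C,\eta}$ also perturbs $c^{n,k}(H_{C,\eta})$ and hence $\Upsilon(C)$, so the quantity $T=\alpha+\Upsilon(C)$ is not tracking a fixed target, and the assertion that ``an entire interval'' is swept inside $\overline{\Sigma_{\mathcal{S}}}$ does not follow. Replace the perturbation detour with the direct numerical exclusion already implicit in your own observation, and the proof closes cleanly along the paper's lines.
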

\begin{proof}
Since $a_H\notin\mathbb{N}\pi/2$, by Theorem~\ref{th:EH.1.6} we get that $c^{n,k}(H_{C,\eta})$ is  a positive critical value of
$\Phi_{H_{C,\eta}}$ and the associated
critical point $x\in E$ gives rise to  a nonconstant leafwise chord
 sitting in the interior of $U$. Then
  Lemma~\ref{lem:coEH.4.2}(i) yields
$$
c^{n,k}(H_{C,\eta})=\Phi_{H_{C,\eta}}(x)=f'_{C,\eta}(\tau)-f_{C,\eta}(\tau),
$$
where $f'_{C,\eta}(\tau)\in e^\tau\Sigma_{\mathcal{S}}$ and $\eta<|\tau|<2\eta$.
Choose $C>0$ so large that
$c^{n,k}(H_{C,\eta})<c^{n,k}(\mathcal{S})+1$.
By the choice of $f$ below (\ref{e:EH.4.14}) we get
either $f_{C,\eta}(\tau)<\eta$ or $f_{C,\eta}(\tau)>C-\eta$.
Moreover $c^{n,k}(H_{C,\eta})>0$ implies $f'_{C,\eta}(\tau)>f_{C,\eta}(\tau)\ge 0$ and so $\tau>0$.

Take a sequence of positive numbers $\eta_n\to 0$. By the arguments above, passing to a subsequence
we have the following two cases.

\noindent{\bf Case 1}. For each $n\in\mathbb{N}$,
$c^{n,k}(H_{C,\eta_n})=f'_{C,\eta_n}(\tau_n)-f_{C,\eta_n}(\tau_n)
=e^{\tau_n}a_n-f_{C,\eta_n}(\tau_n)$, where $a_n\in\Sigma_\mathcal{S}$, $0\le f_{C,\eta_n}(\tau_n)<\eta_n$ and $\eta_n<\tau_n<2\eta_n$.

\noindent{\bf Case 2}. For each $n\in\mathbb{N}$,
$c^{n,k}(H_{C,\eta_n})=f'_{C,\eta_n}(\tau_n)-f_{C,\eta_n}(\tau_n)
=e^{\tau_n}a_n-f_{C,\eta_n}(\tau_n)=e^{\tau_n}a_n-C-(f_{C,\eta_n}(\tau_n)-C)$,
where $a_n\in\Sigma_\mathcal{S}$, $C-\eta_n<f_{C,\eta_n}(\tau_n)\le C$ and $\eta_n<\tau_n<2\eta_n$.

In Case 1, since $c^{n,k}(H_{C,\eta_n})\to\Upsilon(C)$ by (\ref{e:EH.4.14.2}), the sequence $a_n=e^{-\tau_n}(c^{n,k}(H_{C,\eta_n})+f_{C,\eta_n}(\tau_n))$
is  bounded. Passing to a subsequence we may assume  $a_n\to a_C\in \overline{\Sigma_{\mathcal{S}}}$.
Then
\begin{eqnarray*}
a_C=\lim_{n\to\infty}a_n=\lim_{n\to\infty}\left(e^{-\tau_n}(c^{n,k}(H_{C,\eta_n})+f_{C,\eta_n}(\tau_n))\right)
=\Upsilon(C)
\end{eqnarray*}
because $e^{-\tau_n}\to 1$ and $f_{C,\eta_n}(\tau_n)\to 0$.

Similarly,  we can prove $\Upsilon(C)+C=a_C\in \overline{\Sigma_{\mathcal{S}}}$ in Case 2.
\end{proof}

\noindent{\bf Step 1}. {\it Prove $c^{n,k}(\mathcal{S})\in\overline{\Sigma_{\mathcal{S}}}$.}\quad
 Suppose that there exists an increasing sequence $C_n$ tending to $+\infty$ such that $C_n/R^2\notin\mathbb{N}\pi/2$  and
  $\Upsilon(C_n)\in \Sigma_{\mathcal{S}}$ for each $n$.
Since $(\Upsilon(C_n))$ is non-increasing we conclude
\begin{equation}\label{e:EH.4.17+}
c^{n,k}(\mathcal{S})=\lim_{n\to\infty}\Upsilon(C_n)\in \overline{\Sigma_\mathcal{S}}.
\end{equation}
Otherwise, we have
\begin{equation}\label{e:EH.4.18}
\left.\begin{array}{ll}
&\hbox{there exists $\bar{C}>0$ such that
(\ref{e:EH.4.17}) holds}\\
&\hbox{for each $C\in (\bar{C}, +\infty)$
satisfying $C/R^2\notin\mathbb{N}\pi/2$.}
\end{array}\right\}
\end{equation}

\begin{claim}\label{cl:EH.4.3}
Let $\bar{C}>0$ be as in (\ref{e:EH.4.18}).
Then for any $C<C'$ in $(\bar{C}, +\infty)$ there holds
$$
\Upsilon(C)+C\ge \Upsilon(C')+C'.
$$
\end{claim}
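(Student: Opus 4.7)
I will prove the stronger statement that $a_C := \Upsilon(C) + C$ is constant on $(\bar{C}, +\infty)$, from which the claimed inequality follows (with equality). The argument combines Claim~\ref{cl:EH.4.2}, the monotonicity of $\Upsilon$ already noted above, and the fact that $\overline{\Sigma_{\mathcal{S}}}$ is nowhere dense in $\mathbb{R}$. The strategy is: continuity of $\Upsilon$ in $C$ $\Rightarrow$ $a_C$ is a continuous map into $\overline{\Sigma_{\mathcal{S}}}$ $\Rightarrow$ connectedness forces the image to be a singleton.

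\textbf{Step 1: Continuity of $\Upsilon$.} The Hamiltonian $H_{C,\eta}$ can be chosen to depend linearly on $C$: writing $H_{C,\eta}(x) = C\,\psi_\eta(x)$ with a fixed nonnegative $C$-independent profile $\psi_\eta$ (the resonance restriction $C/R^2\notin \pi\mathbb{N}/2$ then excludes only a discrete set of $C$). For non-resonant $C$, each $c^{n,k}(C\psi_\eta)$ is a positive critical value of $\Phi_{C\psi_\eta}$ by Theorem~\ref{th:EH.1.6}, and an envelope-theorem computation at a Case-2 critical point $x(C)$ gives
\[
\frac{d}{dC}\,c^{n,k}(C\psi_\eta) \;=\; -\int_0^1 \psi_\eta\bigl(x(C)(t)\bigr)\,dt.
\]
Because the Case-2 critical point lies on a bounded slice $\psi(\{\tau_\eta\},\mathcal{S})$ with $\tau_\eta\in(\eta,2\eta)$, on which $\psi_\eta$ is bounded (indeed $\to 1$ as $\eta\to 0$), this derivative is bounded in absolute value by a constant close to $1$ uniformly in $\eta$. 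So $C\mapsto c^{n,k}(H_{C,\eta})$ is locally Lipschitz on non-resonant intervals with constant independent of $\eta$, and the monotone limit $\Upsilon(C)=\lim_{\eta\to 0}c^{n,k}(H_{C,\eta})$ is continuous on $(\bar{C},+\infty)$.

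\textbf{Step 2: Connectedness argument.} By Claim~\ref{cl:EH.4.2} combined with hypothesis~(\ref{e:EH.4.18}), $a_C\in\overline{\Sigma_{\mathcal{S}}}$ for every $C\in(\bar{C},+\infty)$ with $C/R^2\notin \pi\mathbb{N}/2$. Continuity of $a_C=\Upsilon(C)+C$ and closedness of $\overline{\Sigma_{\mathcal{S}}}$ extend this to every $C\in(\bar{C},+\infty)$. By Proposition~\ref{prop:EH.4.1} and Claim~\ref{cl:EH.4.1}, $\overline{\Sigma_{\mathcal{S}}}\subset\Sigma_{\mathcal{S}}\cup\{0\}$ is closed with empty interior in $\mathbb{R}$, hence nowhere dense. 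The continuous image $a\bigl((\bar{C},+\infty)\bigr)$ is therefore a connected subset of a nowhere-dense subset of $\mathbb{R}$; since the only connected subsets of $\mathbb{R}$ with empty interior are singletons, this image is a single point. Thus $a_C$ is constant on $(\bar{C},+\infty)$, and in particular $\Upsilon(C)+C\ge \Upsilon(C')+C'$ (with equality) for every $C<C'$ in $(\bar{C},+\infty)$.

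\textbf{Main obstacle.} The genuinely delicate point is the continuity of $\Upsilon$ in Step~1: Proposition~\ref{prop:coEHC.1}(ii) cannot be invoked directly for the comparison of $H_{C,\eta}$ and $H_{C',\eta}$, because $|H_{C,\eta}-H_{C',\eta}|$ has a quadratic tail $((C'-C)/R^2)|x|^2$ and is not uniformly bounded on $\mathbb{R}^{2n}$. The envelope computation sketched above must be underpinned by a localization argument, showing that the minimax value $c^{n,k}(H_{C,\eta})$ is determined only by the restriction of $H_{C,\eta}$ to a bounded neighborhood of $\mathcal{S}$ up to an error that is uniform in $C$ and tends to $0$ with $\eta$. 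The Case-2 localization of the minimax critical point on the slice $\psi(\{\tau_\eta\},\mathcal{S})$ is the crucial input that makes such an estimate possible.
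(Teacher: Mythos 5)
Your plan hinges on Step 1 (continuity of $\Upsilon$), and that step contains a genuine gap which you yourself flag as the ``main obstacle'' without resolving it. The envelope-theorem computation you write down for $\frac{d}{dC}c^{n,k}(C\psi_\eta)$ is heuristic: $c^{n,k}(H)$ is a $\sup$--$\inf$ value, and differentiating such a quantity in a parameter requires uniqueness/regularity of the maximizing family and of the minimizer on it, none of which is available here. You then correctly observe that Proposition~\ref{prop:coEHC.1}(ii) cannot be invoked because $|H_{C,\eta}-H_{C',\eta}|$ has a quadratic tail, and you appeal to a ``localization argument'' to save the estimate---but you do not supply it. Without that, the continuity of $C\mapsto c^{n,k}(H_{C,\eta})$ (with an $\eta$-uniform modulus), and hence of its monotone limit $\Upsilon$, is unproven, and Step 2 (connectedness forces the image of $a_C$ into a singleton inside a nowhere-dense set) has nothing to stand on.

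The paper avoids continuity entirely. Its proof is by contradiction: assuming $\Upsilon(C)+C<\Upsilon(C')+C'$, it shows that for \emph{every} $d$ in the open interval $(\Upsilon(C)+C,\Upsilon(C')+C')$ there exists $C_0\in(C,C')$ with $\Upsilon(C_0)+C_0=d$. The key is that this intermediate-value property of $g(C):=\Upsilon(C)+C$ does not require continuity of $\Upsilon$, only that $\Upsilon$ is non-increasing: set $\Delta_d=\{C''\in(C,C'):g(C'')>d\}$, take $C_0=\inf\Delta_d$, and use $\Upsilon(C_n'')\le\Upsilon(C_0)$ for $C_n''\downarrow C_0$ to get $d\le g(C_0)$, then $\Upsilon(\hat C)\ge\Upsilon(C_0)$ for $\hat C\uparrow C_0$ to rule out $d<g(C_0)$. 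Monotonicity of $\Upsilon$ forbids an ``upward jump'' of $g$, which is exactly what the intermediate-value property needs. Once every $d$ in an interval is a value of $g$, all but finitely many of these $d$ lie in $\overline{\Sigma_{\mathcal{S}}}$ by (\ref{e:EH.4.17}), contradicting ${\rm Int}(\Sigma_{\mathcal{S}})=\emptyset$. So your connectedness idea is in the right spirit, but the correct and elementary route to it is this monotonicity-based intermediate-value argument, not continuity of $\Upsilon$; if you replace your Step~1 with that observation, your Step~2 essentially reproduces the paper's proof.
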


Its proof is carried out later.
Since $\Xi:=\{C>\bar{C}\,|\, C\;\hbox{satisfying $C/R^2\notin\mathbb{N}\pi/2$}\}$
is dense in $(\bar{C}, +\infty)$, it follows from Claim~\ref{cl:EH.4.3} that
$\Upsilon(C')+C'\le \Upsilon(C)+C$ if $C'>C$ are in
$\Xi$. Fix a $C^\ast\in\Xi$. Then
$\Upsilon(C')+C'\le \Upsilon(C^\ast)+C^\ast$ for all $C'\in\{C\in\Xi\,|\, C>C^\ast\}$.
Taking a sequence  $(C_n')\subset \{C\in\Xi\,|\, C>C^\ast\}$ such that $C_n'\to +\infty$,
we deduce that $\Upsilon(C_n')\to-\infty$. This contradicts the fact that
$\Upsilon(C_n')\to c^{n,k}(\mathcal{S})>0$. Hence
(\ref{e:EH.4.18}) does not  hold! (\ref{e:EH.4.17+}) is proved.

\begin{proof}[Proof of Claim~\ref{cl:EH.4.3}]
By contradiction we assume that for some $C'>C>\overline{C}$,
\begin{equation}\label{e:EH.4.20}
\Upsilon(C)+C<\Upsilon(C')+C'.
\end{equation}
Let us prove that (\ref{e:EH.4.20}) implies:
\begin{equation}\label{e:EH.4.21}
\left.\begin{array}{ll}
 &\hbox{ for any given
$d\in (\Upsilon(C)+C,\Upsilon(C')+C')$}\\
&\hbox{ there exists $C_0\in (C, C')$ such that
$\Upsilon(C_0)+C_0=d$.}
\end{array}\right\}
\end{equation}
Clearly, this contradicts the facts that
${\rm Int}(\Sigma_{\mathcal{S}})=\emptyset$ and (\ref{e:EH.4.17})
holds for all large $C$ satisfying $C/R^2\notin\mathbb{N}\pi/2$.

It remains to prove (\ref{e:EH.4.21}).
Put $\Delta_d=\{C''\in (C, C')\,|\, C''+\Upsilon(C'')>d\}$. Since
$\Upsilon(C')+C'>d$ and
$\Upsilon(C')\le\Upsilon(C'')\le\Upsilon(C)$ for any $C''\in (C, C')$
we obtain $\Upsilon(C'')+C''>d$ if $C''\in (C, C')$ is sufficiently close to $C'$.
Hence $\Delta_d\ne\emptyset$. Set $C_0=\inf \Delta_d$. Then $C_0\in [C, C')$.

Let $(C_n'')\subset \Delta_d$ satisfy $C_n''\downarrow C_0$.
Since $\Upsilon(C_n'')\le \Upsilon(C_0)$, we have
$d<C_n''+\Upsilon(C_n'')\le\Upsilon(C_0)+ C_n''$ for each $n\in\mathbb{N}$,
and thus $d\le\Upsilon(C_0)+C_0$ by letting $n\to\infty$.

We conclude $d=\Upsilon(C_0)+C_0$, and so (\ref{e:EH.4.21}) is proved.
By contradiction suppose that
\begin{equation}\label{e:EH.4.22}
d<\Upsilon(C_0)+C_0.
\end{equation}
Since $d>C+\Upsilon(C)$, this implies $C\ne C_0$ and so $C_0>C$.
For $\hat{C}\in (C, C_0)$, as $\Upsilon(\hat{C})\ge\Upsilon(C_0)$  we derive from
(\ref{e:EH.4.22}) that $\Upsilon(\hat{C})+\hat{C}>d$
if $\hat{C}$ is close to $C_0$. Hence such $\hat{C}$ belongs to
$\Delta_d$, which contradicts $C_0=\inf\Delta_d$.
%
\end{proof}

\noindent{\bf Step 2.} {\it Prove $c^{n,k}(U)=c^{n,k}(\mathcal{S})$}.
Note that $c^{n,k}(U)=\inf_{\eta>0, C>0}c^{n,k}(\hat{H}_{C,\eta})$,
where
$$
\hat{H}_{C,\eta}(x)=\left\{\begin{array}{ll}
0 &{\rm if}\;x\in {\bf B}_\delta,\\
\hat{f}_{C,\eta}(\tau) &{\rm if}\;x=\psi(\tau,y),\;y\in\mathcal{S},\;\tau\in [-\delta,\delta],\\
C &{\rm if}\;x\in {\bf A}_\delta\cap B^{2n}(0,R),\\
\hat{h}(|x|^2) &{\rm if}\;x\in {\bf A}_\delta\setminus B^{2n}(0,R)
\end{array}\right.
$$
where $B^{2n}(0,R)\supseteq\overline{\psi((-\varepsilon,\varepsilon)\times\mathcal{S})}$, $\hat{f}_{C,\eta}:(-\varepsilon, \varepsilon)\to\mathbb{R}$ and $\hat{h}:[0, \infty)\to\mathbb{R}$ are smooth functions satisfying  the following conditions
\begin{eqnarray*}
&&\hat{f}_{C,\eta}|_{(-\infty,\eta]}\equiv 0,\quad \hat{f}_{C,\eta}(s)=C\;\hbox{if}\;s\ge 2\eta,\\
&&\hat{f}'_{C,\eta}(s)s>0\quad\hbox{if}\quad \eta<s<2\eta,\\
&&\hat{f}'_{C,\eta}(s)-\hat{f}_{C,\eta}(s)>c^{n,k}(\mathcal{S})+1\quad\hbox{if}\;s>0\;\hbox{and}\;
\eta<\hat{f}_{C,\eta}(s)<C-\eta,\\
&&\hat{h}_{C,\eta}(s)=a_Hs+b\quad\hbox{for $s>0$ large enough}, a_H=C/R^2>\frac{\pi}{2}, a_H\notin \frac{\pi}{2}\mathbb{N},\\
&&s\hat{h}'_{C,\eta}(s)-\hat{h}_{C,\eta}(s)\le 0\quad\forall s\ge 0.
\end{eqnarray*}

For $H_{C,\eta}$ in (\ref{e:EH.4.14}),  choose an associated
$\hat{H}_{C,\eta}$, where $\hat{f}_{C,\eta}|_{[0,\infty)}={f}_{C,\eta}|_{[0,\infty)}$
and $\hat{h}_{C,\eta}={h}_{C,\eta}$. Consider $H_s=sH_{C,\eta}+(1-s)\hat{H}_{C,\eta}$, $0\le s\le 1$, and put
$\Phi_s(x):=\Phi_{H_s}(x)$ for $x\in E$.

It suffices to prove $c^{n,k}(H_0)=c^{n,k}(H_1)$. If $x$ is a critical point of $\Phi_s$ with $\Phi_s(x)>0$,
as in Lemma~\ref{lem:coEH.4.2}, we have $x([0,1])\in \mathcal{S}_\tau=\psi(\{\tau\}\times\mathcal{S})$
for some $\tau\in (\eta,2\eta)$. The choice of $\hat{H}_{C,\eta}$ shows
$H_s(x(t))\equiv {H}_{C,\eta}(x(t))$ for $t\in [0,1]$. This implies that
each $\Phi_s$ has the same positive critical value as $\Phi_{H_{C,\eta}}$.
By the continuity in Proposition~\ref{prop:coEHC.1}(ii), $s\mapsto c^{n,k}(H_s)$ is continuous
and takes values in the set of positive critical value of $\Phi_{H_{C,\eta}}$
(which has measure zero by Sard's theorem). Hence  $s\mapsto c^{n,k}(H_s)$ is constant. We get
$c^{n,k}(\hat{H}_{C,\eta})=c^{n,k}(H_0)=c^\Psi_{\rm EH}(H_1)=c^{n,k}(H_{C,\eta})$.

Summarizing the above arguments  we have proved that
$c^{n,k}(\mathcal{S})=c^{n,k}(U)\in \overline{\Sigma_{\mathcal{S}}}$.
Noting that $c^{n,k}(U)>0$, we deduce
$c^{n,k}(\mathcal{S})=c^{n,k}(U)\in \Sigma_{\mathcal{S}}$ by Claim~\ref{cl:EH.4.1}.


\section{Proof of Theorem~\ref{th:non-triviality}}\label{sec:normal}
\setcounter{equation}{0}

For $W^{2n}(1)$ in (\ref{e:Ball2}),
note that $W^{2n}(1)\equiv \mathbb{R}^{2n-2}\times W^2(1)\supseteq\mathbb{R}^{2n-2}\times U^2(1)$ via the identification under (\ref{e:product2}).
 For each integer $0\le k<n$, (\ref{e:product3.1}) and  (\ref{e:product1}) yield
 $$
 c^{n,k}(W^{2n}(1))\ge\min\{c^{n-1,k}(\mathbb{R}^{2n-2}), c^{1,0}(U^2(1))\}=\frac{\pi}{2}.
$$
We only need to prove  the inverse direction of the inequality.

 Fix a number $0<\varepsilon<\frac{1}{100}$. For  $N>2$ define
$$
W^{2}(1,N):=\left \{(x_n,y_n)\in W^{2}(1)\;|\;|x_n|< N,\; |y_n|<N\right\}.
$$
Let us smoothen $W^{2}(1)$ and $W^{2}(1,N)$ in the following way.
Choose positive numbers $\delta_1, \delta_2\ll 1$ and a smooth even function
$g:\mathbb{R}\to \mathbb{R}$ satisfying the following conditions:
\begin{description}
\item[(i)] $g(t)=\sqrt{1-t^2}$ for $0\le t\le 1-\delta_1$,
\item[(ii)] $g(t)=0$ for $t\ge 1+\delta_2$,
\item[(iii)] $g$ is strictly monotone decreasing, and $g(t)\ge \sqrt{1-t^2}$ for $1-\delta_1\le t\le 1$.
\end{description}
Denote by
\begin{eqnarray*}
W^{2}_g(1):=\{(x_n,y_n)\in\mathbb{R}^{2}\,|\, y_n<g(x_n)\},
\end{eqnarray*}
and by $W^{2}_{g}(1,N)$ the open subset in $\mathbb{R}^2(x_n,y_n)$ surrounded by
curves $y_n=g(x_n)$, $y_n=-N$, $x_n=N$ and $x_n=-N$
(see Figure 2 ). Then $W^{2}_{g}(1,N)$ contains $W^2(1,N)$,
and we can require $\delta_1,\delta_2$ so small that
\begin{equation}\label{e:7.9}
0<{\rm Area}(W^{2}_{g}(1,N))-{\rm Area}(W^2(1,N))<\frac{\varepsilon}{2}.
\end{equation}
Take another smooth function
$h:[0, \infty)\to \mathbb{R}$ satisfying the following conditions:
\begin{description}
\item[(iv)] $h(0)=\frac{\varepsilon}{2}$ and $h(t)=0$ for $t>\frac{\varepsilon}{2}$,
\item[(v)] $h'(t)<0$ and $h''(t)>0$  for any $t\in (0, \frac{\varepsilon}{2})$,
\item[(vi)] the curve $\{(t, h(t))\,|\, 0\le t\le\frac{\varepsilon}{2}\}$ is symmetric with respect to line $s=t$ in $\mathbb{R}^2(s,t)$.
\end{description}

Let $\triangle_1$ be the closed domain in $\mathbb{R}^2(x_n,y_n)$ surrounded by
curves $y_n=h(x_n)$, $y_n=0$ and $x_n=0$ (see Figure~\ref{fig:1} ). Denote by
\begin{eqnarray*}
\triangle_2=\{(x_n,y_n)\in \mathbb{R}^2\,|\,(-x_n,y_n)\in \triangle_1\},\quad \triangle_3=-\triangle_1,\quad \triangle_4=-\triangle_2.
\end{eqnarray*}

\begin{figure}[!htb]
\centering
\includegraphics[clip,width=8cm]{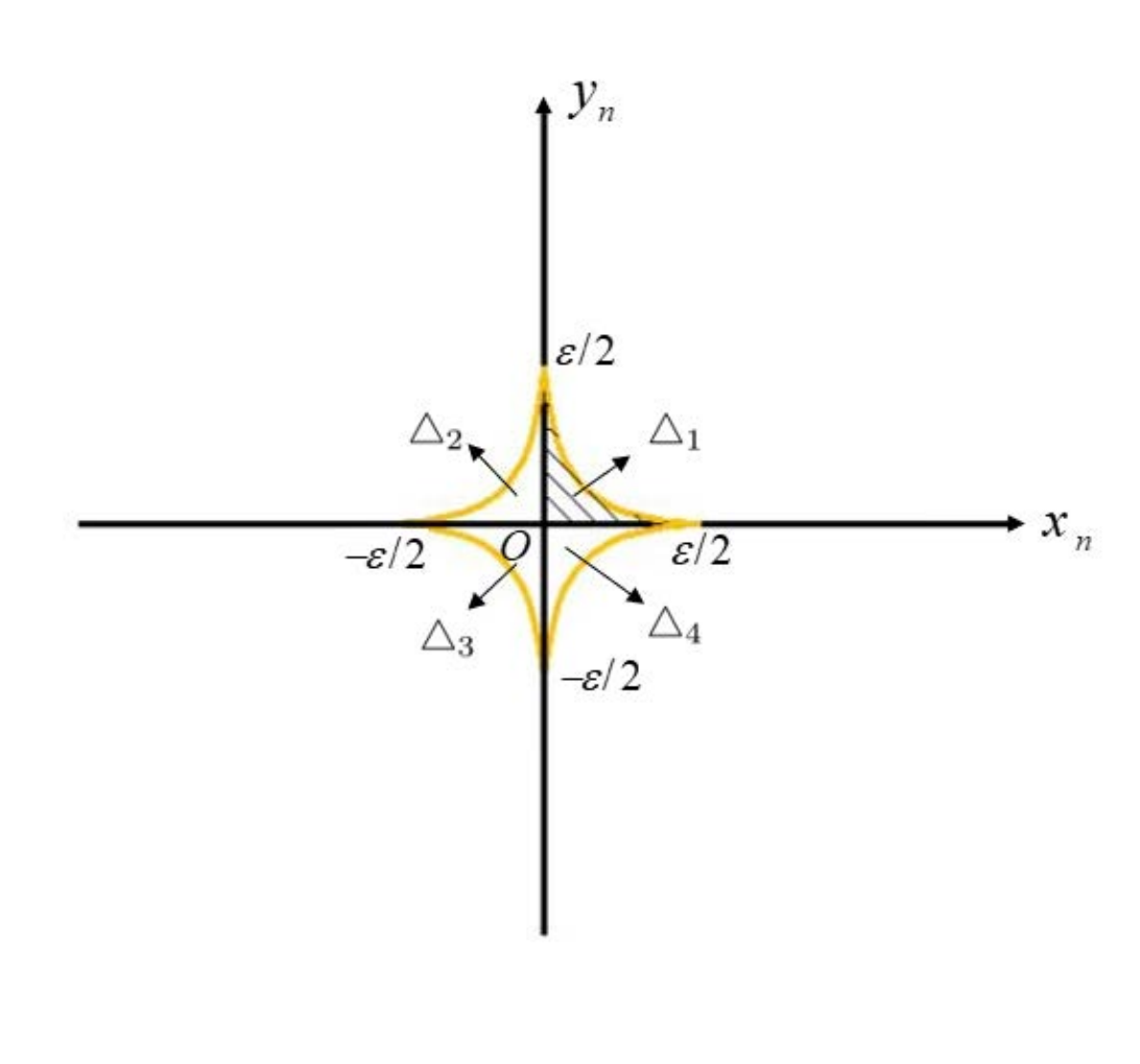}
\caption{The domains $\triangle_i$, $i=1,2,3,4$.}\label{fig:1}
\end{figure}
Let $p_1=(N,0),\; p_2=(-N,0),\; p_3=(-N,-N),\; p_4=(N,-N)$. Define
$$
W^{2}_{g,\varepsilon}(1,N)=W^{2}_{g}(1,N)\setminus\big((p_1+ \triangle_3)\cup(p_2+ \triangle_4)\cup(p_3+ \triangle_1)\cup(p_4+ \triangle_2)\bigr).
$$
Then $W^{2}_{g,\varepsilon}(1,N)$ has smooth boundary (see Figure~\ref{fig:2}) and
$$
0<{\rm Area}(W^2_g(1,N))-{\rm Area}(W^{2}_{g,\varepsilon}(1,N))=4{\rm Area}(\triangle_1)<4\left(\frac{\varepsilon}{2}\right)^2=\varepsilon^2<\frac{\varepsilon}{2}.
$$

\begin{figure}[!htb]
\centering
\includegraphics[clip,width=8cm]{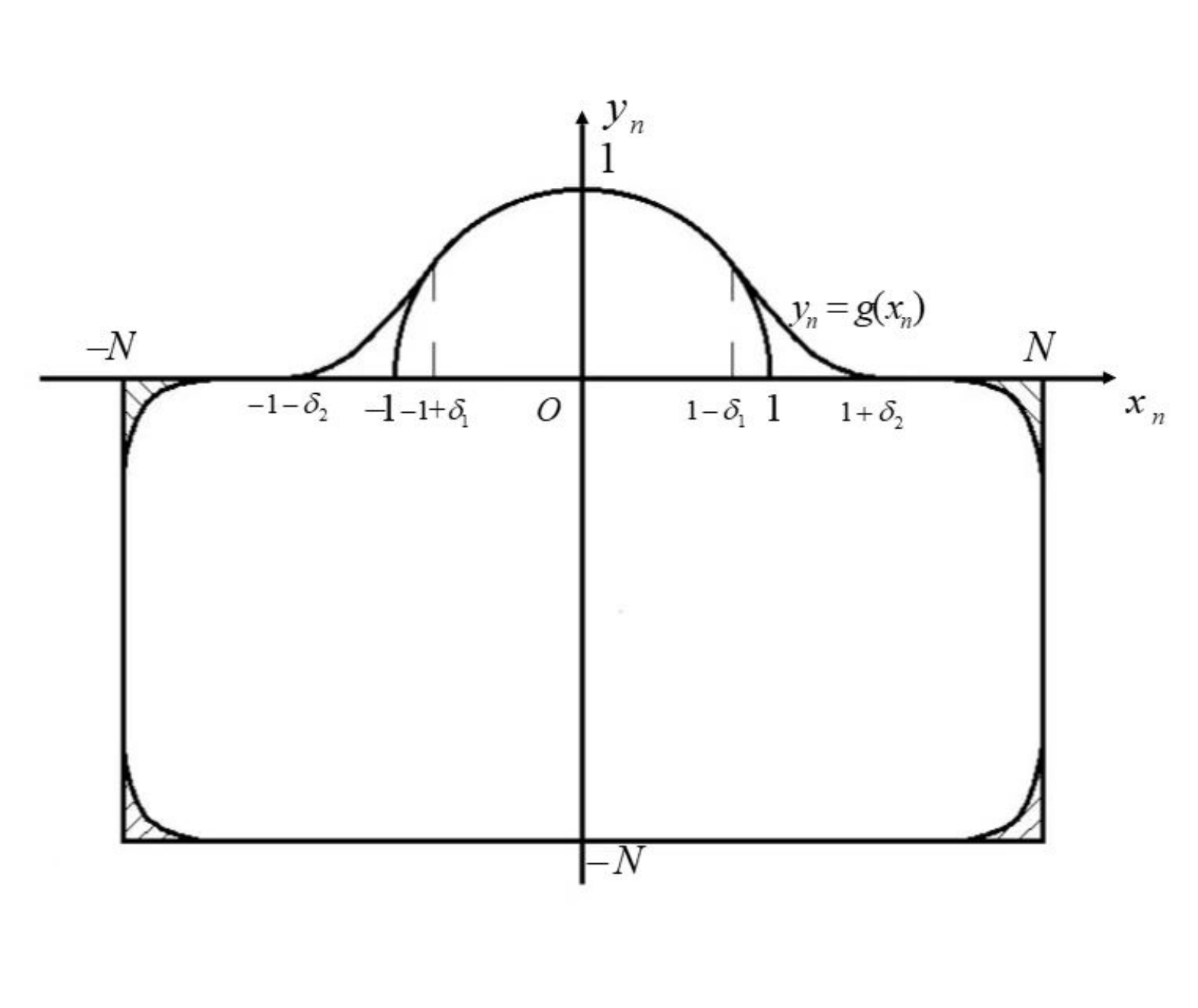}
\caption{The domain $W^{2}_{g,\varepsilon}(1,N)$.}\label{fig:2}
\end{figure}


For $n>1$ and $N>2$ we define
\begin{eqnarray*}
&&W^{2n}_{g}(1):=\{(x,y)\in\mathbb{R}^{2n}\,|\, (x_n,y_n)\in W^2_{g}(1)\}=\mathbb{R}^{2n-2}\times W^{2}_{g}(1) ,\\
&&W^{2n}(1,N):=\left \{(x,y)\in W^{2n}(1)\;|\;|x_n|< N,\; |y_n|<N\right\}=\mathbb{R}^{2n-2}\times W^{2}(1,N),\\
&&W^{2n}_{g}(1,N):=\{(x,y)\in\mathbb{R}^{2n}\,|\, (x_n,y_n)\in W^2_{g}(1, N)\}=\mathbb{R}^{2n-2}\times W^{2}_g(1,N),\\
&&W^{2n}_{g,\varepsilon}(1,N):=\{(x,y)\in\mathbb{R}^{2n}\,|\, (x_n,y_n)\in W^2_{g,\varepsilon}(1,N)\}=\mathbb{R}^{2n-2}\times W^{2}_{g,\varepsilon}(1,N).
\end{eqnarray*}
Clearly, $W^{2n}_{g,\varepsilon}(1, N)\subset W^{2n}_{g,\varepsilon}(1,M)$ for any $M>N>2$, and each bounded subset of
$W^{2n}_g(1)$ can be contained in $W^{2n}_{g,\varepsilon}(1,N)$ for some large $N>2$.
It follows that
\begin{equation}\label{e:7.10}
c^{n,k}(W^{2n}_g(1))=\sup_{N>2}\{c^{n,k}(W^{2n}_{g,\varepsilon}(1,N))\}=\lim_{N\to+\infty}c^{n,k}(W^{2n}_{g,\varepsilon}(1,N)).
\end{equation}

Let us estimate $c^{n,k}(W^{2n}_{g,\varepsilon}(1,N))$ with Theorem \ref{th:EHcontact}.
 Regrettably, $W^{2}_{g,\varepsilon}(1,N)$ is not  star-shaped with respect to the origin.
Fortunately, it can be approximated arbitrarily by star-shaped domains with respect to the origin and with smooth boundary.
Indeed, for a very small $0<\eta<\varepsilon$ the set
$$
W^{2}_{g,\varepsilon}(1,N,\eta):=W^{2}_{g,\varepsilon}(1,N)\cup (W^{2}_{g,\varepsilon}(1,N)+ (0,\eta))
$$
is the desired one.

Define $j_{g,N,\epsilon, \eta}:\mathbb{R}^2\to\mathbb{R}$ by
$$
j_{g,N,\epsilon,\eta}(z_n):=\inf\left\{\lambda>0\;\Big|\frac{z_n}{\lambda}\in W^{2}_{g,\varepsilon}(1,N,\eta)\right\},\quad\forall z_n=(x_n,y_n)\in\mathbb{R}^2.
$$
Then $j_{g,N,\epsilon,\eta}$ is positively homogeneous, and smooth in $\mathbb{R}^2\setminus\{0\}$. For
$(x,y)\in\mathbb{R}^{2n}$ we write $(x,y)=(\hat{z},z_n)$ and  define
$$
W^{2n}_{g,\varepsilon,R}(1,N,\eta):=\left\{\frac{|\hat{z}|^2}{R^2}+j_{g,N,\epsilon,\eta}^2(z_n)<1\right\}, \quad \forall R>0.
$$
Then we have  $W^{2n}_{g,\varepsilon,R_1}(1,N,\eta)\subset W^{2n}_{g,\varepsilon, R_2}(1,N,\eta)$ for $R_1<R_2$, and
$$
W^{2n}_{g,\varepsilon}(1,N,\eta)=\bigcup_{R>0}W^{2n}_{g,\varepsilon,R}(1,N,\eta),
$$
which implies by (\ref{e:coCap+}) that
\begin{equation}\label{e:7.11}
c^{n,k}(W^{2n}_{g,\varepsilon}(1,N,\eta))=\lim_{R\to+\infty}c^{n,k}(W^{2n}_{g,\varepsilon,R}(1,N,\eta)).
\end{equation}

Observe that for  arbitrary $N>2$ and $R>0$ we can shrink $0<\eta<\varepsilon$ so that there holds
$$
W^{2n}_{g,\varepsilon,R}(1,N,\eta)\subset W^{2n}_{g,\varepsilon}(1,N,\eta)\subset U^{2n}(N),
$$
where for $r>0$,
$$
U^{2n}(r):=\{(x,y)\in\mathbb{R}^{2n}\;|\;x_n^2+y_n^2<r^2\}\cup \{(x,y)\in\mathbb{R}^{2n}\;|\;|x_n|<r\;\hbox{and}\;y_n<0\}.
$$
We obtain
\begin{equation}\label{e:bound}
c^{n,k}(W^{2n}_{g,\varepsilon,R}(1,N,\eta))\le c^{n,k}(W^{2n}_{g,\varepsilon}(1,N,\eta))\le c^{n,k}(U^{2n}(N))=\frac{\pi}{2}N^2.
\end{equation}
Note that $W^{2n}_{g, \varepsilon, R}(1,N,\eta)$ is a star-shaped domain with respect to the origin and with smooth boundary
$\mathcal{S}_{N, g, \varepsilon,R,\eta}$
 transversal to  the  globally defined Liouville vector field
$X(z)=z$.
Since the flow $\phi^t$ of $X$, $\phi^t(z)=e^tz$, maps $\mathbb{R}^{n,k}$ to $\mathbb{R}^{n,k}$ and preserves the leaf relation of $\mathbb{R}^{n,k}$, by Theorem \ref{th:EHcontact} we obtain
$$
c^{n,k}(W^{2n}_{g,\varepsilon,R}(1,N,\eta))\in \Sigma_{\mathcal{S}_{N, g, \varepsilon,R,\eta}}
$$
where
$$
\Sigma_{\mathcal{S}_{g,N,\varepsilon, R,\eta}}=\{A(x)>0\;|\;x\;\hbox{is a leafwise chord on\;$\mathcal{S}_{N, g, \varepsilon,R,\eta}$\;for \;$\mathbb{R}^{n,k}$}\}.
$$
Arguing as in the proof of (\ref{e:product+4})
we get that
$$
\Sigma_{\mathcal{S}_{g,N,\varepsilon,R,\eta}}\subset \Sigma _{\partial W^2_{g,\varepsilon}(1,N,\eta)}\bigcup \frac{\pi R^2}{2}\mathbb{N}.
$$
Hence for $R>N$, by (\ref{e:bound}) we have
\begin{equation}\label{e:bound+}
c^{n,k}(W^{2n}_{g,\varepsilon,R}(1,N,\eta))\in \Sigma _{\partial W^2_{g,\varepsilon}(1,N,\eta)}.
\end{equation}

Let us compute $\Sigma _{\partial W^2_{g,\varepsilon}(1,N,\eta)}$.
Note that the part of $\partial W^{2}_{g,\varepsilon}(1,N)$ over the line $y_n=-\frac{\varepsilon}{2}$ and between lines $x_n=-N$ and $x_n=N$ is
$\{(x_n, f(x_n))\in\mathbb{R}^2\,|\, |x_n|\le N\}$, where
$$
f(x_n)=\left\{\begin{array}{ll}
-h(x_n+N) &{\rm if}\;-N\le x_n\le -N+\frac{\varepsilon}{2},\\
g(x_n) &{\rm if}\;-N+\frac{\varepsilon}{2}<x_n<N-\frac{\varepsilon}{2},\\
-h(-x_n+N) &{\rm if}\;N-\frac{\varepsilon}{2}<x_n\le N.
\end{array}\right.
$$
Let $t_0\in (0, \varepsilon/2)$ be the unique number satisfying $h(t_0)=\eta$.
Then  there only exist two leafwise chords on $\partial W^2_{g,\varepsilon}(1,N,\eta)$ for $\mathbb{R}^{1,0}$.
One is the curve in $\mathbb{R}^2(x_n,y_n)$,
$$
\gamma_1:=\{(x_n, \eta+ f(x_n))\in\mathbb{R}^2\,|\, t_0-N\le x_n\le N-t_0\},
$$
 and  the other is
$\gamma_2:=\partial W^{2}_{g,\varepsilon}(1,N,\eta)\setminus\gamma_1$.
Then $A(\gamma_1)$ is equal to the area of the domain
in $\mathbb{R}^2(x_n,y_n)$ surrounded by
curves $\gamma_1$ and $x_n$-axis, that is,
\begin{eqnarray}\label{e:7.73}
A(\gamma_1)&=&\int^{N-t_0}_{t_0-N}(\eta+ f(x_n))dx_n\nonumber\\
&=&2(N-t_0)\eta+
{\rm Area}(W^{2}_{g}(1,N))-2N^2-2\int_{t_0}^{\frac{\varepsilon}{2}}h(t)dt,
\end{eqnarray}
 and
 \begin{eqnarray}\label{e:7.74}
 A(\gamma_2)
&=&2N^2- 2{\rm Area}(\triangle_1)-2\int^{t_0}_0h(t)dt\nonumber\\
&\ge& 2N^2- 4{\rm Area}(\triangle_1)\nonumber\\
&>&2N^2-\varepsilon.
\end{eqnarray}
Hence $\Sigma _{\partial W^2_{g,\epsilon}(1,N,\eta)}=\left\{
A(\gamma_1), A(\gamma_2)\right\}$. Let us choose  $N>2$ so large that
$\frac{\pi}{2}N^2<2N^2-\varepsilon$. Then
(\ref{e:bound}), (\ref{e:bound+}) and (\ref{e:7.74}) lead to
 \begin{eqnarray}\label{e:7.75}
c^{n,k}(W^{2n}_{g,\varepsilon,R}(1,N,\eta))=A(\gamma_1).
\end{eqnarray}
Note that $2N^2- 4{\rm Area}(\triangle_1)>2N^2-\varepsilon$ and that (\ref{e:7.9}) implies
$$
{\rm Area}(W^{2}_{g}(1,N))-2N^2 < {\rm Area}(W^2(1,N))+ \frac{\varepsilon}{2}-2N^2= \frac{\pi}{2}+\frac{\varepsilon}{2}.
$$
It follows from this,  (\ref{e:7.73}) and (\ref{e:7.75}) that
$$
c^{n,k}(W^{2n}_{g,\varepsilon,R}(1,N,\eta))=A(\gamma_1)< \frac{\pi}{2}+  \frac{\varepsilon}{2}+2(N-t_0)\eta-2\int_{t_0}^{\frac{\varepsilon}{2}}h(t)dt.
$$
For fixed $N$ and $\varepsilon$ we may choose $0<\eta<\varepsilon$ so small that $2(N-t_0)\eta<\frac{\varepsilon}{2}$. Then
$$
c^{n,k}(W^{2n}_{g,\varepsilon,R}(1,N,\eta))< \frac{\pi}{2}+  \varepsilon.
$$
%
From this and (\ref{e:7.10})-(\ref{e:7.11}) we derive
$$
c^{n,k}(W^{2n}(1))\le
c^{n,k}(W^{2n}_g(1))\le \frac{\pi}{2}+ \varepsilon
$$
and hence $c^{n,k}(W^{2n}(1))\le \frac{\pi}{2}$ by letting $\varepsilon\to 0+$.

\section{Comparison to symmetrical Ekeland-Hofer capacities}\label{sec:compare}


For each $i=1,\cdots,n$, let $e_i$ be the vector in $\mathbb{R}^{2n}$ with $1$ in the $i$-th position and $0$s elaewhere.
Then $\{e_i\}_{i=1}^n$ is an orthonormal basis for
$L_0^n:=V_0^{n,0}=\{x\in\mathbb{R}^{2n}\;|\;x=(q_{1},\cdots,q_n,0,\cdots,0)\}=\mathbb{R}^{n,0}$.
It was proved in \cite[Corollary~2.2]{JinLu1917} that
$L^2([0,1],\mathbb{R}^{2n})$ has an orthogonal basis
$$
\{e^{m\pi tJ_{2n}}e_i\}_{1\le i\le n, m\in\mathbb{Z}},
$$
and every $x\in L^2([0,1],\mathbb{R}^{2n})$ can be uniquely expanded as form
$x=\sum_{m\in\mathbb{Z}}e^{m\pi tJ_{2n}}x_m$,
where $x_m\in L_0^n$ for all $m\in\mathbb{Z}$  and satisfies
$\sum_{m\in\mathbb{Z}}|x_m|^2<\infty$. Noting that
$V^{n,0}_1=\{0\}$, the spaces in (\ref{e:space1}) and (\ref{e:space2}) become, respectively,
\begin{eqnarray*}
L^2_{n,0}&=&\Big\{x\in L^2([0,1],\mathbb{R}^{2n})\,\Big|\, x\stackrel{L^2}{=}\sum_{m\in\mathbb{Z}}e^{m\pi tJ_{2n}}a_m,\;
a_m\in L^{n}_0,\;\sum_{m\in\mathbb{Z}}|a_m|^2<\infty\Big\}\\
&=&L^2([0,1],\mathbb{R}^{2n})
\end{eqnarray*}
and
\begin{eqnarray*}
H^s_{n,0}=\Big\{x\in L^2([0,1],\mathbb{R}^{2n})\,\Big|\, x\stackrel{L^2}{=}\sum_{m\in\mathbb{Z}}e^{m\pi tJ_{2n}}a_m,
\; a_m\in L^n_0, \; \sum_{m\in\mathbb{Z}}|m|^{2s}|a_m|^2<\infty\Big\}
\end{eqnarray*}
for any real $s\ge0$. 
%
It follows that the space $\mathbb{E}$ in \cite[\S1.2]{JinLu1915} is a subspace of
$E=H^{1/2}_{n,0}$ in (\ref{e:space3}). Denote by  $\widehat{\Gamma}$ the set of  the admissible deformations on $\mathbb{E}$ (see \cite[\S1.2]{JinLu1915}) and $\widehat{S}^+$
the unit sphere in $\mathbb{E}$. Then
$\Gamma_{n,0}|_{\mathbb{E}}\subset \widehat{\Gamma}$ and $\widehat{S}^+\subset S^+_{n,0}$.
Note that each function $H\in C^0(\mathbb{R}^{2n},\mathbb{R}_{\ge 0})$ satisfying
 the conditions (H1), (H2) and (H3) below \cite[Definition~1.4]{JinLu1915} is naturally $\mathbb{R}^{n,0}$-admissible. Then
 \begin{eqnarray*}
 c^{n,0}(H) &=&\sup_{\gamma\in\Gamma_{n,0}}\inf_{x\in \gamma(S^+_{n,0})}\Phi_H(x)\\
 &\le& \sup_{\gamma\in\Gamma_{n,0}}\inf_{x\in \gamma(\widehat{S}^+)}\Phi_H(x)\\
 &\le& \sup_{\gamma\in\widehat{\Gamma}}\inf_{x\in \gamma(\widehat{S}^+)}\Phi_H(x)
 =c_{\rm EH,\tau_0}(H).
 \end{eqnarray*}
It follows that $c^{n,0}(B)\le c_{\rm EH,\tau_0}(B)$ for each $B\subset\mathbb{R}^{2n}$ intersecting with $\mathbb{R}^{n,0}$.

\appendix
\section[Connectedness of the subgroup ${\rm Sp}(2n,k)\subset {\rm Sp}(2n)$ (by Kun Shi)]{
Connectedness of the subgroup ${\rm Sp}(2n,k)\subset {\rm Sp}(2n)$
\textnormal{(by Kun Shi\footnote{School of Mathematical Sciences, Beijing Normal University, Beijing 100875, People's Republic of China,
\texttt{{shikun@mail.bnu.edu.cn}}})}}
\label{sec:app}

Let $e_1,\cdots, e_{2n}$ be the standard symplectic basis in the standard symplectic
Euclidean space $(\mathbb{R}^{2n},\omega_0)$. Then $\omega_0(e_i,e_j)=\omega_0(e_{n+i},e_{n+j})=0$
and $\omega_0(e_{i},e_{n+j})=\delta_{ij}$ for all $1\le i,j\le n$.

\begin{claim}
$A\in {\rm Sp}(2n)$ belongs to ${\rm Sp}(2n,k)$ if and only if
\begin{equation}\label{e:app.1}
A=\left(\begin{array}{cc}
I_{n+k}&\left(\begin{array}{c}
O_{k\times(n-k)}\\
B_{(n-k)\times(n-k)}\\
O_{k\times(n-k)}
\end{array}\right)
\\
O_{(n-k)\times(n+k)}&I_{n-k}
\end{array}\right)
\end{equation}
for some $B_{(n-k)\times(n-k)}=(B_{(n-k)\times(n-k)})^t\in \mathbb{R}^{(n-k)\times(n-k)}$.
Consequently, $tA_0+(1-t)A_1\in {\rm Sp}(2n,k)$ for any $0\le t\le 1$ and $A_i\in {\rm Sp}(2n,k)$,
$i=0,1$. Specially, ${\rm Sp}(2n,k)$ is a connected subgroup of ${\rm Sp}(2n)$.
\end{claim}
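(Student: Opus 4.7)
The plan is to first establish the explicit description in (\ref{e:app.1}) by solving the symplectic equations column by column, and then to deduce the convexity and connectedness claims as an immediate formal consequence.

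First, I would exploit that $\mathbb{R}^{n,k}$ is the span of $e_1,\dots,e_{n+k}$. The condition $Az=z$ for all $z\in\mathbb{R}^{n,k}$ is therefore equivalent to $Ae_i=e_i$ for $i=1,\dots,n+k$, which pins down the first $n+k$ columns of $A$ as the corresponding columns of $I_{2n}$ and yields the left block column in (\ref{e:app.1}).

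Next I would expand the remaining columns as $a_j:=Ae_j=\sum_{\ell=1}^{2n}c_{\ell j}e_\ell$ for $j=n+k+1,\dots,2n$, and impose the symplectic condition $\omega_0(Au,Av)=\omega_0(u,v)$. Using the elementary identities $\omega_0(e_i,e_{n+j'})=\delta_{i,j'}$ for $1\le i,j'\le n$ together with the vanishing of $\omega_0$ when both arguments are $q$-basis vectors or both are $p$-basis vectors, the relations with $u=e_i$ ($i\le n+k$), $v=e_j$ ($j\ge n+k+1$) force $c_{\ell j}=0$ whenever $\ell\in\{1,\dots,k\}\cup\{n+1,\dots,n+k\}$ and $c_{\ell j}=\delta_{\ell,j}$ whenever $\ell\in\{n+k+1,\dots,2n\}$. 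Only the entries $B_{a,b}:=c_{k+a,\,n+k+b}$, $a,b\in\{1,\dots,n-k\}$, remain unconstrained by these relations, which is exactly the block pattern asserted in (\ref{e:app.1}). Applying the remaining symplectic relations with $u=e_j$, $v=e_{j'}$ ($j,j'\ge n+k+1$) then collapses, after cancellation of the $\delta_{\ell,j}$ terms, to $B_{a,b}=B_{b,a}$, i.e.\ $B=B^{t}$. The converse, that every matrix of the form (\ref{e:app.1}) with $B=B^{t}$ lies in ${\rm Sp}(2n,k)$, is a direct verification: such an $A$ clearly fixes $\mathbb{R}^{n,k}$, and a short block calculation yields $A^{t}J_{2n}A=J_{2n}$.

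Once the characterization is in hand, the rest is formal. The entries of $A$ in (\ref{e:app.1}) depend affinely on those of $B$, and the set of symmetric $(n-k)\times(n-k)$ matrices is a linear subspace, so for any $A_0,A_1\in{\rm Sp}(2n,k)$ with parameters $B_0,B_1$ the convex combination $tA_0+(1-t)A_1$ corresponds to the symmetric matrix $tB_0+(1-t)B_1$ and again sits in ${\rm Sp}(2n,k)$. In particular the straight-line path $t\mapsto tA_1+(1-t)A_0$ connects any two points of ${\rm Sp}(2n,k)$ inside ${\rm Sp}(2n,k)$, giving (path-)connectedness. I do not anticipate a genuine obstacle beyond careful bookkeeping in the symplectic computation, which is the only nontrivial step.
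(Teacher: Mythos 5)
Your proof is correct and follows essentially the same route as the paper's: pin down the first $n+k$ columns from $Ae_i=e_i$, expand the remaining columns in the symplectic basis, and use the relations $\omega_0(Ae_i,Ae_j)=\omega_0(e_i,e_j)$ first against the fixed basis vectors to kill the unwanted blocks and force the identity block, then against each other to get the symmetry of $B$. The affine-in-$B$ observation for convexity and connectedness is also exactly the paper's closing step.
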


The following proof of this claim is presented by Kun Shi.

Let $A\in {\rm Sp}(2n,k)$. Then $Ae_i=e_i$ for $i=1,\cdots,n+k$. For $k<j\le n$, suppose
$Ae_{n+j}=\sum^{2n}_{s=1}a_{s(n+j)}e_s$, where $a_{st}\in\mathbb{R}$.
For $1\le j\le k$ and $k<l\le n$, we may obtain
\begin{eqnarray}\label{e:app.2}
0&=&\omega_0(e_{n+l}, e_{n+j})=\omega_0(Ae_{n+l}, Ae_{n+j})=\omega_0(Ae_{n+l},e_{n+j})\nonumber\\
&=&\sum^{2n}_{s=1}a_{s(n+l)}\omega_0(e_s,e_{n+j})=\sum^{2n}_{s=1}a_{s(n+l)}\delta_{sj}=a_{j(n+l)}
\end{eqnarray}
by a straightforward computation.
Similarly,  for $1\le j\le n$ and $k<l\le n$, we have
\begin{eqnarray*}
-\delta_{jl}&=&\omega_0(e_{n+l}, e_{j})=\omega_0(Ae_{n+l}, Ae_{j})=\omega_0(Ae_{n+l},e_{j})
=\sum^{2n}_{s=1}a_{s(n+l)}\omega_0(e_s,e_{j})\\
&=&\sum^{2n}_{s=n+1}a_{s(n+l)}\omega_0(e_s,e_{j})=
\sum^{n}_{i=1}a_{(n+i)(n+l)}
(-\delta_{ji})=-a_{(n+j)(n+l)}.
\end{eqnarray*}
It follows from this and (\ref{e:app.2}) that $Ae_{n+l}=e_{n+l}+ \sum^n_{j=k+1}a_{j(n+l)}e_j$.
By substituting this and $Ae_{n+s}=e_{n+s}+ \sum^n_{j=k+1}a_{j(n+s)}e_j$ into
$\omega_0(e_{n+l},e_{n+s})=\omega_0(Ae_{n+l}, Ae_{n+s})$ we obtain $a_{j(n+l)}=a_{l(n+j)}$
for all $k<j,l\le n$.

Conversely, suppose that $A\in {\rm Sp}(2n)$ has form (\ref{e:app.1}),
that is, $A$ satisfies: $Ae_i=e_i$ for $i=1,\cdots,n+k$, and
$Ae_{n+l}=e_{n+l}+ \sum^n_{j=k+1}a_{j(n+l)}e_j$ for $k<l\le n$,
 where $a_{j(n+l)}=a_{l(n+j)}\in\mathbb{R}$ for $k<j,l\le n$.
Then it is easy to check that $A\in {\rm Sp}(2n,k)$.

\medskip

\begin{tabular}{l}
 Department of Mathematics, Civil Aviation University of China\\
 Tianjin  300300, The People's Republic of China\\
 E-mail address: rrjin@cauc.edu.cn\\
 \\
  School of Mathematical Sciences, Beijing Normal University\\
 Laboratory of Mathematics and Complex Systems, Ministry of Education\\
 Beijing 100875, The People's Republic of China\\
 E-mail address: gclu@bnu.edu.cn\\
\end{tabular}
\medskip


%
%

\end{document}